\newtheorem{theorem}{Theorem}[section]
\newtheorem{corollary}{Corollary}[theorem]
\newtheorem{remark}{Remark}[section] 
\tikzset{
  neuron/.style={ 
    circle,draw,thick, 
    inner sep=0pt, 
    minimum size=3.5em, 
    node distance=1ex and 2em, 
  },
  group/.style={ 
    rectangle,draw,thick, 
    inner sep=0pt, 
  },
  io/.style={ 
    neuron, 
    fill=gray!15, 
  },
  conn/.style={ 
    -{Straight Barb[angle=60:2pt 3]}, 
    thick, 
  },
}
\numberwithin{equation}{section}
\newcommand{\unroll}{\boldsymbol v}
\definecolor{Blue}{rgb}{0,0,1}
\definecolor{Red}{rgb}{1,0,0}
\newcommand{\param}{\boldsymbol\mu}
\newcommand{\state}{\boldsymbol x}
\newcommand{\error}{\boldsymbol e}
\newcommand{\genstate}{\hat{ \boldsymbol x}}
\newcommand{\resid}{\boldsymbol r}
\newcommand{\decoder}{\boldsymbol g}
\newcommand{\norm}[1]{ \|  #1 \|_2}
\newcommand{\velocity}{\boldsymbol f}
\newcommand{\basisst}{\boldsymbol S}
\newcommand{\approachKwd}{approach}
\newcommand{\methodAcronym}{WLS}
\newcommand{\methodAcronymROM}{WLS ROM}
\newcommand{\methodAcronymROMs}{WLS ROMs}
\newcommand{\RRplus}{\RR{}_{+}}
\newcommand{\methodNameLower}{windowed least-squares}
\newcommand{\EP}[1]{{\color{red}EP: #1}}
\newcommand{\matshapea}{
\tikz[baseline=0.0ex]\draw (0,0) rectangle (15pt,-25pt);}
\newcommand{\stdim}{K_{\text{ST}}}
\newcommand{\stdimArg}[1]{K_{\text{ST}}^{#1}}
\newcommand{\quadWeightsLMSScalarArg}[2]{\gamma^{#1,#2}}
\newcommand{\timeDummy}{\tau}
\newcommand{\fbsmGrowth}{\psi_1}
\newcommand{\fbsmDecay}{\psi_2}
\newcommand{\projector}{\mathbb{P}}
\newcommand{\projectorArg}[1]{\projector^{#1}}
\newcommand{\veloargsromy}{\basisspace \genstateyDiscreteArgnt{} + \stateIntercept,\timeDummy}
\newcommand{\statew}{\boldsymbol w}
\newcommand{\stweightingMatCollocArgt}[2]{\stweightingMat}
\newcommand{\stweightingMatCollocArg}[2]{\overline{\stweightingMat}}
\newcommand{\stweightingMat}{ \mathbf{A}}
\newcommand{\stweightingMatOne}{ \mathbf{W}}
\newcommand{\stweightingMatOneArg}[1]{\stweightingMatOne^{#1}}
\newcommand{\stweightingMatOneTArg}[1]{[\stweightingMatOne^{#1}]^T}
\newcommand{\stweightingMatArg}[1]{\stweightingMat^{#1}}
\newcommand{\stweightingMatArgt}[2]{\stweightingMat^{#1}}
\newcommand{\DeltaSlabArg}[1]{\Delta T^{#1}}
\newcommand{\hamiltonian}{\mathcal{H}}
\newcommand{\hamiltonianArg}[1]{\hamiltonian^{#1}}
\newcommand{\adjoint}{\hat{\boldsymbol \lambda}}
\newcommand{\adjointDot}{\dot{\adjoint}}
\newcommand{\adjointDotArg}[2]{\adjointDot^{#1}(#2)}
\newcommand{\adjointDotArgnt}[1]{\adjointDot^{#1}}
\newcommand{\adjointArg}[2]{\adjoint^{#1}(#2)}
\newcommand{\adjointArgnt}[1]{\adjoint^{#1}}
\newcommand{\adjointOCDot}{\dot{\hat{\boldsymbol \nu}}}
\newcommand{\adjointOCDotArg}[2]{\adjointOCDot^{#1}(#2)}
\newcommand{\adjointOC}{\hat{\boldsymbol \nu}}
\newcommand{\adjointOCArg}[2]{\adjointOC^{#1}(#2)}
\newcommand{\adjointOCArgnt}[1]{\adjointOC^{#1}}
\newcommand{\adjointDum}{\hat{\boldsymbol \mu}}
\newcommand{\adjointDumArgnt}[1]{\adjointDum^{#1}}
\newcommand{\adjointDiscreteDum}{\hat{\pmb \mu}}
\newcommand{\adjointDiscreteDumArgnt}[1]{\adjointDiscreteDum^{#1}}
\newcommand{\Range}[1]{\mathrm{Ran}(#1)}
\newcommand{\lipshitzf}{\Gamma}
\newcommand{\lipshitz}{\kappa}
\newcommand{\lipshitzi}{\alpha}
\newcommand{\lipshitziArg}[1]{\alpha^{#1}}
\newcommand{\stateFOMDot}{\dot{\stateFOM}}
\newcommand{\stateFOMDotArg}[1]{\dot{\stateFOM}(#1)}
\newcommand{\stateFOMSolArg}[1]{\stateFOM^{#1}}
\newcommand{\stateFOMSolArgt}[2]{\stateFOM^{#1}(#2)}
\newcommand{\stateFOMProjSol}{\tilde{\state}_{\ell^2}}
\newcommand{\stateFOMProjSolArg}[1]{\stateFOMProjSol^{#1}}
\newcommand{\stateFOMProjSolArgt}[2]{\stateFOMProjSol^{#1}(#2)} 
\newcommand{\intSlabArg}[1]{\int_{\timeStartArg{#1}}^{\timeEndArg{#1}} }
\newcommand{\genstateROMSol}{\genstate}
\newcommand{\genstateROMSolArg}[1]{\genstateROMSol^{#1}}
\newcommand{\genstateROMSolArgt}[2]{\genstateROMSol^{#1}(#2)}
\newcommand{\genstateROMStarSol}{\genstate_{*}}
\newcommand{\genstateROMStarSolArg}[1]{\genstateROMStarSol^{#1}}
\newcommand{\genstateROMStarSolArgt}[2]{\genstateROMStarSol^{#1}(#2)}
\newcommand{\approxstateIC}{\basisspace \basisspace^T (\stateFOMIC - \stateIntercept) + \stateIntercept}
\newcommand{\errorStar}{\hat{\error}_{*}}
\newcommand{\errorStarArg}[1]{\errorStar^{#1}}
\newcommand{\errorStarArgt}[2]{\errorStar^{#1}(#2)}
\newcommand{\errorStarDot}{\dot{\hat{\error}}_{*}}
\newcommand{\errorStarDotArg}[1]{\errorStarDot^{#1}}
\newcommand{\errorStarDotArgt}[2]{\errorStarDot^{#1}(#2)}
\newcommand{\stateROMSol}{\tilde{\state}}
\newcommand{\stateROMSolArg}[1]{\stateROMSol^{#1}}
\newcommand{\stateROMSolArgt}[2]{\stateROMSol^{#1}(#2)}
\newcommand{\stateROMStarSol}{\tilde{\state}_{*}}
\newcommand{\stateROMStarSolArg}[1]{\stateROMStarSol^{#1}}
\newcommand{\stateROMStarSolArgt}[2]{\stateROMStarSol^{#1}(#2)}
\newcommand{\adjointROMStarSol}{\adjoint_*}
\newcommand{\adjointROMStarSolArg}[1]{\adjointROMStarSol^{#1}}
\newcommand{\adjointROMSol}{\adjoint}
\newcommand{\adjointROMSolArg}[1]{\adjoint^{#1}}
\newcommand{\adjointROMStarSolArgt}[2]{\adjointROMStarSol^{#1}(#2)}
\newcommand{\adjointROMSolArgt}[2]{\adjointROMSol^{#1}(#2)}
\newcommand{\errorArg}[1]{\error^{#1}}
\newcommand{\errorArgt}[2]{\error^{#1}(#2)}
\newcommand{\objectiveArg}[1]{\objective^{#1}}
\newcommand{\objectiveDiscrete}{J}
\newcommand{\objectiveArgLMS}[1]{\objectiveDiscrete^{#1}}
\newcommand{\objectiveDiscreteST}{J_{\text{ST}}}
\newcommand{\objectiveDiscreteSTArg}[1]{\objectiveDiscreteST^{#1}}
\newcommand{\objective}{\mathcal{J}}
\newcommand{\controller}{\hat{\boldsymbol u}}
\newcommand{\controllerArg}[2]{\controller^{#1}(#2)}
\newcommand{\controllerArgnt}[1]{{\controller^{#1}}}
\newcommand{\objectiveControl}{\mathcal{L}}
\newcommand{\objectiveControlArg}[1]{\objectiveControl^{#1}}
\newcommand{\controllerDum}{\hat{\boldsymbol v}}
\newcommand{\controllerDumArg}[2]{\controllerDum^{#1}(#2)}
\newcommand{\controllerDumArgnt}[1]{\controllerDum^{#1}}
\newcommand{\controllerDiscreteDum}{\hat{\mathbf{v}}}
\newcommand{\controllerDiscreteDumArg}[2]{\controllerDiscreteDum^{#1}(#2)}
\newcommand{\controllerDiscreteDumArgnt}[1]{\controllerDiscreteDum^{#1}}
\newcommand{\spatialIC}{\basisspaceTArg{n}(\basisspaceArg{n-1} \genstate^{n-1}(\timeEndArg{n-1} )  + \stateInterceptArg{n-1} - \stateInterceptArg{n} )}
\newcommand{\defeq}{\vcentcolon=}
\newcommand{\mass}{\mathbf{M}}
\newcommand{\massArg}[1]{\mass^{#1}}
\newcommand{\massArgnt}[1]{\mass}
\newcommand{\massArgt}[2]{\mass}
\newcommand{\genstateyDot}{\hat{\boldsymbol v}}
\newcommand{\genstateyDotArg}[2]{\genstateyDot^{#1}(#2)}
\newcommand{\genstateyDiscreteDot}{\hat{\mathbf{v}}}
\newcommand{\genstateyDiscreteDotArgnt}[1]{\genstateyDiscreteDot^{#1}}
\newcommand{\genstateDot}{\dot{\genstate}}
\newcommand{\stateyDot}{{\boldsymbol v}}
\newcommand{\statezDot}{\dot{\statez}}
\newcommand{\statez}{\boldsymbol z}
\newcommand{\statezDotBar}{\dot{\statezBar}}
\newcommand{\statezBar}{\overline{\statez}}
\newcommand{\variation}{\boldsymbol \eta}
\newcommand{\variationArgn}[1]{\boldsymbol \eta^{#1}}
\newcommand{\variationArgntt}[2]{\boldsymbol \eta^{#1}(#2)}
\newcommand{\variationDot}{\dot{\boldsymbol \eta}}
\newcommand{\variationArg}[1]{\boldsymbol \eta(#1)}
\newcommand{\genstateDotArg}[2]{\dot{\genstate}^{#1}(#2)}
\newcommand{\genstateDotArgnt}[1]{\dot{\genstate}^{#1}}
\newcommand{\genstateDiscrete}{\hat{\mathbf{x}}}
\newcommand{\genstateDiscreteArg}[1]{\genstateDiscrete^{#1}}
\newcommand{\genstateGuessDiscrete}{\hat{\mathbf{x}}}
\newcommand{\genstateGuessDiscreteArg}[2]{\genstateGuessDiscrete^{#1}_{#2}}
\newcommand{\genstateArg}[2]{\genstate^{#1}(#2)}
\newcommand{\genstateArgnt}[1]{\genstate^{#1}}
\newcommand{\genstateGalerkinDot}{\dot{\genstate}_{\text{G}}}
\newcommand{\genstateGalerkinDotArg}[2]{\genstateGalerkinDot^{#1}(#2)}
\newcommand{\genstateGalerkin}{\genstate_{\text{G}}}
\newcommand{\genstateGalerkinArg}[2]{\genstateGalerkin^{#1}(#2)}
\newcommand{\genstateyArg}[2]{\genstatey^{#1}(#2)}
\newcommand{\genstateyArgnt}[1]{\genstatey^{#1}}
\newcommand{\genstateyDiscreteArg}[1]{\genstateyDiscrete^{#1}}
\newcommand{\genstateyDiscreteArgnt}[1]{\genstateyDiscrete^{#1}}
\newcommand{\stateyDiscreteArgnt}[1]{\stateyDiscrete^{#1}}
\newcommand{\stateyDiscreteArg}[1]{\stateyDiscrete^{#1}}
\newcommand{\statewDiscrete}{\mathbf{w}}
\newcommand{\ncollocST}{N_{\text{ST}}}
\newcommand{\ncollocSTArg}[1]{\ncollocST^{#1}}
\newcommand{\collocPointST}{\chi_{\text{ST}}}
\newcommand{\collocPointSTArg}[2]{\collocPointST^{#1,#2}}
\newcommand{\minintegrandArg}[1]{\minintegrand^{#1}}
\newcommand{\dIdV}{\minintegrand_{ \genstateyDiscreteDot}}
\newcommand{\dIdVDot}{\dot{\minintegrand}_{ \genstateyDiscreteDot}}
\newcommand{\dIdY}{\minintegrand_{ \genstateyDiscrete}}
\newcommand{\dIdVArg}[1]{\dIdV^{#1}}
\newcommand{\dIdYArg}[1]{\dIdY^{#1}}
\newcommand{\dIdVDotArg}[1]{\dIdVDot^{#1}}
\newcommand{\nsamples}{n_s}
\newcommand{\timeWindow}{\tau}
\newcommand{\timeWindowArg}[2]{\tau^{#1,#2}}
\newcommand{\bz}{\boldsymbol 0}
\newcommand{\nsteps}{N_{\tau}}
\newcommand{\nstepsArg}[1]{\nsteps^{#1}}
\newcommand{\lspgWeightingArg}[1]{\stweightingMatOne}
\newcommand{\lspgWeightingST}{\stweightingMatOne_{\text{ST}}}
\newcommand{\lspgWeightingSTArg}[1]{\stweightingMatOne_{\text{ST}}}
\newcommand{\approxstate}{\tilde{\boldsymbol x}} 
\newcommand{\approxstateDiscrete}{\tilde{\mathbf{x}}} 
\newcommand{\approxstateDiscreteArg}[1]{\approxstateDiscrete^{#1}} 
\newcommand{\approxstateLSPG}{\approxstateDiscrete_{\text{L}}} 
\newcommand{\approxstateArg}[2]{\approxstate^{#1}(#2)} 
\newcommand{\approxstateArgnt}[1]{\approxstate^{#1}}
\newcommand{\stateMat}{\mathbf {x}}
\newcommand{\stateMaty}{\mathbf{y}}
\newcommand{\genstatecollocMatSlab}{\hat{\overline{\overline{\stateMat}}}}
\newcommand{\genstatecollocMatSlabArg}[1]{\genstatecollocMatSlab^{#1}}
\newcommand{\genstatecollocMatySlab}{\hat{\overline{\overline{\stateMaty}}}}
\newcommand{\genstatecollocMatySlabArg}[1]{\genstatecollocMatySlab^{#1}}
\newcommand{\timeSpace}{\mathcal{T}}
\newcommand{\onesFunction}{\mathcal{O}}
\newcommand{\onesFunctionArg}[1]{\mathcal{O}^{#1}}
\newcommand{\timeSpaceArg}[1]{\timeSpace^{#1}}
\newcommand{\stateFOM}{{\state}}
\newcommand{\stateFOMIC}{{\stateFOMDiscrete_0}}
\newcommand{\stateFOMArg}[2]{\stateFOM^{#1}(#2)}
\newcommand{\stateFOMArgnt}[1]{\stateFOM^{#1}}
\newcommand{\stateFOMDiscrete}{{\mathbf{x}}}
\newcommand{\stateFOMDiscreteArg}[1]{\stateFOMDiscrete^{#1}}
\newcommand{\stateFOMDiscreteLMSArg}[2]{\stateFOMDiscrete^{\indexMapper(#1,#2)}}
\newcommand{\indexMapper}{\theta}
\newcommand{\indexMappern}{\theta^*}
\newcommand{\stateyArg}[2]{\statey^{#1}(#2)}
\newcommand{\stgenstate}{\hat{\mathbf{\overline{x}}}}
\newcommand{\stgenstateArg}[1]{\stgenstate^{#1}}
\newcommand{\stgenstateStar}{\hat{\mathbf{\overline{x}}}_*}
\newcommand{\stgenstateStarArg}[1]{\stgenstateStar^{#1}}
\newcommand{\stgenstatey}{\hat{\mathbf{\overline{y}}}}
\newcommand{\stgenstateyArg}[1]{\stgenstatey^{#1}}
\newcommand{\stateyDiscrete}{\mathbf{y}}
\newcommand{\stateyDotDiscrete}{\mathbf{{v}}}
\newcommand{\statey}{\boldsymbol y}
\newcommand{\stateIntercept}{\mathbf{x}_\text{ref}}
\newcommand{\stateInterceptArg}[1]{\stateIntercept^{#1}}
\newcommand{\stateInterceptST}{\mathbf{x}_\text{st}}
\newcommand{\stateInterceptSTArg}[1]{\stateInterceptST^{#1}}
\newcommand{\genstatey}{\hat{\boldsymbol y}}
\newcommand{\genstateyDiscrete}{\hat{\mathbf{y}}}
\newcommand{\residST}{\overline{\boldsymbol r}}
\newcommand{\genresidST}{\hat{\overline{\boldsymbol r}}}
\newcommand{\residLMSSlabArg}[1]{\overline{\overline{\boldsymbol r}}^{#1}_{\text{}}}
\newcommand{\spatialAcronym}{S-reduction}
\newcommand{\spaceTimeAcronym}{ST-reduction}
\newcommand{\residLMSArg}[1]{\resid_{}^{#1}}
\newcommand{\residLMSWArg}[2]{\resid_{}^{#1,#2}}
\newcommand{\lmsWidthArg}[2]{k^{#1,#2}}
\newcommand{\ntimeSteps}{N_t}
\newcommand{\nslabs}{N_w}
\newcommand{\RR}[1]{\mathbb{R}^{#1}}
\newcommand{\RRStar}[2]{\mathbb{V}_{#1}(\RR{#2})}
\newcommand{\veloargsrom}{\basisspaceArg{} \genstateArg{}{t} + \stateInterceptArg{},t}
\newcommand{\veloargsromn}{\basisspaceArg{n} \genstateArg{n}{t} + \stateInterceptArg{n},t}
\newcommand{\veloargsromArg}[1]{\basisspace \genstateArgnt{n}_{#1}(t) + \stateInterceptArg{n},t}
\newcommand{\timeStartArg}[1]{t_s^{#1}}
\newcommand{\timeEndArg}[1]{t_f^{#1}}
\newcommand{\trialspace}{\mathcal{V } } 
\newcommand{\trialspaceArg}[1]{\trialspace^{#1}} 
\newcommand{\stspace}{\mathcal{ ST } } 
\newcommand{\stspaceS}{\mathcal{ ST }_{\text{S}} } 
\newcommand{\stspaceST}{\mathcal{ ST }_{\text{ST}} }
\newcommand{\stspaceBoundStartArg}[1]{\stspaceBoundStart^{#1}} 
\newcommand{\stspaceBoundStart}{\mathcal{B}} 
\newcommand{\stspaceArg}[1]{\stspace^{#1}} 
\newcommand{\stspaceSTArg}[1]{\stspaceST^{#1}} 
\newcommand{\stspaceSArg}[1]{\stspaceS^{#1}} 
\newcommand{\stbasis}{\boldsymbol \Pi}
\newcommand{\stbasisArg}[2]{\stbasis^{#1}(#2)}
\newcommand{\stbasisArgnt}[1]{\stbasis^{#1}}
\newcommand{\stbasisDot}{\dot{\boldsymbol \Pi}}
\newcommand{\stbasisDotArg}[2]{\stbasisDot^{#1}(#2)}
\newcommand{\genstateIC}{ \basisspaceTArg{}( \stateFOMIC - \stateInterceptArg{}) }
\newcommand{\genstateICOne}{ [\basisspaceArg{1}]^T( \stateFOMIC -
\stateInterceptArg{1}) }
\newcommand{\elltwo}{\ell^2}
\newcommand{\romdim}{K}
\newcommand{\romdimArg}[1]{K^{#1}}
\newcommand{\fomdim}{N}
\newcommand{\basisspace}{\basismat}
\newcommand{\basisspaceArg}[1]{\basismat^{#1}}
\newcommand{\basisspaceTArg}[1]{[\basismat^{#1}]^T}
\newcommand{\basismat}{\mathbf {V}}
\newcommand{\basisvec}{\mathbf{v}}
\newcommand{\basismatArg}[1]{\basismat^{#1}}
\newcommand{\basisvecArg}[1]{\basisvec^{#1}}
\newcommand{\minintegrand}{\mathcal{I}}
\newcommand{\integrand}{\mathcal{I}}
\begin{document}
\begin{frontmatter}

\title{Windowed least-squares model reduction for dynamical systems}

\author[a]{Eric J. Parish and Kevin T. Carlberg}

\address[a]{Sandia National Laboratories,  Livermore, CA}
\begin{abstract}
This work proposes a \methodNameLower\ (\methodAcronym) approach for model-reduction
	of dynamical systems. The proposed approach sequentially minimizes the
	time-continuous full-order-model residual within a low-dimensional space--time trial
	subspace over time windows. The approach comprises a generalization 
of existing model reduction approaches, as particular instances of
  the methodology recover Galerkin,
	least-squares Petrov--Galerkin (LSPG), and space–time LSPG projection. In
	addition, the approach
	addresses key deficiencies in existing model-reduction
	techniques, e.g., the dependence of LSPG and space--time LSPG projection on the
	time discretization and the exponential growth in time exhibited by \textit{a posteriori}
	error bounds for both Galerkin and LSPG projection.  We consider two types of
	space--time trial
	subspaces within the proposed approach: one that reduces only the spatial dimension of the full-order model, and one that reduces both the spatial and temporal dimensions of the full-order model. For each type of trial
	subspace, we consider two different solution techniques: direct (i.e.,
	discretize then optimize) and indirect (i.e., optimize then discretize).
	Numerical experiments conducted using trial subspaces characterized by spatial
	dimension reduction demonstrate that the \methodAcronym\
	approach can yield more accurate solutions with lower space--time residuals than
	Galerkin and LSPG projection. 

\end{abstract}
\end{frontmatter}

\section{Introduction}

Simulating parameterized dynamical systems arises in many applications across
science and engineering. In many contexts, executing a dynamical-system
simulation at a single parameter instance---which entails the numerical
integration of a system of ordinary differential equations (ODEs)---incurs an
extremely large computational cost.  This occurs, for example, when the
state-space dimension is large (e.g., due to fine spatial resolution when
discretizing a partial differential equation) and/or when the number of time
instances is large (e.g., due to time-step limitations incurred by stability
or accuracy considerations).  When the application is time critical or many
query in nature, analysts must replace such large-scale parameterized
dynamical-system models (which we refer to as the full-order model) with a low-cost approximation that makes the
application tractable.

Projection-based reduced-order models (ROMs) comprise one such approximation
strategy. First, these techniques execute a computationally expensive
\textit{offline} stage that computes a low-dimensional \textit{trial subspace} on
which the dynamical-system state can be well approximated (e.g., by computing
state ``snapshots'' at different time and parameter instances, by solving
Lyapunov equations). Second, these methods execute an inexpensive
\textit{online} stage during which they compute approximations to the
dynamical-system trajectory that reside on this trial subspace via, e.g., 
projection of the full-order model or residual minimization. 
 
Model reduction for linear-time-invariant systems (and other well structured
dynamical systems) is quite mature
\cite{wilcox_benner_rev,moore,roberts,GugercinIRKA}, as system-theoretic properties (e.g., controllability,
observability, asymptotic stability, $\mathcal H_2$-optimality) can be readily
quantified and accounted for; this often results in
reduced-order models that inherit such important properties.  The primary
challenge in
developing reduced-order models for general nonlinear dynamical systems is
that such properties are difficult to assess quantitatively. As a result, it
is challenging to develop reduced-order models that preserve important
dynamical-system properties, which often results in methods that yield
trajectories that are
inaccurate, unstable, or violate physical properties.  To address this, researchers have pursued several directions that
aim to imbue reduced-order models for nonlinear
dynamical systems with properties that can improve robustness and accuracy.
These
efforts include residual-minimization approaches that equip the ROM solution with a notion of optimality~\cite{carlberg_lspg,carlberg_gnat,legresley_1,legresley_2,legresley_3,bui_resmin_steady,bui_unsteady,rovas_thesis,carlberg_thesis,bui_thesis,l1};
space--time approaches that lead to error bounds that grow slowly in time~\cite{choi_stlspg,constantine_strom,URBAN2012203,Yano2014ASC,benner_st};
``energy-based'' inner
products that ensure non-increasing entropy in the ROM
solution~\cite{rowley_pod_energyproj,Kalashnikova_sand2014,chan2019entropy};
basis-adaptation methods that improve the ROM's accuracy \textit{a
posteriori}~\cite{carlberg_hadaptation,adeim_peherstorfer,etter2019online}, stabilizing subspace rotations that account for truncated modes \textit{a priori}~\cite{basis_rotation}, structure-preserving
methods that enforce conservation \cite{carlberg_conservative_rom} or
(port-)Hamiltonian/Lagrangian structure
\cite{LALL2003304,carlberg2012spd,structurePreserveBeattie,chaturantabut2016structure,farhat2014dimensional} in the ROM; and subgrid-scale
modeling methods that aim to improve accuracy by addressing the closure
problem~\cite{san_iliescu_geostrophic,iliescu_pod_eddyviscosity,iliescu_vms_pod_ns,Bergmann_pod_vms,iliescu_ciazzo_residual_rom,Wang_ROM_thesis,wentland_apg,Wang:269133,San2018}.
We note that these techniques are often not mutually exclusive.
Residual-minimizing and space--time approaches are the most relevant classes of
methods for the current work and comprise the focus of the following review.

Residual-minimization methods in model reduction compute the solution within a
low-dimensional trial subspace that minimizes the full-order-model residual.\footnote{While we focus our review on residual-minimization approaches in the context of model reduction, we note that these approaches are intimately related to least-squares finite element methods~\cite{bochev_leastsquares,bochev_lsfem_book}.}
Researchers have developed such residual-minimizing model-reduction methods
for both static systems (i.e., systems without time-dependence)
\cite{legresley_1,legresley_2,legresley_3,bui_resmin_steady,rovas_thesis,carlberg_thesis,bui_thesis}
and dynamical
systems~\cite{bui_thesis,bui_unsteady,carlberg_thesis,carlberg_gnat,carlberg_lspg,carlberg_lspg_v_galerkin}.
In the latter category,
Refs.~\cite{bui_thesis,bui_unsteady,carlberg_thesis,carlberg_gnat,carlberg_lspg}
formulated the residual minimization problem for dynamical systems by
sequentially minimizing the \textit{time-discrete} full-order-model residual
(i.e., the residual arising after applying time discretization) at each time
instance on the time-discretization grid. This formulation is often referred
to as the \textit{least--squares Petrov--Galerkin} (LSPG) method.
Ref.~\cite{carlberg_lspg_v_galerkin} performed detailed analyses of this
formulation and examined its connections with Galerkin projection. Critically,
this work demonstrated that (1) under certain conditions, LSPG can be cast as a 
Petrov--Galerkin projection applied to the time-continuous
full-order-model residual, and (2) LSPG and Galerkin projection are equivalent
in the limit as the time step goes to zero (i.e., Galerkin projection 
minimizes the time-instantaneous full-order-model residual). Numerous
numerical experiments have demonstrated that LSPG often yields more
accurate and stable solutions than Galerkin~\cite{bui_thesis,carlberg_lspg_v_galerkin,carlberg_gnat,carlberg_thesis,parish_apg}.
The common intuitive explanation for this improved performance is that, by
minimizing the full-order-model residual over a finite time window (rather
than time instantaneously), LSPG computes solutions that are more
accurate over a larger part of the trajectory as compared to Galerkin.

However, LSPG has several notable shortcomings. First, LSPG
exhibits a complex dependence on the time discretization. In
particular, changing
the time step ($\Delta t$) modifies both the
time window over which LSPG minimizes the residual as well as the
time-discretization error of the full-order model on which LSPG is based. 
As LSPG and Galerkin projection are equivalent in the
limit of $\Delta t \rightarrow 0$, the accuracy of LSPG approaches the (sometimes poor)
accuracy of Galerkin as the time step shrinks.
For too-large a time step the accuracy of LSPG also degrades. It is unclear if this is due to the time-discretization error associated with enlarging the time step, or rather if it is due to the size of the window the residual is being minimized over.
As a
consequence, LSPG often yields the smallest error for an intermediate value of the
time step (see, e.g., Ref.~\cite[Figure 9]{carlberg_lspg_v_galerkin}); there
is no known way to compute this optimal time step \textit{a priori}.
Second, as the LSPG approach performs sequential residual minimization in
time, its \textit{a posteriori} error bounds grow exponentially in time
\cite{carlberg_lspg_v_galerkin}, and it is not equipped with any notion of
optimality over the entire time domain of interest. As a result, LSPG is not
equipped with \textit{a priori} guarantees of accuracy or stability, even for
linear time-invariant systems~\cite{bui_thesis}.

Researchers have pursued the development of space--time
residual-minimization approaches~\cite{choi_stlspg,constantine_strom,URBAN2012203,Yano2014ASC} to
address the issues incurred by sequential residual minimization in time.
Existing space--time approaches differ from the classic LSPG and Galerkin approaches in (1) the definition of the space--time trial subspace and (2) the definition of the residual minimization problem. 
First, space--time approaches leverage a space--time trial basis that characterizes both the spatial \textit{and} temporal dependence of the state, classic ``spatial" model reduction approaches such as LSPG and Galerkin leverage only a spatial trial basis that characterizes the spatial dependence of the state. 
Second, space--time residual minimization approaches compute the entire space--time trajectory of the state (within the low-dimensional space--time trial subspace) that minimizes the full-order-model residual over the entire time domain; Galerkin and LSPG sequentially compute instances of the state that either minimize the full-order model instantaneously (Galerkin) or over a time step (LSPG). 
The result of these differences is that space--time approaches yield a system of algebraic equations defined
over all space and time, whose solution comprises a vector of
(space--time) generalized coordinates; on the other hand, spatial-projection-only approaches 
generally associate with systems of ODEs whose solutions
comprise \textit{time-dependent} vectors of (spatial) generalized
coordinates.


Space--time residual minimization approaches minimize the FOM residual over all of space and time and, as a result, yield models that are equipped with a notion of space--time optimality and \textit{a priori} error bounds that grow more slowly
in time. Further, space--time approaches reduce both the spatial and temporal dimensions of the full-order model, and thus promise cost savings over spatial-projection-only approaches.
However, space--time techniques also suffer from several
limitations. First, the computational cost of solving the algebraic system arising from space--time
approaches scales cubically with the number of space--time degrees of freedom;
in contrast, the computational cost incurred by standard
spatial-projection-based ROMs is linear in the number of temporal degrees of
freedom, as the attendant solvers can leverage the lower-block triangular
structure of the system arising from the sequential nature of time
evolution. As a result, solving the algebraic systems arising from space--time
projection is generally intractable without applying hyper-reduction in time
\cite{choi_stlspg,constantine_strom}. Second, space--time residual
minimization precludes future
state prediction, as these methods employ space--time basis vectors defined over the
entire time domain of interest, which must have been included in the training
simulations.

The objectives of this work are to overcome the shortcomings of existing
residual-minimizing methods, and to provide a unifying framework from which
existing methods can be assessed. In essence, the proposed \textit{\methodNameLower\
(\methodAcronym)} \approachKwd\ sequentially minimizes the FOM
residual over a sequence of arbitrarily defined
time windows. The method is characterized by three notable aspects.  First, the method
minimizes the \textit{time-continuous}  residual (i.e., that associated with
the full-order model ODE). By adopting a time-continuous viewpoint, the
formulation decouples the underlying temporal discretization scheme from the
residual-minimization problem, thus addressing a key deficiency of both LSPG
and space--time LSPG. Critically, time-continuous residual minimization also exposes
two different solution methods:  a \textit{discretize-then-optimize}
(i.e., direct) method, and an \textit{optimize-then-discretize} (i.e.,
indirect) method. Second, the method sequentially minimizes
the residual over arbitrarily defined \textit{time windows} rather than
sequentially minimizing the residual over time steps (as in LSPG)
or over the entire time domain (as in space--time residual-minimization
methods). This equips the method with additionally flexibility that enables it
to explore more fine-grained tradeoffs between computational cost and error.
Finally, \methodAcronym\ is formulated for two kinds of space--time trial
subspaces: one that associates with spatial dimension reduction (as employed by traditional spatial-projection-based methods), and one that associates with space--time dimension reduction (as employed by space--time methods).
The above attributes allow the \methodAcronym\ approach to be viewed as a
generalization of existing model-reduction methods, as 
Galerkin, LSPG, and space--time LSPG projection correspond to specific
instances of the formulation.
Figure~\ref{fig:flowchart} depicts how the proposed \methodAcronym\ method
provides a unifying framework from which these existing approaches can be derived.

The \methodAcronym\ approach can be viewed as a hybrid space--time method and displays 
commonalities with several related efforts. 
First, Ref.~\cite{bui_thesis} briefly formulated a space--time least-squares ROM
and connected this formulation with optimal control; specifically it mentioned the
optimize-then-discretize vs.\ discretize-then-optimize approaches. This work
did not fully develop this approach, eschewing it 
for 
sequential residual minimization in time (i.e., LSPG). The present work thus
formally develops and extends several of the concepts put forth
in Ref.~\cite{bui_thesis}. Next, Ref.~\cite{constantine_strom} developed a
space--time residual minimization formulation for model interpolation.  The
present work distinguishes itself from Ref.~\cite{constantine_strom} in that
(1) this work also considers trial subspaces characterized by spatial dimension reduction only, and (2) we
minimize the time-continuous FOM residual over arbitrary time
windows. We note that,
similar to the current work, Ref.~\cite{constantine_strom} employs
minimization of the time-continuous FOM residual as a starting point; as such,
this work shares some thematic similarities with
Ref.~\cite{constantine_strom}.  Lastly, Ref.~\cite{choi_stlspg} develops a
space--time extension of LSPG projection that minimizes the time-discrete
FOM residual over the entire time domain.  The present work distinguishes
itself from Ref.~\cite{choi_stlspg} in that (1) this work minimizes the
time-continuous FOM residual, (2) this work minimizes this residual over
arbitrary time windows, and (3) this work also considers trial subspaces
associated with spatial dimension reduction only.

\begin{figure} 
\begin{centering} 
\includegraphics[trim={0.1cm 0cm 1cm 0cm},clip,width=0.99\textwidth]{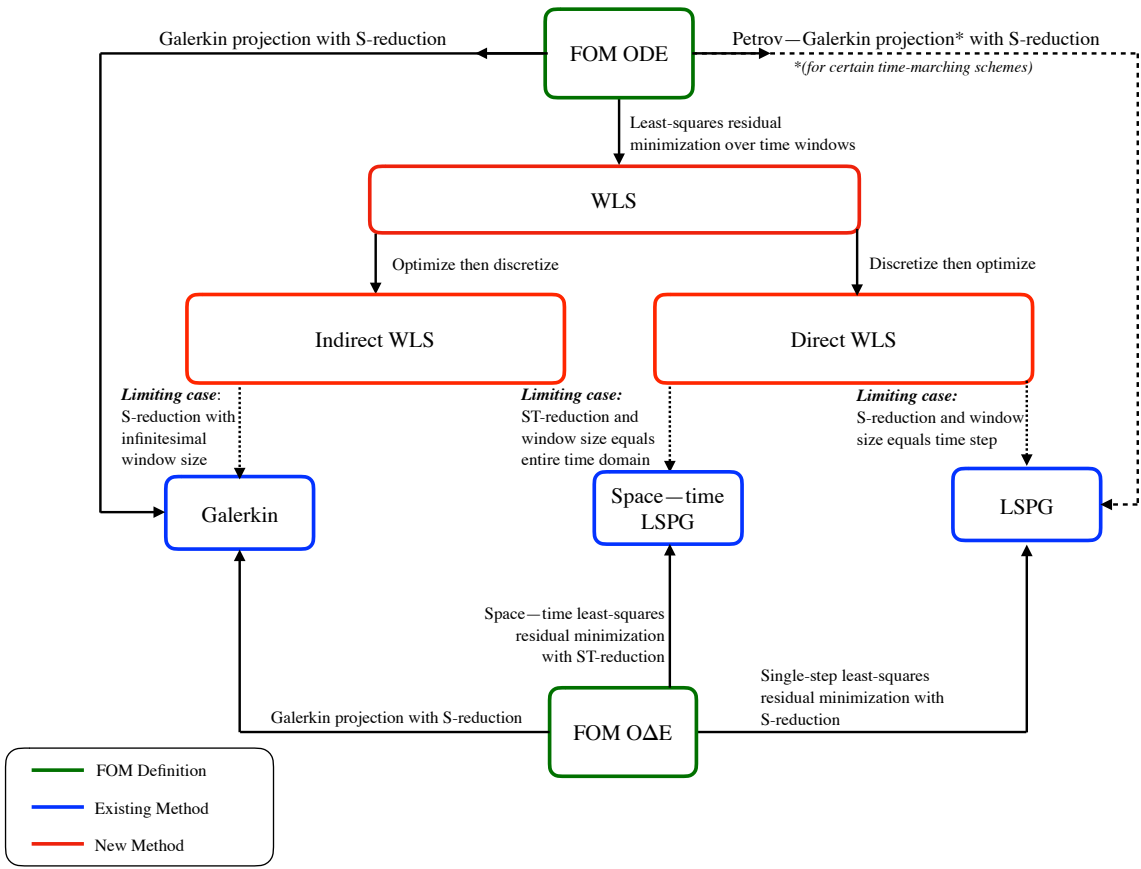} 
\caption{Relationship diagram for the \methodAcronym\ approach for model
	reduction.} 
\label{fig:flowchart} 
\end{centering} 
\end{figure}

 In summary, specific contributions of this work include:
\begin{enumerate}
\item The \methodNameLower\ (\methodAcronym) approach for dynamical-system
	model reduction. The approach sequentially minimizes the time-continuous
		full-order-model residual over arbitrary time windows.
\item Support of two space--time trial subspaces: one that associates with spatial dimension reduction and 
    one that associates with spatial and temporal dimension reduction.
The former case is of particular
		interest in the \methodAcronym\  context, as the stationary conditions are
		derived via the Euler--Lagrange equations and comprise a coupled two-point
		Hamiltonian boundary value problem containing a forward and backward
		system. The forward system, which is forced by an auxiliary costate,
		evolves the (spatial) generalized coordinates of the ROM in time. The
		backward system, which is forced by the time-continuous FOM residual
		evaluated about the ROM state, governs the dynamics of the costate. 
	\item Derivation of two solution techniques:
		\textit{discretize then optimize} (i.e., direct) and
		\textit{optimize then discretize} (i.e., indirect). 
	\item Remarks and derivations of conditions under which the
		\methodAcronym\ approach recovers Galerkin, LSPG, and space--time LSPG.
\item Error analysis of the \methodAcronym\ approach using
	trial subspaces associated with spatial dimension reduction.
	This analysis demonstrates that, over a given window, the \methodAcronymROMs\ error 
		is bounded \textit{a priori} by a combination of the
		error at the start of the window and the integrated FOM ODE residual
		evaluated at the FOM state projected onto the trial subspace. 
\item Numerical experiments for 
	trial subspaces associated with spatial dimension reduction, which demonstrate two key findings:
\begin{itemize}
\item Minimizing the residual over a larger time window leads to more stable solutions 
with lower space--time residuals norms.
\item Minimizing the residual over a larger time window does not necessarily
	lead to a more accurate trajectory (as measured in the space--time
		$\elltwo$-norm of the solution). Instead, minimizing the residual over an
		intermediate-sized time window leads to the smallest trajectory error.
\end{itemize}
\end{enumerate}

The paper proceeds as follows: Section~\ref{sec:math}
outlines the mathematical setting for the full-order model, along with Galerkin, LSPG, and
space--time LSPG projection. Section~\ref{sec:tclspg} outlines the proposed \methodAcronym\
\approachKwd. Section~\ref{sec:numerical_techniques}
outlines numerical techniques for solving \methodAcronymROMs, including both direct and
indirect methods. Section~\ref{sec:analysis} provides
equivalence conditions and error analysis for \methodAcronymROMs.
Section~\ref{sec:numerical_experiments} presents numerical experiments.
Section~\ref{sec:conclude} provides conclusions and perspectives.
We denote vector-valued functions with italicized bold symbols (e.g., $\boldsymbol
x$), vectors with standard bold symbols (e.g., $\mathbf{x}$), 
matrices with capital bold symbols (e.g., $\mathbf{X} \equiv \begin{bmatrix}
\mathbf{x}_1 & \cdots & \mathbf{x}_r\end{bmatrix}$), and spaces with
calligraphic symbols (e.g., $\mathcal{X}$). We additionally denote differentiation of a time-dependent 
function with respect to time with the $\dot\ $ operator.
\section{Mathematical formulation}\label{sec:math}
	We begin by providing the formulation for the full-order model,
	followed by a description of standard model-reduction methods
	classified according to the type of trial subspace they employ.
	\subsection{Full-order model}\label{sec;FOM}
We consider the full-order model to be a dynamical system expressed as a
	system of ordinary differential equations (ODEs)
\begin{equation}\label{eq:FOM}
 \stateFOMDotArg{t}  = \velocity(\stateFOMArg{}{t},t), \qquad \stateFOM(0) =
	\stateFOMIC,\qquad t \in [0,T],
\end{equation}
where  $\stateFOM: [0,T]
	\rightarrow  \RR{\fomdim}$ with $\stateFOM: \timeDummy \mapsto
	\stateFOM(\timeDummy) $ and $\stateFOMDot \equiv {d \stateFOM}/{d\tau}$
	denotes the state implicitly defined as the solution to initial value
	problem~\eqref{eq:FOM}, 
$T \in \RRplus$ denotes the final time, 
 $\stateFOMIC \in \mathbb{R}^{\fomdim}$ denotes the initial condition,
	and $\velocity: \mathbb{R}^{\fomdim} \times [0,T] \rightarrow
	\mathbb{R}^{\fomdim}$ with $(\stateyDiscrete,\timeDummy) \mapsto
	\velocity(\stateyDiscrete,\timeDummy)$ denotes the velocity, which is possibly
	nonlinear in its first argument. For subsequent exposition, we introduce
	$\timeSpace$ to denote the set of (sufficiently smooth) real-valued functions acting on the time
	domain (i.e., $\timeSpace = \{f\,|\,f:[0,T]\rightarrow\RR{}\}$); the
	state can be expressed equivalently as $\stateFOM \in \RR{\fomdim} \otimes
	\timeSpace $.  We refer to the initial value problem defined in
	Eq.~\eqref{eq:FOM} as the ``full-order model" (FOM) ODE. We note that
	although the problem of interest described in the introduction corresponds
	to a parameterized dynamical system, we suppress dependence of the FOM ODE
	\eqref{eq:FOM} on such parameters for notational convenience, as this work
	focuses on devising a model-reduction approach applicable to a specific parameter instance.

Directly solving the FOM ODE~\eqref{eq:FOM} is computationally expensive if
	either the state-space dimension $\fomdim$ is large, or if the time-interval
	length $T$ is large relative to the time step required to numerically
	integrate Eq.~\eqref{eq:FOM}. For time-critical or many-query applications,
	it is essential to replace the FOM ODE~\eqref{eq:FOM} with a strategy that
	enables an approximate trajectory to be computed at lower computational
	cost. Projection-based ROMs constitute one such promising approach. 

\subsection{Reduced-order models}
Projection-based ROMs generate approximate solutions to the FOM
	ODE~\eqref{eq:FOM} by approximating the state in a low-dimensional trial
	subspace. Two types of space--time trial subspaces are commonly used for
	this purpose:\footnote{For both spatial and space--time ROMs of dynamical systems, all trial subspaces are, strictly speaking, space--time subspaces.} 
\begin{enumerate} 
	\item \textit{Subspaces that reduce only the spatial dimension of the full-order
		model (\spatialAcronym)}. These trial subspaces are characterized by a spatial projection operator, associate with a basis that represents the spatial dependence of the state, and are employed in classic model reduction approaches, e.g., Galerkin and LSPG. 
	\item \textit{Subspaces that reduce both the spatial and temporal dimensions of the full-order
		model (\spaceTimeAcronym)}.
These trial subspaces are characterized by a space--time projection operator, associate with a basis that represents the spatial and temporal dependence of the state, and are employed in space--time 
model reduction approaches (e.g., space--time Galerkin~\cite{benner_st}, space--time LSPG~\cite{choi_stlspg}). 
\end{enumerate}
 We now describe these two types of space--time trial subspaces and their
	application to the Galerkin, LSPG, and space--time LSPG approaches. 

\subsection{\spatialAcronym\ trial subspaces}
At a given time instance 
$t\in[0,T]$,
\spatialAcronym\ trial subspaces approximate the FOM ODE solution
	as $\approxstate(t)\approx\stateFOM(t)$, which is enforced to reside in an
	affine spatial trial subspace of dimension $K\ll\fomdim$ such that
	$\approxstate(t)\in
	\stateIntercept+\trialspace
\subseteq\RR{\fomdim}$, where $\dim(\trialspace) = K$
and $\stateIntercept \in \mathbb{R}^{\fomdim}$ denotes the reference state, which
	is often taken to be the initial condition (i.e., $\stateIntercept = \stateFOMIC$).
Here, the trial subspace
$\trialspace$ 
is spanned by an orthogonal basis such that
$ \trialspace= \Range{\basismat}$
with 
$ \basismat \equiv \begin{bmatrix}  \basisvec_1  & \cdots &  \basisvec_K \end{bmatrix}
	\in \RRStar{\romdim}{\fomdim}$, where $\RRStar{\romdim}{\fomdim}$ denotes the compact Stiefel manifold (i.e.,  $
	\RRStar{\romdim}{\fomdim}\defeq
	\{ \mathbf{X} \in \RR{\fomdim
	\times \romdim}\, \big|\, \mathbf{X}^T \mathbf{X} = \mathbf{I} \}$).
The basis vectors $\basisvec_i$, $i=1,\ldots,K$ are typically constructed
using state snapshots, e.g., via
proper orthogonal decomposition (POD)~\cite{berkooz_turbulence_pod}, the reduced-basis method~\cite{rb_1,rb_2,rb_3,NgocCuong2005,Rozza2008}. 
Thus, at any time instance $t\in[0,T]$, ROMs that employ the  \spatialAcronym\
trial subspace approximate the FOM ODE solution as
\begin{equation}\label{eq:affine_trialspace}
\stateFOMArg{}{t} \approx \approxstate(t) = \basisspace \genstateArg{}{t} + \stateIntercept,
\end{equation}
where $\genstate \in \RR{\romdim} \otimes \timeSpace$ with
$\genstate:\timeDummy\mapsto\genstate(\timeDummy)$
denotes the generalized
coordinates. From the space--time perspective, this is equivalent to approximating the
	FOM ODE solution trajectory $\stateFOM\in\RR{N}\otimes\timeSpace$ with 
	$\approxstate\in \stspaceS$, where
\begin{equation}\label{eq:spatial_subspace}
\begin{split}
& \stspaceS \defeq \trialspace \otimes \timeSpace +
	\stateIntercept\otimes\onesFunction\subseteq\RR{N}\otimes\timeSpace,
\end{split}
\end{equation}
with $\onesFunction\in\timeSpace$ defined as
$\onesFunction:\timeDummy\mapsto 1$.
	 
Substituting the approximation~\eqref{eq:affine_trialspace} into the FOM ODE~\eqref{eq:FOM} and performing orthogonal
$\elltwo$-projection of the initial condition onto the trial subspace yields
the overdetermined system of ODEs
\begin{equation}\label{eq:g_truncation}
\basisspace \genstateDotArg{}{t} = \velocity(\basisspace
\genstateArg{}{t} + \stateIntercept,t ), \qquad \genstate(0) = \genstateIC,
	\qquad t \in [0,T],
\end{equation}
where $\genstateDot\equiv {d \genstate}/{d\tau}$.
Because Eq.~\eqref{eq:g_truncation} is overdetermined, a solution may not
exist. Typically, either \textit{Galerkin} or \textit{least-squares
Petrov--Galerkin} projection is employed to reduce the number of equations
such that a unique solution exists. We now describe these two methods.

\subsubsection{Galerkin projection}
The Galerkin approach reduces the number of equations in
Eq.~\eqref{eq:g_truncation} by enforcing orthogonality of the 
residual to the spatial trial subspace in the (semi-)inner product induced by the 
positive (semi-)definite $\fomdim\times\fomdim$ matrix $\stweightingMatArg{}\equiv \stweightingMatOneTArg{ }
\stweightingMatOneArg{ }$ (commonly set to
$\stweightingMatArg{}=\mathbf{I}$), i.e.,
\begin{equation}\label{eq:g_truncation2}
\genstateGalerkinDotArg{}{t} = \massArgnt{n}^{-1} \basisspace^T \stweightingMatArg{} \velocity(\basisspace
\genstateGalerkinArg{}{t} + \stateIntercept,t), \qquad \genstateGalerkin(0) = \genstateIC, \qquad t \in [0,T],
\end{equation}
where $\massArgnt{ } \equiv \basisspace^T \stweightingMatArg{ } \basisspace$
denotes the $K\times K$ positive definite mass matrix.
As demonstrated in Ref.~\cite{carlberg_lspg_v_galerkin}, the Galerkin approach
can be viewed alternatively as a residual-minimization
method, as the Galerkin ODE~\eqref{eq:g_truncation2} is equivalent
to
\begin{equation}\label{eq:GalOptimal}
\genstateGalerkinDotArg{}{t} = \underset{ \genstateyDiscrete \in \RR{\romdim}
	}{\text{arg\,min}} \| \basisspace \hat{\stateyDiscrete} -
	\velocity(\basisspace \genstateGalerkinArg{}{t} + \stateIntercept,t)
	\|_{\stweightingMatArg{}}^2, \qquad \genstate(0) = \genstateIC,
	\qquad t \in [0,T],
\end{equation}
where $\|\mathbf{x}\|_{\stweightingMatArg{}}\equiv
\sqrt{\mathbf{x}^T\stweightingMatArg{}\mathbf{x}}$.
Thus, the computed velocity $\genstateGalerkinDotArg{}{t}$ minimizes the
FOM ODE residual evaluated at the state $\basisspace
\genstateGalerkin(t) + \stateIntercept$ and time instance $t$ over the spatial trial
subspace $\trialspace$.
\subsubsection{Least-squares Petrov--Galerkin projection}
Despite its time-instantaneous residual-minimization optimality property 
\eqref{eq:GalOptimal}, the Galerkin approach can yield inaccurate solutions,
particularly if the velocity is not self-adjoint or
is nonlinear. 
Least-squares Petrov--Galerkin (LSPG)
~\cite{carlberg_lspg_v_galerkin,carlberg_thesis,carlberg_gnat,bui_unsteady,bui_thesis}
was developed
as an alternative method that exhibits several advantages over the Galerkin approach.
Rather than minimize the (time-continuous) FOM ODE residual at a time instance (as in Galerkin), LSPG minimizes
the (time-discrete) FOM O$\Delta$E residual 
(i.e., the residual arising after applying time discretization to the FOM ODE)
over a time step.
We now describe the LSPG approach in the case of linear multistep methods;
Ref.~\cite{carlberg_lspg_v_galerkin} also presents LSPG for Runge--Kutta
schemes. 

Without loss of generality, we introduce a uniform time
grid characterized by time step $\Delta t$ and time instances
$t^n = n\Delta
t$, $n=0,\ldots,N_t$.
Applying a linear multistep method to discretize the FOM ODE \eqref{eq:FOM}
with this time grid
yields the FOM O$\Delta$E, which computes the sequence of discrete
solutions
$\stateFOMDiscreteArg{n}( \approx \stateFOM(t^n))$, $n=1,\ldots,N_t$
as the implicit solution to the system of algebraic equations
\begin{equation}\label{eq:lms}
\residLMSArg{n}
	(\stateFOMDiscreteArg{n};\stateFOMDiscreteArg{n-1},\ldots,\stateFOMDiscreteArg{n-k^n})
	= \bz,\qquad n=1,\ldots,N_t,
\end{equation}
with the initial condition $\stateFOMDiscreteArg{0} = \stateFOMIC$. In the above, $k^n$ denotes the number of steps employed by the scheme at the $n$th
time instance and 
$\residLMSArg{n}$ denotes the FOM O$\Delta$E residual defined as
\begin{align*}
\residLMSArg{n} &: (\stateyDiscreteArgnt{n};\stateyDiscreteArgnt{n-1},\ldots,\stateyDiscreteArgnt{n-k^n}) \mapsto  \frac{1}{\Delta t} \sum_{j=0}^{k^n} \alpha^n_j \stateyDiscreteArgnt{n-j} -  \sum_{j=0}^{k^n} \beta^n_j \velocity(\stateyDiscreteArgnt{n-j},t^{n-j}),
\\
&: \RR{\fomdim} \otimes \RR{k^n + 1} \rightarrow \RR{\fomdim}.
\end{align*} 
Here, $\alpha^n_j,\beta^n_j\in\RR{}$, $j=0,\ldots,k^n$ are coefficients
that define the linear multistep method at the $n$th time instance.

At each time instance on the time grid, LSPG substitutes the \spatialAcronym\ trial subspace approximation
\eqref{eq:affine_trialspace} into the FOM O$\Delta$E~\eqref{eq:lms} and
minimizes the residual, i.e., LSPG sequentially computes the solutions
$\approxstateLSPG^n \approx \stateFOMDiscreteArg{n}$, $n=1,\ldots,
\ntimeSteps$ that satisfy
\begin{equation*}
\approxstateLSPG^n = \underset{\stateyDiscrete \in \trialspace + \stateIntercept }{\text{arg\,min}}|| \lspgWeightingArg{\stateyDiscrete}\residLMSArg{n} (\stateyDiscrete; \approxstateLSPG^{n-1},\ldots,\approxstateLSPG^{n-k^n}) ||_2^2, \qquad n = 1,\ldots,\ntimeSteps,
\end{equation*}
where 
$\lspgWeightingArg{\cdot} \in \RR{\nsamples \times \fomdim}$, with $\romdim
\le \nsamples \le \fomdim$, is a weighting matrix that can be used, e.g., to
enable hyper-reduction by requiring it to have a small number of nonzero
columns. 

As described in the introduction, although numerical experiments have
demonstrated that LSPG often yields more accurate and stable
solutions than Galerkin~\cite{bui_thesis,carlberg_lspg_v_galerkin,carlberg_gnat,carlberg_thesis,parish_apg},
LSPG still suffers from several shortcomings. In particular, LSPG suffers from its complex
dependence on the time discretization, exponentially growing error bounds, and
lack of optimality for the trajectory defined over the entire time domain.

\subsection{\spaceTimeAcronym\ trial spaces and space--time ROMs}

Space--time projection methods that employ \spaceTimeAcronym\ trial
spaces~\cite{choi_stlspg,constantine_strom,URBAN2012203,Yano2014ASC,benner_st,bui_thesis}
aim to overcome the latter two shortcomings of LSPG. Because these methods employ \spaceTimeAcronym\ trial
spaces, they reduce both the spatial and temporal dimensions of the full-order
model; further, they yield error bounds that grow more slowly in time and
their trajectories exhibit an optimality property over the entire time domain. 

\spaceTimeAcronym\ trial subspaces approximate the FOM ODE solution
trajectory
	$\stateFOM\in\RR{N}\otimes \timeSpace$ with an approximation that resides in an
	affine space--time trial subspace of dimension $\stdim\ll\fomdim$, i.e., 
	$\approxstate\in \stspaceST$ with $\dim(\stspaceST) =\stdim $, where
\begin{equation}\label{eq:sttrialspace_def}
 \stspaceST \defeq 
	\Range{\stbasis} + 
	\stateFOMIC\otimes\onesFunction
	\subseteq \RR{N} \otimes \timeSpace.
\end{equation}
Here $\stbasis \in \RR{\fomdim \times \stdim} \otimes \timeSpace$, with $\stbasis:\timeDummy\mapsto\stbasis(\timeDummy)$ and $\stbasis(0) =
\boldsymbol 0$
denotes the space--time trial basis. 
Thus, at any time instance $t\in[0,T]$, ROMs that employ the
\spaceTimeAcronym\ trial subspace approximate the FOM ODE solution as
\begin{equation}\label{eq:stapprox1}
 \stateFOMArg{}{t} \approx \approxstateArg{}{t}  = \stbasisArg{}{t} \stgenstate + \stateFOMIC,
\end{equation}
where $\stgenstateArg{} \in \RR{\stdim}$ denotes the space--time generalized coordinates. 
Critically, comparing the approximations arising from \spatialAcronym\ and
\spaceTimeAcronym\ trial subspaces in Eqs.~\eqref{eq:affine_trialspace} and \eqref{eq:stapprox1}, respectively,
highlights that the former approximation associates with time-dependent
generalized coordinates, while the latter approximation associates with a
time-dependent basis matrix.

Substituting the approximation~\eqref{eq:stapprox1} into the FOM
ODE~\eqref{eq:FOM} yields
\begin{equation}\label{eq:st_fom}
\stbasisDotArg{}{t} \stgenstate =  \velocity(\stbasisArg{}{t} \stgenstate +
	\stateFOMIC,t) , \qquad t \in [0,T],
\end{equation}
where 
$\stbasisDot\equiv d\stbasis/d\timeDummy$. We note that the initial conditions
are automatically satisfied from the definition of the \spaceTimeAcronym\
trial subspace.

Space--time methods reduce the number of equations in
\eqref{eq:st_fom} to ensure a unique solution exists.
We now outline one such method: space--time least-squares Petrov--Galerkin (ST-LSPG)
~\cite{choi_stlspg}. 
While the space--time Galerkin method~\cite{benner_st} is another alternative, it does not
associate with any residual-minimization principle, and thus we do not discuss
it further.
\subsubsection{Space--time LSPG projection} 
Analogously to LSPG, space--time LSPG~\cite{choi_stlspg}
minimizes the (time-discrete) FOM O$\Delta$E residual, but does so
using the \spaceTimeAcronym\ subspace and simultaneously minimizes this
residual over all $N_t$ time instances.
We first introduce the full space--time FOM O$\Delta$E residual for linear
multistep methods as
\begin{align*}
\residST_{} & \vcentcolon (\stateyDiscreteArg{1}, \ldots
	,\stateyDiscreteArg{N_t};\stateFOMIC) \mapsto \begin{bmatrix}
\residLMSArg{1}(\stateyDiscreteArg{1}; \stateFOMIC) 
\\ 
\vdots \\
\residLMSArg{N_t}(\stateyDiscreteArg{N_t};\stateyDiscreteArg{N_t-1},\ldots,\stateyDiscreteArg{N_t - k^{N_t}}) \end{bmatrix} , \\
& \vcentcolon \RR{\fomdim} \otimes \RR{N_t+1} \rightarrow \RR{\fomdim N_t},
\end{align*}
and define the counterpart function acting on space--time generalized
coordinates:
\begin{align*}
\genresidST_{\text{}} & \vcentcolon (\stgenstatey ;\stateFOMIC) \mapsto \begin{bmatrix}
\residLMSArg{1}\bigg( \stbasisArg{}{t^1}\stgenstatey + \stateIntercept; \stateFOMIC \bigg)
\\ 
\vdots \\
\residLMSArg{N_t}\bigg( \stbasisArg{}{t^{N_t}}\stgenstatey + \stateIntercept; \stbasisArg{}{t^{N_t-1}}\stgenstatey + \stateIntercept ,\ldots,  \stbasisArg{}{t^{N_t-k^{N_t}}}\stgenstatey + \stateIntercept \bigg)  \\ 
\end{bmatrix} , \\
& \vcentcolon \RR{\stdim} \times \RR{N} \rightarrow \RR{\fomdim N_t}. 
\end{align*}
ST-LSPG computes the space--time generalized coordinates that
minimize the space--time FOM O$\Delta$E residual:
\begin{equation}\label{eq:stlspg}
	\stgenstate_\text{ST-LSPG} = \underset{ \stgenstatey\in\RR{\stdim} }{\text{arg\,min}}|| \lspgWeightingSTArg{\stgenstatey}  \genresidST_{\text{}}(\stgenstatey;\stateFOMIC) ||_2^2, 
\end{equation}
where $\lspgWeightingSTArg{\cdot} \in \RR{n_{st} \times N N_t}$, with $\stdim
\le n_{st} \le N N_t$, is a space--time weighting matrix that can be chosen,
e.g., to enable hyper-reduction.

ST-LSPG overcomes two of the primary shortcomings of LSPG. In particular, it
leads to error bounds that grow sub-quadratically in time rather than
exponentially in time, and it generates entire trajectories that associate
with an optimality property over the entire time domain \cite{choi_stlspg}.
However, it is subject to several challenges. First, the computational cost of
solving Eq.~\eqref{eq:stlspg} scales cubically with the number of space--time
degrees of freedom $\stdim$. This cost is due to the fact that ST-LSPG yields 
dense systems that do not expose any natural mechanism for exploiting
the sequential nature of time evolution. Second, it is unclear how these
methods can be employed for future state prediction, as the space--time trial
basis $\stbasis$ must be defined over the entire time interval of interest
$[0,T]$. Third, ST-LSPG is still strongly tied to the time discretization
employed for the full-order model, as it minimizes the (time-discrete) FOM
O$\Delta$E residual over all time instances.
  
\subsection{Outstanding challenges}
This work seeks to overcome the limitations of existing residual-minimizing
model-reduction methods, and to provide a unifying framework from which
existing methods can be assessed. Specifically, we look to overcome the
complex dependence of LSPG on the time discretization,
exponential time growth of the error bounds for Galerkin and LSPG,
the cubic dependence of the computational cost of ST-LSPG on the number of
degrees of freedom, and the lack of ability for ST-LSPG to perform prediction
in time. We now describe the proposed \methodNameLower\ \approachKwd\ for this
purpose. 

\section{Windowed least-squares approach}\label{sec:tclspg} 
This section
outlines the proposed \methodNameLower\ 
(\methodAcronym) \approachKwd. In contrast to (1) Galerkin projection, 
which minimizes the (time-continuous) FOM ODE residual at a time instance, 
(2) LSPG projection, 
which minimizes
the (time-discrete) FOM O$\Delta$E residual 
over a time step, and (3) ST-LSPG projection, 
which minimizes
the  FOM O$\Delta$E residual 
over the entire time interval, the proposed \methodAcronym\ \approachKwd\ sequentially minimizes the 
FOM ODE residual over arbitrarily defined
\textit{time windows}. The formulation is compatible with both \spatialAcronym\ and \spaceTimeAcronym\ trial subspaces. 
In this section, we start by outlining the \methodAcronym\ formulation  
for a general space--time trial subspace. We then examine the \spatialAcronym\ 
trial subspace in this context, followed by the \spaceTimeAcronym\ trial subspace. In each case, we derive the stationary conditions 
associated with the residual-minimization problems.
\subsection{Windowed least-squares for general space--time trial subspaces} 
We begin by introducing a (potentially nonuniform) partition of the time domain $[0,T]$
into $\nslabs$ non-overlapping windows $[\timeStartArg{n} ,
\timeEndArg{n}]\subseteq[0,T]$ of length $\DeltaSlabArg{n}\defeq\timeEndArg{n} -
\timeStartArg{n}$, $n=1,\ldots,\nslabs$ such that 
$\timeStartArg{1} = 0$, $\timeEndArg{\nslabs} = T$, and
$\timeStartArg{n+1} = \timeEndArg{n}$,
$n=1,\ldots,\nslabs-1$; Fig.~\ref{fig:slab_fig} depicts such a partitioning.
\begin{figure} 
\begin{centering} 
\includegraphics[trim={0.0cm 5cm 0cm 3cm},clip,width=1.0\textwidth]{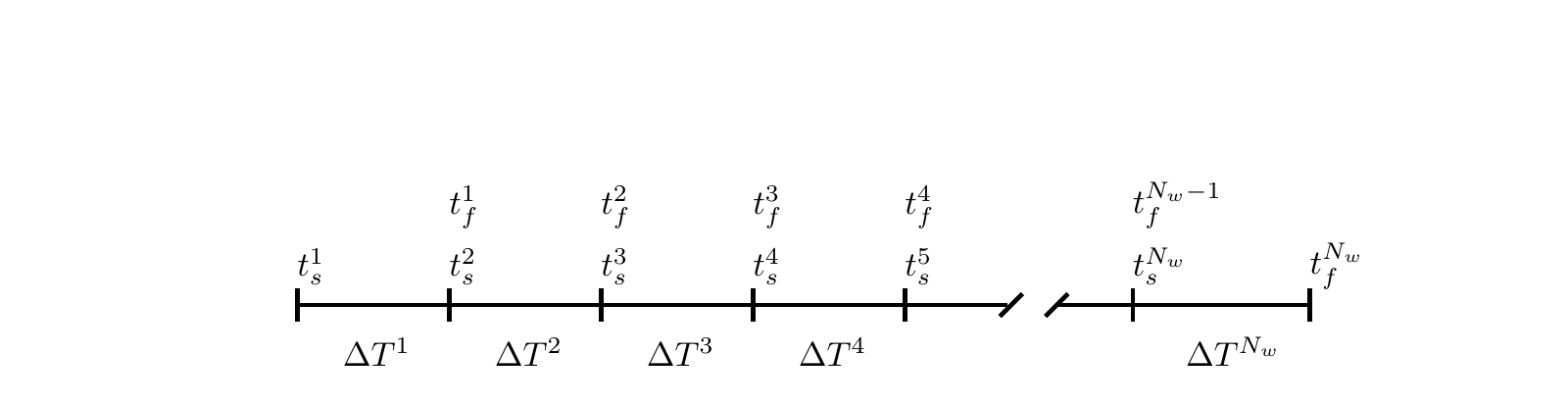} 
\caption{Partitioning of the time
	domain into time windows.} 
\label{fig:slab_fig} 
\end{centering} 
\end{figure}
Over the $n$th time window, we approximate the FOM ODE solution as 
$\approxstate^n(t)\approx \stateFOM(t)$,
$t\in[\timeStartArg{n},\timeEndArg{n}]$, which is enforced to reside in 
the $n$th space--time trial subspace 
$\stspaceArg{n}$
such that
\begin{equation}
\approxstate^n \in \stspace^n \subseteq \RR{N} \otimes \timeSpaceArg{n} , \qquad  n = 1,\ldots,\nslabs,
\end{equation}
where  $\timeSpaceArg{n}$ denotes the set of (sufficiently smooth) real-valued functions acting on
$[\timeStartArg{n},\timeEndArg{n}]$ (i.e., $\timeSpaceArg{n} =
\{f\,|\,f:[\timeStartArg{n},\timeEndArg{n}]\rightarrow\RR{}\}$).
For notational purposes, we additionally define the spatial trial subspaces at the start of each window as
\begin{equation}
	\stspaceBoundStartArg{n}\defeq\mathrm{span}(\{\statey(\timeStartArg{n})\,|\, \statey\in
	\stspaceArg{n}\})\subseteq\RR{\fomdim}, \qquad n=1,\ldots,\nslabs, 
\end{equation}
such that $\approxstateArg{n}{\timeStartArg{n}} \in \stspaceBoundStartArg{n}$.
To outline \methodAcronym, we now define the objective functional over the
$n$th window as
\begin{equation}\label{eq:obj}
\begin{split} \mathcal{J}^n &\vcentcolon \statey \mapsto
\frac{1}{2} \int_{\timeStartArg{n}}^{\timeEndArg{n}} \big[ \dot{\statey}(t)
- \velocity(\stateyArg{}{t},t) \big]^T \stweightingMatArgt{n}{t} \big[
\dot{\statey}(t) - \velocity(\stateyArg{}{t},t) \big] d t, \\
&\vcentcolon \RR{\fomdim}\otimes \timeSpaceArg{n}  \rightarrow
\RRplus, 
\end{split}
\end{equation}
where $\stweightingMatArg{n} \equiv \stweightingMatOneTArg{n}
\stweightingMatOneArg{n} \in \RR{\fomdim \times \fomdim}$ denotes a
symmetric positive semi-definite matrix that can enable hyper-reduction, for
example. 

The \methodAcronym\ \approachKwd\ sequentially computes approximate solutions
$\approxstateArgnt{n}\in\stspace^n$, $n=1,\ldots,\nslabs$, where 
$\approxstateArgnt{n}$ is the solution to the
minimization problem
\begin{equation}\label{eq:tclsrm}
\begin{split}
      &\underset{\statey \in \stspace^n}{\text{minimize}} \; \mathcal{J}^n(\statey), \\
			&\text{subject to } \;  \statey(\timeStartArg{n}) =
\begin{cases}\mathbb{P}^n (\approxstateArgnt{n-1}(\timeEndArg{n-1})) & n = 2,\ldots,\nslabs, \\
\mathbb{P}^n( \stateFOMIC)& n=1, \end{cases} 
\end{split}
\end{equation}
where $\projectorArg{n} : \RR{\fomdim} \rightarrow \stspaceBoundStartArg{n}$ 
is a (spatial) projection operator onto the start of the $n$th trial space
(e.g., $\elltwo$-orthogonal projection operator).

We now define the trial subspaces $\stspaceArg{n}$ considered in this work. In
	particular, we introduce \spatialAcronym\ trial subspaces and
	\spaceTimeAcronym\ trial subspaces tailored for this context. 
We leave further investigation into other approaches, such as nonlinear
	trial manifolds~\cite{leeCarlberg}, as a subject for future work.

\subsection{\spatialAcronym\ trial subspaces}
In this context, the \spatialAcronym\ trial subspace over the $n$th time window approximates
the FOM ODE solution trajectory $\stateFOM(t)$,
$t\in[\timeStartArg{n},\timeEndArg{n}]$ 
with $\approxstateArgnt{n} \in \stspaceSArg{n}$, where
\begin{equation}\label{eq:sttrialspace}
 \stspaceSArg{n} \defeq 
	\trialspaceArg{n} \otimes \timeSpaceArg{n} +
	\stateInterceptArg{n}\otimes\onesFunction^n\subseteq\RR{N} \otimes \timeSpaceArg{n}.
\end{equation}
Here, the spatial trial subspaces $\trialspaceArg{n}\subseteq\RR{\fomdim}$,
$n=1,\ldots,\nslabs$ satisfy 
$\trialspaceArg{n}\defeq\Range{\basismatArg{n}}$ with 
$\basismatArg{n}\equiv[\basisvecArg{n}_1\ \cdots\
\basisvecArg{n}_{\romdimArg{n}}]\in
\RRStar{\romdimArg{n}}{\fomdim}
$, the reference states $\stateInterceptArg{n} \in \RR{\fomdim}$, $n=1,\ldots,\nslabs$ provide the affine transformation, 
and  
$\onesFunction^n\in\timeSpaceArg{n}$ is defined as
$\onesFunction^n:\timeDummy\mapsto 1.$
Thus, at any time instance $t\in[\timeStartArg{n},\timeEndArg{n}]$, the
\spatialAcronym\ trial subspace 
approximates the FOM ODE solution as
\begin{equation}\label{eq:affine_trialspace_tclsrm}
	\stateFOM(t)\approx \approxstateArgnt{n}(t) = \basismatArg{n}
	\genstateArg{n}{t} + \stateInterceptArg{n},
\end{equation}
where $\genstateArgnt{n} \in \RR{\romdimArg{n}} \otimes \timeSpaceArg{n}$ with
$\genstateArgnt{n}: \timeDummy\mapsto \genstateArgnt{n}(\timeDummy)
$ denotes the generalized coordinates over the $n$th time window. 

Setting $\stspace^n \leftarrow \stspaceSArg{n}$ in the
\methodAcronym\ minimization problem~\eqref{eq:tclsrm} and setting 
$\mathbb{P}^n$ to the 
$\elltwo$-orthogonal projection operator 
implies that 
\methodAcronym\ with \spatialAcronym\ space--time trial subspaces
sequentially computes solutions
$\genstateArgnt{n}$, $n = 1,\ldots,\nslabs$ that satisfy
\begin{equation}\label{eq:obj_gen_slab}
\begin{split}
	& \underset{\genstateyArgnt{} \in \RR{\romdimArg{n}} \otimes \timeSpaceArg{n}
			}{\text{minimize}}\; \mathcal{J}^n(\basismatArg{n} \genstateyArgnt{} +
			\stateInterceptArg{n} \otimes \onesFunction^n), \\ 
      & \text{subject to }\; \genstateyArg{}{\timeStartArg{n}} =
	\begin{cases}
\spatialIC & n = 2,\ldots,\nslabs, \\
\genstateICOne & n=1. \end{cases} 
\end{split}
\end{equation}

\subsubsection{Stationary conditions and the Euler--Lagrange equations}
We derive stationary conditions for optimization problem~\eqref{eq:obj_gen_slab} via 
	the Euler--Lagrange equations from the
calculus of variations. To begin, we define the
integrand appearing in the objective function $\mathcal{J}^n$ defined in Eq.~\eqref{eq:obj} in terms of the generalized
coordinates induced by the \spatialAcronym\ subspace as 
\begin{equation}\label{eq:integrand}
\begin{split}
 \minintegrandArg{n} & \vcentcolon
(\genstateyDiscreteArgnt{}, \genstateyDiscreteDotArgnt{},\timeDummy) \mapsto \frac{1}{2} \big[
\basisspaceArg{n} \genstateyDiscreteDotArgnt{} - \velocity(\basisspaceArg{n} \genstateyDiscreteArgnt{}
+ \stateInterceptArg{n},\timeDummy ) \big]^T \stweightingMatArgt{n}{t} \big[
\basisspaceArg{n} \genstateyDiscreteDotArgnt{}  - \velocity(\basisspaceArg{n} \genstateyDiscreteArgnt{} +
\stateInterceptArg{n},\timeDummy) \big], \\ & \vcentcolon \RR{\romdimArg{n}} \times \RR{\romdimArg{n}} \times [\timeStartArg{n},\timeEndArg{n}]
 \rightarrow \RRplus .  
\end{split}
\end{equation}
We also define the quantities\footnote{We use numerator layout for the
scalar-by-vector gradients.}
\begin{equation}
\begin{split}
\dIdVArg{n}  & \vcentcolon
(\genstatey , \genstateyDot, \timeDummy) \mapsto \bigg[ \frac{\partial \minintegrandArg{n}}{\partial \genstateyDiscreteDot} (\genstateyArg{}{\timeDummy}, \genstateyDotArg{}{\timeDummy},\timeDummy) \bigg]^T, \\ 
& \vcentcolon  \RR{\romdimArg{n}} \otimes \timeSpaceArg{n} \times \RR{\romdimArg{n}} \otimes \timeSpaceArg{n} \times [\timeStartArg{n},\timeEndArg{n}]
 \rightarrow \RR{\romdimArg{n}} ,
\end{split}
\end{equation}
\begin{equation}
\begin{split}
\dIdYArg{n}  & \vcentcolon
(\genstatey , \genstateyDot, \timeDummy) \mapsto \bigg[ \frac{\partial \minintegrandArg{n}}{\partial \genstateyDiscrete} (\genstateyArg{}{\timeDummy}, \genstateyDotArg{}{\timeDummy},\timeDummy) \bigg]^T, \\ 
& \vcentcolon  \RR{\romdimArg{n}} \otimes \timeSpaceArg{n} \times \RR{\romdimArg{n}} \otimes \timeSpaceArg{n} \times [\timeStartArg{n},\timeEndArg{n}]
 \rightarrow \RR{\romdimArg{n}} .
\end{split}
\end{equation}
Using this notation, the Euler--Lagrange equations (see
Appendix~\ref{appendix:eulerlagrange} for the derivation) over the $n$th
time window for $t \in [\timeStartArg{n},\timeEndArg{n}]$ are given by
\begin{equation}\label{eq:el1} 
\begin{split}
& \dIdYArg{n}(\genstateArgnt{n},\genstateDotArgnt{n},t) - \dIdVDotArg{n}(\genstateArgnt{n},\genstateDotArgnt{n}, t )  = \bz, \\ 
&\genstate^n(\timeStartArg{n})  = \begin{cases} 
\spatialIC &
n=2,\ldots,\nslabs, \\ 
\genstateICOne & n=1,
\end{cases}\\ 
&\dIdVArg{n}(\genstateArgnt{n},\genstateDotArgnt{n},\timeEndArg{n})  = \bz.
\end{split} 
\end{equation}
Appendix~\ref{appendix:vector_calc} provides the steps required to evaluate
the terms in system~\eqref{eq:el1}; the resulting system can be written as the
following coupled forward--backward system for $t \in
[\timeStartArg{n},\timeEndArg{n} ]$:
\begin{align}\label{eq:lspg_continuous} 
&\massArg{n}   \genstateDotArg{n}{t}  -  [\basisspaceArg{n}]^T
\stweightingMatArgt{n}{t} \velocity(\veloargsromn) =  \massArg{n} \adjointArg{n}{t} , \\
\begin{split}
 &\massArg{n}  \adjointDotArg{n}{t}  + [\basisspaceArg{n}]^T \bigg[\frac{\partial
\velocity}{\partial \stateyDiscrete} (\veloargsromn) \bigg]^T \stweightingMatArgt{n}{t} \basisspaceArg{n}
 \adjointArg{n}{t} = -[\basisspaceArg{n}]^T \big[
\frac{\partial \velocity}{\partial \stateyDiscrete}(\veloargsromn) \big]^T \\& \hspace{1.7 in} \stweightingMatArgt{n}{t} \big( \mathbf{I} -
\basisspaceArg{n} [\massArg{n}]^{-1} [\basisspaceArg{n}]^T \stweightingMatArgt{n}{t} \big)
 \big( \basisspaceArg{n} \genstateDotArg{n}{t} -
\velocity(\veloargsromn) \big), \label{eq:lspg_adjoint}
\end{split}
 \\ &
\genstateArg{n}{\timeStartArg{n}} = \begin{cases}
\spatialIC & n=2,\ldots,\nslabs, \label{eq:lspg_bcs1}\\
\genstateICOne & n=1, \end{cases}\\ &
\adjointArg{n}{\timeEndArg{n}} = \boldsymbol 0, \label{eq:lspg_bcs} 
\end{align}
where 
\begin{equation}\label{eq:costate_def}
\begin{split}
\adjointArgnt{n} &: \timeDummy \mapsto \genstateDotArg{n}{\timeDummy}  -  [\massArg{n}]^{-1} [\basisspaceArg{n}]^T \stweightingMatArgt{n}{\timeDummy}\velocity(\basisspaceArg{n} \genstateArg{n}{\timeDummy} + \stateInterceptArg{n} , \timeDummy ) ,\\
&: [\timeStartArg{n},\timeEndArg{n}] \rightarrow \RR{\romdimArg{n}} ,
\end{split}
\end{equation}
is the adjoint or ``costate'' variable with $\adjointDotArgnt{n} \equiv d \adjointArgnt{n}/ d\tau$, and $\massArg{n} \equiv \basisspaceTArg{n} \stweightingMatArg{n} \basisspaceArg{n}$ is a mass matrix. 
Eq.~\eqref{eq:lspg_continuous} is equivalent to a Galerkin reduced-order
model forced by the costate variable $\adjointArgnt{n}$.
Eq.~\eqref{eq:lspg_adjoint} is typically referred to as the adjoint
equation, which is linear in the costate and is forced by the residual.
We note that both ODEs~\eqref{eq:lspg_continuous}
and~\eqref{eq:lspg_adjoint} can be equipped with hyper-reduction, e.g., via
collocation, (discrete) empirical interpolation, Gappy
POD~\cite{everson_sirovich_gappy,eim,qdeim_drmac}. The state--costate
coupled system~\eqref{eq:lspg_continuous}--\eqref{eq:lspg_bcs} can be interpreted as an ``optimally controlled"
ROM, wherein the adjoint equation controls the forward model by enforcing the
minimum residual condition over the time window.

We note that in the case $\stweightingMatArg{n} =
\mathbf{I}$, the system simplifies to 
\begin{align*}\label{eq:lspg_continuous_simle} & \genstateDotArg{n}{t}  -
\basisspaceTArg{n}  \velocity(\veloargsromn) =  \adjointArg{n}{t} , \\
 &\adjointDotArg{n}{t}   + \basisspaceTArg{n} \bigg[\frac{\partial
\velocity}{\partial \stateyDiscrete}(\veloargsromn)\bigg]^T \basisspaceArg{n} \adjointArg{n}{t} = \\
&\hspace{2 in} \basisspaceTArg{n} \bigg[
\frac{\partial \velocity}{\partial \stateyDiscrete} (\veloargsromn) \bigg]^T \big( \mathbf{I} -   \basisspaceArg{n} \basisspaceTArg{n}
\big)    \velocity(\veloargsromn) , \\ & \genstateArg{n}{\timeStartArg{n}} =
\begin{cases} \basisspaceTArg{n}(\basisspaceArg{n-1}\genstateArg{n-1}{\timeEndArg{n-1}} - \stateInterceptArg{n})& n=2,\ldots,\nslabs, \\
\genstateICOne& n=1, \end{cases}\\
&\adjointArg{n}{\timeEndArg{n}} = \boldsymbol 0 .  \end{align*}

\subsubsection{Formulation as an optimal-control problem of Lagrange type}\label{sec:optimal_control} 
The stationary conditions for \methodAcronym\ with \spatialAcronym\ trial subspaces~\eqref{eq:lspg_continuous}--\eqref{eq:lspg_bcs} can be alternatively formulated as a Lagrange
problem from optimal control. To this end, recall the dynamics of the Galerkin ROM over $t \in [\timeStartArg{n},\timeEndArg{n}]$, 
$$ \basisspaceTArg{} \stweightingMatArgt{}{t} \basisspaceArg{}
 \genstateGalerkinDotArg{}{t} - \basisspaceTArg{} \stweightingMatArgt{}{t}
\velocity(\basisspace \genstateGalerkin + \stateIntercept , t) = \bz.$$
We introduce now a controller $\controllerArgnt{n} \in \RR{\romdimArg{n}} \otimes \timeSpaceArg{n}$ 
and pose the problem of finding a controller that minimizes the residual
over the time window and forces the dynamics as
\begin{equation}\label{eq:controlled_rom}
 \basisspaceTArg{n}
\stweightingMatArgt{n}{t} \basisspaceArg{n} \genstateDotArg{n}{t}  - \basisspaceTArg{n}
\stweightingMatArgt{n}{t}\velocity(\veloargsromn) = \controllerArg{n}{t}. 
 \end{equation}
We now demonstrate how to compute this controller.
Before doing so, we note that~\eqref{eq:controlled_rom} displays commonalities with \textit{subgrid-scale}
methods~\cite{iliescu_pod_eddyviscosity,iliescu_vms_pod_ns,iliescu_ciazzo_residual_rom,parish_apg,wentland_apg,Wang:269133,San2018},
which add an additional term to the reduced-order model in order to account
for truncated states. 

We begin by defining a Lagrangian
\begin{equation}\label{eq:obj_controller}
\begin{split}
 \objectiveControlArg{n} &:  (\genstateyDiscreteArgnt{},\controllerDiscreteDumArgnt{},\timeDummy)
\mapsto \frac{1}{2} \bigg[ \basisspaceArg{n} \bigg(  [\massArg{n}]^{-1}\basisspaceTArg{n}
\stweightingMatArgt{n}{t}  \velocity(\basisspaceArg{n} \genstateyDiscreteArgnt{} +
\stateInterceptArg{n},\timeDummy) + [\massArg{n}]^{-1}\controllerDiscreteDumArgnt{} \bigg) -
\velocity(\basisspaceArg{n} \genstateyDiscreteArgnt{} + \stateInterceptArg{n},\timeDummy) \bigg]^T
\stweightingMatArgt{n}{t}  \\ & \hspace{1.5 in}\bigg[ \basisspaceArg{n} \bigg(
[\massArg{n}]^{-1}\basisspaceTArg{n} \stweightingMatArgt{n}{t}\velocity(\basisspaceArg{n}
\genstateyDiscreteArgnt{} + \stateInterceptArg{n},\timeDummy) + [\massArg{n}]^{-1}\controllerDiscreteDumArgnt{}
\bigg) - \velocity( \basisspaceArg{n}  \genstateyDiscreteArgnt{} + \stateInterceptArg{n},\timeDummy ) \bigg]
, \nonumber \\ & : \RR{\romdimArg{n}} \times \RR{\romdimArg{n}} \times
	[\timeStartArg{n},\timeEndArg{n}] \rightarrow \RRplus{},
\end{split}
\end{equation}
where we have used $ \genstateDotArg{n}{t} = [\massArg{n}]^{-1}\basisspaceTArg{n}
\stweightingMatArg{n} \velocity(\basisspaceArg{n} \genstateArg{n}{t} +
\stateInterceptArg{n} ,t) + [\massArg{n}]^{-1}\controllerArg{n}{t} 
$ from Eq.~\eqref{eq:controlled_rom}. Note that this Lagrangian measures the same residual as Eq.~\eqref{eq:integrand}.
The \methodAcronym\ approach with \spatialAcronym\ trial subspaces can be formulated as an optimal-control
method that sequentially computes the controllers $\controllerArgnt{n}
\in \RR{\romdimArg{n}} \otimes \timeSpaceArg{n}
$, $n=1,\ldots,\nslabs$ that satisfy
\begin{equation}\label{eq:tclspg_oc1a} 
\begin{split}
&\underset{\controllerDumArgnt{} \in \RR{\romdimArg{n}}\otimes \timeSpaceArg{n} }{\text{minimize } } 
\int_{\timeStartArg{n}}^{\timeEndArg{n}}
\objectiveControlArg{n}(\genstateArg{n}{t},\controllerDumArg{}{t},t)dt, 
 \\
&\text{subject to } \;  \left\{\begin{array}{l} 
 \basisspaceTArg{n} \stweightingMatArgt{n}{t}
\basisspaceArg{n}  \genstateDotArg{n}{t}  - \basisspaceTArg{n}
\stweightingMatArgt{n}{t} \velocity(\veloargsromn)
	=\controllerDumArg{}{t},\quad t\in[\timeStartArg{n},\timeEndArg{n}] \\
 \genstateArg{n}{\timeStartArg{n}} =
\begin{cases} \spatialIC  & n = 2,\ldots,\nslabs,
 \\ \genstateICOne & n=1. \end{cases} \end{array} \right.
\end{split}
\end{equation}
The solution to the system~\eqref{eq:tclspg_oc1a} is equivalent of that defined by~\eqref{eq:obj_gen_slab}. 
This can be demonstrated via the \textit{Pontryagin Maximum Principle} (PMP)~\cite{optimal_control_book}. To this end, we introduce the Lagrange multiplier (costate) 
	$\adjointOCArgnt{n} \in \RR{\romdimArg{n}} \otimes \timeSpaceArg{n}$ and define the Hamiltonian
\begin{equation}\label{eq:hamiltonian}
\begin{split}
\hamiltonianArg{n} \; &: \;  (\genstateyDiscreteArgnt{},\adjointDiscreteDumArgnt{},\controllerDiscreteDumArgnt{},\timeDummy) \mapsto 
 \adjointDiscreteDumArgnt{T} \bigg[  [\massArg{n}]^{-1}\basisspaceTArg{n} \stweightingMatArgt{n}{t}\velocity(\basisspaceArg{n} \genstateyDiscreteArgnt{} + \stateInterceptArg{n},\timeDummy) + [\massArg{n}]^{-1}\controllerDiscreteDumArgnt{} \bigg] +  \objectiveControlArg{n}(\genstateyDiscreteArgnt{},\controllerDiscreteDumArgnt{},\timeDummy), \\
&: \; \RR{\romdimArg{n}} \times \RR{\romdimArg{n}} \times \RR{\romdimArg{n}} \times [\timeStartArg{n},\timeEndArg{n}] \rightarrow \RR{}.
\end{split}
\end{equation} 
The Pontryagin Maximum Principle states that solutions of the optimization problem~\eqref{eq:tclspg_oc1a} must satisfy the 
following conditions over the $n$th window,
\begin{align*}
&\genstateDotArg{n}{t} = \frac{\partial \hamiltonianArg{n}}{\partial \adjointDiscreteDumArgnt{}{}}(\genstateArg{n}{t},\adjointOCArg{n}{t},\controllerArg{n}{t},t),\\
&\adjointOCDotArg{n}{t}= - \frac{\partial \hamiltonianArg{n}}{\partial \genstateyDiscreteArgnt{}{}}(\genstateArg{n}{t},\adjointOCArg{n}{t},\controllerArg{n}{t},t),\\
&\frac{\partial \hamiltonianArg{n}}{\partial \controllerDiscreteDumArgnt{}{}} (\genstateArg{n}{t},\adjointOCArg{n}{t},\controllerArg{n}{t},t) = \boldsymbol 0, \\
& \genstateArg{n}{\timeStartArg{n}} =
\begin{cases} \spatialIC & n = 2,\ldots,\nslabs,
 \\ \genstateICOne& n=1,\end{cases} \\
&\adjointOCArg{n}{\timeEndArg{n}}= \bz.
\end{align*}
Evaluation of the required gradients (Appendix~\ref{appendix:optimal_control}) yields the system to be solved over the $n$th window for $t \in [\timeStartArg{n},\timeEndArg{n}]$,
\begin{equation}\label{eq:sys_oc1}
\begin{split}
&\massArg{n}  \genstateDotArg{n}{t}  -  \basisspaceTArg{n} \stweightingMatArgt{n}{t} \velocity(\veloargsromn) =  \controllerArg{n}{t} , \\
 & \adjointOCDotArg{n}{t}  + \basisspaceTArg{n} \bigg[\frac{\partial \velocity}{\partial \stateyDiscrete} (\veloargsromn) \bigg]^T \stweightingMatArgt{n}{t} \basisspaceArg{n} [\massArg{n}]^{-1} \adjointOCArg{n}{t} =  \basisspaceTArg{n} \big[ \frac{\partial \velocity}{\partial \stateyDiscrete}(\veloargsromn) \big]^T \\
& \hspace{1.8in} \stweightingMatArgt{n}{t} \big( \mathbf{I} -   \basisspaceArg{n} [\massArg{n}]^{-1} \basisspaceTArg{n}  \stweightingMatArgt{n}{t} \big)  \big( \basisspaceArg{n}\genstateDotArg{n}{t}  -   \velocity(\veloargsromn) \big), \\
&\controllerArg{n}{t} = -\adjointOCArg{n}{t} ,\\
& \genstateArg{n}{\timeStartArg{n}} = 
\begin{cases}
\spatialIC& n=2,\ldots,\nslabs, \\
\genstateICOne & n=1,  
\end{cases} \\
& \adjointOCArg{n}{\timeEndArg{n}} = \boldsymbol 0. \\
\end{split}
\end{equation}
Setting 
$\controllerArgnt{n} = \massArg{n} \adjointArgnt{n}$ and
$\adjointOCArgnt{n} = -\massArg{n} \adjointArgnt{n}$ results in equivalence
between the system~\eqref{eq:sys_oc1} and the
system~\eqref{eq:lspg_continuous}--\eqref{eq:lspg_bcs}.
Thus, \methodAcronym\ with \spatialAcronym\ trial subspaces can be formulated
as an optimal control problem: \methodAcronym\ computes a controller that
modifies the Galerkin ROM to minimize the residual over the time window. 
The \methodAcronym\ method can additionally be
interpreted as a subgrid-scale modeling technique that constructs a
residual-minimizing subgrid-scale model.

\begin{remark}
The Euler--Lagrange equations comprise a Hamiltonian system. This imbues \methodAcronym\ with \spatialAcronym\ trial subspaces with certain properties; e.g., for autonomous systems the Hamiltonian~\eqref{eq:hamiltonian} is conserved. 
\end{remark}

\subsection{\spaceTimeAcronym\ trial spaces}\label{sec:wls_spacetime}
The \spaceTimeAcronym\ trial subspace over the $n$th time window approximates
the FOM ODE solution trajectory $\stateFOM\in\RR{N}\otimes\timeSpace$
with $\approxstateArgnt{n} \in \stspaceSTArg{n}$, where
\begin{equation}\label{eq:st_sttrialspace}
 \stspaceSTArg{n} \defeq
 \Range{\stbasisArgnt{n}} + \stateInterceptSTArg{n}\otimes \onesFunctionArg{n} \subseteq \RR{\fomdim} \otimes \timeSpaceArg{n}.
\end{equation}
Here $\stbasisArgnt{n} \in \RR{\fomdim \times
\stdimArg{n}}\otimes\timeSpaceArg{n}$, $n=1,\ldots,\nslabs$, with $\stbasisArgnt{n}: \timeDummy \mapsto \stbasisArgnt{n}(\timeDummy)$ and $\stbasisArg{n}{\timeStartArg{n}} = \bz$ is the space--time trial basis matrix {function} and $\stateInterceptSTArg{n} \in \RR{\fomdim}$, $n=1,\ldots,\nslabs$ provides the affine transformation. 
To enforce the initial condition and ensure solution continuity across time windows, we set $\stateInterceptSTArg{1} = \stateFOMIC$ and
$\stateInterceptSTArg{n} = \approxstateArg{n-1}{\timeEndArg{n-1}}$ for
$n=2,\ldots,\nslabs$.
At any time instance $t \in [\timeStartArg{n},\timeEndArg{n}]$, the \spaceTimeAcronym\ trial subspace approximates the FOM ODE solution as
 \begin{equation}\label{eq:stapprox}
 \stateFOMArg{n}{t} \approx \approxstateArg{n}{t}  = \stbasisArg{n}{t} \stgenstateArg{n} + \stateInterceptSTArg{n},
\end{equation}
where $\stgenstateArg{n} \in \RR{\stdimArg{n}}$ are the space--time generalized coordinates over the $n$th window. 
Setting $\stspace^n \leftarrow \stspaceSTArg{n}$ in the
\methodAcronym\ minimization problem~\eqref{eq:tclsrm} implies that
\methodAcronym\ with \spaceTimeAcronym\ trial subspaces sequentially computes
solutions $\stgenstateArg{n}$, $n = 1,\ldots,\nslabs$ that satisfy
\begin{align}\label{eq:obj_gen_slab_spacetime}
\begin{split}
&\underset{\stgenstateyArg{} \in \RR{\stdimArg{n}}}{\text{minimize}}\; \mathcal{J}^n( \stbasisArgnt{n} \stgenstateyArg{} + \stateInterceptSTArg{n} \otimes \onesFunctionArg{n} ).
\end{split}
\end{align}

\subsubsection{Stationary conditions} 
The key difference between \spaceTimeAcronym\ and \spatialAcronym\ trial
	subspaces is as follows: generalized coordinates for \spaceTimeAcronym\ trial subspaces 
	comprise a vector in $\RR{\stdimArg{n}}$, while generalized coordinates for
	the \spatialAcronym\ trial subspaces comprise a
	time-dependent vector in
	$\RR{\romdimArg{n}} \otimes \timeSpaceArg{n}$. 
Thus, in the \spaceTimeAcronym\ case, the optimization problem is no longer
	minimizing a functional with respect to a (time-dependent) function, but is minimizing a function with 
respect to a vector.  As such, the first-order optimality conditions can be
	derived using standard calculus. Differentiating the objective function with respect
	to the generalized coordinates and setting the result equal to zero yields 
\begin{equation}\label{eq:st_stationary}
 \intSlabArg{n} \bigg[ \stbasisDotArg{n}{t}^T  - \stbasisArg{n}{t}^T \bigg[\frac{\partial
\velocity}{\partial \stateyDiscrete} (\stbasisDotArg{n}{t} \stgenstateArg{n} +                    
\stateInterceptSTArg{n},t)\bigg]^T  \bigg] \stweightingMatArg{n} \bigg( \stbasisDotArg{n}{t} \stgenstateArg{n}  - \velocity (\stbasisArg{n}{t} \stgenstateArg{n} + \stateInterceptSTArg{n},t) \bigg) dt = \bz.\end{equation}
Thus, for \spaceTimeAcronym\ trial subspaces, the stationary conditions
	comprise a system of algebraic equations, as opposed to a system of
	differential equations as with \spatialAcronym\ trial subspaces.

\subsection{\methodAcronym\ summary}
This section outlined the \methodAcronym\ \approachKwd\ for model reduction. In
summary, \methodAcronym\ sequentially minimizes the time continuous FOM ODE
residual within the range of a space--time trial subspace over time
windows; i.e.,  
\methodAcronym\ sequentially computes approximate solutions
$\approxstateArgnt{n}\in\stspace^n$, $n=1,\ldots,\nslabs$ that comprise
solutions to the optimization problem \eqref{eq:tclsrm}.
For \spatialAcronym\ trial subspaces, the stationary conditions for the
residual minimization problem can be derived via the Euler--Lagrange equations
and yield the system \eqref{eq:lspg_continuous}--\eqref{eq:lspg_bcs} over the $n$th window for $t \in
[\timeStartArg{n},\timeEndArg{n}]$.
\methodAcronym\ with \spatialAcronym\ trial subspaces can be alternatively interpreted as a Lagrange problem from optimal control: the Galerkin method is forced by a 
controller that enforces the residual minimization property. 
Section~\ref{sec:wls_spacetime} additionally considered \spaceTimeAcronym\
trial subspaces. For \spaceTimeAcronym\ trial subspaces, in where the
generalized coordinates are no longer functions, the stationary conditions
correspond to the system of algebraic equations \eqref{eq:st_stationary}.

\section{Numerical solution techniques}\label{sec:numerical_techniques}
We now consider numerical-solution techniques for the \methodAcronym\ \approachKwd.
We focus primarily on 
direct (i.e., discretize-then-optimize) and indirect (i.e., optimize-then-discretize) methods for \spatialAcronym\ trial subspaces, 
and then briefly outline direct and indirect methods for \spaceTimeAcronym\ trial subspaces. 
\subsection{\spatialAcronym: direct and indirect methods}
\methodAcronym\ with \spatialAcronym\ trial subspaces can be viewed as an optimal-control problem.
Numerical solution techniques for
optimal-control problems can be classified as either
\textit{direct} or \textit{indirect}
methods~\cite{conway_optimalcontrolreview}. Rather than working with the first-order optimality conditions~\eqref{eq:lspg_continuous}--\eqref{eq:lspg_bcs}, direct methods ``directly" solve the optimization problem~\eqref{eq:tclsrm} by first
numerically discretizing the objective functional and ``transcribing"
the infinite-dimensional problem into a finite-dimensional 
problem. Direct approaches thus
``discretize then optimize''. 
In contrast, indirect methods compute solutions to the Euler--Lagrange
equations~\eqref{eq:lspg_continuous}--\eqref{eq:lspg_bcs} (which comprise the
first-order optimality conditions). Thus, indirect methods
``optimize then discretize," and solve the optimization problem
``indirectly.'' A variety of both discretization and solution techniques are possible
for both direct and indirect methods. Collocation methods,
finite-element methods, spectral methods, and shooting methods are all examples of
possible solution techniques.  

In the present context, we investigate both direct and indirect methods to solve \methodAcronym\ with \spatialAcronym\ trial
subspaces. In particular, we consider:
\begin{itemize} \item \textit{Direct methods (discretize then optimize)}: A
			direct approach that leverages linear multistep methods to directly solve the optimization problem~\eqref{eq:obj_gen_slab}.
Section~\ref{sec:direct} outlines this approach.
\item \textit{Indirect methods (optimize then discretize)}: An indirect
	approach that leverages the forward--backward sweep algorithm to solve the
		Euler--Lagrange equations~\eqref{eq:lspg_continuous}--\eqref{eq:lspg_bcs}.
Section~\ref{sec:indirect} outlines this technique.
\end{itemize} 
We note that a variety of other approaches exist, and their investigation
comprises the subject of future work.

\subsection{\spatialAcronym\ trial subspaces: direct solution approach}\label{sec:direct} 

Direct approaches solve optimization problem~\eqref{eq:tclsrm} by
``transcribing" the infinite-dimensional optimization problem into a finite-dimensional one by discretizing the state and objective functional in time.
The minimization problem is then reformulated as a (non)linear 
optimization problem. A variety of direct solution approaches exist, 
including collocation approaches, spectral  methods, and genetic algorithms.  
In the context of \spatialAcronym\ trial subspaces, the most straightforward direct solution approach consists of the 
following steps: (1) numerically discretize the FOM ODE (and hence the
\textit{integrand} of the objective function in problem~\eqref{eq:obj_gen_slab}) and 
(2) select a numerical quadrature rule to evaluate the \textit{integral}
defining the objective function in~\eqref{eq:obj_gen_slab}.
To this end, we define time grids
$\{\timeWindowArg{n}{i}\}_{i=0}^{\nstepsArg{n}}\subset[\timeStartArg{n},\timeEndArg{n}]$,
$n=1,\ldots,\nslabs$ that
satisfy 
$\timeStartArg{n}=\timeWindowArg{n}{0} \le \cdots \le \timeWindowArg{n}{\nstepsArg{n}} 
 = \timeEndArg{n}$. 
Figure~\ref{fig:slab_fig2} depicts such a discretization.
For the purposes of indexing between different windows, we additionally define a function:
$$\indexMapper: (n,i) \mapsto 
\begin{cases}
	(n,i) & n = 1, \; i = 0, \\
	(n,i) & n \geq 1, \; i > 0, \\
\indexMapper(n-1,\nstepsArg{n-1}+i) & n > 1, \; i \le 0.
\end{cases}$$
We now outline the direct solution approach for linear-multistep schemes; the formulation for
other time-integration methods (e.g., Runge--Kutta) follows closely. 
\begin{figure} 
\begin{centering} 
\includegraphics[trim={0.0cm 4.5cm 0cm 3cm},clip,width=1.0\textwidth]{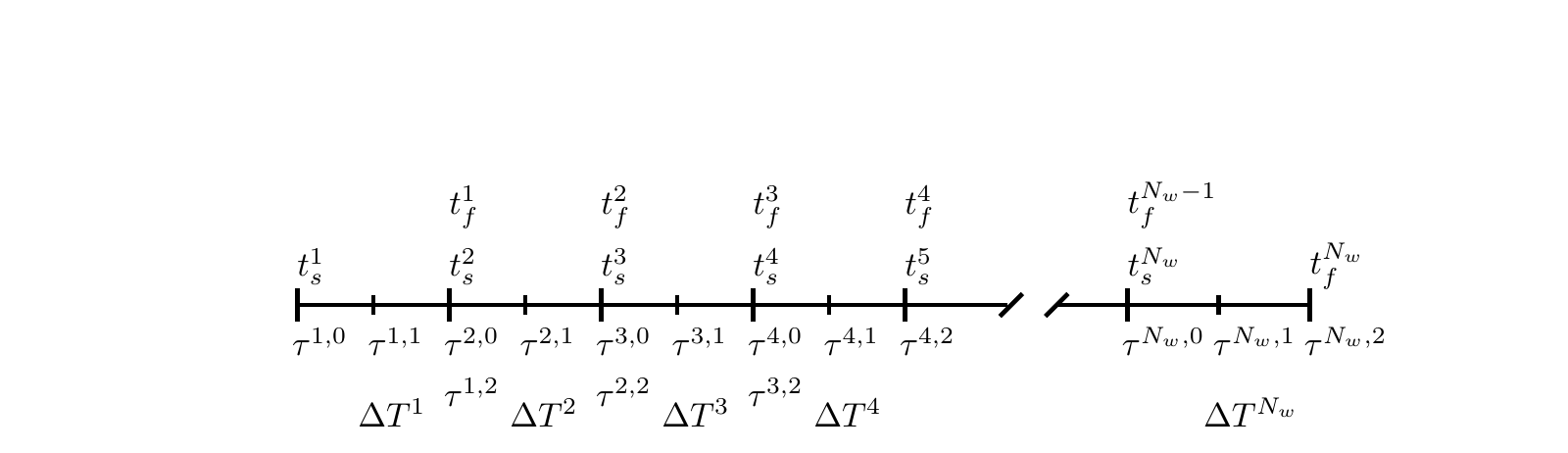} 
	\caption{Depiction of the $\nstepsArg{n}+1$ time instances over each window. In the figure, $\nstepsArg{n} = 2$ for all $n$.} 
\label{fig:slab_fig2} 
\end{centering} 
\end{figure}

\subsubsection{Linear multistep schemes}\label{sec:lmm}
Linear multistep schemes approximate the solution at time instance $\timeWindowArg{n}{i}$ using the previous $\lmsWidthArg{n}{i}$ time instances, where $\lmsWidthArg{n}{i}$ denotes the number of time steps employed by the scheme at the $i$th time instance of the $n$th window. 
Employing such a method to discretize the FOM ODE yields the sequence of FOM
O$\Delta$Es defined over the $n$th time window
\begin{align*}
&\residLMSWArg{n}{i} (\stateFOMDiscreteArg{n,i};\stateFOMDiscreteLMSArg{n}{i-1},\ldots,\stateFOMDiscreteLMSArg{n}{i-\lmsWidthArg{n}{i}}) = \bz, \qquad i=1,\ldots,\nstepsArg{n}
\end{align*}
along with the initial condition $\stateFOMDiscreteArg{1,0} = \stateFOMIC$.
Here, $\stateFOMDiscreteArg{n,i} (\approx
\stateFOMArg{}{\timeWindowArg{n}{i}})
\in \RR{\fomdim}$ and $\residLMSWArg{n}{i}$ denotes the FOM O$\Delta$E
residual over the $i$th time instance of the $n$th window  
defined as
\begin{align*}
\residLMSWArg{n}{i} &: (\stateyDiscreteArgnt{i};\stateyDiscreteArgnt{i-1},\ldots,\stateyDiscreteArgnt{i-\lmsWidthArg{n}{i}}) \mapsto  \frac{1}{\Delta t^{n,i}} \sum_{j=0}^{\lmsWidthArg{n}{i}} \alpha^{n,i}_j \stateyDiscreteArgnt{i-j} -  \sum_{j=0}^{\lmsWidthArg{n}{i}} \beta^{n,i}_j \velocity(\stateyDiscreteArgnt{i-j},\timeWindow^{\indexMapper(n,i-j)}), \\
               &: \RR{\fomdim} \otimes \RR{\lmsWidthArg{n}{i}+1} \rightarrow \RR{\fomdim}. 
\end{align*}
Here, $\Delta t^{n,i} \defeq \timeWindowArg{n}{i} - \timeWindowArg{n}{i-1}$
denotes the time step, and
$\alpha^{n,i}_j,\beta^{n,i}_j\in\RR{}$ denote coefficients that 
define the specific type of multistep scheme at the $i$th time instance of the $n$th window. 
Employing a linear multistep method allows the objective \textit{functional}~\eqref{eq:obj} to be replaced with the objective \textit{function}
\begin{equation*}
\begin{split} 
\objectiveArgLMS{n}_{\text{D}} &\vcentcolon
	(\stateyDiscreteArgnt{1},\ldots,\stateyDiscreteArgnt{\nstepsArg{n}};
	\stateyDiscreteArgnt{0},\ldots,\stateyDiscreteArgnt{-\lmsWidthArg{n}{1}+1})
	\mapsto 
\frac{1}{2} \sum_{i=1}^{\nstepsArg{n}} \quadWeightsLMSScalarArg{n}{i} [\residLMSWArg{n}{i}(\stateyDiscreteArgnt{i};\stateyDiscreteArgnt{i-1},\ldots, \stateyDiscreteArgnt{i-\lmsWidthArg{n}{i})})]^T  \stweightingMatArgt{n}{\timeWindowArg{n}{i}} \residLMSWArg{n}{i}(\stateyDiscreteArgnt{i};\stateyDiscreteArgnt{i-1},\ldots, \stateyDiscreteArgnt{i-\lmsWidthArg{n}{i}}), \\
&\vcentcolon \RR{\fomdim} \otimes \RR{\nstepsArg{n} + \lmsWidthArg{n}{1}} \rightarrow
\RR{}_+, 
\end{split}
\end{equation*}
where $\quadWeightsLMSScalarArg{n}{i} \in \RRplus$ are quadrature weights.  

\methodAcronym\ with the direct approach and a linear multistep method sequentially computes the solutions
$\approxstateDiscreteArg{n,1},\ldots,\approxstateDiscreteArg{n,\nstepsArg{n}}$,
$n=1,\ldots,\nslabs$ that satisfy
\begin{align*}
	&\underset{(\stateyDiscreteArgnt{1},\ldots,\stateyDiscreteArgnt{\nstepsArg{n}}) \in ( \trialspace + \stateInterceptArg{n}) \otimes \RR{\nstepsArg{n}}}{\text{minimize } }
\objectiveArgLMS{n}_{\text{D}} (\stateyDiscreteArgnt{1},\ldots,\stateyDiscreteArgnt{\nstepsArg{n}} ; \approxstateDiscreteArg{\indexMapper(n,0)},\ldots,\approxstateDiscreteArg{\indexMapper(n,-\lmsWidthArg{n}{1}+1)}), 
\end{align*}
with the initial condition $\approxstateDiscreteArg{1,0} = \basisspaceArg{n}\basisspaceTArg{n} (\stateFOMIC - \stateInterceptArg{n}) + \stateInterceptArg{n}$.


Equivalently, \methodAcronym\ with the direct approach and a linear multistep method sequentially computes the generalized
	coordinates
	$\genstateDiscreteArg{n,1},\ldots,\genstateDiscreteArg{n,\nstepsArg{n}}$,
	$n=1,\ldots,\nslabs$ 
with $\genstateDiscreteArg{n,i}(\approx
	\genstateArgnt{n}(\timeWindowArg{n}{i}))\in\RR{\romdimArg{n}}$
	that satisfy
\begin{equation}\label{eq:obj_gen_lms_final}
\begin{split}
	&
	\underset{(\genstateyDiscreteArg{1},\ldots,\genstateyDiscreteArg{\nstepsArg{n}})\in\RR{\romdimArg{n}}\otimes \RR{\nstepsArg{n}}}{\text{minimize } }
\objectiveArgLMS{n}_{\text{D}} (\basisspaceArg{n} \genstateyDiscreteArg{1} + \stateInterceptArg{n},\ldots,\basisspaceArg{n} \genstateyDiscreteArg{\nstepsArg{n}} + \stateInterceptArg{n}; \approxstateDiscreteArg{\indexMapper(n,0)},\ldots,\approxstateDiscreteArg{\indexMapper(n,-\lmsWidthArg{n}{1}+1)}). 
\end{split}
\end{equation}
The optimization problem takes the form of a \textit{weighted least-squares
	problem}. 
We emphasize that optimization problem~\eqref{eq:obj_gen_lms_final} associates
	with an \spatialAcronym\ trial subspace characterized by a reduction in
	spatial complexity, but no reduction in temporal complexity.
 
The minimization problem~\eqref{eq:obj_gen_lms_final} requires specification of the quadrature weights (and hence the integration scheme used to discretize 
the objective functional). Typically, the same integration scheme used to discretize the FOM ODE is employed for consistency~\cite{colloc_review}; e.g., if a  
backward Euler method is used to discretize the FOM ODE, then a backward Euler method is the used to numerically integrate the objective functional.

\begin{remark}
For the limiting case where $\nstepsArg{n} = 1$, $n=1,\ldots,\nslabs$ such that the window size is equivalent to the time step (i.e., $\DeltaSlabArg{n} = \timeWindowArg{n}{1} - \timeWindowArg{n}{0}$), uniform 
quadrature weights are used, a uniform trial space is employed (i.e., $\trialspaceArg{n} = \trialspace$ and $\stateInterceptArg{n} = \stateInterceptArg{}, n=1,\ldots,\nslabs$), the weighting matrices are taken to be
$\stweightingMatOneArg{n} = \lspgWeightingArg{}, n=1,\ldots,\nslabs$, and the
	time instances satisfy $\timeWindowArg{n}{1} = t^n$, $n=1,\ldots,\nslabs$, then 
$\approxstateDiscreteArg{n,1}  = \approxstateLSPG^n$, $n=1,\ldots,\nslabs$ and  
\methodAcronym\ with \spatialAcronym\ trial
	subspaces solved via the direct approach recovers the LSPG approach. 
\end{remark} 
\subsubsection{Solution to the least-squares problem through the Gauss--Newton
	method}
	Problem \eqref{eq:obj_gen_lms_final} corresponds to a discrete least-squares
	problem, which is nonlinear if the full-order-model velocity $\velocity$ is nonlinear in
	its first argument. 
A variety of algorithms exist for solving nonlinear least-squares problems,
	including the Gauss--Newton method, and the Levenberg--Marquardt method.
 The numerical experiments
	presented in this work consider nonlinear dynamical systems and are solved
	via the Gauss--Newton method; as such, we outline this approach here. 

Defining a ``vectorization" function 
\begin{align*}
 \unroll &\vcentcolon (\stateyDiscreteArg{1},\ldots,\stateyDiscreteArg{m} ) \mapsto \begin{bmatrix} [\stateyDiscreteArg{1}]^T & \ldots & [\stateyDiscreteArg{m}]^T \end{bmatrix}^T , \\
&\vcentcolon \RR{p} \otimes \RR{m} \rightarrow \RR{pm},
\end{align*}
the vectorized generalized coordinates over the $n$th time window are 
defined as 
\begin{equation*}
\genstatecollocMatSlabArg{n} \defeq 
\unroll (\genstateDiscreteArg{n,1},\ldots,\genstateDiscreteArg{n,\nstepsArg{n}} ).
\end{equation*}
We now define the weighted space--time residual over the entire window as
\begin{equation*}
\residLMSSlabArg{n} : \genstatecollocMatySlabArg{} \mapsto \begin{bmatrix}
 \sqrt{\frac{\quadWeightsLMSScalarArg{n}{1}}{2}} \stweightingMatOneArg{n} \residLMSArg{n,1}( \basisspaceArg{n} \genstateyDiscreteArgnt{1} + \stateInterceptArg{n}; 
 \approxstateDiscreteArg{\indexMapper(n,0)},
\ldots , \approxstateDiscreteArg{\indexMapper(n,1 - \lmsWidthArg{n}{1}) }   ) \\
\vdots \\
 \sqrt{\frac{\quadWeightsLMSScalarArg{n}{\nstepsArg{n}}}{2}} \stweightingMatOneArg{n} \residLMSArg{n,\nstepsArg{n}}( \basisspaceArg{n} \genstateyDiscreteArgnt{\nstepsArg{n}} + \stateInterceptArg{n}; \basisspaceArg{n} \genstateyDiscreteArgnt{\nstepsArg{n}-1} + \stateInterceptArg{n} ,  \ldots, \basisspaceArg{n} \genstateyDiscreteArgnt{\nstepsArg{n} - \lmsWidthArg{n}{\nstepsArg{n}} } + \stateInterceptArg{n} ) \\
\end{bmatrix},
\end{equation*}
where $\genstatecollocMatySlabArg{} \equiv \unroll (\genstateyDiscreteArg{1},\ldots,\genstateyDiscreteArg{\nstepsArg{n}} )$ and, for $i \le 0$,
\begin{equation}\label{eq:wls_genspatial_direct_args}
\genstateyDiscreteArgnt{n,i}  \equiv 
\begin{cases}
\genstateDiscreteArg{\indexMapper(n,i)}  & n = 2,\ldots,\nslabs,  \\
\genstateICOne  & n = 1. 
\end{cases}
\end{equation}
We note that
\begin{equation*}
\objectiveArgLMS{n}_{\text{D}} \bigg( \approxstateDiscreteArg{n,1} ,\ldots,\approxstateDiscreteArg{n,\nstepsArg{n}}  ; \approxstateDiscreteArg{\indexMapper(n,0)},\ldots,\approxstateDiscreteArg{\indexMapper(n,-\lmsWidthArg{n}{1}+1)}  \bigg) 
=
\bigg[\residLMSSlabArg{n}  (\genstatecollocMatSlabArg{n}) \bigg]^T \bigg[ \residLMSSlabArg{n}(\genstatecollocMatSlabArg{n}) \bigg].
\end{equation*} 
Using these definitions, Algorithm~\ref{alg:colloc_gn} presents the standard
Gauss--Newton method. Each Gauss--Newton iteration consists of three fundamental steps: (1)
compute the FOM O$\Delta$E residual given the current guess, (2) compute the
Jacobian of the residual over the time window, and (3) solve the linear least-squares problem and update the guess. 

The practical implementation of the Gauss--Newton algorithm requires an
efficient method for computing the Jacobian of the residual over the time
window
${\partial \residLMSSlabArg{n}}/{\partial \genstatecollocMatySlabArg{}}$.
For this purpose, we can leverage the fact that this Jacobian is a block lower
triangular matrix with the following sparsity pattern (for $\lmsWidthArg{n}{i} = 1$,
$i=1,\ldots,\nstepsArg{n}$): 
\begin{equation*} \begin{bmatrix*}[l]
\matshapea & \\
 \matshapea & \matshapea & \\
 & \matshapea  & \matshapea & \\
&  & \ddots & \\
 & &  & \matshapea &  \matshapea 
\end{bmatrix*},
\end{equation*}
where each block comprises an $\fomdim \times \romdimArg{n}$ dense matrix.
In particular, solution techniques can leverage this structure, e.g., to
efficiently compute Jacobian--vector products.
Another consequence of this sparsity pattern is that the normal equations
arising at each Gauss--Newton iteration 
comprise a banded block system that can also be exploited.

\begin{remark}\label{remark:gaussnewton}\textit{(Acceleration of the
	Gauss--Newton Method)}\\
	The principal cost of a Gauss--Newton method is often the formation of the Jacobian matrix. A variety of techniques aimed at 
reducing this computational burden exist; Jacobian-free Newton--Krylov
	methods~\cite{jfnk}, Broyden's method~\cite{broyden} (as explored in
	Ref.~\cite{carlberg_thesis}, Appendix A), and frozen Jacobian approximations
	are several such examples. Further, the space--time formulation introduces
	an extra dimension for parallelization that can be exploited to future
	reduce the wall time. The investigation of these additional, potentially
	more efficient, solution algorithms is a topic of future work. 
\end{remark}
\begin{algorithm}
\caption{\spatialAcronym\ trial subspace: algorithm for the direct solution technique with the Gauss--Newton method and a linear multistep method over the $n$th window}
\label{alg:colloc_gn}
\SetKwInOut{Input}{Input}\SetKwInOut{Output}{Output}
\Input{tolerance, $\epsilon$; initial guess, $\genstateGuessDiscreteArg{n,1}{0},\ldots,\genstateGuessDiscreteArg{n,\nstepsArg{n}}{0}$}
\Output{Solution to least squares problem, $\genstatecollocMatSlabArg{n}$} 
\textbf{Online Steps}: \\
$\text{converged} \leftarrow \text{false}$ \Comment{Set convergence checker} \\
$\genstatecollocMatSlabArg{n}_0 \leftarrow \unroll(\genstateGuessDiscreteArg{n,1}{0},\ldots,\genstateGuessDiscreteArg{n,\nstepsArg{n}}{0})$ \Comment{Assemble generalized coordinates over window} \\
$k \leftarrow 0$ \Comment{Set counter}\\
\While{\text{not converged}}
{
$\mathbf{r} \leftarrow \residLMSSlabArg{n}(\genstatecollocMatSlabArg{n}_k)$ \Comment{Compute weighted residual over window} \\
$\mathbf{J} \leftarrow  
\frac{\partial \residLMSSlabArg{n}}{\partial \genstatecollocMatySlabArg{}}(\genstatecollocMatSlabArg{n}_k) 
$ \Comment{Compute weighted residual-Jacobian over window} \\
\uIf{ $\norm{ \mathbf{J}^T\mathbf{r}  } \le \epsilon$ }{
{\text{converged} $\leftarrow$ \text{true}}  \Comment{Check and set convergence based on gradient norm} \\
Return: $\genstatecollocMatSlabArg{n} = \genstatecollocMatSlabArg{n}_{k+1} $ \Comment{Return converged solution}\\
}
\Else{
 Compute  $\Delta  \genstatecollocMatSlabArg{n} $ that minimizes $\norm{ \mathbf{J} \Delta \genstatecollocMatSlabArg{n} +\mathbf{r}}^2$ \Comment{Solve the linear least-squares problem} \\
$\alpha \leftarrow \text{linesearch}(\Delta  \genstatecollocMatSlabArg{n} ,\genstatecollocMatSlabArg{n}_k ) $ \Comment{Compute $\alpha$ based on a line search, or set to 1}\\
$\genstatecollocMatSlabArg{n}_{k+1} \leftarrow \genstatecollocMatSlabArg{n}_k + \alpha \Delta \genstatecollocMatSlabArg{n}$ \Comment{Update guess to the state} \\
}
$k\leftarrow k+1$
}
\end{algorithm}

\subsection{\spatialAcronym\ trial subspaces: indirect solution approach}\label{sec:indirect}
In contrast to the direct approach,
indirect methods ``indirectly" solve the minimization
problem~\eqref{eq:tclsrm} by solving the Euler--Lagrange
equations~\eqref{eq:lspg_continuous}--\eqref{eq:lspg_adjoint} associated with
stationarity. This
system comprises a coupled two-point boundary value
problem. Several techniques have
been devised to solve such problems, 
including shooting methods, multiple shooting
methods~\cite{multiple_shooting}, and the forward--backward sweep
method~\cite{fbs} (FBSM).  This work explores using the FBSM. 


\subsubsection{Forward--backward sweep method (FBSM)}\label{sec:FBSM}


Until convergence, the FBSM alternates between solving the
system~\eqref{eq:lspg_continuous} \textit{forward} in time given a fixed value
of the costate, and solving the adjoint equation~\eqref{eq:lspg_adjoint}
\textit{backward} in time given a fixed value for the generalized coordinates.
Typically, the system~\eqref{eq:lspg_continuous} is solved first given an
initial guess for the costate.
%
Algorithm~\ref{alg:st_iter} outlines the algorithm, which 
contains three parameters: the relaxation factor $\rho \le 1$, the growth factor
$\fbsmGrowth \ge 1$, and the decay factor $\fbsmDecay \ge 1$. The relaxation factor controls the rate at which the costate seen by~\eqref{eq:lspg_continuous} is updated. The
closer $\rho$ is to unity, the faster the algorithm will converge.
For large window sizes, however, a large a value of $\rho$ can lead to an unstable iterative process. 
In practice, a line search is used to compute an acceptable value for the
relaxation factor $\rho$. The line search presented in Algorithm~\ref{alg:st_iter} adapts the relaxation factor
according to the objective. Convergence properties of the FBSM method are
presented in Ref.~\cite{McAsey2012ConvergenceOT}, which shows that the
algorithm will converge for a sufficiently small 
value of $\rho$.

\begin{algorithm} \caption{\spatialAcronym\ trial subspace: algorithm for the FBSM over the $n$th window.} \label{alg:st_iter} 
\SetKwInOut{Input}{Input}\SetKwInOut{Output}{Output}
\Input{tolerance, $\epsilon$; relaxation factor, $\rho \le 1$; growth factor, $\fbsmGrowth \ge 1$; decay
factor, $\fbsmDecay \ge 1$; initial guess for state $\genstate^n_0$; initial guess for costate $\controllerArgnt{n}$} 
\Output{Stationary point, $\genstateArgnt{n}$ }
\textbf{Online Steps:}\\ 
$\text{Compute } \genstateArgnt{n}_1 \text{ satisfying }  \massArg{n} \genstateDotArgnt{n}_1(t)  -  \basisspaceTArg{n} \stweightingMatArg{n}
\velocity(\veloargsromArg{1}) =  \massArg{n} \controllerArg{n}{t}$ 
\Comment{Solve~\eqref{eq:lspg_continuous}}\\ 
$i \leftarrow 1$ \Comment{Set counter}\\
\While{$\epsilon \le \int_{\timeStartArg{n}}^{\timeEndArg{n}} \norm{\genstate^n_{i}(t) - \genstate^n_{i-1}(t) }dt $}{
\small{
\begin{multline*}
\text{ Compute } \adjointArgnt{n} \text{ satisfying }
\massArg{n} \adjointDotArg{n}{t}  + \basisspaceTArg{n} \bigg[\frac{\partial \velocity}{\partial \stateyDiscrete}(\basisspaceArg{n} \genstate^n_i(t) + \stateInterceptArg{n},t) \bigg]^T \stweightingMatArg{n} \basisspaceArg{n} \adjointArg{n}{t}= \\ -\bigg[\basisspaceTArg{n} \bigg[ \frac{\partial \velocity}{\partial \stateyDiscrete} ( \basisspaceArg{n} \genstate^n_i(t) + \stateInterceptArg{n},t) \bigg]^T \stweightingMatArg{n} \bigg( \mathbf{I} -   \basisspaceArg{n} [\massArg{n}]^{-1} \basisspaceTArg{n} \stweightingMatArg{n} \bigg)  \bigg( \basisspaceArg{n} \dot{\genstate}_i^n(t)   -   \velocity( \basisspaceArg{n} \genstate_i^n(t)  +\stateInterceptArg{n},t) \bigg) \bigg] 
\end{multline*} }
\Comment{Solve~\eqref{eq:lspg_adjoint} to obtain guess to costate} \\
$\controllerArgnt{n}  \leftarrow \rho \controllerArgnt{n} + (1 - \rho) \adjointArgnt{n}$ \Comment{Weighted update to costate}\\
$i \leftarrow i+1$ \Comment{Update counter}\\
$\text{Compute }\genstateArgnt{n}_i \text{ satisfying } \massArg{n} \genstateDotArgnt{n}_i(t)   -  \basisspaceTArg{n} \stweightingMatArg{n} \velocity(\basisspaceArg{n} \genstate^n_i(t) + \stateInterceptArg{n},t) = \massArg{n} \controllerArg{n}{t} $
\Comment{Solve~\eqref{eq:lspg_continuous}}\\
\uIf{ $\objectiveArg{n}({\basisspaceArg{n}\genstate_i^n + \stateInterceptArg{n}\otimes \onesFunctionArg{n}}) \le \objectiveArg{n}({\basisspaceArg{n}\genstate_{i-1}^n + \stateInterceptArg{n}\otimes \onesFunctionArg{n}})$}
{
$\rho \leftarrow \text{min}(\rho \fbsmGrowth,1)$ \Comment{Grow the relaxation factor}\\
}
\Else{
$\rho \leftarrow \frac{\rho }{ \fbsmDecay}$ \Comment{Shrink the relaxation factor}\\ 
$\genstate_i^{n} \leftarrow  \genstate_{i-1}^{n}$ \Comment{Reset state to value at previous iteration}
}
}
Return converged solution, $\genstateArgnt{n}= \genstateArgnt{n}_i$
\end{algorithm}
\subsubsection{Considerations for the numerically solving the forward and
backward systems}
The FBSM requires solving the forward~\eqref{eq:lspg_continuous} and
backward~\eqref{eq:lspg_adjoint} systems, both of which are defined at the time-continuous level. 
The numerical implementation of the FBSM requires two main ingredients: (1) temporal discretization schemes for the forward and backward problems and (2) 
an efficient method for computing terms involving the transpose of the
Jacobian of the velocity.

This work employs linear multistep schemes for time discretization of the
forward and backward problems. As described in Section \ref{sec:lmm},
temporal discretization is achieved by introducing $\nstepsArg{n} + 1$ time
instances over each time window.

The second ingredient, namely devising an efficient method for computing terms
involving the transpose of the Jacobian of the velocity,
can be challenging if one
does not have explicit access to this Jacobian or it is too costly to
compute. We discuss two methods that can be used to evaluate such terms
that appear in the forward system~\eqref{eq:lspg_adjoint}:
\begin{enumerate}
\item \textit{Jacobian-free approximation}: A non-intrusive way to evaluate
	these terms is to recognize that all terms including the transpose of the
		Jacobian of the velocity are left multiplied by the transpose of the
		spatial trial basis; this can be manipulated as
$$\basisspaceTArg{n} \bigg[\frac{\partial \velocity}{\partial \stateyDiscrete}
		(\veloargsromn)\bigg]^T = \bigg[  \frac{\partial \velocity}{\partial
		\stateyDiscrete} (\veloargsromn) \basisspaceArg{n} \bigg]^T,$$
		which exposes the ability to approximate rows of this matrix via
		finite differences, e.g., via forward differences as
$$\frac{\partial \velocity}{\partial \stateyDiscrete}(\veloargsromn)
		\basisvec_i^n \approx \frac{1}{\epsilon}\bigg(
		\velocity(\basisspaceArg{n}\genstateArg{n}{t} + \stateInterceptArg{n} +
		\epsilon \basisvec_i^n,t) - \velocity(\veloargsromn) \bigg),\quad
		i=1,\ldots,\romdimArg{n},$$
which requires $\romdimArg{n}+1$ evaluations of the velocity. 

\item \textit{Automatic differentiation}: A more intrusive, but exact
	method for computing these terms is
		through automatic differentiation (AD), which evaluate derivatives of
		functions (e.g., Jacobians, vector-Jacobian products) in a numerically
		exact manner by recursively applying the chain rule. The numerical
		examples presented later in this work leverage AD. The principal drawback
		of this approach is its intrusiveness, which may prevent them from
		practical application, e.g., in legacy codes.
\end{enumerate}

\begin{remark}\label{remark:fbsm}(Acceleration of Indirect Methods)
The FBSM is a simple iterative method for solving the coupled two-point boundary value problem. For large time windows, however, the FBSM may require many 
forward--backward iterations for convergence. More sophisticated solution
	techniques, such as a multiple FBSM method or multiple shooting methods, can 
reduce this cost in principle. Analyzing additional solution techniques is
	the subject of future work.
\end{remark}

\subsection{\spaceTimeAcronym\ trial subspaces: direct and indirect methods}
We now consider \spaceTimeAcronym\ trial subspaces.  Because the optimization
variables (i.e., the generalized coordinates) in this case are already finite
dimensional, solvers for this type of trial subspace need only develop a
finite-dimensional representation of the objective
functional in problem~\eqref{eq:obj_gen_slab_spacetime}. We describe two
techniques for this purpose: a direct method 
that operates on the FOM O$\Delta$E and an indirect method that operates on the FOM ODE. 

\subsection{\spaceTimeAcronym\ trial subspaces: direct solution approach}
The direct solution technique seeks to minimize the fully discrete objective
function associated with the FOM O$\Delta$E. We consider linear
multistep methods and leverage the discretization introduced in
Section~\ref{sec:direct}. For notational simplicity, we define an
index-mapping function that is equivalent to the mapping function
$\indexMapper$, but outputs only the first argument: 
$$\indexMappern: (n,i) \mapsto 
\begin{cases}
n & n = 1, \; i = 0, \\
n & n \ge 1, \; i > 0, \\
\indexMappern(n-1,\nstepsArg{n-1}+i) & n > 1, \; i \le 0.
\end{cases}$$
\methodAcronym\ with an \spaceTimeAcronym\ trial subspace and the direct approach sequentially computes the generalized coordinates $\stgenstateArg{n}$, $n=1,\ldots,\nslabs$ that satisfy
 \begin{equation}\label{eq:obj_gen_lms_final_st}
\begin{split}
& \underset{\stgenstatey \in \RR{\stdimArg{n}}}{\text{minimize } }
\objectiveArgLMS{n}_{\text{D}} (\stbasisArg{n}{\timeWindowArg{n}{1}} \stgenstatey + \stateInterceptSTArg{n},\ldots,\stbasisArg{n}{\timeWindowArg{n}{\nstepsArg{n}}} \stgenstatey + \stateInterceptSTArg{n} ;\\
& \hspace{2. in}  \approxstateArg{\indexMappern(n,0)}{\timeWindow^{\indexMapper(n,0)}},\ldots, 
 \approxstateArg{\indexMappern(n,-\lmsWidthArg{n}{1}+1)}{\timeWindow^{\indexMapper(n,-\lmsWidthArg{n}{1}+1)}}).
\end{split} 
\end{equation}
The boundary conditions are automatically satisfied through the definition of the \spaceTimeAcronym\ trial subspace. 
Assuming $\text{Rank}(\stweightingMatOneArg{n}) \nstepsArg{n} \ge \stdimArg{n}$, the minimization problem~\eqref{eq:obj_gen_lms_final_st} again yields a least-squares problem.
\begin{remark}
	Comparing optimization problems \eqref{eq:obj_gen_lms_final} and
	\eqref{eq:obj_gen_lms_final_st} reveals that
\methodAcronym\ with the direct solution approach minimizes the same objective
	function in the case of both \spaceTimeAcronym\ and \spatialAcronym\ trial
	subspaces.
\end{remark}
\begin{remark}
For the limiting case where one window comprises the entire domain (i.e.,
	$\DeltaSlabArg{1}\equiv T$), uniform quadrature weights are used, the trial
	subspace is set to be $\stspaceSTArg{1} = \stspaceST$, the weighting matrix
	$\lspgWeightingST = \mathrm{diag}(\stweightingMatOneArg{1})$, and $\nstepsArg{1} = N_t$ time instances are employed that
	satisfy $\timeWindowArg{1}{i} = t^i$, $i=1,\ldots,N_t$, then
	$\stgenstateArg{1} =  \stgenstate_\text{ST-LSPG}$ and direct \methodAcronym\
	with an \spaceTimeAcronym\ trial subspace recovers ST-LSPG. 
\end{remark}

\begin{remark}
To enable equivalence in the case for a general ST-LSPG weighting matrix
	$\lspgWeightingSTArg{\cdot}$, the weighting matrix $\stweightingMatArg{1}$
	must be time dependent matrix-valued, which associates the objective function 
	\eqref{eq:obj_gen_lms_final_st} with a modified space--time
	norm. For notational simplicity, we do not consider this case in the current
	manuscript.
\end{remark}

\subsection{\spaceTimeAcronym\ trial subspaces: indirect solution approach}
As opposed to the direct approach, the indirect approach directly minimizes the continuous objective function~\eqref{eq:obj_gen_slab_spacetime} and sequentially computes solutions $\stgenstateArg{n}$, $n = 1,\ldots,\nslabs$ that satisfy
\begin{equation}\label{eq:obj_gen_slab2}
\begin{split}
 & \underset{\stgenstatey \in \RR{\stdimArg{n}}}{\text{minimize }} \mathcal{J}^n \bigg( \stbasisArgnt{n} \stgenstatey + \stateInterceptSTArg{n} \otimes \onesFunctionArg{n} \bigg) .
\end{split} 
\end{equation}
Numerically solving the minimization problem requires the introduction of a quadrature rule for 
discretization of the integral. To this end, we introduce $\ncollocSTArg{n} \ge \text{ceil}(\stdimArg{n}/ \text{rank}(\stweightingMatOneArg{n}))$ quadrature points over the $n$th window, $ \{ \collocPointSTArg{n}{i} \}_{i=1}^{\ncollocSTArg{n}} \subset [\timeStartArg{n},\timeEndArg{n}]$, $n=1,\ldots,\nslabs$. 
Leveraging these quadrature points, \methodAcronym\ with the indirect method and an \spaceTimeAcronym\ trial subspace computes the generalized coordinates 
$\stgenstateArg{n}$, $n=1,\ldots,\nslabs$ that satisfy
\begin{equation}\label{eq:obj_gen_slab2} 
\begin{split}
&\underset{\stgenstatey \in \RR{\stdimArg{n}}}{\text{minimize }} \objectiveDiscreteSTArg{n} \bigg( \stbasisArgnt{n} \stgenstatey + \stateInterceptSTArg{n} \otimes \onesFunctionArg{n}  \bigg) , 
\end{split} 
\end{equation}
where the discrete objective function is given by
\begin{equation*}
\begin{split}
\objectiveDiscreteSTArg{n} &\vcentcolon \statey \mapsto \frac{1}{2}\sum_{i=1}^{\ncollocSTArg{n}} \zeta^{n,i} 
\bigg[ \dot{\statey}(\collocPointSTArg{n}{i})  - \velocity (\stateyArg{}{\collocPointSTArg{n}{i}},\collocPointSTArg{n}{i} ) \bigg]^T 
\stweightingMatArg{n} 
\bigg[ \dot{\statey}(\collocPointSTArg{n}{i})  - \velocity (\stateyArg{}{\collocPointSTArg{n}{i}},\collocPointSTArg{n}{i} ) \bigg], \\
& \vcentcolon \RR{\fomdim} \otimes \timeSpaceArg{n} \rightarrow \RRplus,
\end{split}
\end{equation*}
and $\zeta^{n,i} \in \RRplus$, $i=1,\ldots,\ncollocSTArg{n}$ are quadrature
weights over the $n$th time window. The optimization problem~\eqref{eq:obj_gen_slab2} again comprises a least-squares problem.

\begin{remark}
\methodAcronym\ with \spaceTimeAcronym\ trial subspaces solved via the indirect approach naturally achieves ``collocation" in time as the full-order model residual needs to be queried at only the quadrature points. 
\end{remark}
\begin{remark}
For the limiting case where one window comprises the entire domain (i.e.,
	$\DeltaSlabArg{1}\equiv T$), the trial
	subspace is set to the span of full solution trajectories, $\stspaceSTArg{1}
	= \text{span}\{ \stateFOMArgnt{}_i\}_{i=1}^{\stdimArg{1}}$ (where
	$\stateFOMArgnt{}_i$ are obtained, e.g.,  from training simulations at
	different parameter instances), and the weighting matrix is set to
	$\stweightingMatArg{1} = \mathbf{I}$, \methodAcronym\ with
	\spaceTimeAcronym\ trial subspaces and the indirect approach closely
	resembles the model reduction procedure proposed in
	Ref.~\cite{constantine_strom}; the approaches differ only in that
	Ref.~\cite{constantine_strom} imposes the constraint
	$\sum_{i=1}^{\stdimArg{1}} \stgenstateArg{1}_i = 1$ in the associated
	minimization problem. 
\end{remark}
\subsection{\spaceTimeAcronym\ trial subspaces: summary}
\spaceTimeAcronym\ trial subspaces yield a series of space--time systems of algebraic equations over each window. As a variety of work has examined space--time reduced-order models with \spaceTimeAcronym\ trial subspaces, 
a detailed exposition of solution techniques for these systems is not pursued here. It is sufficient to say that \methodAcronym\ with \spaceTimeAcronym\ trial subspaces yields a series of dense systems to be solved over each window.

\section{Analysis}\label{sec:analysis}
This section provides theoretical analyses of the \methodAcronym\ \approachKwd. First, we demonstrate equivalence conditions 
between \methodAcronym\  with (uniform) \spatialAcronym\ trial subspaces and
the Galerkin ROM in the limit $\DeltaSlabArg{n} \rightarrow 0$.
Next, we derive \textit{a priori} error bounds for autonomous systems. 
\subsection{Equivalence conditions}
\begin{theorem}\label{theorem:galerkin_equiv}\textit{(Galerkin equivalence)}
For sequential minimization over infinitesimal time windows and uniform
	\spatialAcronym\ trial spaces, i.e., $\trialspaceArg{n} = \trialspace$
	and $\stateInterceptArg{n} = \stateInterceptArg{}$, $n=1,\ldots,\nslabs$, the \methodAcronym\ approach (weakly) recovers
	Galerkin projection.
\end{theorem}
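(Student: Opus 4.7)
The plan is to exploit the time-instantaneous residual-minimization interpretation of Galerkin projection established in \eqref{eq:GalOptimal}, together with the mean value theorem for integrals, to show that in the limit $\DeltaSlabArg{n}\to 0$ the windowed minimization of $\mathcal{J}^n$ collapses pointwise to the Galerkin minimization. I would begin by noting that under the uniform trial-space assumption ($\basismatArg{n}=\basisspace$, $\stateInterceptArg{n}=\stateIntercept$) and orthonormality of $\basisspace$, the interface condition in \eqref{eq:obj_gen_slab} simplifies to $\genstateArgnt{n}(\timeStartArg{n}) = \genstateArgnt{n-1}(\timeEndArg{n-1})$, so that the windowed trajectories glue together continuously in the generalized coordinates.

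Next I would apply the mean value theorem for integrals to the objective \eqref{eq:obj} evaluated at $\basisspace\genstateArgnt{n}+\stateIntercept\otimes\onesFunctionArg{n}$, yielding a point $t^\star \in [\timeStartArg{n},\timeEndArg{n}]$ with
\begin{equation*}
\mathcal{J}^n(\basisspace\genstateArgnt{n} + \stateIntercept\otimes\onesFunctionArg{n}) = \frac{\DeltaSlabArg{n}}{2}\bigl\|\basisspace\dot{\genstate}^n(t^\star) - \velocity(\basisspace\genstateArgnt{n}(t^\star)+\stateIntercept,t^\star)\bigr\|_{\stweightingMat}^2.
\end{equation*}
As $\DeltaSlabArg{n}\to 0$ we have $t^\star \to \timeStartArg{n}$, the value $\genstateArgnt{n}(\timeStartArg{n})$ is fixed by the interface condition, and the only remaining degree of freedom is $\dot{\genstate}^n(\timeStartArg{n})$. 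Hence, after dividing through by the positive scaling $\DeltaSlabArg{n}/2$, the windowed minimization reduces to
\begin{equation*}
\min_{\hat{\mathbf{v}}\in\RR{\romdim}} \bigl\| \basisspace\hat{\mathbf{v}} - \velocity(\basisspace\genstateArgnt{n}(\timeStartArg{n})+\stateIntercept,\timeStartArg{n}) \bigr\|_{\stweightingMat}^2,
\end{equation*}
which is exactly the Galerkin residual-minimization problem \eqref{eq:GalOptimal}, whose unique solution obeys the Galerkin ODE \eqref{eq:g_truncation2} at $t=\timeStartArg{n}$. Since the starting times $\timeStartArg{n}$ sweep out $[0,T]$ as $\nslabs\to\infty$, this recovers the Galerkin ODE pointwise in time, which justifies the ``weakly recovers'' qualifier.

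As an independent cross-check I would derive the same conclusion from the Euler--Lagrange stationary conditions \eqref{eq:lspg_continuous}--\eqref{eq:lspg_bcs}: the terminal condition $\adjointArg{n}{\timeEndArg{n}} = \bz$ together with local Lipschitz continuity of $\velocity$ and a Gr\"onwall-type estimate on the linear adjoint equation \eqref{eq:lspg_adjoint} imply $\|\adjointArgnt{n}\|_{L^\infty([\timeStartArg{n},\timeEndArg{n}])} = O(\DeltaSlabArg{n})$, so that the forward equation \eqref{eq:lspg_continuous} reduces to the Galerkin ODE \eqref{eq:g_truncation2} up to an $O(\DeltaSlabArg{n})$ perturbation.

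The main obstacle is making the limit $\max_n\DeltaSlabArg{n}\to 0$ precise at the level of trajectories rather than only at the level of pointwise stationary conditions. At finite $\DeltaSlabArg{n}$ the generalized coordinates are continuous at window interfaces but $\dot{\genstate}$ may jump, so one must argue that these jumps vanish in the limit and that no oscillatory concentration develops. This is most naturally achieved by establishing uniform (in $n$) bounds on $\genstateArgnt{n}$ and $\dot{\genstate}^n$ from the coercivity of $\mathcal{J}^n$ and the Lipschitz regularity of $\velocity$, then extracting a limiting trajectory via a standard Arzel\`a--Ascoli argument and identifying it as the Galerkin solution by uniqueness of the initial value problem \eqref{eq:g_truncation2}.
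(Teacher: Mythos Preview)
Your proposal is correct and arrives at the right conclusion, but the paper's argument is shorter and rests on an observation you did not exploit. The paper works from the weak Euler--Lagrange identity (the precursor to \eqref{eq:el1}), which on the window $[\zeta(n-1),\zeta n]$ reads
\[
\int_{\zeta(n-1)}^{\zeta n}\Bigl(\frac{\partial\minintegrandArg{n}}{\partial\genstateyDiscrete}-\frac{d}{dt}\frac{\partial\minintegrandArg{n}}{\partial\genstateyDiscreteDot}\Bigr)\variationArgn{n}\,dt
\;+\;\frac{\partial\minintegrandArg{n}}{\partial\genstateyDiscreteDot}\Big|_{t=\zeta n}\variationArgntt{n}{\zeta n}=0.
\]
As $\zeta\to 0^{+}$ the integral term vanishes, while the natural boundary term survives because $\variationArgntt{n}{\zeta n}$ is unconstrained. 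The crux is that $\partial\minintegrandArg{n}/\partial\genstateyDiscreteDot=\bigl(\basisspace^{T}\stweightingMat\basisspace\,\dot{\genstate}^{n}-\basisspace^{T}\stweightingMat\velocity\bigr)^{T}$, so the surviving condition \emph{is already} the Galerkin equation \eqref{eq:g_truncation2} at $t=\zeta n$. No mean-value argument, no Gr\"onwall estimate on the costate, and no compactness are needed.

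Your mean-value route captures the right intuition but, as you yourself flag, the reduction ``the only remaining degree of freedom is $\dot{\genstate}^{n}(\timeStartArg{n})$'' is delicate: the point $t^{\star}$ depends on the candidate trajectory, and the admissible set remains infinite-dimensional on every window of positive length. Your second route via the costate bound is rigorous and yields an explicit $O(\DeltaSlabArg{n})$ perturbation of the Galerkin ODE, a quantitative statement the paper does not provide; that is a genuine benefit if one cares about rates. The paper's approach buys brevity by recognizing that the Galerkin relation is precisely the natural (free-endpoint) boundary condition of the variational problem, so it never has to analyze the coupled forward--backward system at all.
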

\begin{proof}
The \methodAcronym\ approach with uniform \spatialAcronym\ subspaces comprises solving the following sequence of minimization problems for $\genstateArgnt{n}$, $n=1,\ldots,\nslabs$,
\begin{equation}\label{eq:obj_proof}
\begin{split}
      & \underset{\genstateyArgnt{} \in \RR{\romdim} \otimes \timeSpaceArg{n}}{\text{minimize}}\; \mathcal{J}^n(\basisspace \genstateyArgnt{} + \stateIntercept \otimes \onesFunctionArg{n}), \\ 
      & \text{subject to }\; \genstateyArg{}{\timeStartArg{n}} =
\begin{cases} \genstateArg{n-1}{\timeEndArg{n-1}} & n = 2,\ldots,\nslabs \\
\genstateIC & n=1. \end{cases} 
\end{split}
\end{equation}
Following the derivation of the Euler--Lagrange equations presented in Appendix~\ref{appendix:eulerlagrange} leads to Eq.~\eqref{eq:euler_lagrange_analysis}. Setting $a = \timeStartArg{n}$, $b = \timeEndArg{n}$, and $\mathcal{I} = \minintegrandArg{n}$ in Eq.~\eqref{eq:euler_lagrange_analysis} yields the sequence of systems to be solved for $\genstate^n$ (and, implicitly, $\genstateDotArgnt{n}$) over $t \in [\timeStartArg{n},\timeEndArg{n}]$:
\begin{multline}\label{eq:g_equiv_1}
 \int_{\timeStartArg{n}}^{\timeEndArg{n}} \bigg( \frac{\partial \minintegrandArg{n}  }{\partial \genstateyDiscrete }(\genstateArg{n}{t},\genstateDotArg{n}{t},t)  \variationArgntt{n}{t}  - \frac{d}{dt}\bigg( \frac{\partial \minintegrandArg{n}}{\partial \genstateyDiscreteDot} (\genstateArg{n}{t},\genstateDotArg{n}{t},t ) \bigg) \variationArgntt{n}{t} \bigg)dt +\\ \bigg(\frac{\partial \minintegrandArg{n}}{\partial \genstateyDiscreteDot}(\genstateArg{n}{\timeEndArg{n}},\genstateDotArg{n}{\timeEndArg{n}},\timeEndArg{n}) \bigg) \variationArgntt{n}{\timeEndArg{n}}   = 0,
\end{multline}
for all functions $\variationArgn{n}: [\timeStartArg{n},\timeEndArg{n}] \rightarrow
	\RR{\romdim}$ that satisfy
	$\variationArgntt{n}{\timeStartArg{n}} = \bz$,
with the boundary conditions
\begin{equation*}
 \genstate^n(\timeStartArg{n})  = 
\begin{cases}
\genstate^{n-1}(\timeEndArg{n-1}) & n=2,\ldots,\nslabs, \\
\genstateIC & n=1. \end{cases}  
\end{equation*}
To examine what happens for infinitesimal time windows, we take a uniform
	window size and let $\timeEndArg{n} = \timeStartArg{n} + \zeta$,
	$n=1,\ldots,\nslabs$ such that $\timeStartArg{n} = \zeta (n-1)$ and
	$\timeEndArg{n} = \zeta n$, $n=1,\ldots,\nslabs$. 
Taking the limit $\zeta \rightarrow 0^+$ and noting that $\variationArgn{n}$
	is an arbitrary function, we obtain the following sequence of problems for
	$n=1,\ldots,\nslabs$:
\begin{equation*}
 \genstateArg{n}{\zeta(n-1)} = 
\begin{cases}
\genstate^{n-1}(\zeta(n-1)) & n=2,\ldots,\nslabs,\\
\genstateIC & n=1, \end{cases} \qquad
\bigg[ \frac{\partial \minintegrandArg{n}}{\partial \genstateyDiscreteDot}(\genstateArg{n}{\zeta n},\genstateDotArg{n}{\zeta n},\zeta n) \bigg]^T= \boldsymbol 0,
\end{equation*}
where the first term in Eq.~\eqref{eq:g_equiv_1} has vanished, as $
	\lim_{\zeta \rightarrow 0^+}  \int_{\zeta (n-1)}^{\zeta n} h(t) dt = 0 $ for
	any continuous function $h$.
Noting that the derivative evaluates to
\begin{equation*}
\bigg[ \frac{\partial \minintegrandArg{n}}{\partial \genstateyDiscreteDot}(\genstateArg{n}{\zeta n},\genstateDotArg{n}{\zeta n},\zeta n) \bigg]^T=
\basisspace^T \stweightingMatArgt{}{\zeta n} \basisspace \genstateDotArg{n}{\zeta n} -  \basisspace^T \stweightingMatArgt{}{\zeta n} \velocity(\basisspace \genstateArg{n}{\zeta n} + \stateIntercept ,\zeta n), 
\end{equation*}
we have 
\begin{equation*}
\basisspace^T \stweightingMatArgt{}{\zeta n} \basisspace \genstateDotArg{n}{\zeta n} -  \basisspace^T \stweightingMatArgt{}{\zeta n} \velocity(\basisspace \genstateArg{n}{\zeta n} + \stateIntercept ,\zeta n) = \bz, \qquad n=1,\ldots,\nslabs, 
\end{equation*}
with the boundary conditions  $\genstateArg{n}{\zeta (n-1)} = \genstateArg{n-1}{\zeta(n-1)}$ for $n=2,\ldots,\nslabs$ and $\genstateArg{1}{0} = \genstateIC $. 
In the limit of $\zeta \rightarrow 0^+$ (and hence $\nslabs
\rightarrow \infty$) this is a (weak) statement of the Galerkin ROM. 
\end{proof}

\subsection{\textit{A priori} error bounds}
We now derive \textit{a priori} error bounds for \spatialAcronym\ trial
subspaces in the case that no weighting matrix is
employed (i.e., $\stweightingMat = \mathbf{I}$).
We denote the error in the \methodAcronymROM\ solution over the $n$th window
as
\begin{align*}
	\errorArg{n}\vcentcolon& \timeDummy \mapsto \stateFOMSolArg{}(\timeDummy)-
	\stateROMSolArg{n} (\timeDummy),\\
	:&[\timeStartArg{n},\timeEndArg{n}]\rightarrow\RR{\fomdim},
\end{align*}
$n=1,\ldots,\nslabs$.
Additionally, we denote $\stateFOMProjSolArg{n}$, $n=1,\ldots,\nslabs$ to be
the $\elltwo$-optimal solution over the $n$th window
$$\stateFOMProjSolArg{n} = \underset{\statey \in
\stspace^n}{\text{arg}\,\text{min } } \intSlabArg{n} \norm{ \statey(t) - \stateFOMSolArgt{}{t} }^2 dt.$$ 
We employ the following assumptions.

\begin{itemize}
\item \textbf{A1:} The residual is Lipshitz continuous in the first argument, i.e., there exists
	$\lipshitz>0$ such that
$$ \norm{\resid(\statew,\timeDummy) - \resid(\statey,\timeDummy) } \le
		\lipshitz \norm{\statew(\timeDummy) - \statey(\timeDummy)},\quad \forall
		\statew,\, \statey\in\RR{N} \otimes \timeSpaceArg{}, \tau \in [0,T],$$
where  
\begin{align*}
\resid &: \, (\statey,\timeDummy) \mapsto \dot{\statey}(\timeDummy) - \velocity(\stateyArg{}{\timeDummy},\timeDummy) ,\\
&: \, \RR{N} \otimes \timeSpaceArg{} \times [0,T] \mapsto \RR{N}.
\end{align*}
\item \textbf{A2:} The velocity is Lipshitz continuous in its first argument, i.e., there exists
	$\lipshitzf>0$ such that
$$ \norm{\velocity(\statewDiscrete,\timeDummy) - \velocity(\stateyDiscrete,\timeDummy) } \le
		\lipshitzf \norm{\statewDiscrete - \stateyDiscrete},\quad \forall
		\statewDiscrete,\, \stateyDiscrete \in\RR{\fomdim} ,\tau \in [0,T].$$

\item \textbf{A3:} The integrated residual is inverse Lipshitz continuous in its first argument over
	each time window, i.e., there exist $\lipshitziArg{n}>0$,
		$n=1,\ldots,\nslabs$ such that
$$  \intSlabArg{n} \norm{\statew(t) - \statey(t)} dt \le  \lipshitziArg{n}
		\intSlabArg{n} \norm{\resid(\statew,t) - \resid(\statey,t) } dt,\quad
		\forall \statew,\statey \in \stspaceArg{n}_*,$$
		where
$\stspaceArg{n}_* = \{ \statew \in \RR{\fomdim} \otimes \timeSpaceArg{n}\, |\,
		\statew(\timeStartArg{n}) = \stateFOMArg{}{\timeStartArg{n}} \}$.
\item \textbf{A4:} The FOM solution at the start of each time window lies
	within the range of the trial subspace, i.e.,
$$ \stateFOMSolArgt{n}{\timeStartArg{n}} \in \trialspaceArg{n} +
		\stateInterceptArg{n},\quad n=1,\ldots,\nslabs.$$

\end{itemize} 
%

\begin{theorem}(\textit{A priori} error bounds)\label{theorem:apriori_bound}
Under Assumptions A1--A4, the error in the solution computed by the \methodAcronymROM\ \approachKwd\ with
	\spatialAcronym\ trial subspaces over the $n$th window is bounded as
\begin{equation}\label{eq:apriori_bound}
\intSlabArg{n} \norm{\errorArgt{n}{t}} dt \le  \norm{\errorArgt{n}{\timeStartArg{n}}} 
 \bigg(\frac{ e^{ \DeltaSlabArg{n}(\lipshitz + \lipshitzf)} - 1}{ \lipshitz + \lipshitzf} \bigg)
+ \lipshitziArg{n} \intSlabArg{n} \norm{ \resid(\stateFOMProjSolArg{n},t) } dt.
\end{equation}

\end{theorem}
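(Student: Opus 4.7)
The plan is to combine a differential inequality for the error with Grönwall's lemma and the inverse-Lipshitz assumption A3, using constant-in-time shift arguments enabled by A4 to reconcile the various initial conditions. Writing the definition of the time-continuous residual evaluated at the WLS solution as $\stateROMSolDotArg{n}(t) = \velocity(\stateROMSolArgt{n}{t},t) + \resid(\stateROMSolArg{n},t)$ and subtracting the FOM ODE $\stateFOMDotArg{}(t) = \velocity(\stateFOMArg{}{t},t)$ yields the error dynamics
\begin{equation*}
\dot{\errorArg{n}}(t) = \velocity(\stateFOMArg{}{t},t) - \velocity(\stateROMSolArgt{n}{t},t) - \resid(\stateROMSolArg{n},t), \qquad t \in [\timeStartArg{n},\timeEndArg{n}].
\end{equation*}

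First I would insert the $\elltwo$-optimal projection by writing $\resid(\stateROMSolArg{n},t) = \resid(\stateFOMProjSolArg{n},t) + [\resid(\stateROMSolArg{n},t) - \resid(\stateFOMProjSolArg{n},t)]$, apply A2 to the velocity difference, A1 to the residual difference, and then use the triangle inequality $\norm{\stateROMSolArgt{n}{t} - \stateFOMProjSolArgt{n}{t}} \le \norm{\errorArgt{n}{t}} + \norm{\stateFOMArg{}{t} - \stateFOMProjSolArgt{n}{t}}$. This produces the scalar differential inequality
\begin{equation*}
\norm{\dot{\errorArg{n}}(t)} \le (\lipshitzf + \lipshitz)\norm{\errorArgt{n}{t}} + \lipshitz \norm{\stateFOMArg{}{t} - \stateFOMProjSolArgt{n}{t}} + \norm{\resid(\stateFOMProjSolArg{n},t)}.
\end{equation*}
Integrating from $\timeStartArg{n}$ to $t$ and invoking the standard integral form of Grönwall's lemma with rate $\lipshitzf + \lipshitz$ yields the pointwise estimate
\begin{equation*}
\norm{\errorArgt{n}{t}} \le \norm{\errorArgt{n}{\timeStartArg{n}}} e^{(\lipshitzf+\lipshitz)(t-\timeStartArg{n})} + \int_{\timeStartArg{n}}^{t} e^{(\lipshitzf+\lipshitz)(t-s)} \bigl[ \lipshitz \norm{\stateFOMArg{}{s} - \stateFOMProjSolArgt{n}{s}} + \norm{\resid(\stateFOMProjSolArg{n},s)} \bigr] ds.
\end{equation*}

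Integrating this bound once more over $[\timeStartArg{n}, \timeEndArg{n}]$ and exchanging the order of integration in the resulting double integral immediately produces the prefactor $(e^{\DeltaSlabArg{n}(\lipshitz+\lipshitzf)} - 1)/(\lipshitz+\lipshitzf)$ multiplying $\norm{\errorArgt{n}{\timeStartArg{n}}}$, matching the first term of \eqref{eq:apriori_bound}. The remaining task is to convert the two integrated projection-error contributions into the claimed $\lipshitziArg{n}\intSlabArg{n}\norm{\resid(\stateFOMProjSolArg{n},t)}dt$ expression. For this I would shift $\stateFOMProjSolArg{n}$ by the constant-in-time function $(\stateFOMArg{}{\timeStartArg{n}} - \stateFOMProjSolArgt{n}{\timeStartArg{n}})\otimes\onesFunctionArg{n}$, which lies in $\stspaceSArg{n}$ by A4, to produce an element of $\stspaceArg{n}_*$; A3 then bounds $\intSlabArg{n}\norm{\stateFOMArg{}{t} - \stateFOMProjSolArgt{n}{t}}dt$ by $\lipshitziArg{n}\intSlabArg{n}\norm{\resid(\stateFOMProjSolArg{n},t)}dt$ up to Lipshitz corrections (from A1 and A2) on the shift, which are themselves absorbed into the $\norm{\errorArgt{n}{\timeStartArg{n}}}$ term.

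The principal obstacle is the bookkeeping of these constant-in-time shifts, since neither $\stateROMSolArg{n}$ nor $\stateFOMProjSolArg{n}$ generally satisfies the exact FOM initial condition $\stateFOMArg{}{\timeStartArg{n}}$, whereas A3 is stated only for pairs of functions in $\stspaceArg{n}_*$. Assumption A4 is crucial here as it guarantees that such shifts remain inside $\stspaceSArg{n}$ and that the attendant shift errors are controlled cleanly by $\norm{\errorArgt{n}{\timeStartArg{n}}}$. A secondary care point is that A3 is formulated directly in $L^1$ time, matching the $L^1$ structure on both sides of \eqref{eq:apriori_bound} and thereby avoiding any lossy Cauchy--Schwarz step that would introduce a spurious $\sqrt{\DeltaSlabArg{n}}$ factor on the residual of $\stateFOMProjSolArg{n}$.
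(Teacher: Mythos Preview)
Your Grönwall-based argument gives \emph{a} bound, but not the bound stated in the theorem, and the gap is structural: you never invoke the residual-minimization property of the WLS solution. After integrating your pointwise Grönwall estimate over the window and exchanging the order of integration, the forcing terms $\lipshitz\norm{\stateFOMArg{}{s}-\stateFOMProjSolArgt{n}{s}}$ and $\norm{\resid(\stateFOMProjSolArg{n},s)}$ are each weighted by the kernel $\int_s^{\timeEndArg{n}} e^{(\lipshitz+\lipshitzf)(t-s)}\,dt = (e^{(\lipshitz+\lipshitzf)(\timeEndArg{n}-s)}-1)/(\lipshitz+\lipshitzf)$. Even after bounding this kernel uniformly and applying A3 to the projection-error term (no shift is needed, incidentally, since A4 already gives $\stateFOMProjSolArgt{n}{\timeStartArg{n}}=\stateFOMArg{}{\timeStartArg{n}}$), you are left with a prefactor of order $\frac{e^{\DeltaSlabArg{n}(\lipshitz+\lipshitzf)}-1}{\lipshitz+\lipshitzf}(1+\lipshitz\lipshitziArg{n})$ on the residual integral, not the bare $\lipshitziArg{n}$ that the theorem claims. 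There is no mechanism in your argument to remove that exponential amplification or to absorb it into the initial-error term, because it multiplies a different quantity.

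The paper avoids this by splitting the error differently. It introduces an auxiliary WLS solution $\stateROMStarSolArg{n}$ defined by the same minimization problem but started from the \emph{exact} FOM state $\stateFOMArg{}{\timeStartArg{n}}$, and writes $\norm{\errorArgt{n}{t}}\le\norm{\stateROMSolArgt{n}{t}-\stateROMStarSolArgt{n}{t}}+\norm{\stateROMStarSolArgt{n}{t}-\stateFOMArg{}{t}}$. The second piece is handled entirely by A3 together with the optimality of $\stateROMStarSolArg{n}$ (both $\stateROMStarSolArg{n}$ and $\stateFOMProjSolArg{n}$ lie in $\stspaceArg{n}_*$, and $\stateROMStarSolArg{n}$ has minimal residual among such functions), giving exactly $\lipshitziArg{n}\intSlabArg{n}\norm{\resid(\stateFOMProjSolArg{n},t)}dt$ with no exponential factor. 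Only the first piece---the discrepancy between two WLS solutions differing only in their initial data---is treated by a Grönwall argument on the generalized-coordinate ODE (using the costate definition and A1, A2), and this is where the factor $(e^{\DeltaSlabArg{n}(\lipshitz+\lipshitzf)}-1)/(\lipshitz+\lipshitzf)$ legitimately appears, multiplying only $\norm{\errorArgt{n}{\timeStartArg{n}}}$. The optimality step is the missing idea in your proposal; without it the residual term cannot be decoupled from the exponential growth.
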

\begin{proof}
To obtain an error bound over the $n$th window, we must account for the 
fact that the initial conditions into the $n$th window can be incorrect. To this end, 
we define new quantities $\stateROMStarSolArg{n}$, $n=1,\ldots,\nslabs$, where
	$\stateROMStarSolArg{n}$ is the
	solution to the minimization problem
\begin{equation}\label{eq:min_correct}
\begin{split}
& \underset{\statey \in \stspaceArg{n}}{\text{minimize } }
\objectiveArg{n}(\statey),\\
& \text{subject to } \statey(\timeStartArg{n})= \stateFOMSolArgt{}{\timeStartArg{n}}.
\end{split}
\end{equation}
Note that minimization problem~\eqref{eq:min_correct} is equivalent to the \methodAcronym\ minimization problem~\eqref{eq:tclsrm}, but uses the FOM solution for the initial conditions. 
Additionally, define $\adjointROMStarSolArg{n}$, $n=1,\ldots,\nslabs$ to be the costate solution associated with optimization problem~\eqref{eq:min_correct}.
 The error in the solution obtained by the 
\methodAcronymROM\ over the $n$th window at time $t \in [\timeStartArg{n},\timeEndArg{n}]$ can be written as
\begin{equation*}
\norm{ \stateROMSolArgt{n}{t} - \stateFOMSolArgt{}{t}} = 
\norm{\stateROMSolArgt{n}{t} - \stateROMStarSolArgt{n}{t} + \stateROMStarSolArgt{n}{t} -  \stateFOMSolArgt{}{t} }.
\end{equation*}
Applying triangle inequality yields
\begin{equation*}
\norm{ \stateROMSolArgt{n}{t} - \stateFOMSolArgt{}{t}} \le 
\norm{\stateROMSolArgt{n}{t} - \stateROMStarSolArgt{n}{t}} + \norm{ \stateROMStarSolArgt{n}{t} -  \stateFOMSolArgt{}{t} }.
\end{equation*}
Integrating over the $n$th window and using the definition of the error yields
$$\intSlabArg{n} \norm{\errorArgt{n}{t}} dt \le \intSlabArg{n} \norm{\stateROMSolArgt{n}{t} - \stateROMStarSolArgt{n}{t}} dt +  \intSlabArg{n} \norm{\stateROMStarSolArgt{n}{t} - \stateFOMSolArgt{}{t}}dt.$$
Applying Assumption A3 and 
	$\resid(\stateFOMSolArg{},t) = \bz$, $\forall t \in [0,T]$
	yields
\begin{equation*}
\intSlabArg{n} \norm{\errorArgt{n}{t}} dt \le \intSlabArg{n} \norm{\stateROMSolArgt{n}{t} - \stateROMStarSolArgt{n}{t}} dt + \lipshitziArg{n} \intSlabArg{n} \norm{ \resid(\stateROMStarSolArg{n},t) } dt.
\end{equation*}
Leveraging the residual-minimization property of \methodAcronym\ and noting
	that $\stateFOMProjSolArg{n}(\timeStartArg{n}) =
	\stateFOMSolArgt{n}{\timeStartArg{n}}$ by Assumption A4, we have 
$$ \intSlabArg{n} \norm{\resid(\stateROMStarSolArg{n},t)}dt \le \intSlabArg{n} \norm{\resid(\stateFOMProjSolArg{n},t)}dt.$$
	This leads to the following expression for the error over the $n$th window,
\begin{equation}\label{eq:boundtmp}
\intSlabArg{n} \norm{\errorArgt{n}{t}} dt \le \intSlabArg{n} \norm{\stateROMSolArgt{n}{t} - \stateROMStarSolArgt{n}{t}} dt + \lipshitziArg{n} \intSlabArg{n} \norm{ \resid(\stateFOMProjSolArg{n},t) } dt.
\end{equation}
We now derive an upper bound for $\intSlabArg{n} \norm{\stateROMSolArgt{n}{t} - \stateROMStarSolArgt{n}{t}} dt$.
Defining $\errorStarArg{n} = \genstateROMSolArg{n} - \genstateROMStarSolArg{n}$, $n=1,\ldots,\nslabs$, where $\genstateROMStarSolArg{n}$ are the generalized coordinates of $\stateROMStarSolArg{n}$ (i.e., $\stateROMStarSolArgt{n}{t} = \basisspaceArg{n} \genstateROMStarSolArgt{n}{t} + \stateInterceptArg{n}$), the differential equation for $\errorStarArg{n}$ is given by
$$\errorStarDotArgt{n}{t} = \basisspaceTArg{n}[\velocity(\basisspaceArg{n} \genstateROMSolArgt{n}{t} + \stateInterceptArg{n},t) - \velocity(\basisspaceArg{n} \genstateROMStarSolArgt{n}{t} + \stateInterceptArg{n},t) ] + \adjointROMSolArgt{n}{t} - \adjointROMStarSolArgt{n}{t},$$
for $t \in [\timeStartArg{n},\timeEndArg{n}]$ and with the initial condition $\errorStarArgt{n}{\timeStartArg{n}}.$ We have used the notation $ \errorStarDotArg{n}\equiv d \errorStarArg{n} / d\tau$. 
Taking the norm of both sides and applying triangle inequality yields
$$\norm{ \errorStarDotArgt{n}{t} } \le \norm{[\velocity(\basisspaceArg{n} \genstateROMSolArgt{n}{t} + \stateInterceptArg{n},t) - \velocity(\basisspaceArg{n} \genstateROMStarSolArgt{n}{t} + \stateInterceptArg{n},t) ]} + \norm{\adjointROMSolArgt{n}{t} - \adjointROMStarSolArgt{n}{t}},$$
with the initial condition $\norm{\errorStarArgt{n}{\timeStartArg{n}}}$. We note we have used $\norm{\basisspaceArg{n}} = 1$. Using the definition of the costate~\eqref{eq:costate_def} yields 
$$\norm{ \errorStarDotArgt{n}{t} } \le \norm{[\velocity(\basisspaceArg{n} \genstateROMSolArgt{n}{t} + \stateInterceptArg{n},t) - \velocity(\basisspaceArg{n} \genstateROMStarSolArgt{n}{t} + \stateInterceptArg{n},t) ]} + \norm{\basisspaceTArg{n} \big( \resid(\stateROMSolArg{n},t) - \resid(\stateROMStarSolArg{n},t) \big)}.$$
Employing assumptions A1-A2 yields the bound
$$\norm{ \errorStarDotArgt{n}{t} } \le (\lipshitzf + \lipshitz)  \norm{ \errorStarArgt{n}{t} } .$$
We use the fact that $ d \norm{ \errorStarArg{n} } / d\tau \le \norm{ \errorStarDotArg{n} } $ to get 
$$\big(\frac{d \norm{\errorStarArg{n}}}{d\tau} \big)(t)  \le (\lipshitzf + \lipshitz)  \norm{ \errorStarArgt{n}{t} } .$$
The above is a linear homogeneous equation for the bound of $\norm{\errorStarArg{n}}$ and has the solution for $t \in [\timeStartArg{n},\timeEndArg{n}]$
$$\norm{\errorStarArgt{n}{t}} \le \norm{ \errorStarArgt{n}{\timeStartArg{n}} }e^{(\lipshitz + \lipshitzf)(t - \timeStartArg{n})}.$$
Noting that $\norm{\errorStarArgt{n}{\timeStartArg{n}}} = \norm{\errorArgt{n}{\timeStartArg{n}}}$ we get the bound
\begin{equation}\label{eq:ustarbound} 
\intSlabArg{n} \norm{\errorStarArgt{n}{t}} dt \le \norm{\errorArgt{n}{\timeStartArg{n}}} \bigg(\frac{ e^{ \DeltaSlabArg{n}(\lipshitz + \lipshitzf)} - 1}{ \lipshitz + \lipshitzf} \bigg) .
\end{equation}
Substituting bound~\eqref{eq:ustarbound} into bound~\eqref{eq:boundtmp} and noting that $\norm{\stateROMSolArg{n} - \stateROMStarSolArg{n}} = \norm{\genstateROMSolArg{n} - \genstateROMStarSolArg{n}}$ gives the upper bound
\begin{equation*}
\intSlabArg{n} \norm{\errorArgt{n}{t}} dt \le  \norm{\errorArgt{n}{\timeStartArg{n}}} 
 \bigg(\frac{ e^{ \DeltaSlabArg{n}(\lipshitz + \lipshitzf)} - 1}{ \lipshitz + \lipshitzf} \bigg)
+ \lipshitziArg{n} \intSlabArg{n} \norm{ \resid(\stateFOMProjSolArg{n},t) } dt.
\end{equation*}

\end{proof}
\begin{corollary}
	For the case of one time window $\DeltaSlabArg{1} = T$, then under Assumptions A3--A4  
	the error in the solution computed by the \methodAcronymROM\ \approachKwd\ with
	\spatialAcronym\ trial subspaces is bounded as
\begin{equation*}
\int_0^T \norm{\errorArgt{1}{t}} dt \le \lipshitziArg{1} \int_0^T \norm{\resid(\stateFOMProjSolArg{1},t)}dt. 
\end{equation*}
\end{corollary}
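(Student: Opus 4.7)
The plan is to specialize Theorem~\ref{theorem:apriori_bound} to $n=1$ with $\DeltaSlabArg{1}=T$ and show that the first term on the right-hand side of the bound~\eqref{eq:apriori_bound} vanishes under Assumption~A4. Since this first term is precisely the one that required Assumptions A1 and A2, the argument needs only A3 and A4.

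First I would observe that the initial condition enforced by the \methodAcronymROM\ in the first window is $\stateROMSolArgt{1}{0} = \projectorArg{1}(\stateFOMIC)$, where $\projectorArg{1}$ is the $\elltwo$-orthogonal projection onto $\stspaceBoundStartArg{1}$. Assumption~A4 implies $\stateFOMIC \in \trialspaceArg{1} + \stateInterceptArg{1}$, so that $\projectorArg{1}(\stateFOMIC) = \stateFOMIC$. Consequently $\errorArgt{1}{0} = \stateFOMIC - \stateROMSolArgt{1}{0} = \bz$.

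Next I would trace back through the proof of Theorem~\ref{theorem:apriori_bound} to the intermediate bound
\begin{equation*}
\int_0^T \norm{\errorArgt{1}{t}}\,dt \le \int_0^T \norm{\stateROMSolArgt{1}{t} - \stateROMStarSolArgt{1}{t}}\,dt + \lipshitziArg{1} \int_0^T \norm{\resid(\stateFOMProjSolArg{1},t)}\,dt,
\end{equation*}
which was obtained by inserting $\stateROMStarSolArg{1}$ (the \methodAcronym\ solution initialized at the exact FOM state) and applying triangle inequality, Assumption~A3, and the residual-minimization property of \methodAcronym. Under A4, the auxiliary minimization problem~\eqref{eq:min_correct} defining $\stateROMStarSolArg{1}$ coincides with the original \methodAcronym\ minimization problem~\eqref{eq:tclsrm} for $n=1$: both have initial condition $\stateFOMIC$ and are posed over $\stspaceArg{1}$. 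By uniqueness of the minimizer this forces $\stateROMSolArg{1} \equiv \stateROMStarSolArg{1}$ on $[0,T]$, so the first integral on the right-hand side vanishes and the claimed bound follows immediately.

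There is no real obstacle here; the corollary is essentially a bookkeeping observation that in the single-window case the ``initial-error propagation'' term of Theorem~\ref{theorem:apriori_bound} is null under A4, leaving only the residual term. The only subtle point to articulate carefully is that A1 and A2 need not be invoked, since they entered the parent theorem solely to control the initial-error term via a Grönwall-type argument on $\errorStarArg{n}$.
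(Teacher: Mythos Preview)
Your proposal is correct and follows essentially the same approach as the paper: specialize to $n=1$, use Assumption~A4 to conclude $\errorArgt{1}{0}=\bz$, and observe that the initial-error contribution vanishes. The paper simply substitutes $n=1$ into the final bound~\eqref{eq:apriori_bound} and notes the first term is zero; your version is slightly more explicit in returning to the intermediate bound~\eqref{eq:boundtmp} so that the constants $\lipshitz,\lipshitzf$ from A1--A2 never appear, which cleanly justifies why only A3--A4 are needed.
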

\begin{proof}
Setting $n=1$ in~\eqref{eq:apriori_bound} with the time intervals
	$\timeStartArg{1}=0$, $\timeEndArg{1}=T$, noting that the initial conditions are known and employing Assumption A4 yields the desired result.
\end{proof}

\subsection{Discussion} 
Theorem~\ref{theorem:apriori_bound} provides \textit{a priori} bounds on the integrated normed error for \methodAcronym\ employing \spatialAcronym\ trial subspaces. We make several 
observations. First, it is observed \methodAcronym\ is subject to recursive error bounds (through the first term on the RHS in the upper bound~\eqref{eq:apriori_bound}). As the number of time windows grows, so does the recursive growth of error. Second, we observe that when a single window spans the entire domain, the error in the \methodAcronym\ with \spatialAcronym\ trial subspaces is bounded \textit{a priori} by the residual of the $\elltwo$-orthogonal projection of the FOM solution. 

\section{Numerical experiments}\label{sec:numerical_experiments}
We now analyze the performance of \methodAcronymROMs\ leveraging \spatialAcronym\ trial subspaces on two benchmark problems: the Sod shock tube and compressible flow in a cavity. In each experiment, we compare \methodAcronymROMs\ to the Galerkin and LSPG ROMs. The purpose of the numerical experiments is to assess the impact of minimizing the residual over an arbitrarily sized time window on the solution accuracy. We additionally assess the impact of the time step and time scheme on \methodAcronym. In both experiments, the spatial basis is equivalent for each time window, e.g., $\basisspaceArg{n} \equiv \basisspace$, $n=1,\ldots,\nslabs$. We also note that both experiments are designed to test the \textit{reproductive} ability of the ROMs. We do not consider future state prediction and prediction at new parameter instances as these problems introduce factors that confound the solution accuracy with the solution methodology (e.g., accuracy of the basis). 
 
\subsection{Sod shock tube}
We first consider reduced-order models of the Sod shock tube problem, which is governed by the compressible Euler equations in one dimension, 
\begin{equation}\label{eq:euler_1D}
    \frac{\partial \boldsymbol u}{\partial t} + \frac{\partial \boldsymbol F}{\partial \mathsf{x}} = 0, \quad
    \boldsymbol u= 
    \begin{Bmatrix} \rho \\ \rho u \\ \rho E \end{Bmatrix}, \quad 
    \boldsymbol F = \begin{Bmatrix} \rho u \\ \rho u^2 + p \\  u(\rho E + p) \end{Bmatrix},
\end{equation}
where $\boldsymbol u : \Omega \times [0,T] \rightarrow \RR{3}$ comprise the density, $\mathsf{x}$-momentum, and total energy, $\mathsf{x} \in \Omega \defeq  [0,1]$ is the spatial domain, and 
$T = 1$ the final time. 
The problem setup is given by the initial conditions

\begin{equation*}
\rho = 
\begin{cases} 
      1 & \mathsf{x}\leq 0.5 \\
      0.125 & \mathsf{x} > 0.5 
   \end{cases},
\qquad
p = 
\begin{cases} 
      1 & \mathsf{x}\leq 0.5 \\
      0.1 & \mathsf{x} > 0.5 
   \end{cases},
\qquad
u = 
\begin{cases} 
      0 & \mathsf{x}\leq 0.5 \\
      0 & \mathsf{x} > 0.5 
   \end{cases},
\end{equation*}
along with reflecting boundary conditions at $\mathsf{x}=0$ and $\mathsf{x}=1$. 

\subsubsection{Description of FOM and generation of \spatialAcronym\ trial subspace}\label{sec:sod_fom}
We solve the 1D compressible Euler equations with a finite volume method. We partition the domain into 500 cells of uniform width and employ the Rusanov flux~\cite{rusanov} at the cell interfaces. We employ the Crank--Nicolson (CN) scheme, which is a linear multistep method defined by the coefficients $\alpha_0 = 1,\alpha_1 = -1, \beta_0 = \beta_1 = 1/2$, for temporal integration. We evolve the FOM for $t \in [0.0,1.0]$ at a time-step of $\Delta t = 0.002$. We construct the \spatialAcronym\ trial subspace by executing Algorithm~\ref{alg:pod} with inputs $N_{\text{skip}}=2, \stateIntercept = \bz, K=46 $. The resulting trial subspace corresponds to an energy criterion of $99.99\%$.

\subsubsection{Description of reduced-order models}
We consider reduced-order models based on Galerkin projection, LSPG
projection, and the \methodAcronym\ approach. No hyper-reduction is considered 
in this example, i.e., $\stweightingMatArg{n} = \mathbf{I}$, $n=1,\ldots,\nslabs$. Details on the implementation of the 
different reduced-order models is as follows:
\begin{itemize}
\item \textit{Galerkin ROM}: We obtain the Galerkin ROM through Galerkin projection of the FOM and evolve the Galerkin ROM in time with the CN time scheme at a constant time step of $ \Delta t = 0.002$.

\item \textit{LSPG ROM:} We construct the LSPG ROM on top of the FOM
	discretization leveraging the CN time scheme as previously described. Unless
		noted otherwise, we employ a constant time step size of $\Delta t =
		0.002$ for the LSPG ROM. We solve the nonlinear least-squares problem arising at each time instance
		via the Gauss--Newton method, and solve the linear least-squares problems
		arising at each Gauss--Newton iteration via the normal
		equations. We deem the Gauss--Newton iteration converged when the gradient norm is less than $10^{-4}$.  We compute all Jacobians via automatic
		differentiation~\cite{adolc}. 
\item \textit{\methodAcronymROM:} We consider \methodAcronymROMs\ solved via the
	direct and indirect methods with two different solution techniques:
\begin{itemize}
	\item \textit{Direct method}: We consider \methodAcronymROMs\ solved via the direct method for both the same CN discretization employed in the FOM and LSPG, as well as for the second-order explicit Adams Bashforth (AB2) discretization using a constant time step of $\Delta t = 0.0005$. We solve the nonlinear least-squares problem 
arising over each window with the Gauss--Newton method, and solve the linear
		least-squares problems arising at each Gauss--Newton iteration 
		via the normal equations. We again compute all Jacobians via automatic
		differentiation, and deem the Gauss--Newton algorithm converged when
		the gradient norm is less than $10^{-4}$ (i.e., we use the parameter $\epsilon = 10^{-4}$ in Algorithm~\ref{alg:colloc_gn}). Critically, we note that we assemble the (sparse) Jacobian 
matrix over a window by computing local (dense) Jacobians. We store the Jacobian matrix over a window in a compressed sparse row format. We employ uniform quadrature weights for evaluating the integral in~\eqref{eq:obj_gen_slab}. 

\item \textit{Indirect method}: We consider two \methodAcronymROMs\ solved via the indirect method. The first uses the same CN discretization at at time step of $\Delta t = 0.002$, while the second uses the AB2 discretization using a time step of $\Delta t = 0.0005$. We solve the coupled two-point boundary 
problem via the forward--backward sweep method, and compute the action of the Jacobian transpose on vectors via automatic differentiation. We use parameters $\epsilon = 10^{-6}$, $\fbsmGrowth = 1.1$, and $\fbsmDecay=2$ in Algorithm~\ref{alg:st_iter}. 
\end{itemize}
\end{itemize}

%

\subsubsection{Numerical results}
We first assess the impact of the window size on the performance of the \methodAcronymROMs.  We consider a set of \methodAcronymROMs\ that minimize the residual over windows of constant size 
$\DeltaSlabArg{n} \equiv \DeltaSlabArg{} =  .002$, $0.004$ ,$0.008$, $0.02$, $0.04$, $0.10$, $0.20$, $1.0$. We additionally consider the standard Galerkin and LSPG ROMs. We first show results for \methodAcronymROMs\ using the direct method with CN time discretization; a comparison of different time-marching methods and direct/indirect solution techniques will be provided later in this section. 
First, Figure~\ref{fig:sod_density} presents the density solutions produced by the various ROMs at $t = 0.5$ and $1.0$. Figure~\ref{fig:sod_xt} shows 
$\mathsf{x}-t$ diagrams for the same density solutions. From Figures~\ref{fig:sod_density} and~\ref{fig:sod_xt}, we observe that the LSPG and Galerkin ROMs accurately characterize 
the system: they correctly track the shock location, expansion waves, etc. We observe both predictions, however, to be highly oscillatory. These oscillations are 
not physical and can lead to numerical instabilities; e.g., due to negative pressure. We observe the \methodAcronymROMs\ to produce less oscillatory solutions than both 
the Galerkin and LSPG ROMs. Critically, we see that the solution becomes less oscillatory as the window size over which the residual is minimized grows. The solution displays no oscillations when the residual is minimized over the entire space--time domain. 

\begin{figure}
\begin{center}
\begin{subfigure}[t]{0.45\textwidth}
\includegraphics[width=1.\linewidth]{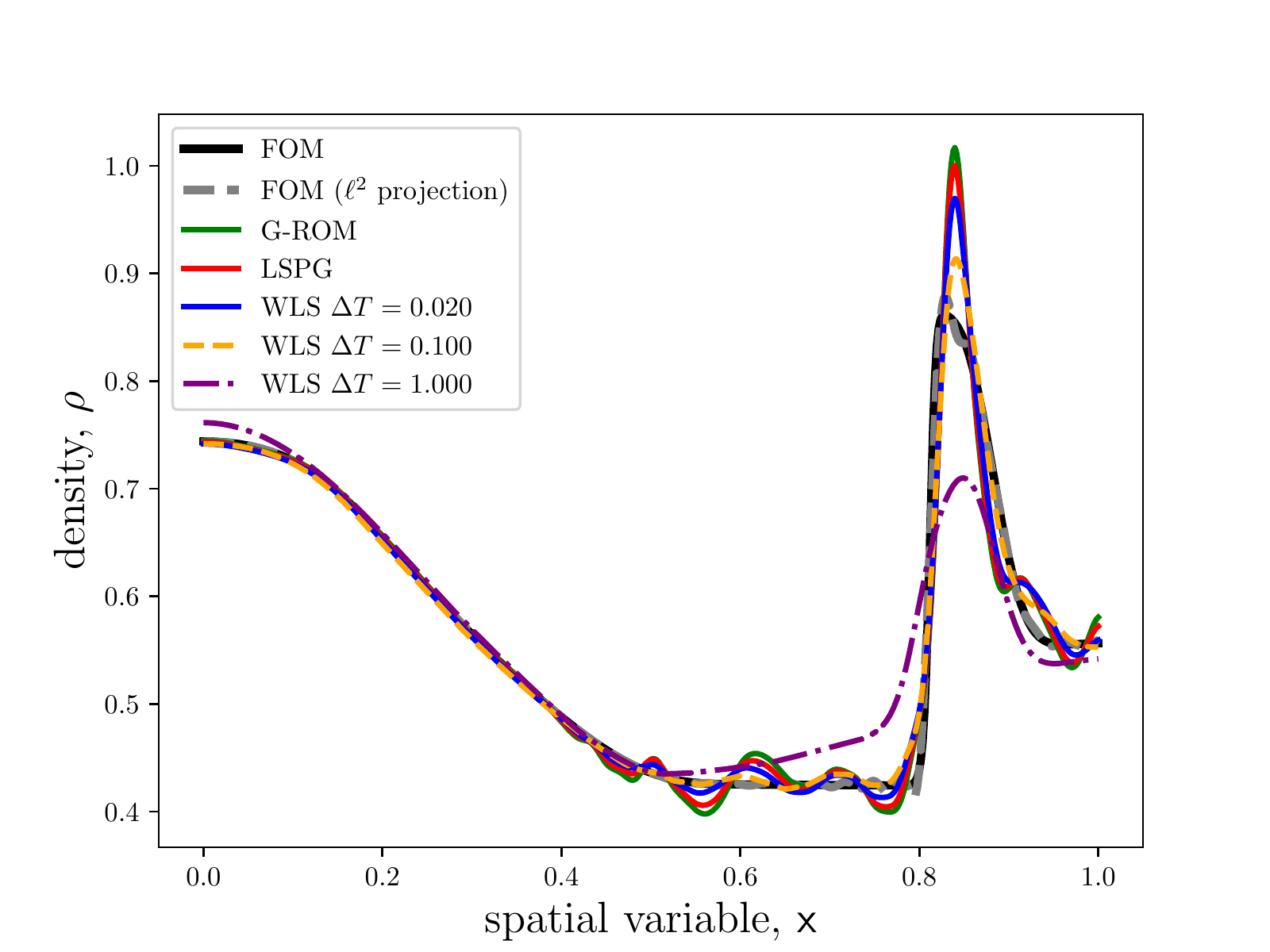}
\caption{$t=0.5$}
\end{subfigure}
\begin{subfigure}[t]{0.45\textwidth}
\includegraphics[width=1.\linewidth]{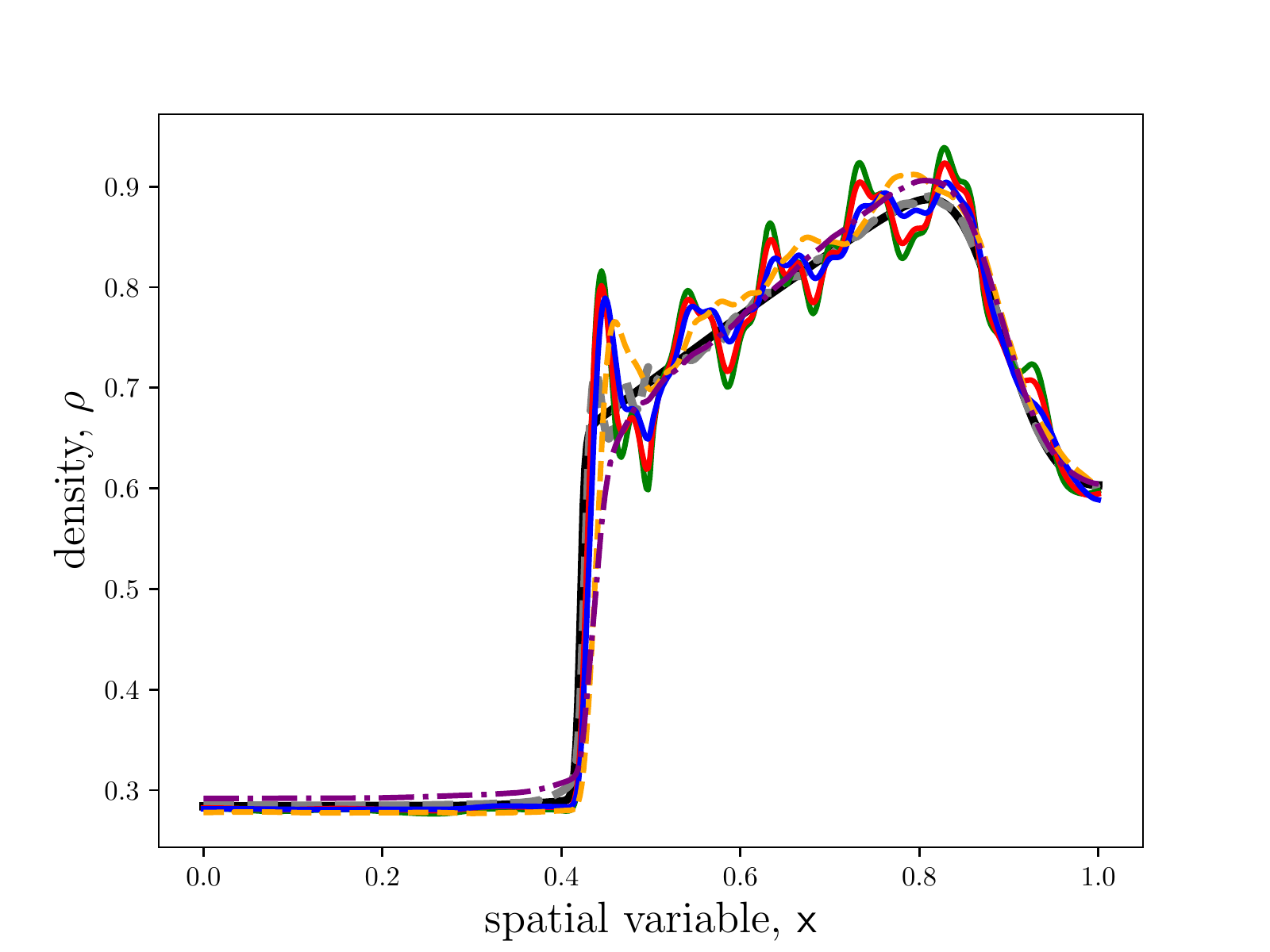}
\caption{$t=1$}
\end{subfigure}
\caption{Density profiles at various time instances.}
\label{fig:sod_density}
\end{center}
\end{figure}

\begin{figure}
\begin{center}
\begin{subfigure}[t]{0.48\textwidth}
\includegraphics[width=1.\linewidth]{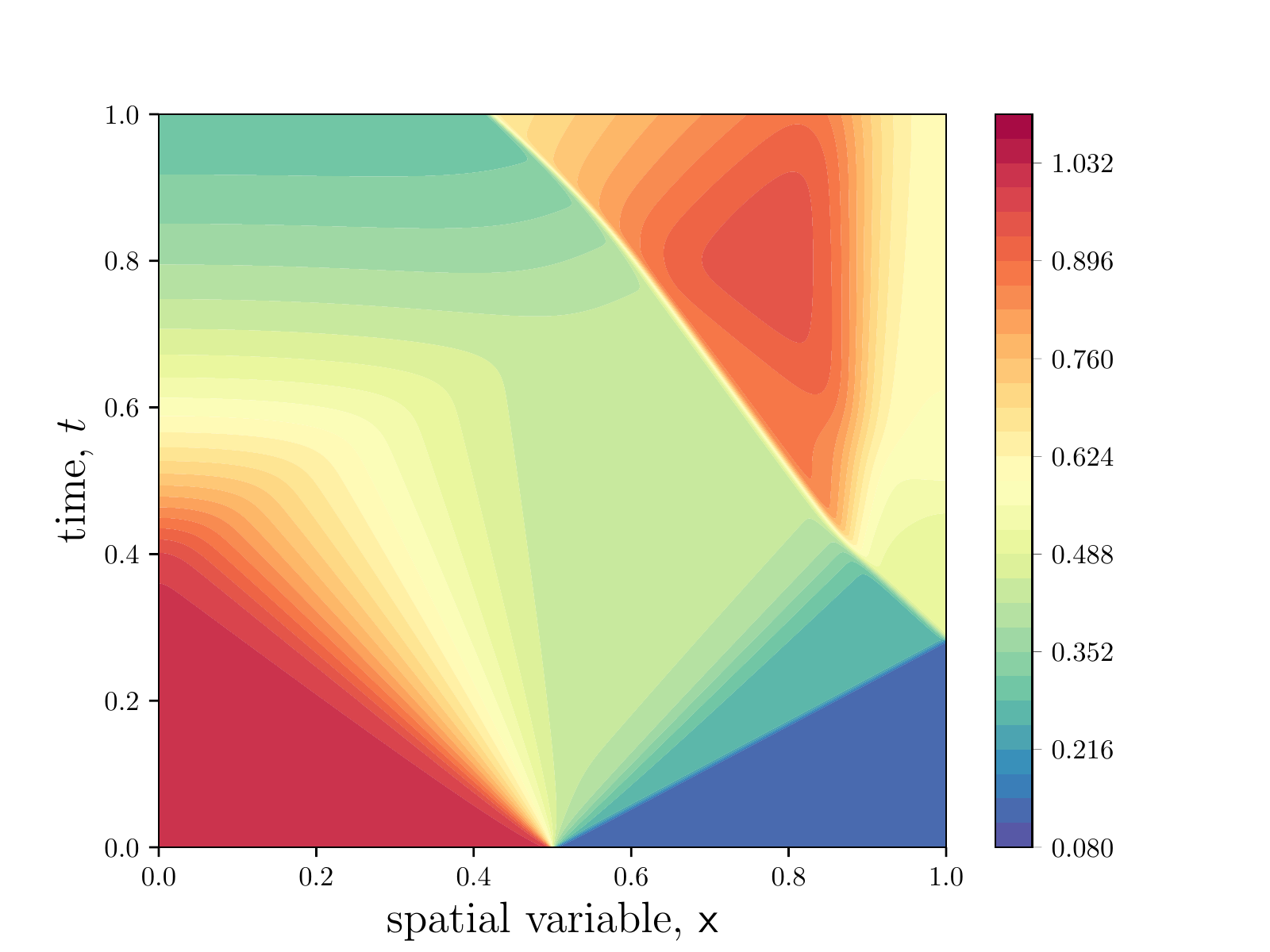}
\caption{Full-order model}
\end{subfigure}
\begin{subfigure}[t]{0.48\textwidth}
\includegraphics[width=1.\linewidth]{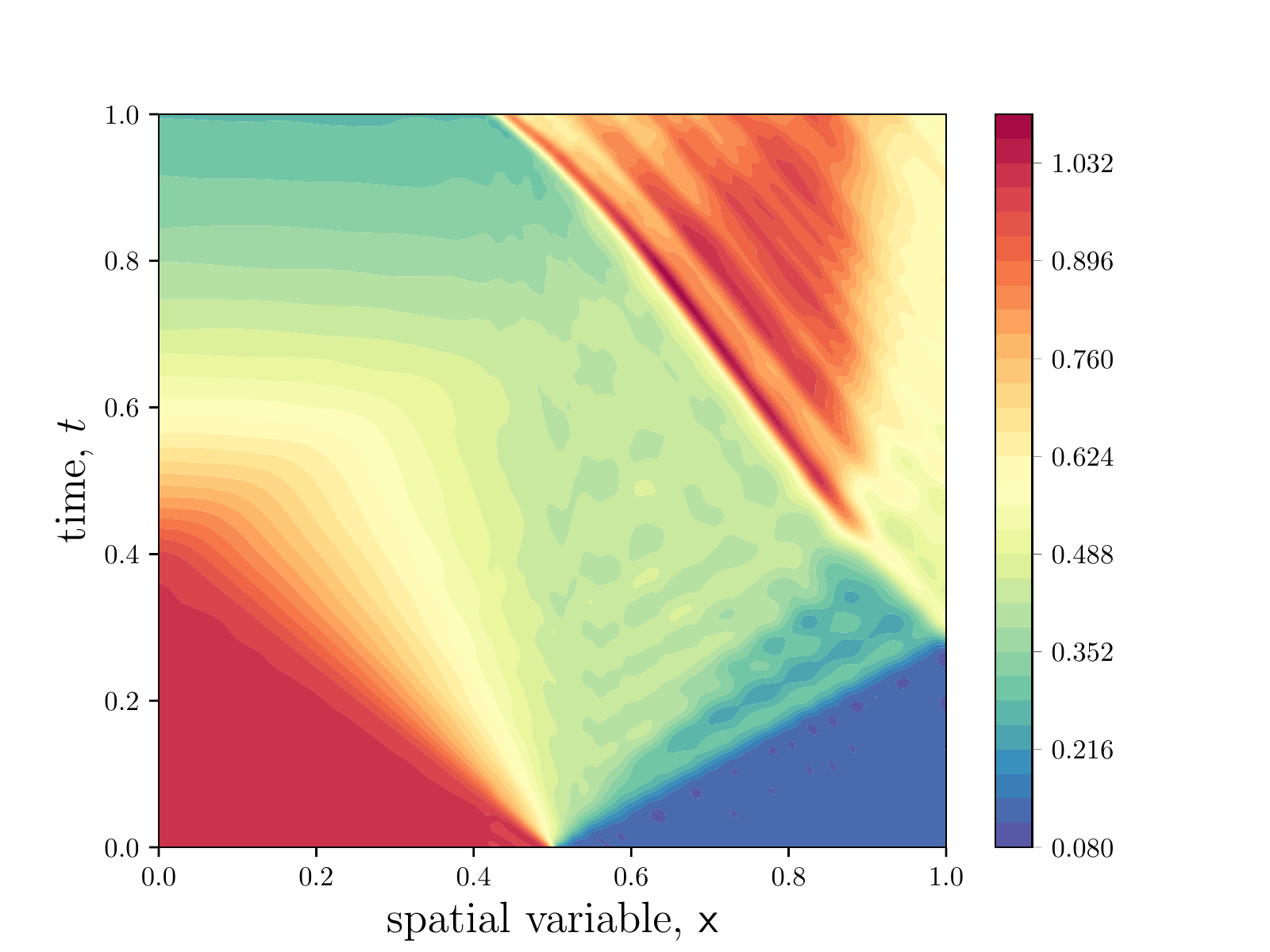}
\caption{G-ROM}
\end{subfigure}
\begin{subfigure}[t]{0.48\textwidth}
\includegraphics[width=1.\linewidth]{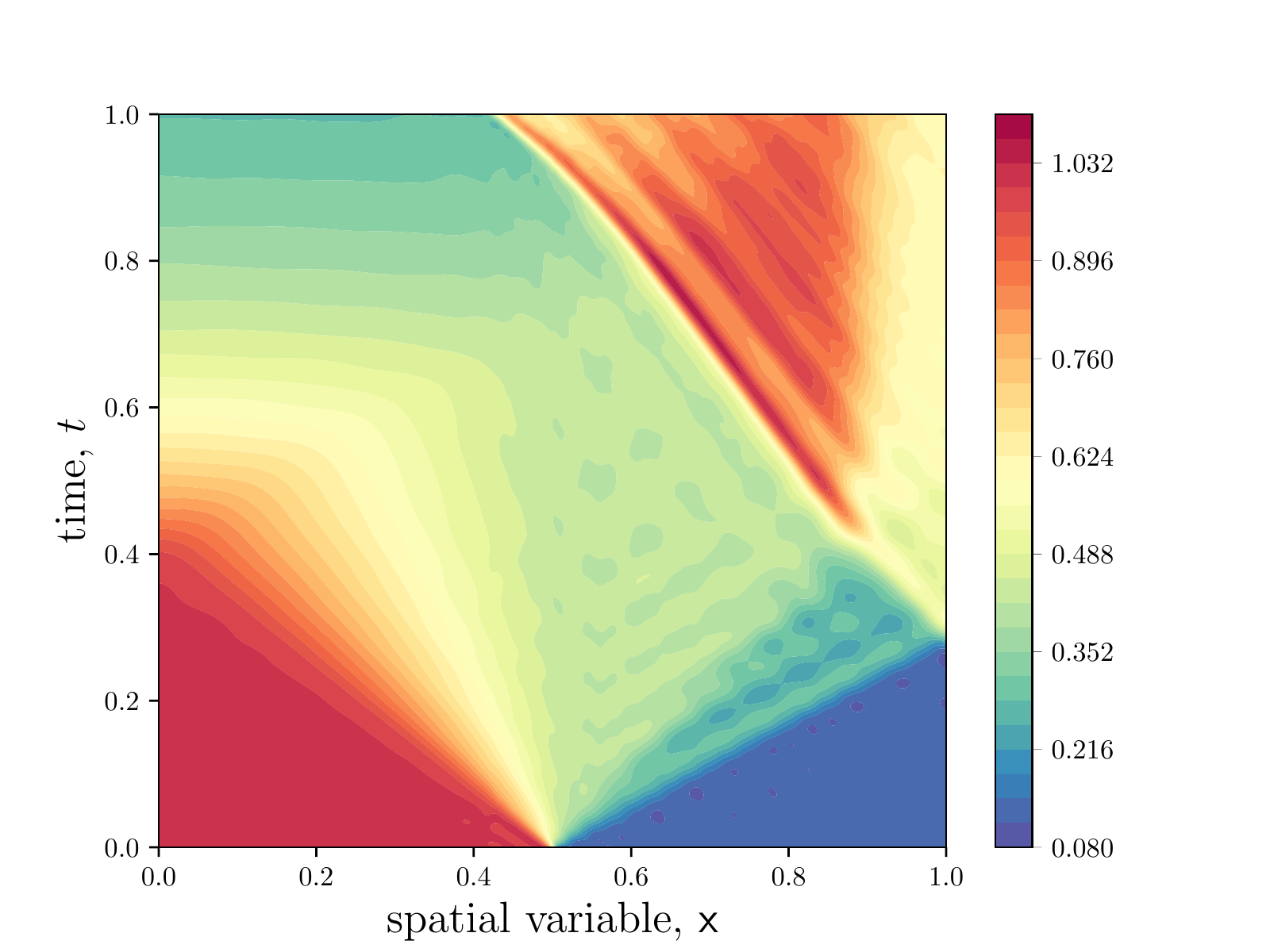}
\caption{LSPG}
\end{subfigure}
\begin{subfigure}[t]{0.48\textwidth}
\includegraphics[width=1.\linewidth]{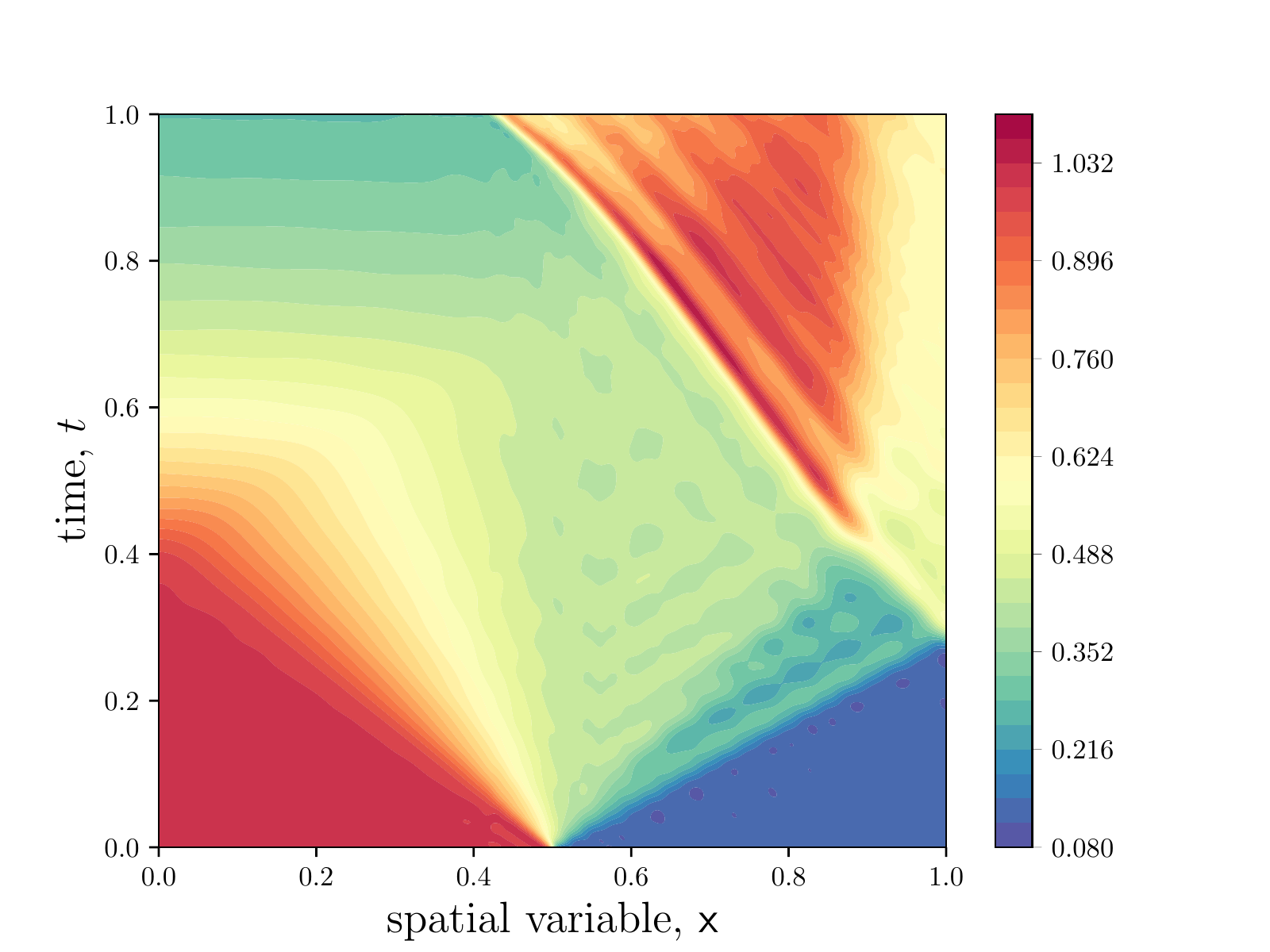}
\caption{\methodAcronym: $\DeltaSlabArg{}= 0.002$}
\end{subfigure}
\begin{subfigure}[t]{0.48\textwidth}
\includegraphics[width=1.\linewidth]{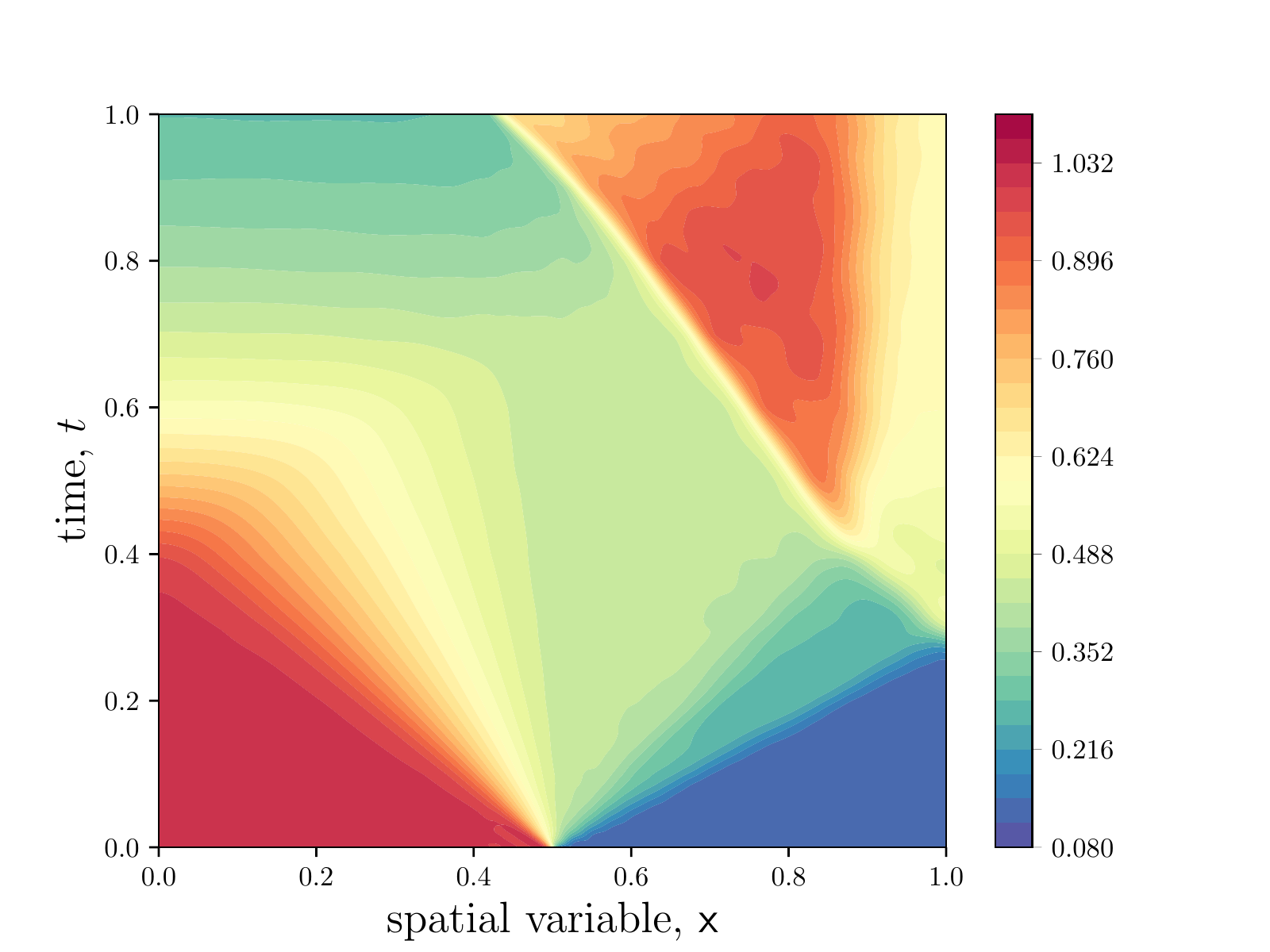}
\caption{\methodAcronym: $\DeltaSlabArg{} = 0.1$}
\end{subfigure}
\begin{subfigure}[t]{0.48\textwidth}
\includegraphics[width=1.\linewidth]{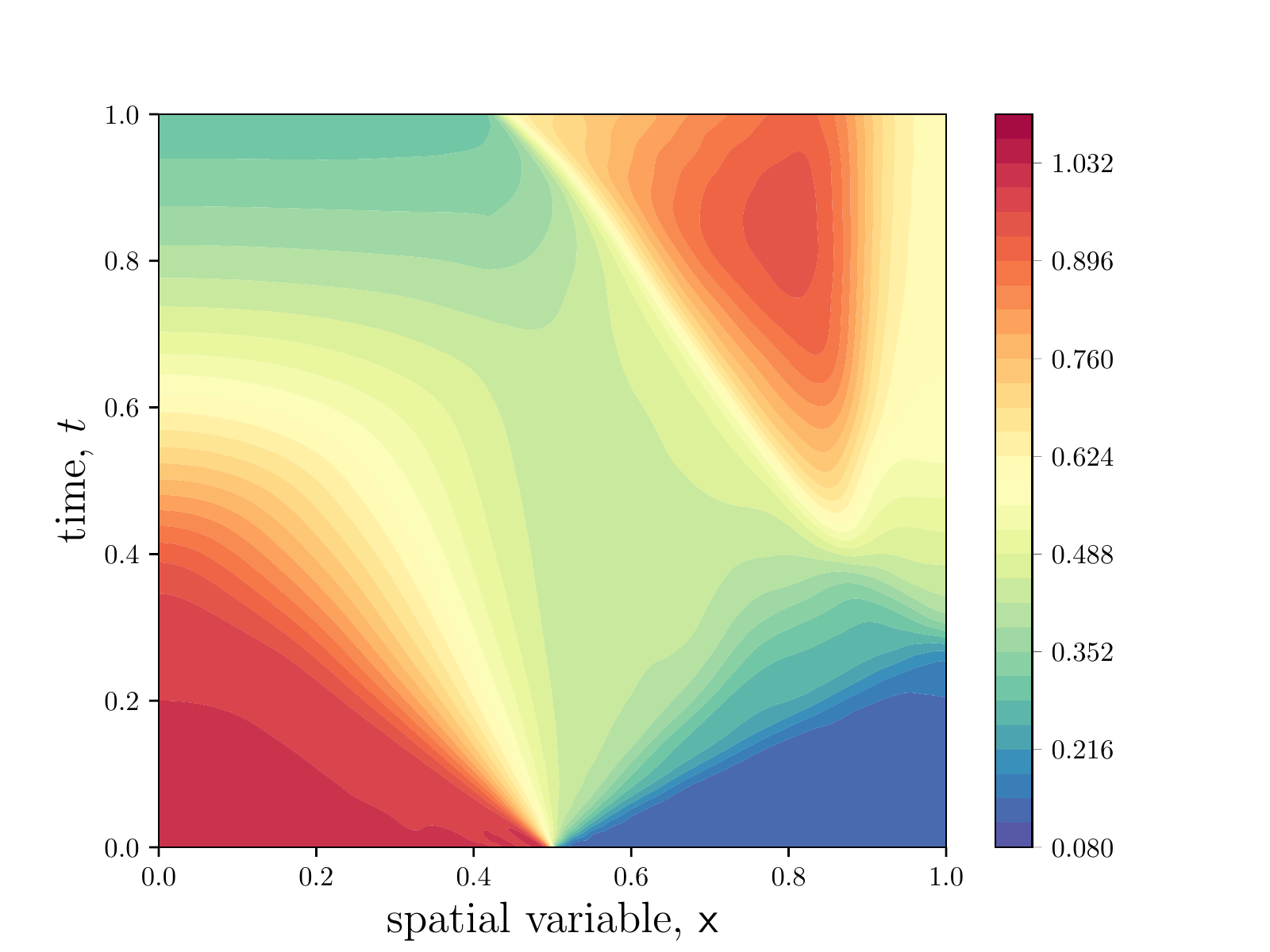}
\caption{\methodAcronym: $\DeltaSlabArg{} = 1.0$}
\end{subfigure}
\caption{$\mathsf{x}-t$ diagrams for the density fields as predicted by the FOM, G-ROM,
	LSPG-ROM, and \methodAcronymROMs.}
\label{fig:sod_xt}
\end{center}
\end{figure}

Next, Figure~\ref{fig:sod_error} shows space--time state errors and the objective function~\eqref{eq:obj} (i.e., the space--time residual norm) over $t \in [0,1]$ for the various ROMs. Most notably, we observe 
that increasing the window size over which the residual is minimized does \textit{not} lead to a monotonic decrease in the space--time error as measured in the $\elltwo$-norm (we refer to this as the $\elltwo$-error). As expected, increasing the window size does lead to a monotonic decrease in the space--time residual, however. We additionally note that, although the space--time
$\elltwo$-error of 
the projected FOM solution is significantly lower than that of the various ROM solutions, the space--time residual norm of the projected FOM solution is \textit{higher} 
than all ROMs. Thus, although the projected FOM solution is more accurate in the $\elltwo$-norm, it does not satisfy the governing equations as well the ROM solutions. 

\begin{figure}
\begin{center}
\begin{subfigure}[t]{0.45\textwidth}
\includegraphics[width=1.\linewidth]{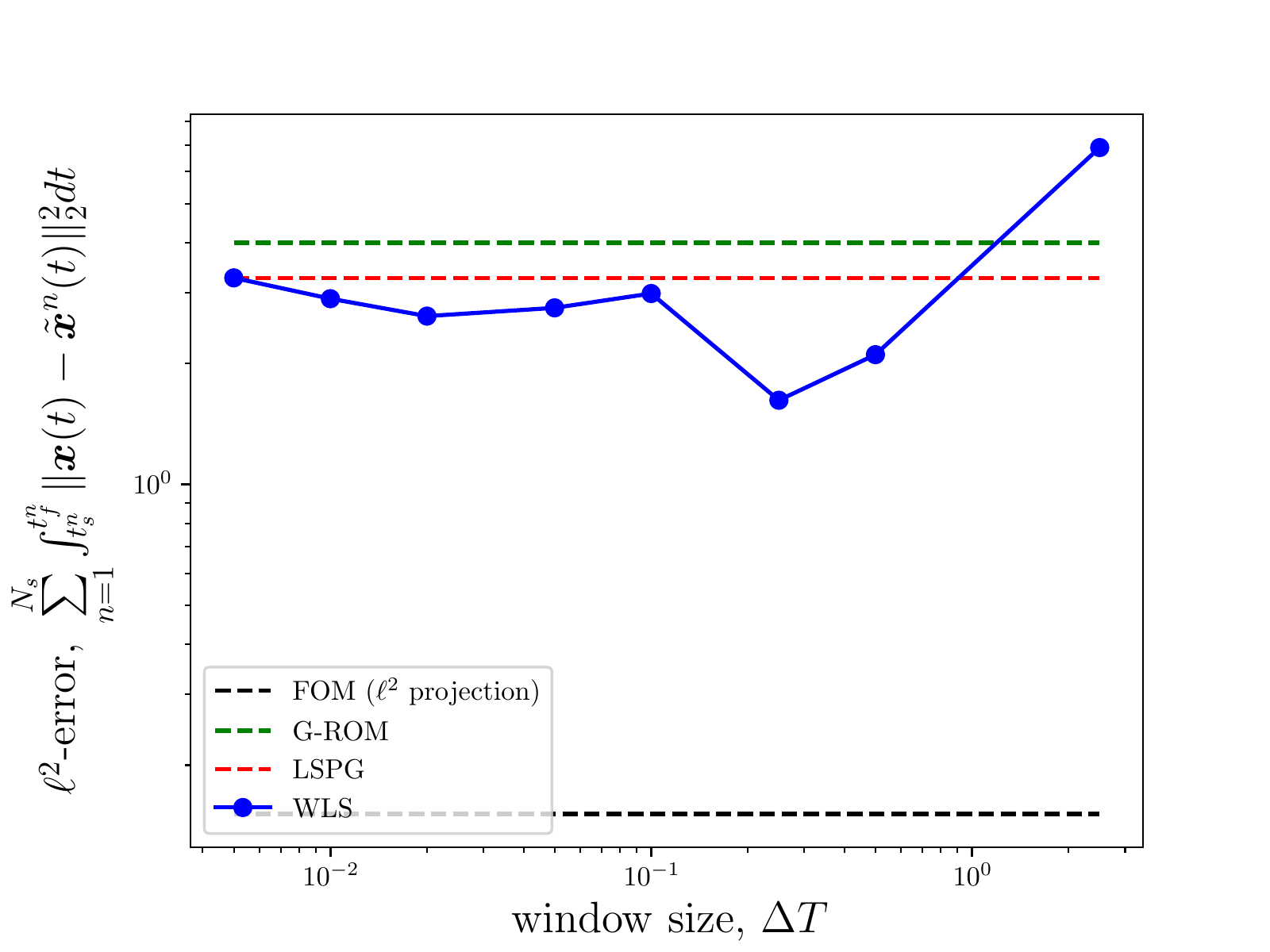}
\caption{Space--time $\elltwo$-error of various ROMs.}
\label{fig:sod_error_a}
\end{subfigure}
\begin{subfigure}[t]{0.45\textwidth}
\includegraphics[width=1.\linewidth]{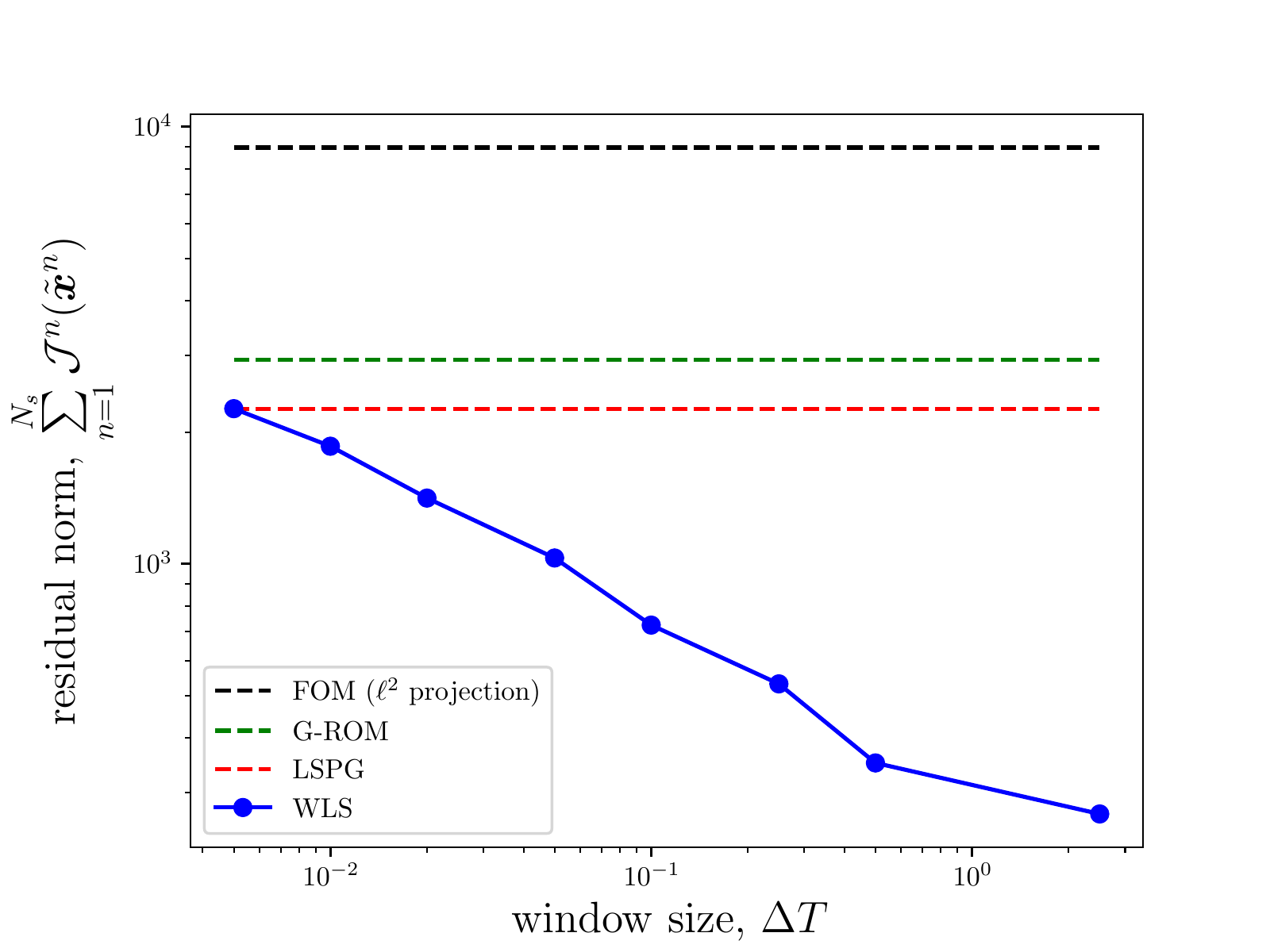}
\caption{Integrated residual norm of various ROMs.} 
\label{fig:sod_error_b}
\end{subfigure}
\caption{Integrated performance metrics of the Galerkin, LSPG, and \methodAcronymROMs.  Note that the Galerkin and LSPG ROMs do not depend on the window size.} 
\label{fig:sod_error}
\end{center}
\end{figure}

We now examine the comparative performance of the direct and indirect solution techniques for the \methodAcronymROMs\ using various time discretization techniques. 
Figure~\ref{fig:sod_error_methods} 
shows the same space--time $\elltwo$-errors and residual norms as in Figure~\ref{fig:sod_error}, but this time results are shown for 
the various \methodAcronymROMs. In both Figures~\ref{fig:sod_error_methods_a} and~\ref{fig:sod_error_methods_b}, we observe that the \methodAcronym\ method 
is relatively insensitive to the solution technique (direct vs indirect) and underlying discretization scheme, although some minor differences are observed.
In particular, ROMs using the second order explicit AB2 scheme with a time step of $\Delta t = 0.0005$ provide similar results to the 
ROMs using the second-order CN scheme at a time step of $\Delta t = 0.002$. All methods display similar dependence on the window size: the optimal 
$\elltwo$-error occurs when the window size is $\DeltaSlabArg{} = 0.1$, and the residual decreases monotonically as the window size grows. 
\begin{figure}
\begin{center}
\begin{subfigure}[t]{0.45\textwidth}
\includegraphics[width=1.\linewidth]{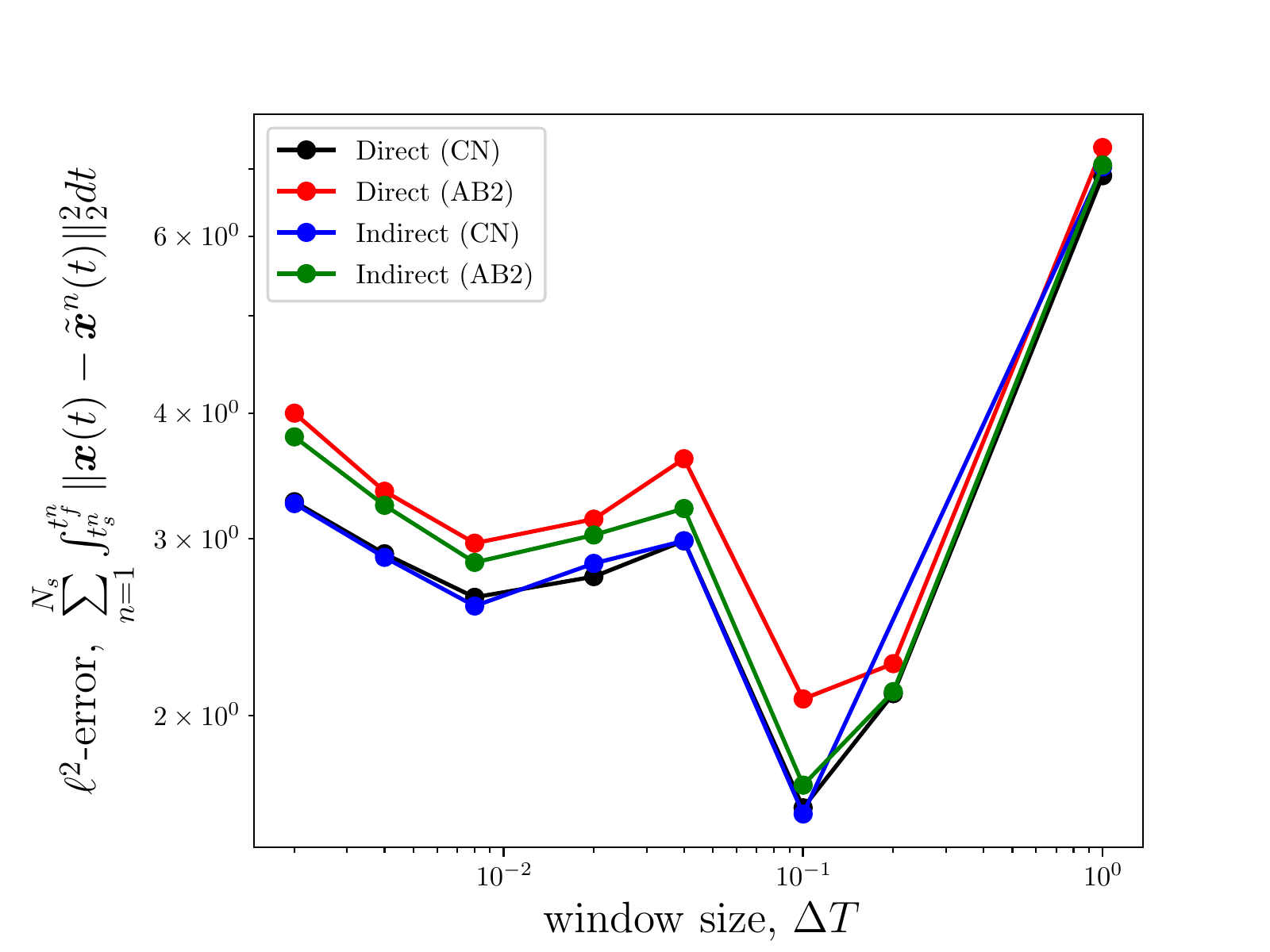}
\caption{Integrated $\elltwo$-error of the \methodAcronymROMs.}
\label{fig:sod_error_methods_a}
\end{subfigure}
\begin{subfigure}[t]{0.45\textwidth}
\includegraphics[width=1.\linewidth]{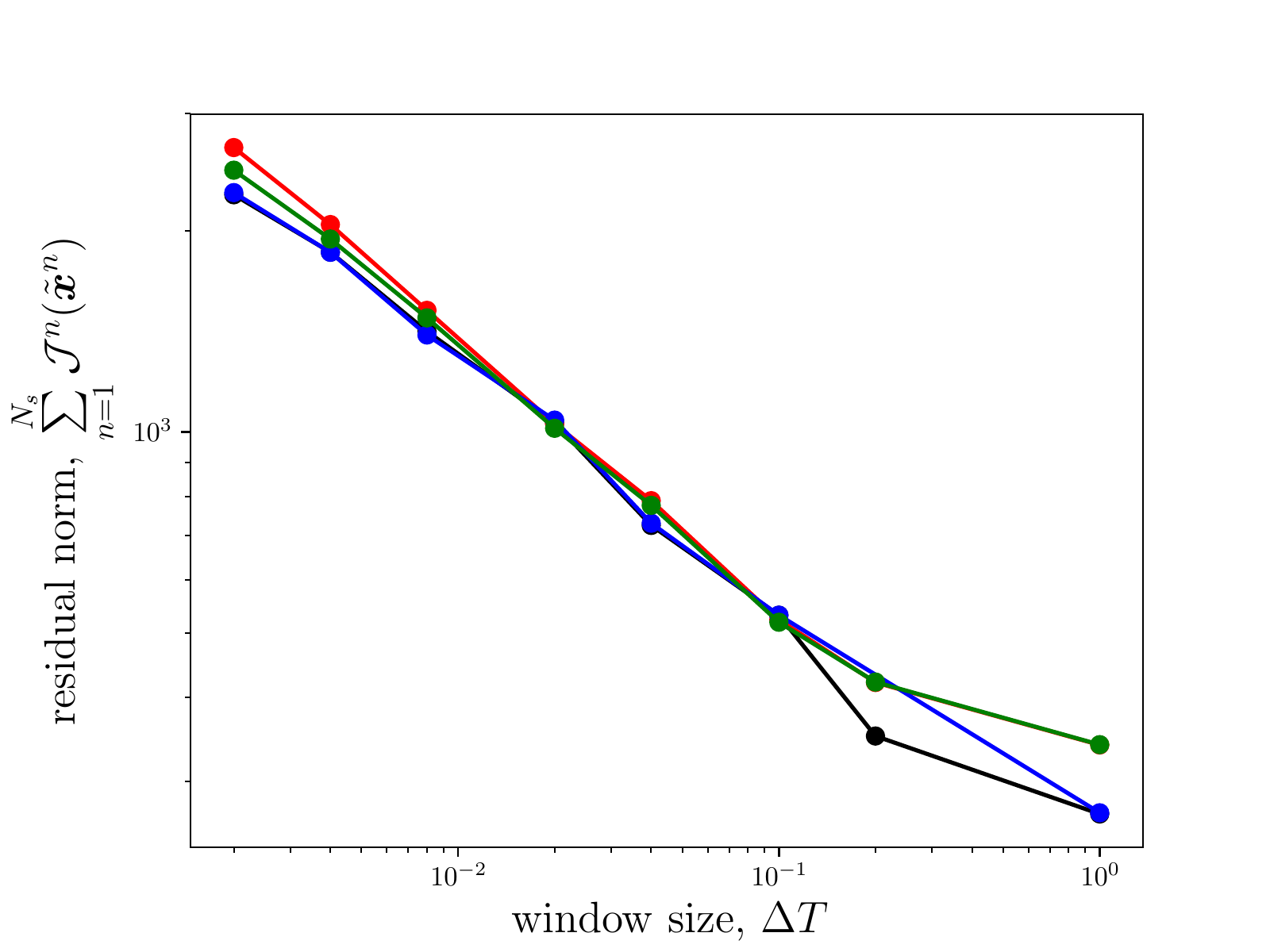}
\caption{Integrated residual norm of the \methodAcronymROMs.} 
\label{fig:sod_error_methods_b}
\end{subfigure}
\caption{Integrated performance metrics of various \methodAcronymROMs.} 
\label{fig:sod_error_methods}
\end{center}
\end{figure}

Next, Figure~\ref{fig:sod_walltime} provides the CPU wall-clock times for the various \methodAcronymROMs. We observe that the computational cost of all methods grows
as the window size is increased. For indirect methods, this increase in cost is due to the fact that, as the window size 
grows, more iterations of the FBSM are required for convergence. For direct methods, the increase in cost is due to (1) the cost 
associated with forming and solving the normal equations at each Gauss--Newton iteration and (2) the increased number of 
Gauss--Newton iterations required for convergence. We observe the cost of the FBSM method to increase more rapidly 
than the direct methods. Encouragingly, \methodAcronymROMs\ based on the CN discretization
minimizing the full space--time residual (comprising 
500 time instances) only  
cost approximately one order of magnitude more than the case where the residual is minimized over a single time step (i.e., LSPG). Also of interest is the 
fact that the direct method utilizing the AB2 time-discretization scheme costs only slightly more per a given window size than the direct method 
utilizing the CN time discretization. This is despite the fact that AB2 is evolved at a time step of $\Delta t = 0.0005$, while CN uses $\Delta t = 0.002$; thus 
AB2 contains 4 times more temporal degrees of freedom than CN.  Finally, we emphasize that the 
results presented here are for standard algorithms (e.g., Gauss--Newton and the FBSM). As mentioned in 
Remarks~\ref{remark:gaussnewton} and~\ref{remark:fbsm}, we expect that the computational cost 
of both indirect and direct methods can be decreased through the use and/or development of 
algorithms tailored to the windowed minimization problem.

\begin{figure}
\begin{center}
\begin{subfigure}[t]{0.45\textwidth}
\includegraphics[width=1.\linewidth]{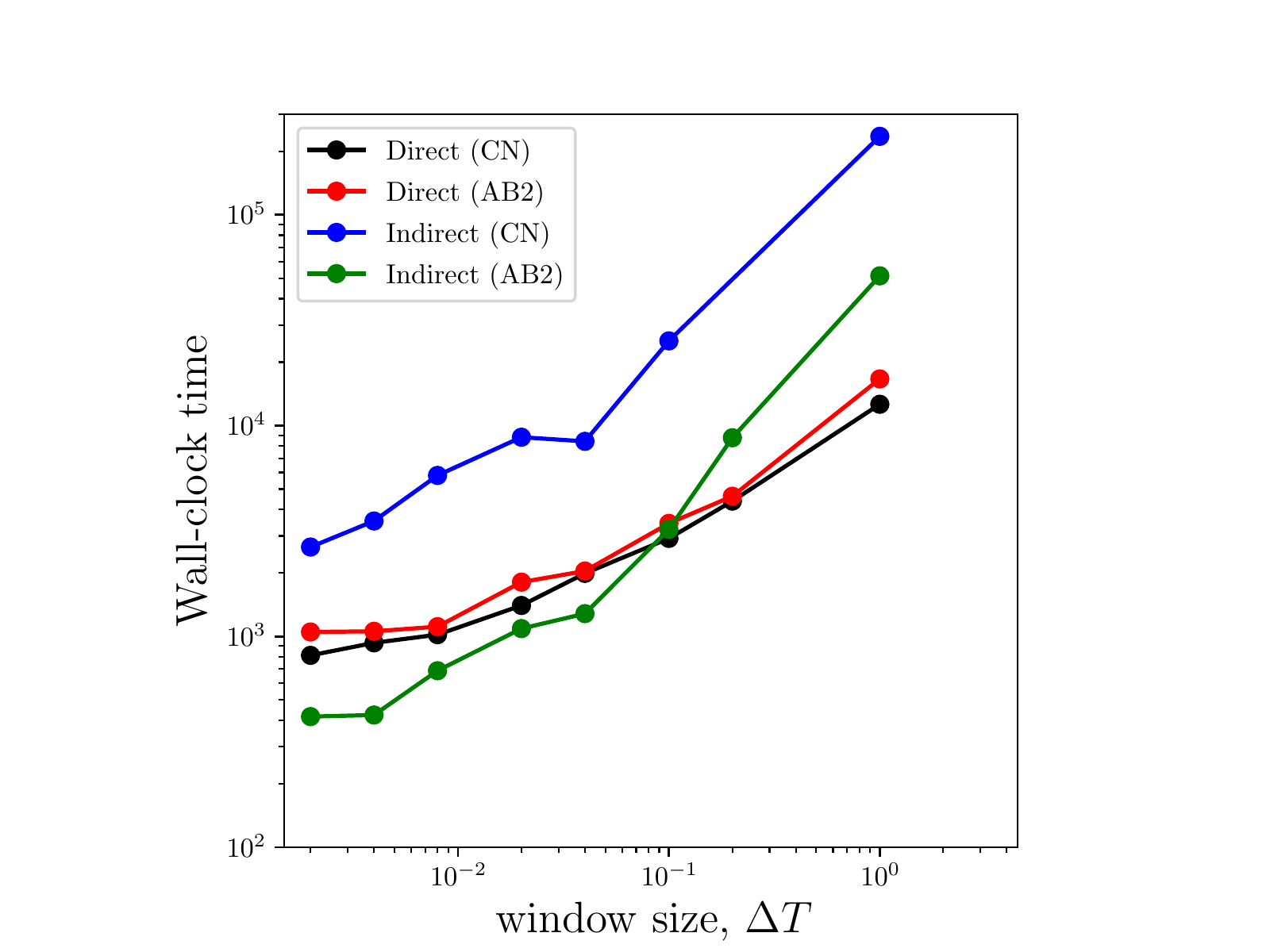}
\label{fig:sod_error_a}
\end{subfigure}
\caption{Comparison of wall-clock times of the direct and indirect \methodAcronymROMs\ as a function of window size.} 
\label{fig:sod_walltime}
\end{center}
\end{figure}

Finally, we study the impact of the time step on the \methodAcronymROM\ results. We examine \methodAcronymROMs\ that use a
window size of $\Delta T = 0.1$, with time steps $\Delta t =$  $0.001$, $0.002$, $0.005$, $0.01$, $0.05$, $0.1$ (i.e.,  
$100$, $50$, $20$, $10$, $2$, and $1$ time instances per window). We additionally 
consider LSPG ROMs leveraging the same set of time steps. To assess time step convergence, we compare results to a new full-order model, which is as described in Section~\ref{sec:sod_fom} but uses a fine time step of $\Delta t = 10^{-4}$. Figure~\ref{fig:time_step_study} shows the $\elltwo$-error and residual norm  
of the various ROMs. We observe that the $\elltwo$-error and residual norm 
of the \methodAcronymROMs\ decrease and converge as the time step decreases. This is in direct contrast to the LSPG ROMs, in where the $\elltwo$-error and residual norm display a complex dependence on the time step. This result 
demonstrates that \methodAcronym\ overcomes the time-step dependence inherit to the LSPG approach. Lastly, Figure~\ref{fig:walltime_dtvar}
shows the relative wall-clock times of the \methodAcronymROMs\ with respect to the LSPG ROMs. It is seen that for all time steps considered, the \methodAcronymROMs\ 
are less than 6x the cost of LSPG; this is despite the window sizes comprising up to 100 time instances. 
\begin{figure}
\begin{center}
\begin{subfigure}[t]{0.45\textwidth}
\includegraphics[width=1.\linewidth]{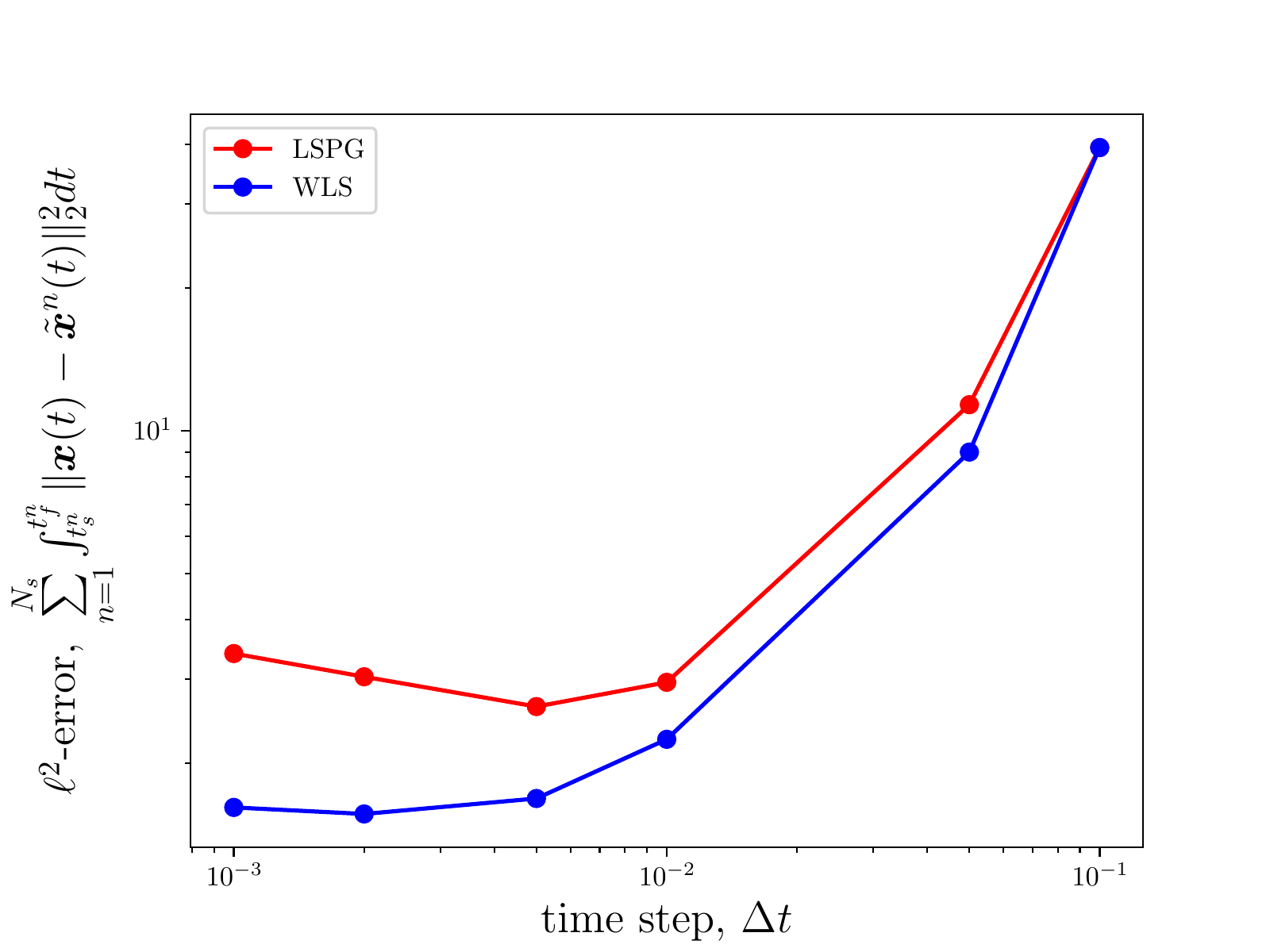}
\caption{Space--time $\elltwo$-error of the \methodAcronymROM.}
\label{fig:sod_error_methods_a}
\end{subfigure}
\begin{subfigure}[t]{0.45\textwidth}
\includegraphics[width=1.\linewidth]{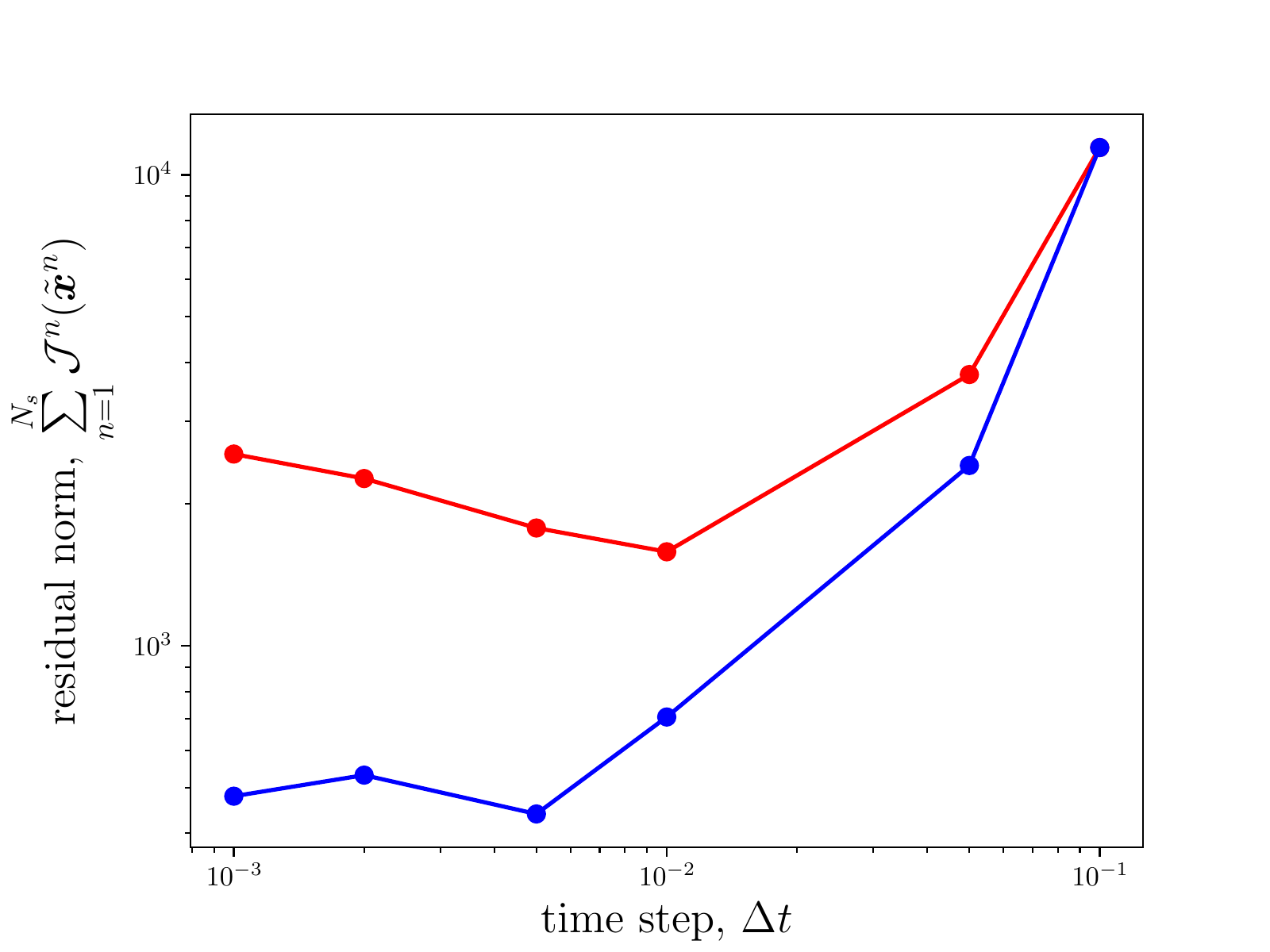}
\caption{Space--time residual norm of the \methodAcronymROM.} 
\label{fig:sod_error_methods_b}
\end{subfigure}
\caption{Performance metrics of the Galerkin, LSPG, and \methodAcronymROMs.} 
\label{fig:time_step_study}
\end{center}
\end{figure}

\begin{figure}
\begin{center}
\begin{subfigure}[t]{0.45\textwidth}
\includegraphics[width=1.\linewidth]{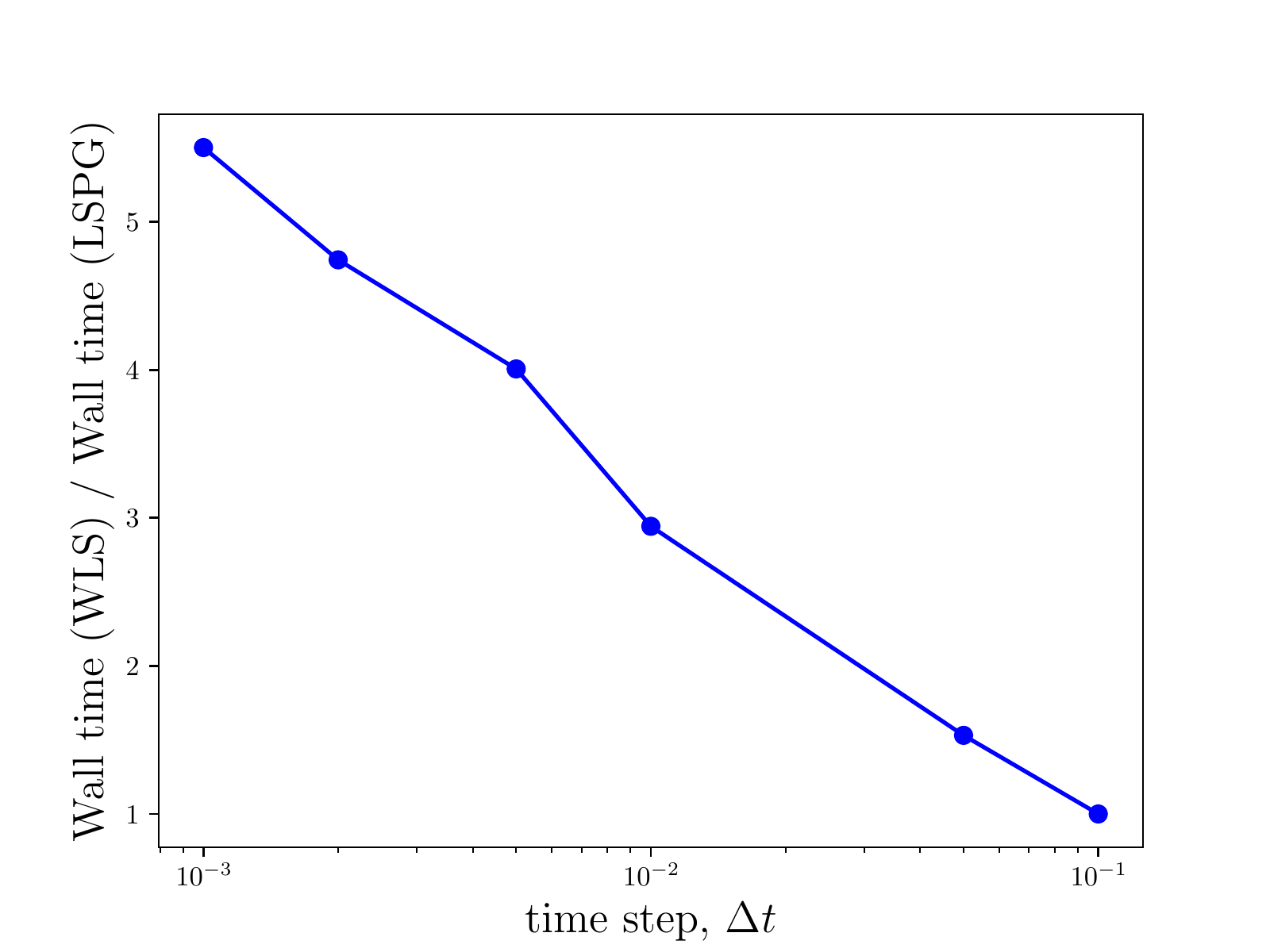}
\label{fig:sod_error_a}
\end{subfigure}
\caption{Comparison of wall-clock times of the \methodAcronymROMs\ to LSPG ROMs as a function of the time step. WLS ROMs have a fixed window size of $\Delta T = 0.1$.} 
\label{fig:walltime_dtvar}
\end{center}
\end{figure}

\subsubsection{Summary of numerical results for the Sod shock tube}
The key observations from the results of the first numerical example are: 
\begin{enumerate}
\item Increasing the window size over which the residual is minimized led to more physically relevant solutions. Specifically, we observed that as the window size over which the residual was minimized grew, \methodAcronym\ led to less oscillatory solutions.
\item Increasing the window size over which the residual is minimized does not necessary lead to a lower space--time error as measured by the $\elltwo$-norm. We observed that minimizing the residual over an intermediary window size led to the lowest space--time error in the $\elltwo$-norm. 
\item \methodAcronym\ displays time-step convergence: both the $\elltwo$-error and residual norm decreased and displayed time-step convergence as the time step decreased. This is in contrast to LSPG.
\item For all examples considered, in where the windows comprised up to 2000 time instances, \methodAcronym\ with the direct method is between 1x and 10x the cost of the LSPG. The 
direct method was observed to be slightly more efficient and robust than the indirect method.
\end{enumerate} 

\subsection{Cavity flow}
The next numerical example considers collocated ROMs\footnote{Collocation is a form of hyper-reduction which requires sampling the full-order model at only select grid points.} of a viscous, compressible flow in a two-dimensional cavity. 
The flow is described by the two-dimensional compressible Navier-Stokes equations,
\begin{equation}\label{eq:compressible_ns}
\frac{\partial \mathbf{u}}{\partial t} + \nabla \cdot \big( \mathbf{F}(\mathbf{u} ) - \mathbf{F}_v (\mathbf{u},\nabla \mathbf{u} )      \big) =\bz,
\end{equation}
where $\mathbf{u}: [0,T] \times \Omega \rightarrow \RR{4}$ comprise the density, $\mathsf{x}_1$ and $\mathsf{x}_2$ momentum, and total energy. The terms $\mathbf{F}$ and $ \mathbf{F}_v$ are the inviscid and viscous fluxes, respectively. For a two-dimensional flow, the state vector and inviscid fluxes are
$$
\mathbf{u} = \begin{Bmatrix}
\rho \\ \rho u_1 \\ \rho u_2 \\ \rho E \end{Bmatrix}, \qquad \mathbf{F}_{1} = \begin{Bmatrix} \rho u_1 \\ \rho u_1^2 +      p \\ \rho u_1 u_2 \\ u_1(E + p) \end{Bmatrix}, 
\qquad \mathbf{F}_{2} = \begin{Bmatrix} \rho u_2 \\ \rho u_1 u_2  \\ \rho u_2^2 + p \\ u_2(E + p) \end{Bmatrix}.
$$
The viscous fluxes are given by
$$
\qquad \mathbf{F}_{v_1} = \begin{Bmatrix} 0 \\ \tau_{11} \\ \tau_{12}  \\ u_j \tau_{j1} + c_p \frac{\mu}{\text{Pr}} \frac{\partial T}{\partial x_1}  \end{Bmatrix}, 
\qquad \mathbf{F}_{v_2} = \begin{Bmatrix} 0 \\ \tau_{21} \\ \tau_{22}  \\ u_j \tau_{j2} + c_p \frac{\mu}{\text{Pr}} \frac{\partial T}{\partial x_2}  \end{Bmatrix},
$$
where $\mu \in \RRplus$ is the dynamic viscosity, $\text{Pr} = 0.72$ is the Prandtl number, $T : \Omega \rightarrow \RRplus$ the temperature, and $c_p \in \RRplus$ the heat capacity ratio.
We assume a Newtonian fluid, which leads to a viscous stress tensor of the form
\begin{equation*}
\tau_{ij} = 2\mu S_{ij},
\end{equation*}
where
\begin{equation*}
 S_{ij} = \frac{1}{2} \big( \frac{\partial u_i}{\partial \mathsf{x}_j} + \frac{\partial u_j}{\partial \mathsf{x}_i} \big) - \frac{1}{3} \frac{\partial      u_k}{\partial \mathsf{x}_i} \delta_{ij}.
\end{equation*}
We close the Navier--Stokes equations with a constitutive relationship for a calorically perfect gas,
$$p = (\gamma - 1)( \rho E - \frac{1}{2} \rho u_1^2 - \frac{1}{2} \rho u_2^2 \big),$$
where $\gamma = 1.4$ is the heat-capacity ratio.

Figure~\ref{fig:cav_fig} depicts the domain $\Omega$ and flow conditions. The Reynolds number is defined as $\text{Re} = \rho_{\infty} \norm{\mathbf{v}_{\infty}} L / \mu$ with a characteristic length set at $L=1$ and $\mathbf{v} = [u_1 \; \; u_2]^T$, the speed of sound is defined by $a_{\infty} = \sqrt{\gamma p_{\infty}/\rho_{\infty}}$, and $\infty$ subscripts refer to free-stream conditions. We employ free-stream boundary conditions at the inlet, outlet, and top wall of the cavity. We enforce no-slip boundary conditions 
on the bottom wall of the cavity. 

\subsubsection{Description of FOM and generation of \spatialAcronym\ trial subspace}
The full-order model comprises a discontinuous-Galerkin discretization. We obtain the discretization 
by partitioning the domain into $100$ elements in the flow direction and $40$ elements 
in the wall-normal direction. The discretization represents the solution to third order over each element using tensor product polynomials of order $p=2$, 
resulting in $36000$ unknowns for each conserved variable. Spatial discretization via the discontinuous Galerkin method yields a dynamical system 
of the form
$$\frac{d \state}{dt} = \mass_{\text{DG}}^{-1} \velocity_{\text{DG}} (\state),$$
where $\mass_{\text{DG}} \in \RR{N \times N}$ is the (block diagonal) DG mass matrix and $\velocity_{\text{DG}}: \RR{N} \rightarrow \RR{N}$ is the DG velocity operator containing 
surface and volume integrals. By the definition of the FOM~\eqref{eq:FOM}, the dynamical system velocity is $\velocity = \mass_{\text{DG}}^{-1} \velocity_{\text{DG}}$. 
Figure~\ref{fig:cav_mesh} shows the computational mesh. The 
DG method uses the Rusanov flux at the cell interfaces~\cite{rusanov} and uses the first form of Bassi and Rebay~\cite{br1} for the viscous fluxes. Time integration 
is performed via a third-order strong stability preserving Runge-Kutta method~\cite{ssp_rk3} with a time step of $\Delta t = 0.001$. 
 
The reduced-order models leverage POD to construct the \spatialAcronym\ trial basis and use q-sampling~\cite{qdeim_drmac} based on snapshots of the FOM velocity to select the sampling points (and as a result the weighting 
matrix $\stweightingMatArg{}$); we use a constant weighting matrix across all windows, e.g., $\stweightingMatArg{n} \equiv \stweightingMatArg{}$, $n=1,\ldots,\nslabs$. The process used to construct the initial conditions, trial subspaces, and weighting matrices for the ROMs is as follows:
\begin{enumerate}
\item Initialize the FOM with uniform free-stream conditions.
\item Evolve the FOM for $t \in [0,400]$.
\item Reset the time coordinate, $0 \leftarrow t$, and execute Algorithm~\ref{alg:pod} with $\stateInterceptArg{} = \bz, N_{\text{skip}} = 100$ and $K = \{136,193\}$ over $t \in [0,100]$ to construct two trial subspaces comprising $\approx 95\%$ and $\approx 97\%$ of the snapshot energy, respectively. 
\item Execute Algorithm~\ref{alg:qdeim} with $N_{\text{skip}} = 100$, $n_s = 129$ to obtain the sampling point matrix of dimension $\stweightingMatOneArg{} \in \{0,1\}^{1603 \times \fomdim }$. 
\end{enumerate}
Figure~\ref{fig:cav_sampmesh} shows the resulting sample mesh used in the ROM simulations. To depict the nature of the flow, Figure~\ref{fig:fom_sols_cav} shows snapshots of the vorticity field generated by the FOM for several time instances used in training.  
\begin{table}[]
\begin{centering}
\begin{tabular}{c c c c}
\hline
Basis \# & Trial Basis Dimension ($K$) &  Energy Criterion & Sample Points ($n_S$) \\
\hline
1    & $136$ & $95.0\%$ & $1603$ \\
2    & $193$ & $97.0\%$ & $1603$ \\
\hline
\end{tabular}
\caption{Summary of the various basis sizes employed in the cavity flow example.}
\label{tab:rom_basis_details}
\end{centering}
\end{table}

\subsubsection{Description of reduced-order models}
We consider collocated ROMs based on the Galerkin, least-squares Petrov--Galerkin, and \methodAcronym\ approaches. Details on the implementation of the methods is as follows:
\begin{itemize}
\item \textit{Galerkin ROM with collocation}: We consider a Galerkin ROM with collocation and evolve the ROM in time with the CN time scheme at a time step of $\Delta t =0.1$.

\item \textit{LSPG ROM with collocation}: We consider a collocated LSPG ROM, which is built on top of the FOM discretization using the CN scheme for temporal 
discretization. We employ a time step of $\Delta t  = 0.1$. The implementation is the same as previously described. 
 
\item \textit{\methodAcronymROMs\ with collocation}: We consider \methodAcronymROMs\ solved via the direct method. The ROMs use the CN time discretization with a time step of 
$\Delta t = 0.1$. The implementation is the same as previously described. 
\end{itemize}

\begin{figure}
\begin{center}
\includegraphics[trim={2cm 7cm 4cm 6cm},clip,width=0.95\linewidth]{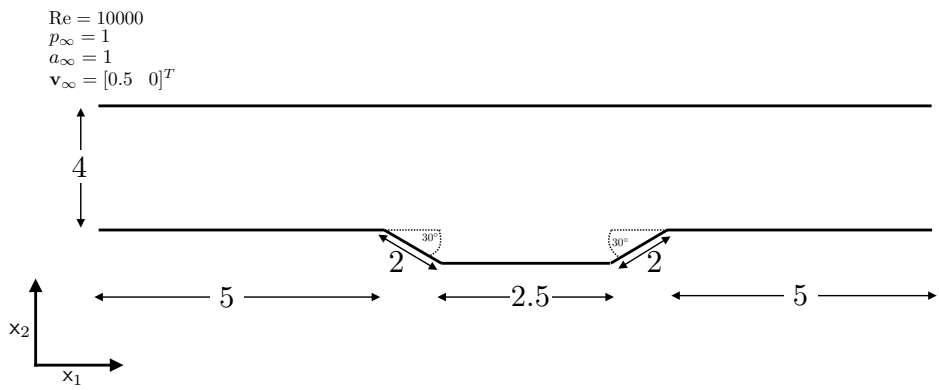}
\caption{Figure depicting geometry and flow conditions of the cavity flow problem.} 
\label{fig:cav_fig}
\end{center}
\end{figure}

\begin{figure}
\begin{center}
\includegraphics[trim={0cm 14cm 0cm 14cm},clip,width=1.\linewidth]{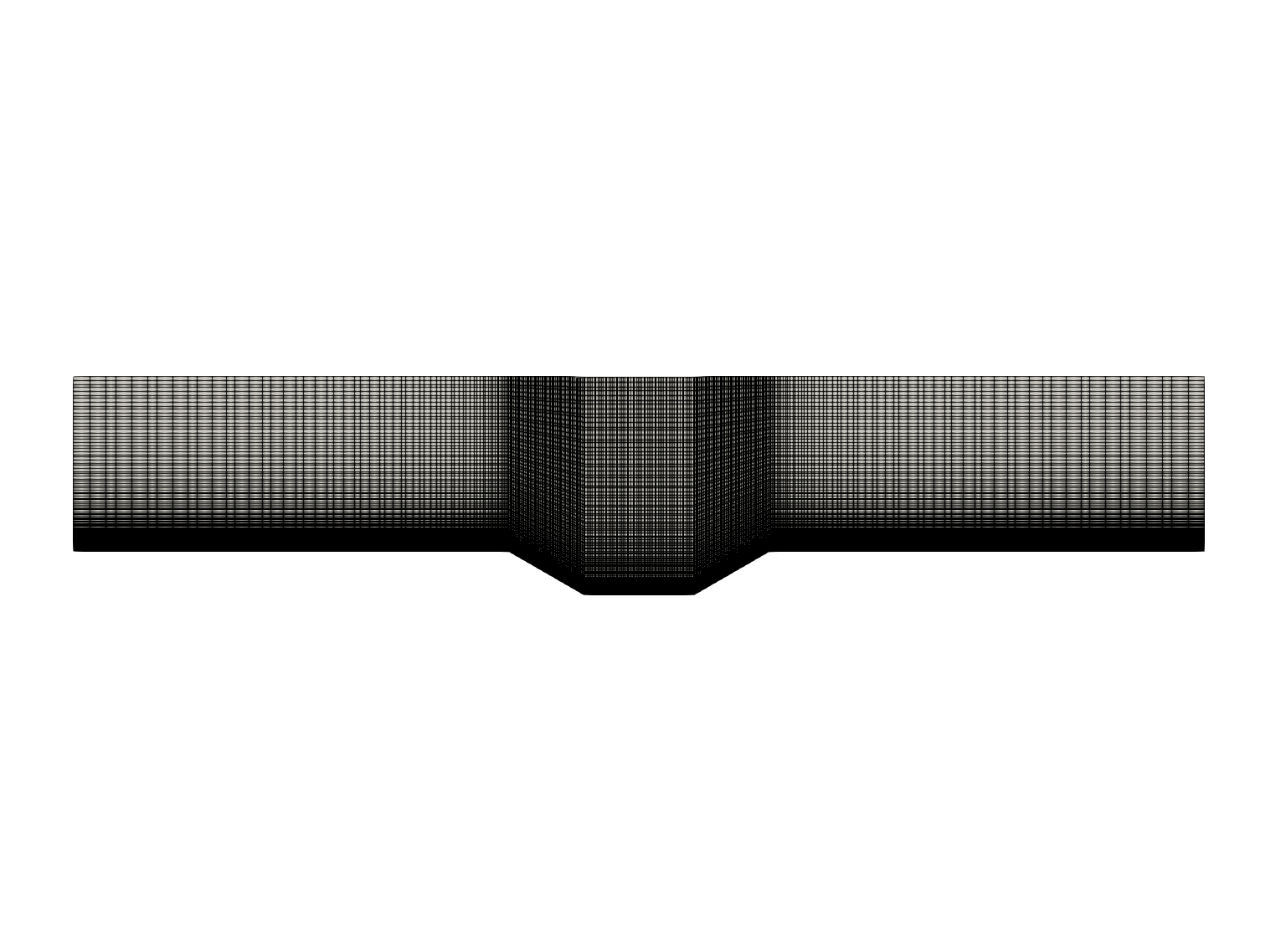}
\caption{Computational mesh employed in cavity flow simulations.}
\label{fig:cav_mesh}
\end{center}
\end{figure}

\begin{figure} 
\begin{center}
\includegraphics[trim={0cm 0cm 0cm 0cm},clip,width=0.65\linewidth]{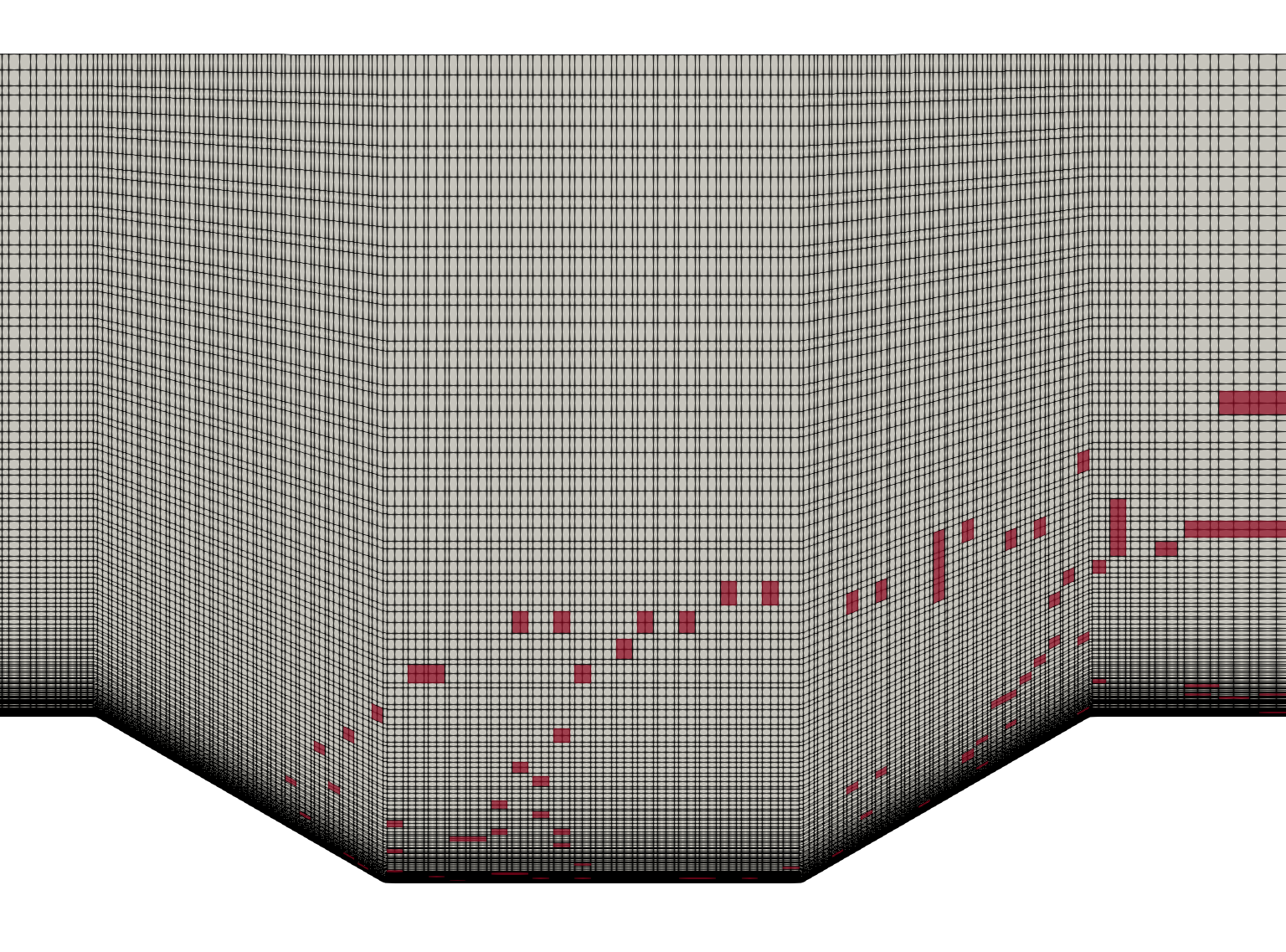}
\caption{Close up of computational mesh with highlighted collocation cells.} 
\label{fig:cav_sampmesh}
\end{center}
\end{figure}

\begin{figure}
\begin{center}
\begin{subfigure}[t]{0.49\textwidth}
\includegraphics[trim={18cm 16cm 18cm 15cm},clip,width=1.0\linewidth]{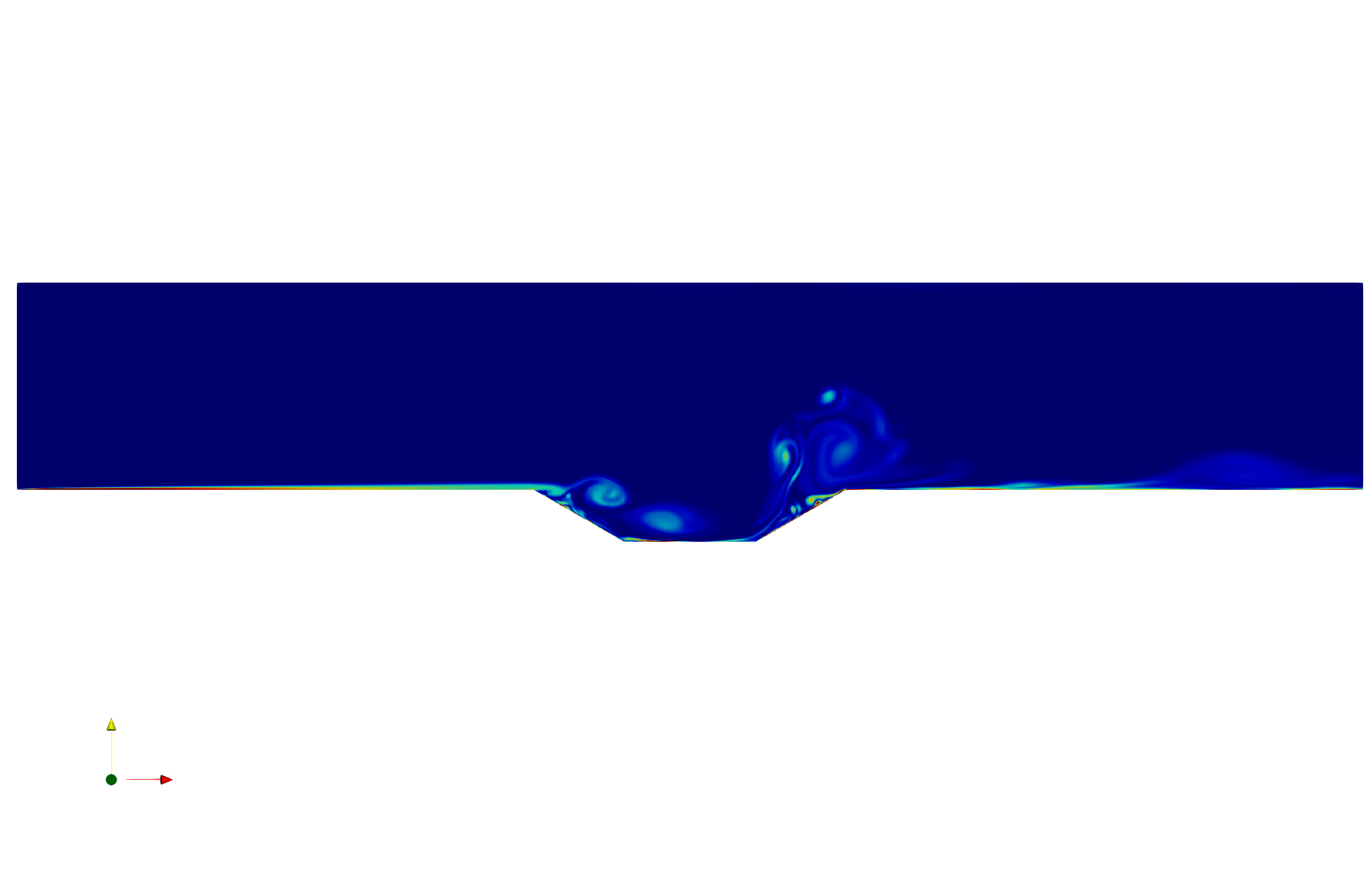}
\caption{$t=0.0$}
\end{subfigure}
\begin{subfigure}[t]{0.49\textwidth}
\includegraphics[trim={18cm 16cm 18cm 15cm},clip,width=1.0\linewidth]{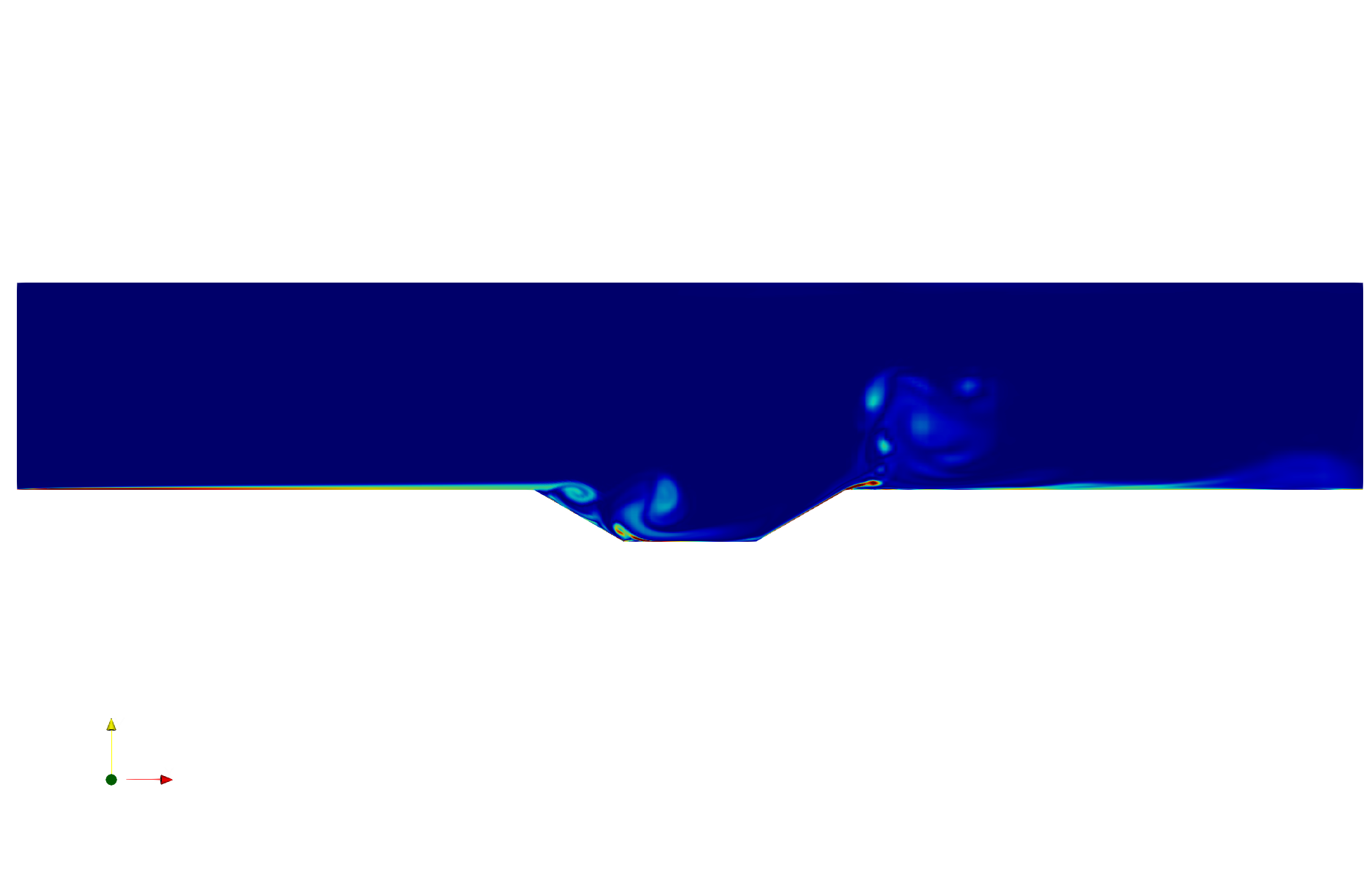}
\caption{$t=4.0$}
\end{subfigure}
\begin{subfigure}[t]{0.49\textwidth}
\includegraphics[trim={18cm 16cm 18cm 15cm},clip,width=1.0\linewidth]{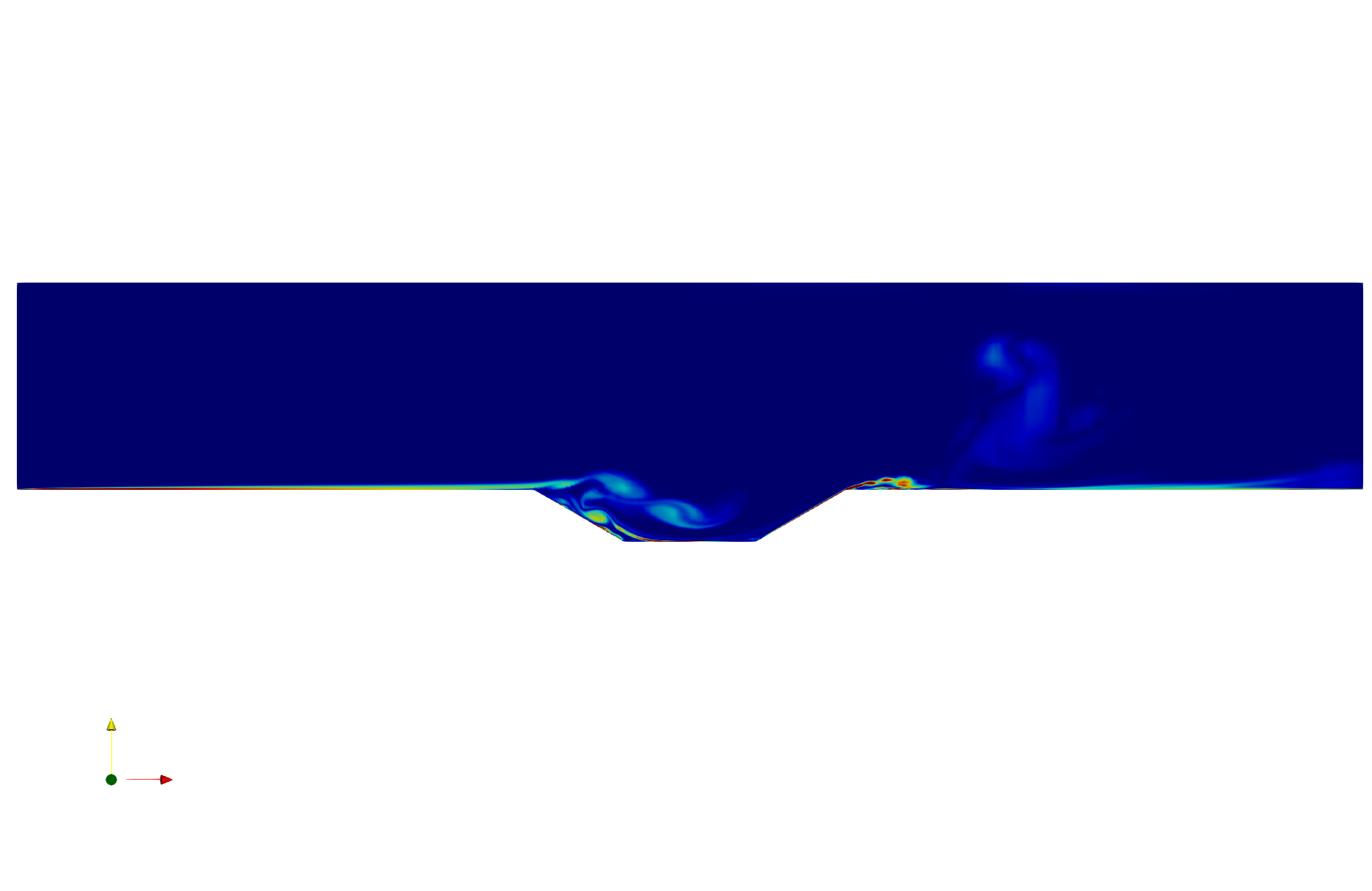}
\caption{$t=8.0$}
\end{subfigure}
\begin{subfigure}[t]{0.49\textwidth}
\includegraphics[trim={18cm 16cm 18cm 15cm},clip,width=1.0\linewidth]{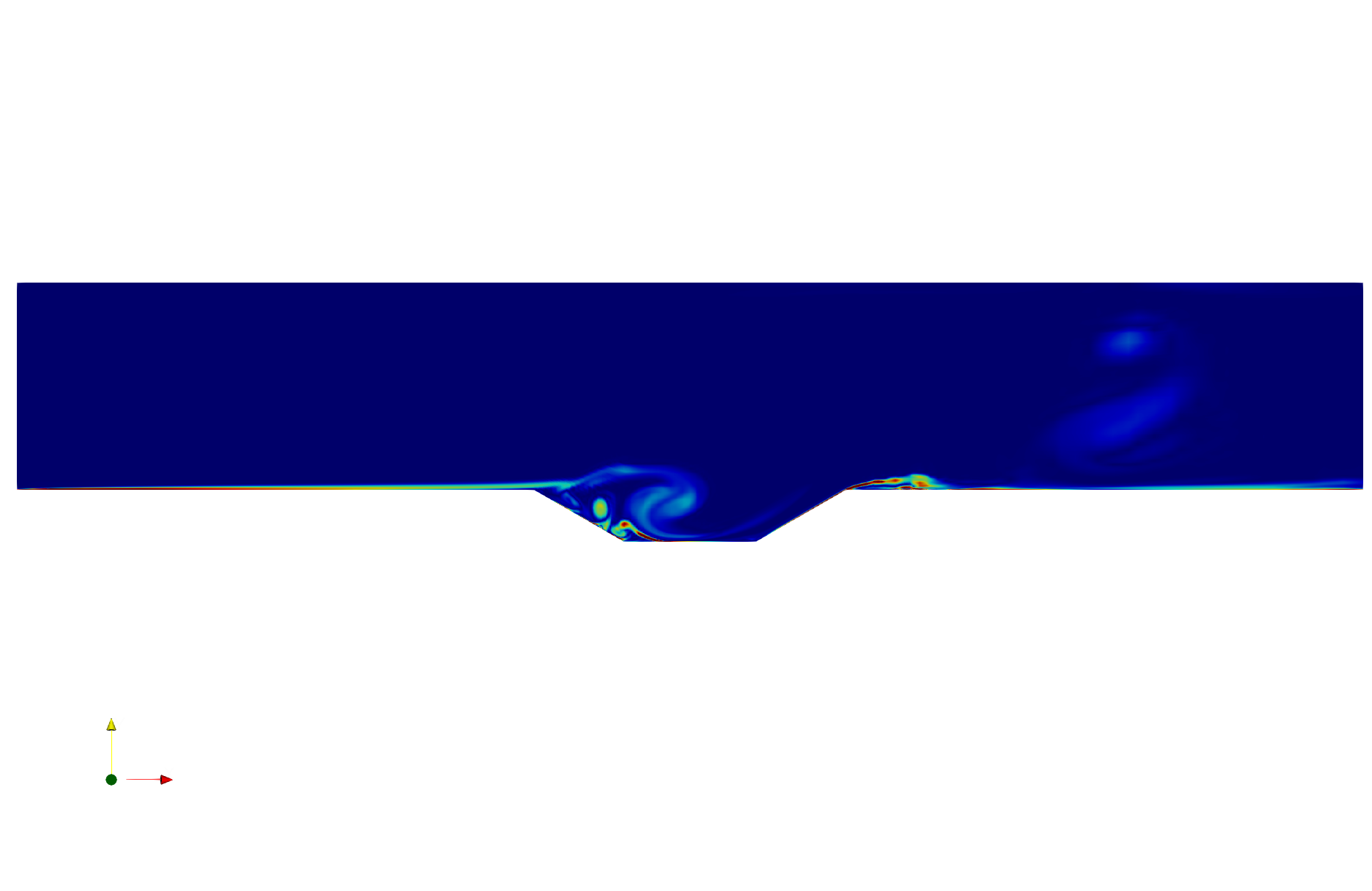}
\caption{$t=12.0$}
\end{subfigure}
\begin{subfigure}[t]{0.49\textwidth}
\includegraphics[trim={18cm 16cm 18cm 15cm},clip,width=1.0\linewidth]{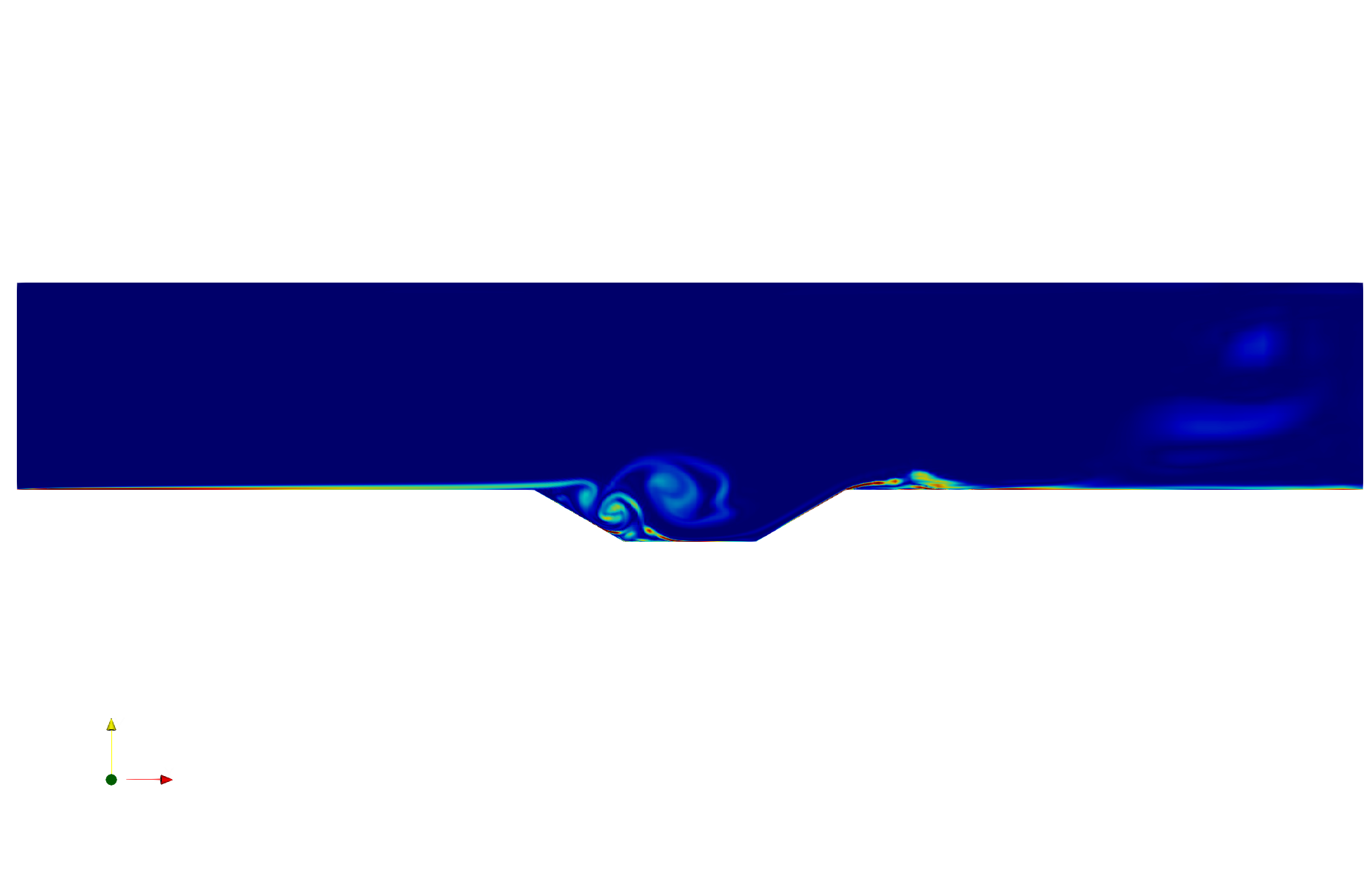}
\caption{$t=16.0$}
\end{subfigure}
\begin{subfigure}[t]{0.49\textwidth}
\includegraphics[trim={18cm 16cm 18cm 15cm},clip,width=1.0\linewidth]{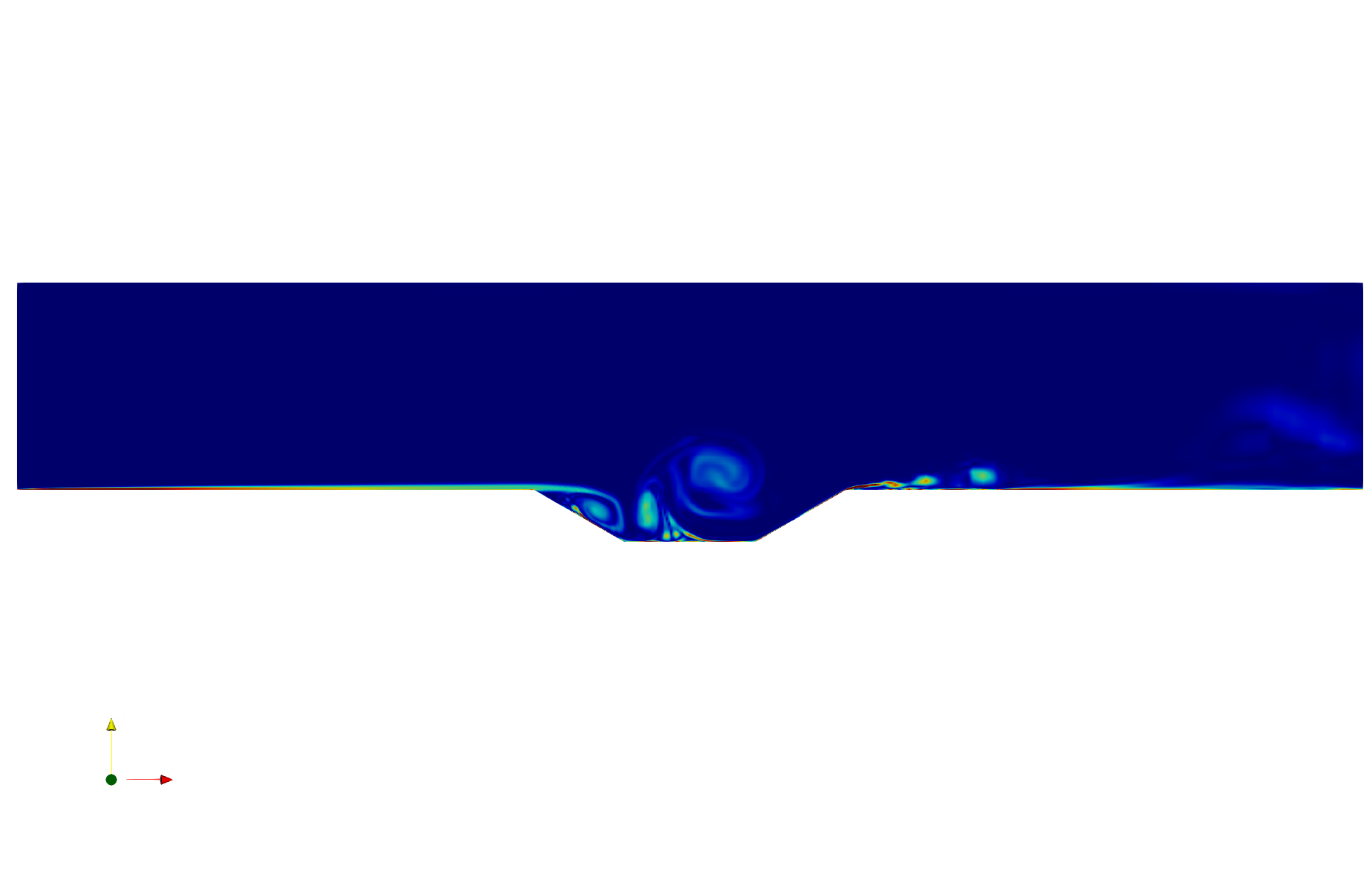}
\caption{$t=20.0$}
\end{subfigure}
\caption{Vorticity snapshots from the FOM simulation at various time instances.} 
\label{fig:fom_sols_cav}
\end{center}
\end{figure}

\subsubsection{Numerical results}
We first assess the performance of \methodAcronymROMs\ with varying window sizes. To this end, we consider \methodAcronymROMs\ with uniform window 
sizes of $\Delta T^n \equiv \Delta T = 0.2,0.5,1.0,$ and $2.0$, along with the Galerkin and LSPG ROMs. We first consider results for basis \#1 as described in 
Table~\ref{tab:rom_basis_details}. For all ROMs, we evolve 
the solution for $t \in [0,100]$. This comprises the same time interval used to construct the trial subspace. First, Figure~\ref{fig:cav_results1a} depicts the evolution of the pressure at the bottom wall in the midpoint of the computational domain, while Figure~\ref{fig:cav_results1b} depicts the evolution of the normalized $\elltwo$-error of the various reduced-order models. Both the collocated Galerkin and LSPG ROMs blow up/fail to converge within the first several time units. The \methodAcronymROM\ minimizing the residual over a window of size $\Delta T = 0.2$ also fails to converge. The \methodAcronymROMs\ that minimize the residual over window sizes of $\Delta T \ge 0.5$ are seen to all be stable and accurate; the pressure response is well characterized and the normalized state errors are less than $10\%$. The most notable discrepancy between the \methodAcronymROM\ and FOM solutions is a phase difference.

\begin{figure}
\begin{center}

\begin{subfigure}[t]{0.95\textwidth}
\includegraphics[trim={0cm 3.0cm 0cm 3cm},clip,width=1.\linewidth]{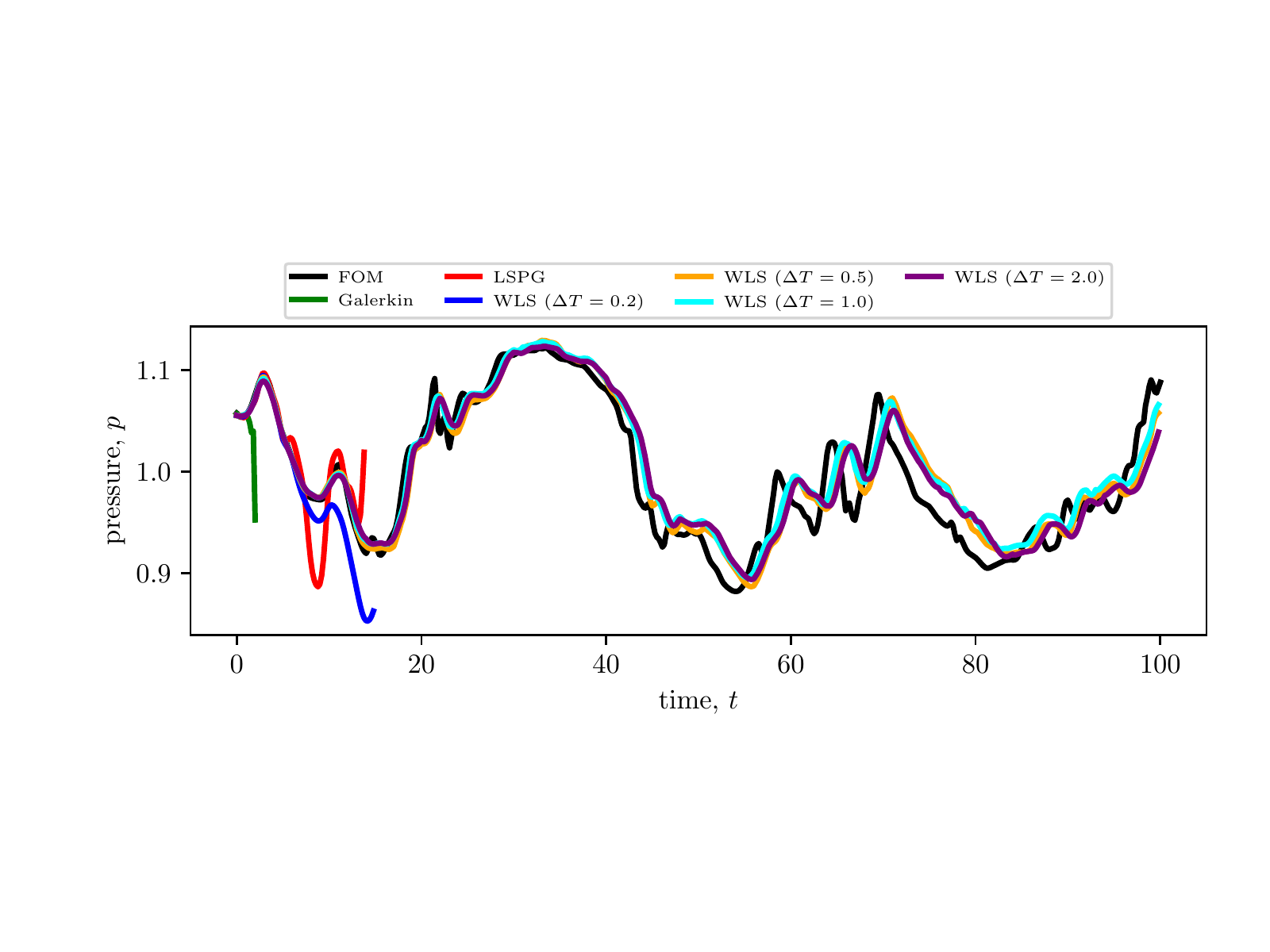}
\caption{Pressure} 
\label{fig:cav_results1a}
\end{subfigure}

\begin{subfigure}[t]{0.95\textwidth}
\includegraphics[trim={0cm 2.8cm 0cm 3cm},clip,width=1.\linewidth]{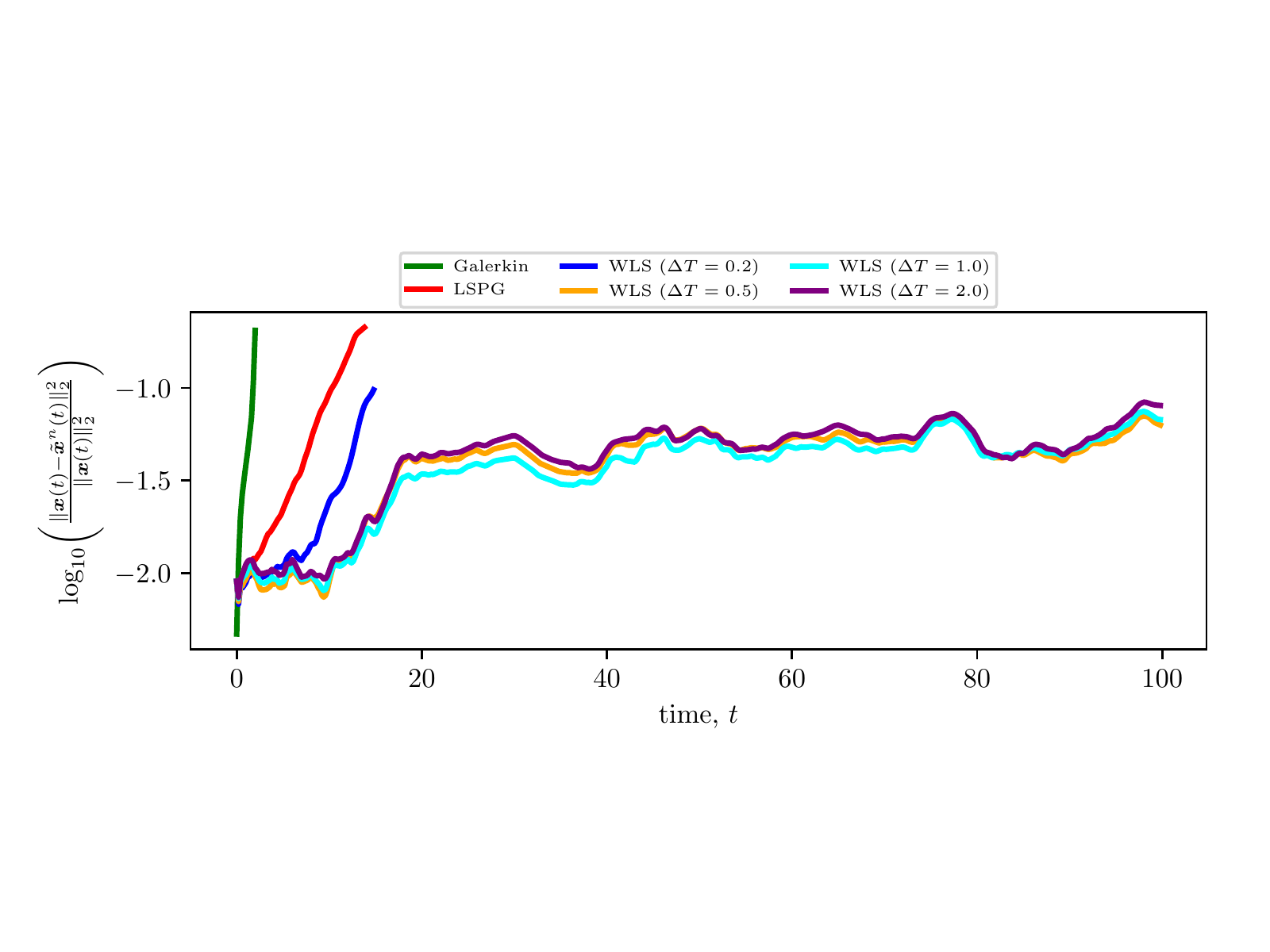}
\caption{Normalized $\elltwo$-error}
\label{fig:cav_results1b}
\end{subfigure}

\end{center}
\caption{Comparison of the pressure profiles obtained at the midpoint of the bottom wall (top) and normalized $\elltwo$-errors (bottom) of various collocated ROMs to the full-order model solution.}
\label{fig:cav_results1}
\end{figure}

Figure~\ref{fig:cav_results2} shows the space--time error and objective function of the stable \methodAcronymROMs. We observe that growing the window size over 
which the residual is minimized leads to a lower space--time residual, but not necessarily a lower $\elltwo$-error. This result is consistent with the previous numerical example. Next, Figure~\ref{fig:cav_wallclock}
shows the wall-clock times of the \methodAcronymROMs\ for $t \in [0,10]$ as compared to the LSPG ROM\footnote{We note that LSPG failed to converge at $t \approx 16.0$, so we focus on the first ten time units.}. As expected, increasing $\Delta T$ again leads to an increase in computational cost; minimizing the residual over a window comprising 20 time instances yields a 2.5x increase in cost over LSPG.

\begin{figure}
\begin{center}
\begin{subfigure}[t]{0.45\textwidth}
\includegraphics[trim={0cm 0cm 0cm 0cm},clip,width=1.\linewidth]{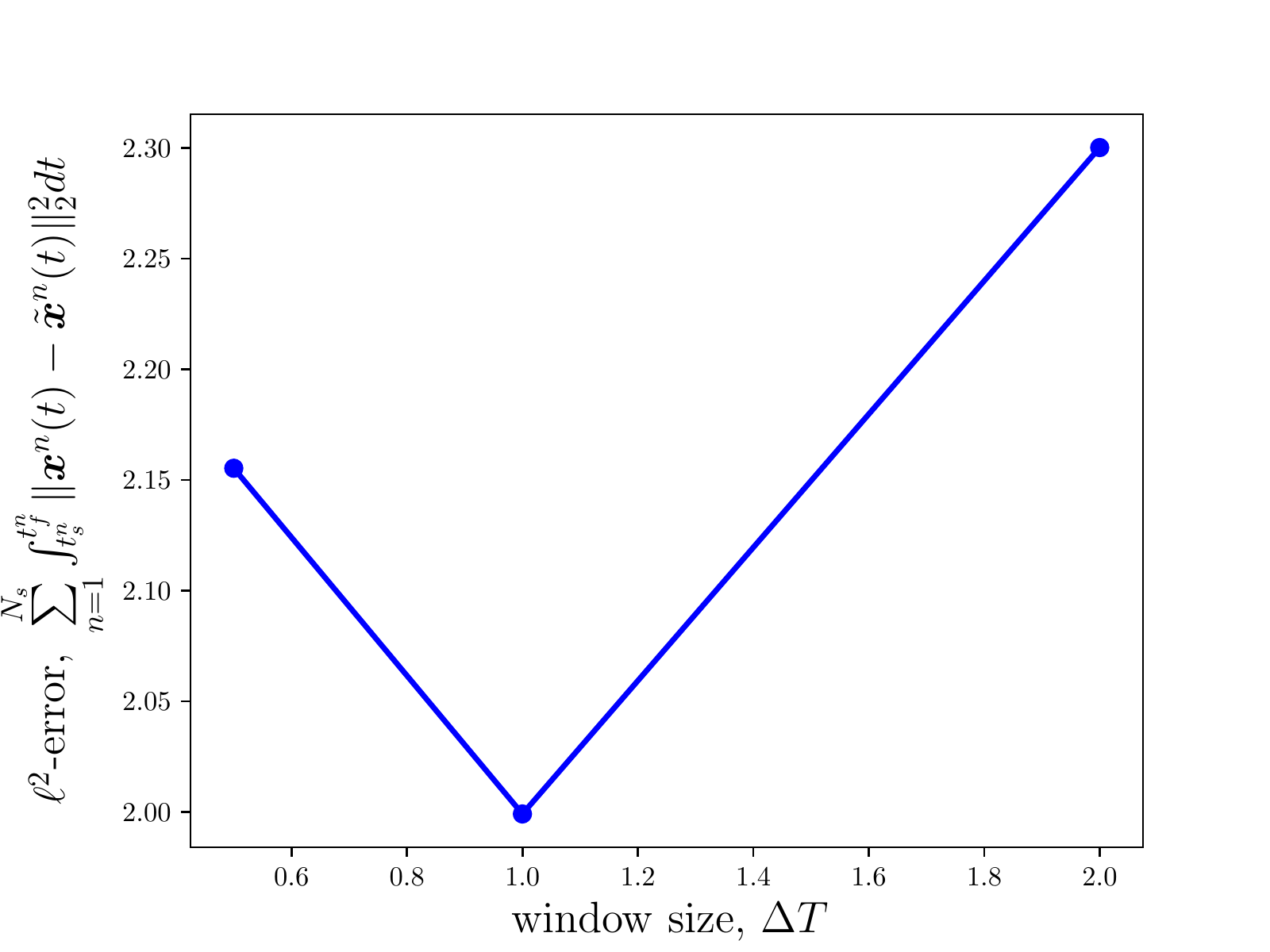}
\caption{Space--time $\elltwo$-error}
\label{fig:cav_results2a}
\end{subfigure}
\begin{subfigure}[t]{0.45\textwidth}
\includegraphics[trim={0cm 0cm 0cm 0cm},clip,width=1.\linewidth]{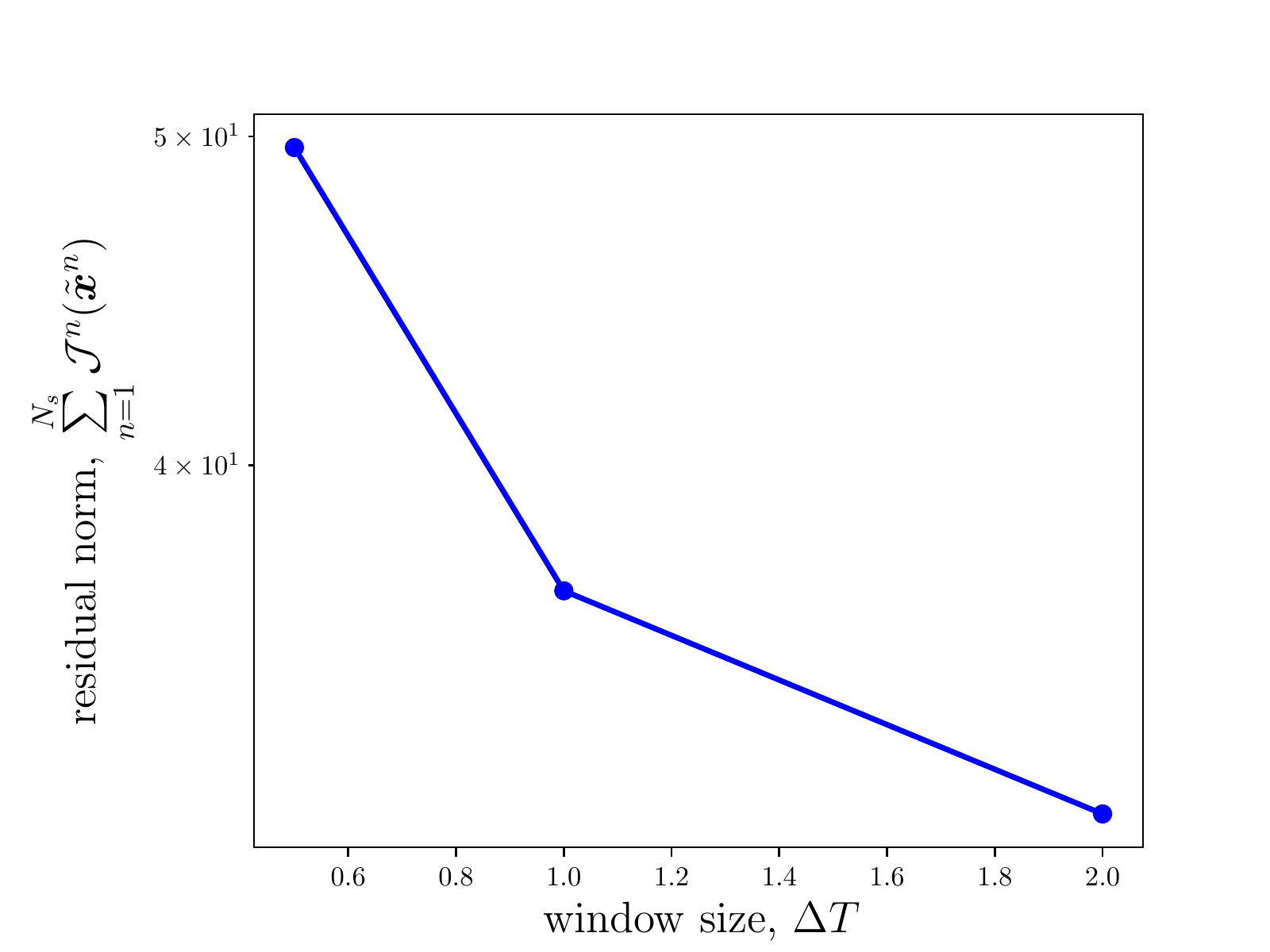}
\caption{Objective function} 
\label{fig:cav_results2b}
\end{subfigure}
\end{center}
\caption{Space--time error (left) and objective function (right) as a function of window size.}
\label{fig:cav_results2}
\end{figure}

\begin{figure}
\begin{center}
\includegraphics[trim={0cm 0cm 0cm 0cm},clip,width=0.49\linewidth]{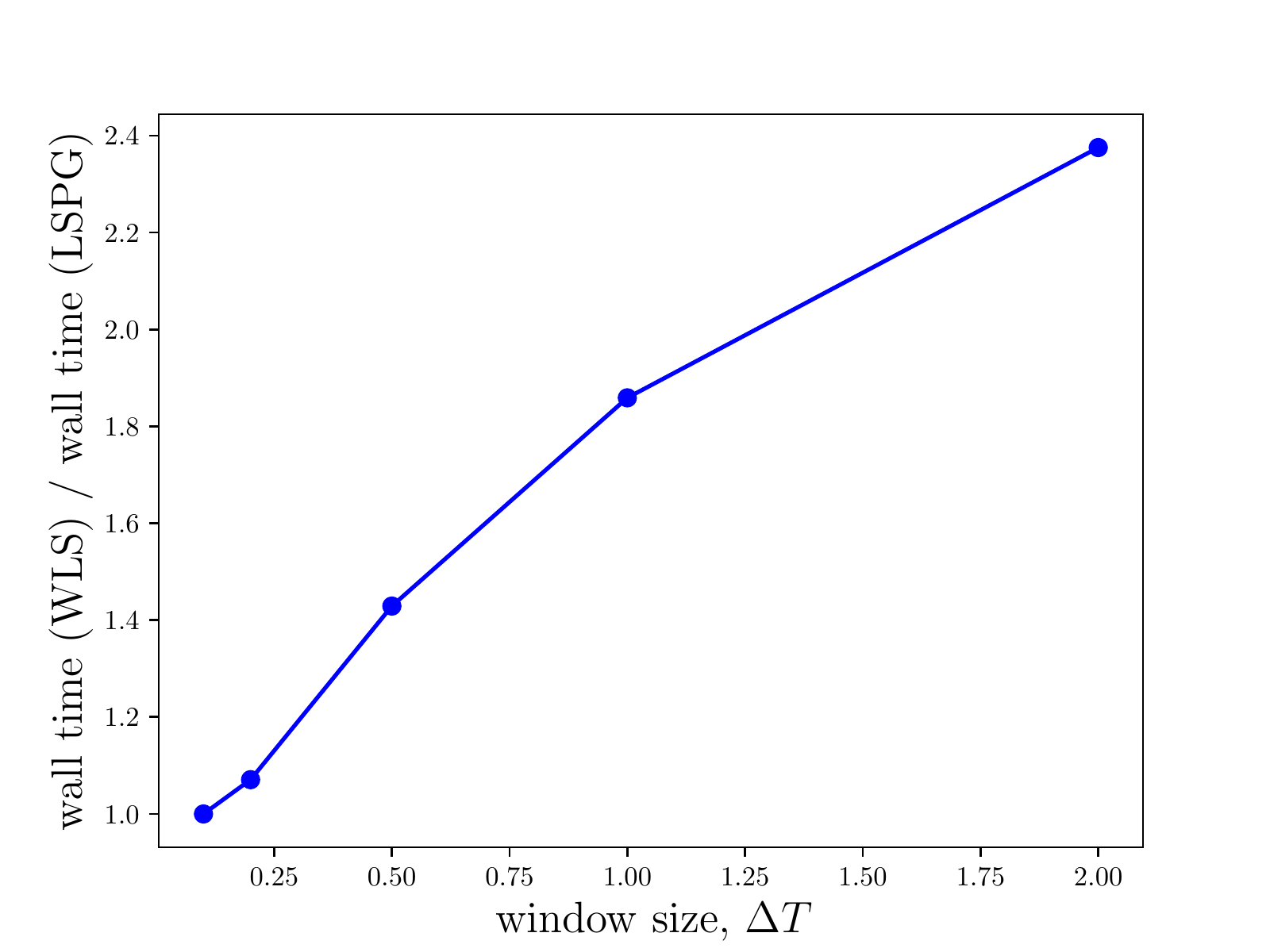}
\caption{Wall-clock times of \methodAcronymROMs\ with respect to the LSPG ROM.}
\label{fig:cav_wallclock}
\end{center}
\end{figure}

Figure~\ref{fig:cav_snapshots} shows vorticity fields for the FOM, LSPG ROM, and \methodAcronymROMs\ with $\Delta T = 0.5,1.0$ for the time instance $t = 5.0$. LSPG is observed to exhibit artificial oscillations; the Gauss-Newton method fails to converge at $t \approx 16.0$. The \methodAcronymROMs\ at $\Delta T = 0.5,1.0$ are able to capture the important features of the flow, including the points of flow separation at the start and end of the ramp, and remain stable for the entire time interval.  


\begin{figure}
\begin{center}
\begin{subfigure}[t]{0.49\textwidth}
\includegraphics[trim={10cm 7cm 10cm 12cm},clip,width=1.0\linewidth]{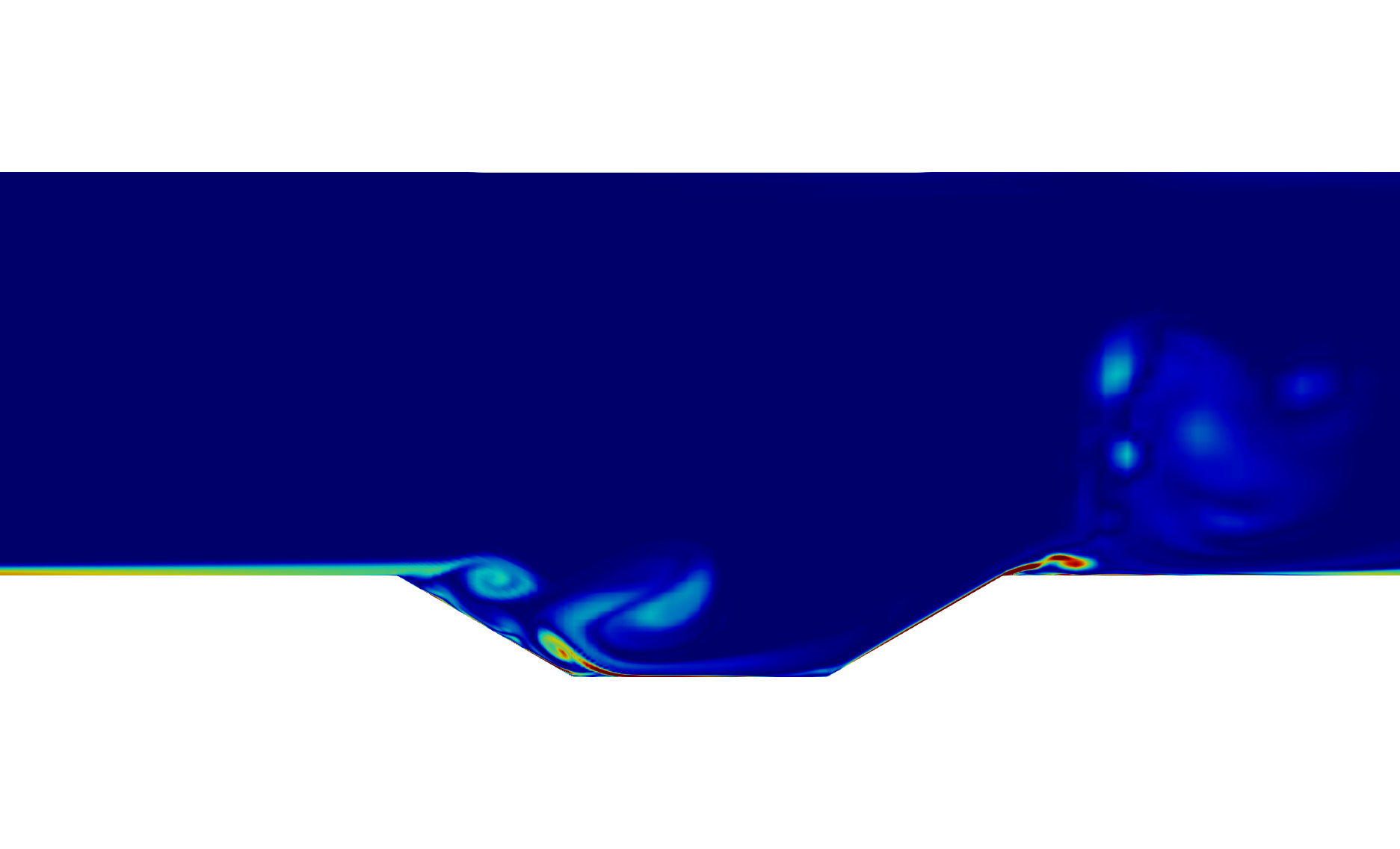}
\caption{FOM}
\end{subfigure}
\begin{subfigure}[t]{0.49\textwidth}
\includegraphics[trim={10cm 7cm 10cm 12cm},clip,width=1.0\linewidth]{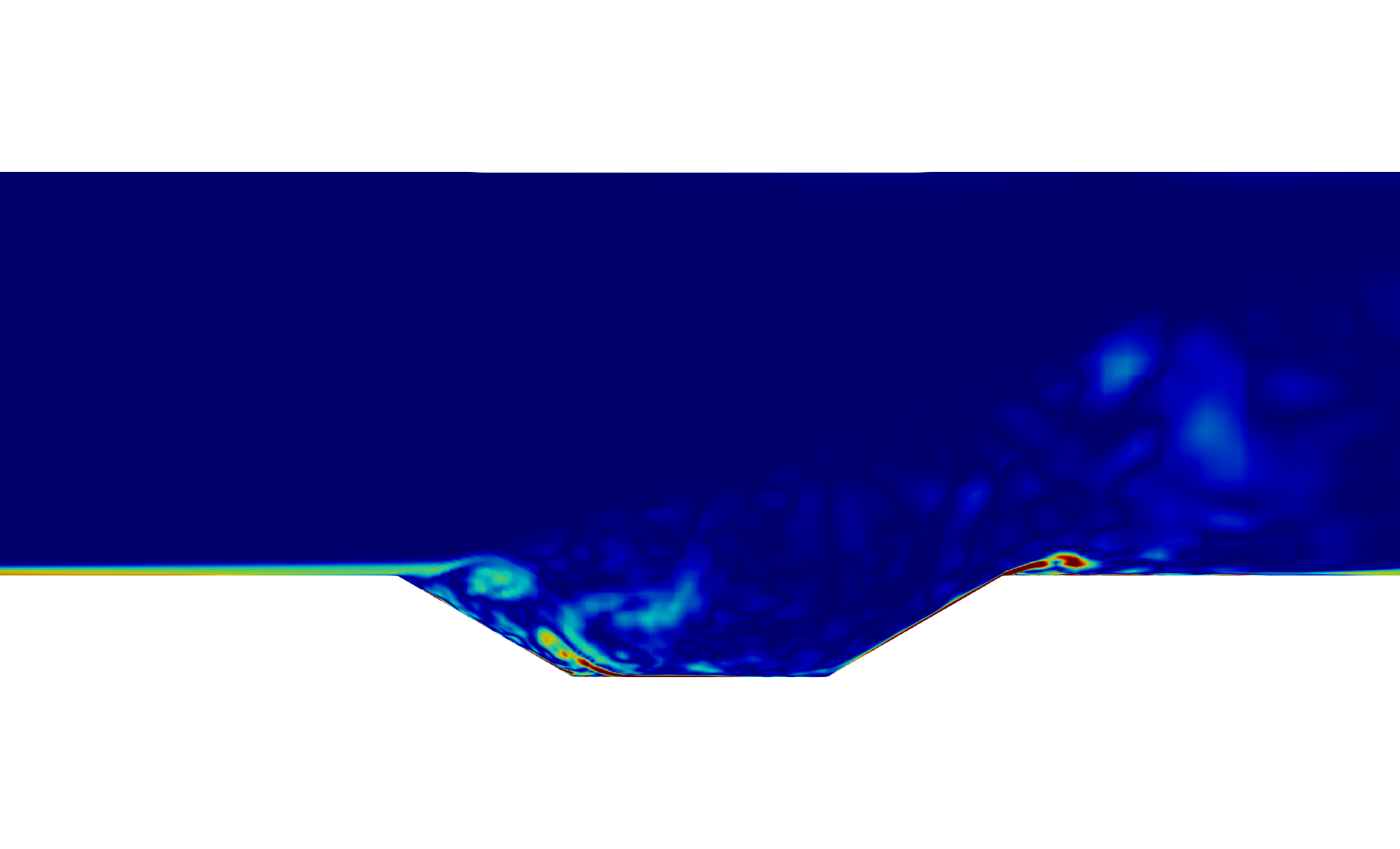}
\caption{LSPG}
\end{subfigure}
\begin{subfigure}[t]{0.49\textwidth}
\includegraphics[trim={10cm 7cm 10cm 12cm},clip,width=1.0\linewidth]{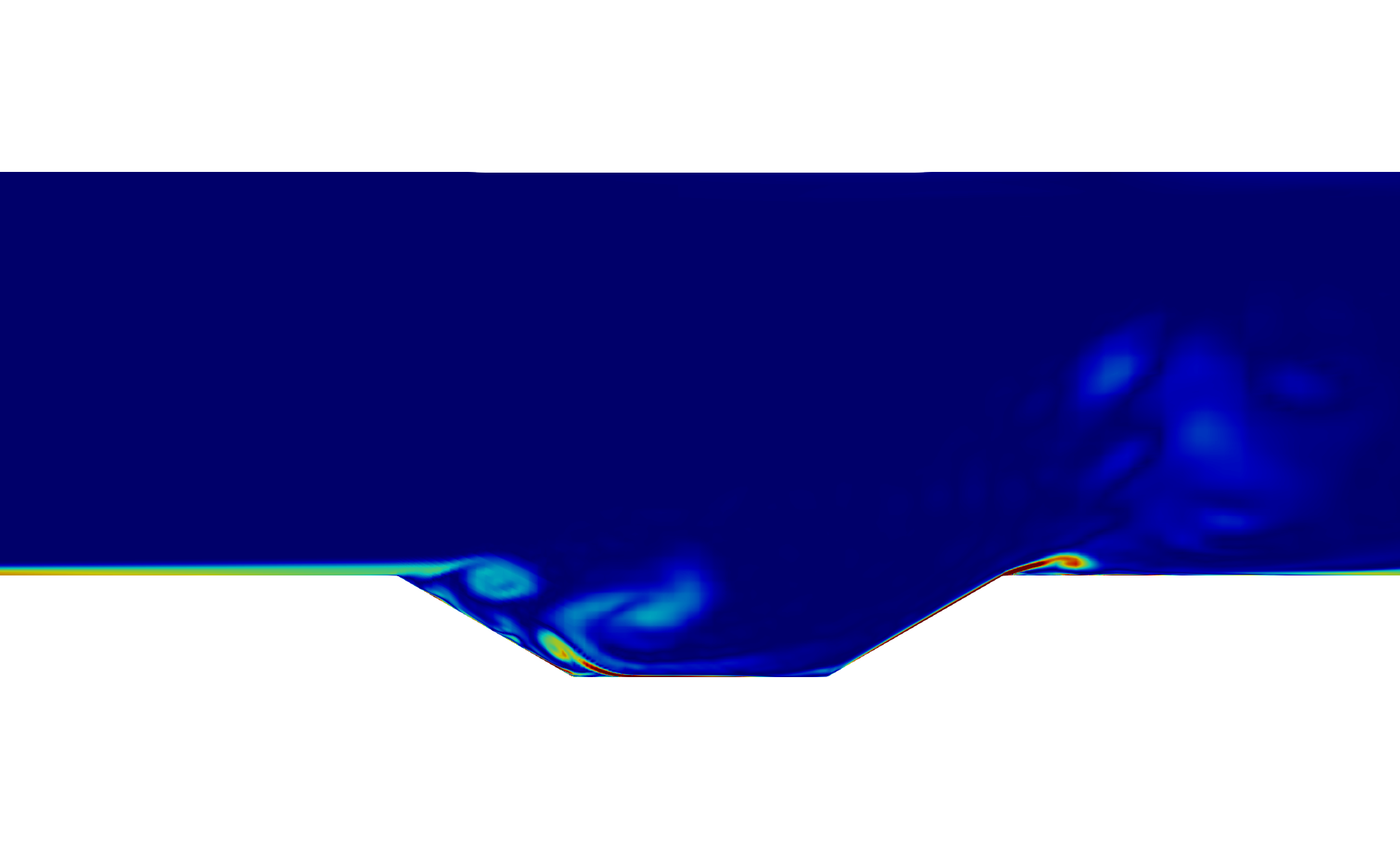}
\caption{\methodAcronym, $\Delta T = 0.5$}
\end{subfigure}
\begin{subfigure}[t]{0.49\textwidth}
\includegraphics[trim={10cm 7cm 10cm 12cm},clip,width=1.0\linewidth]{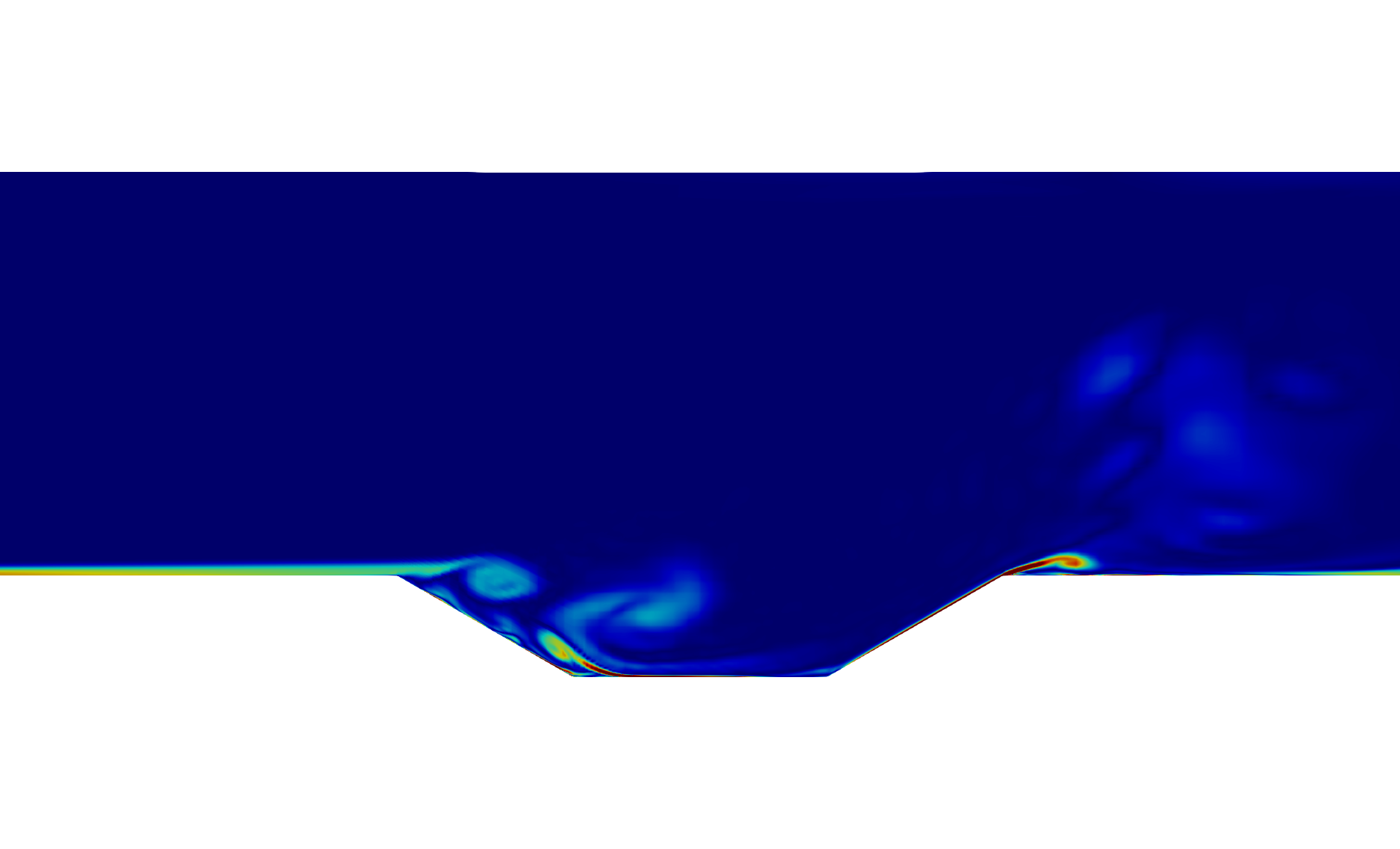}
\caption{\methodAcronym, $\Delta T = 1.0$}
\end{subfigure}
\caption{Vorticity snapshots from the FOM and ROM simulations at $t=5.0$.} 
\label{fig:cav_snapshots}
\end{center}
\end{figure}

Next, we assess the performance of the various ROMs for basis \#2 as described in Table~\ref{tab:rom_basis_details}, which comprises a richer spatial basis.
 Figure~\ref{fig:cav_results1_basis1} shows the same pressure and error profiles as Figure~\ref{fig:cav_results1}, but for the enriched basis. The LSPG and Galerkin ROMs blow up faster as compared to Figure~\ref{fig:cav_results1}; LSPG fails to converge around $t \approx 8$ (opposed to $t \approx 16$), while Galerkin blows up almost immediately. The 
\methodAcronymROMs\ again yield improved performance: \methodAcronymROMs\ minimizing the residual over window sizes of $\Delta T \ge 0.5$ are seen to all be stable and accurate; the pressure response is well characterized and the normalized state errors are less than $5\%$. The \methodAcronymROMs\ employing basis \#2 yield more accurate results than \methodAcronymROMs\ employing basis \#1.

\begin{figure}
\begin{center}

\begin{subfigure}[t]{0.95\textwidth}
\includegraphics[trim={0cm 2.5cm 0cm 2.5cm},clip,width=1.\linewidth]{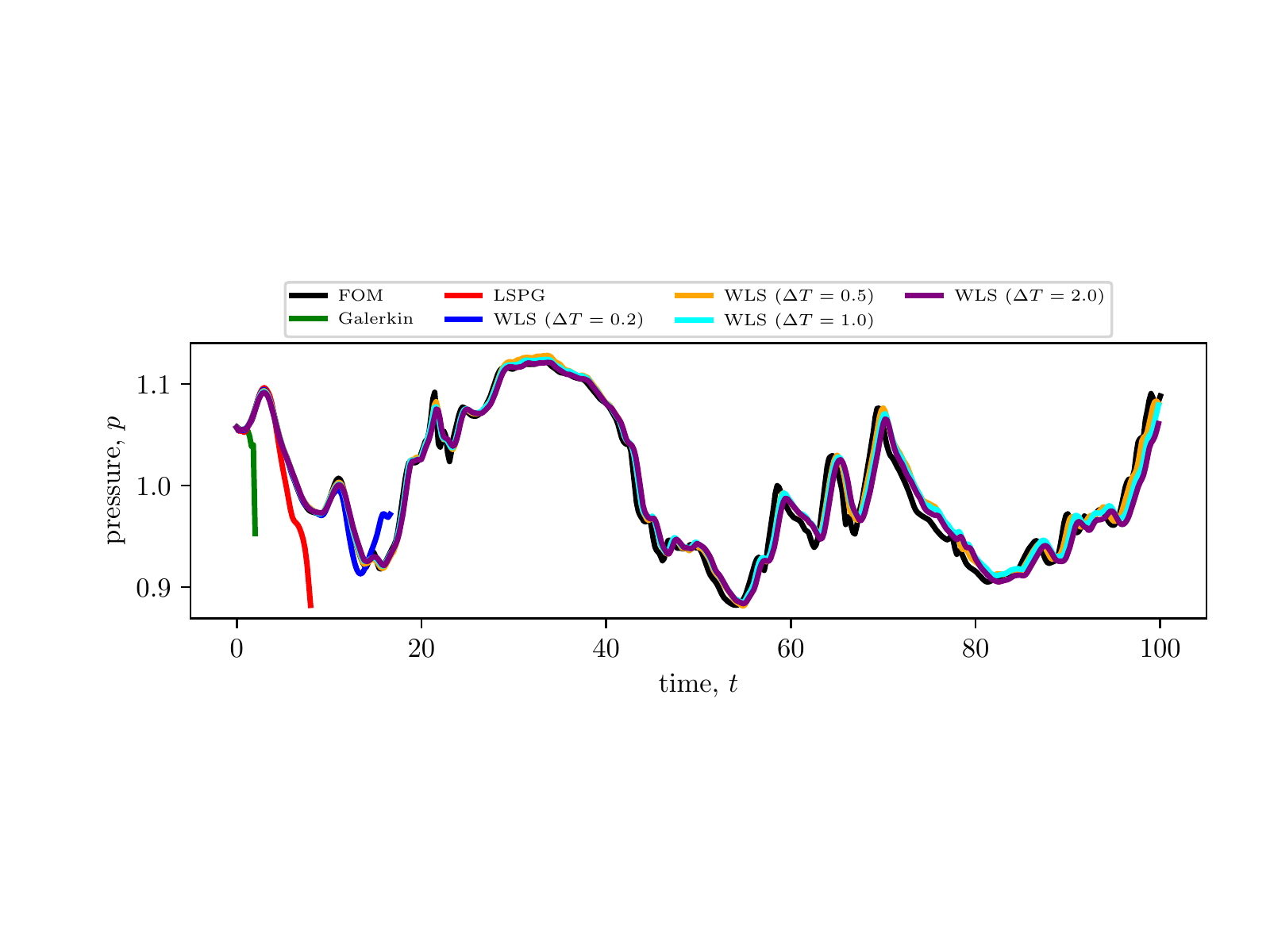}
\caption{Pressure} 
\label{fig:cav_results1a_basis1}
\end{subfigure}

\begin{subfigure}[t]{1.0\textwidth}
\includegraphics[trim={0cm 2.5cm 0cm 3cm},clip,width=1.\linewidth]{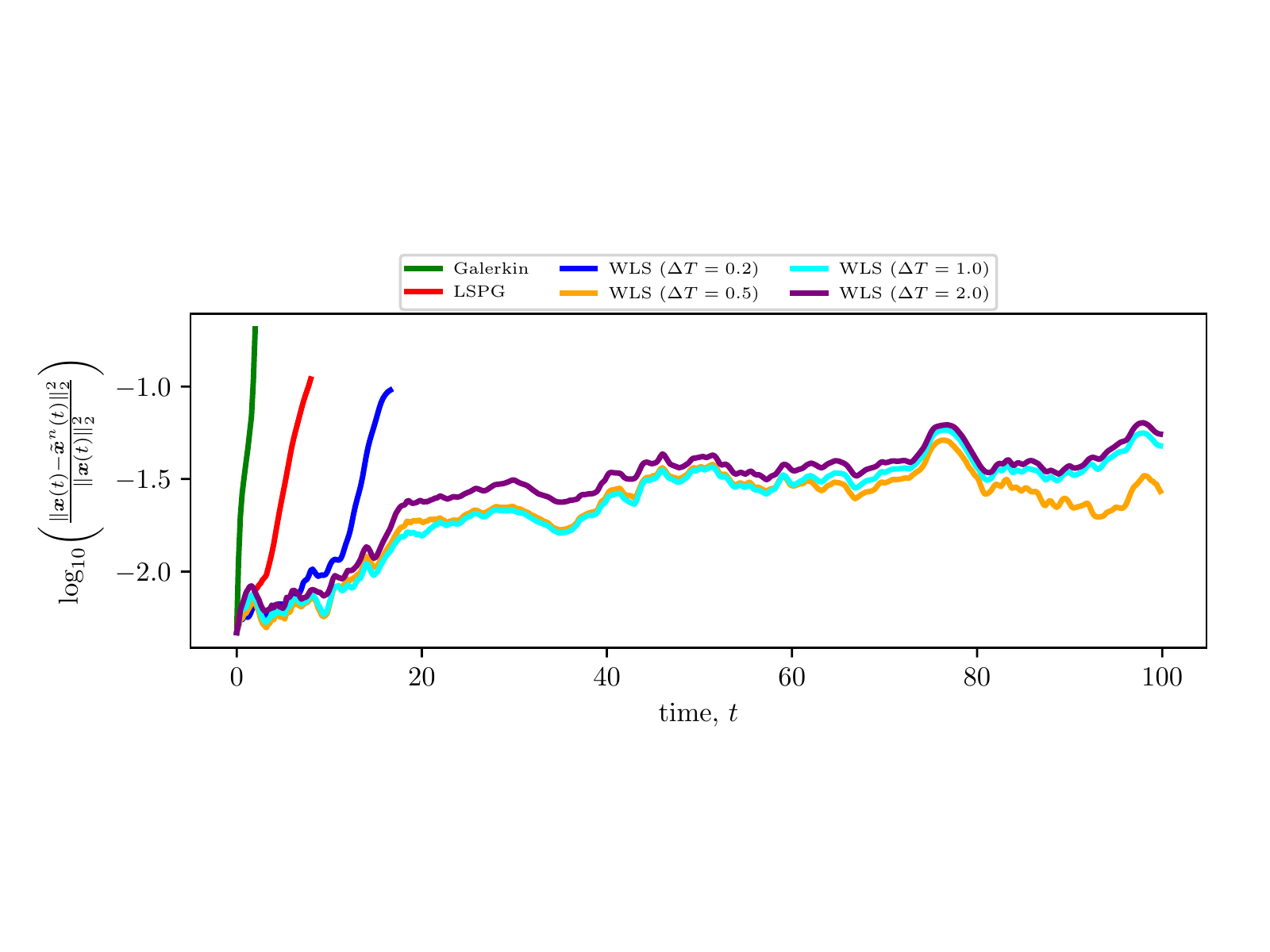}
\caption{Normalized $\elltwo$-error}
\label{fig:cav_results1b_basis1}
\end{subfigure}
\end{center}
\caption{Comparison of the pressure profiles obtained at the midpoint of the bottom wall (top) and normalized $\elltwo$-errors (bottom) of various collocated ROMs to the full-order model solution.}
\label{fig:cav_results1_basis1}
\end{figure}

\begin{figure}
\begin{center}
\begin{subfigure}[t]{0.45\textwidth}
\includegraphics[trim={0cm 0cm 0cm 0cm},clip,width=1.\linewidth]{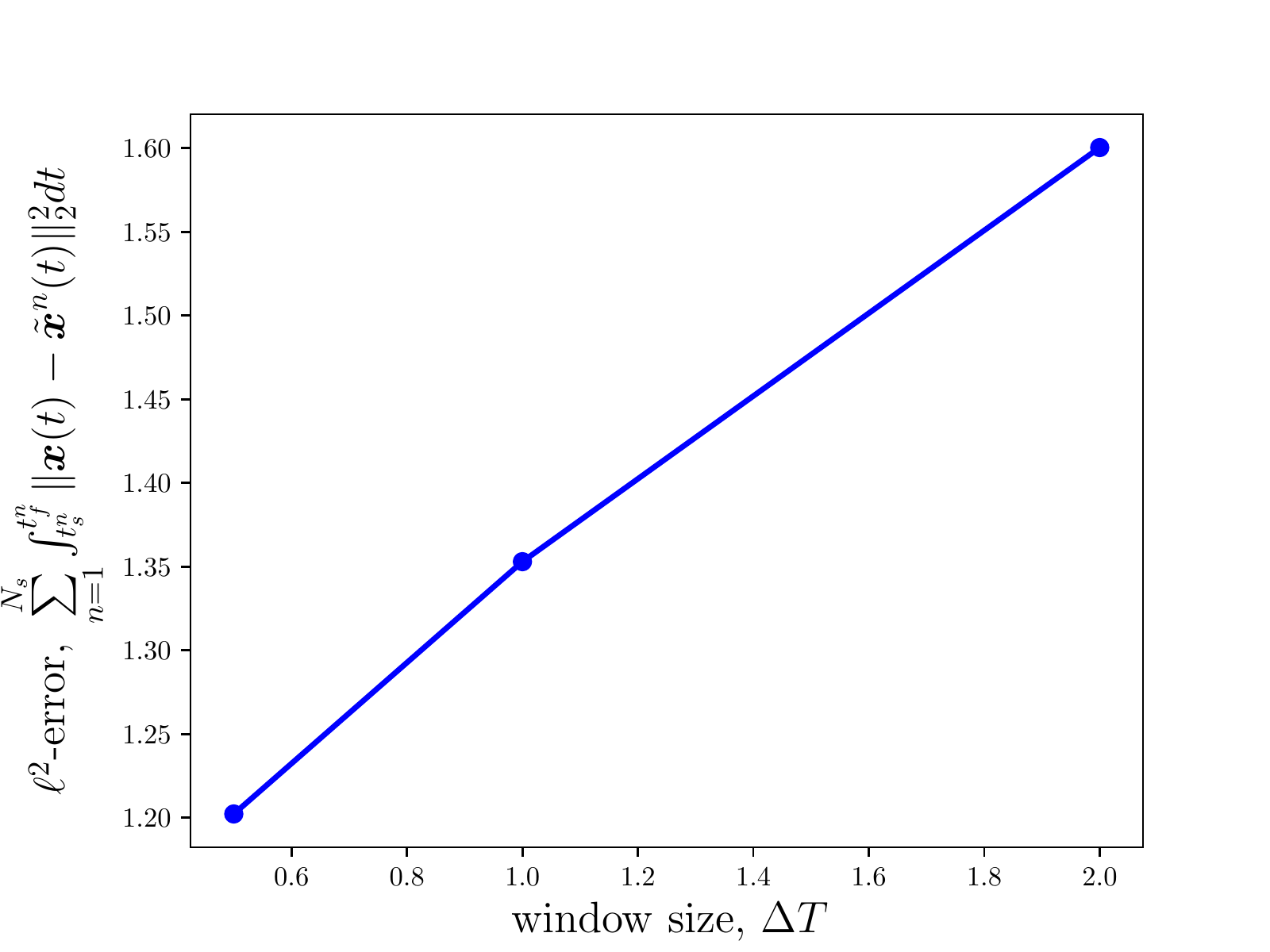}
\caption{Integrated $\elltwo$-error}
\label{fig:cav_results3a}
\end{subfigure}
\begin{subfigure}[t]{0.45\textwidth}
\includegraphics[trim={0cm 0cm 0cm 0cm},clip,width=1.\linewidth]{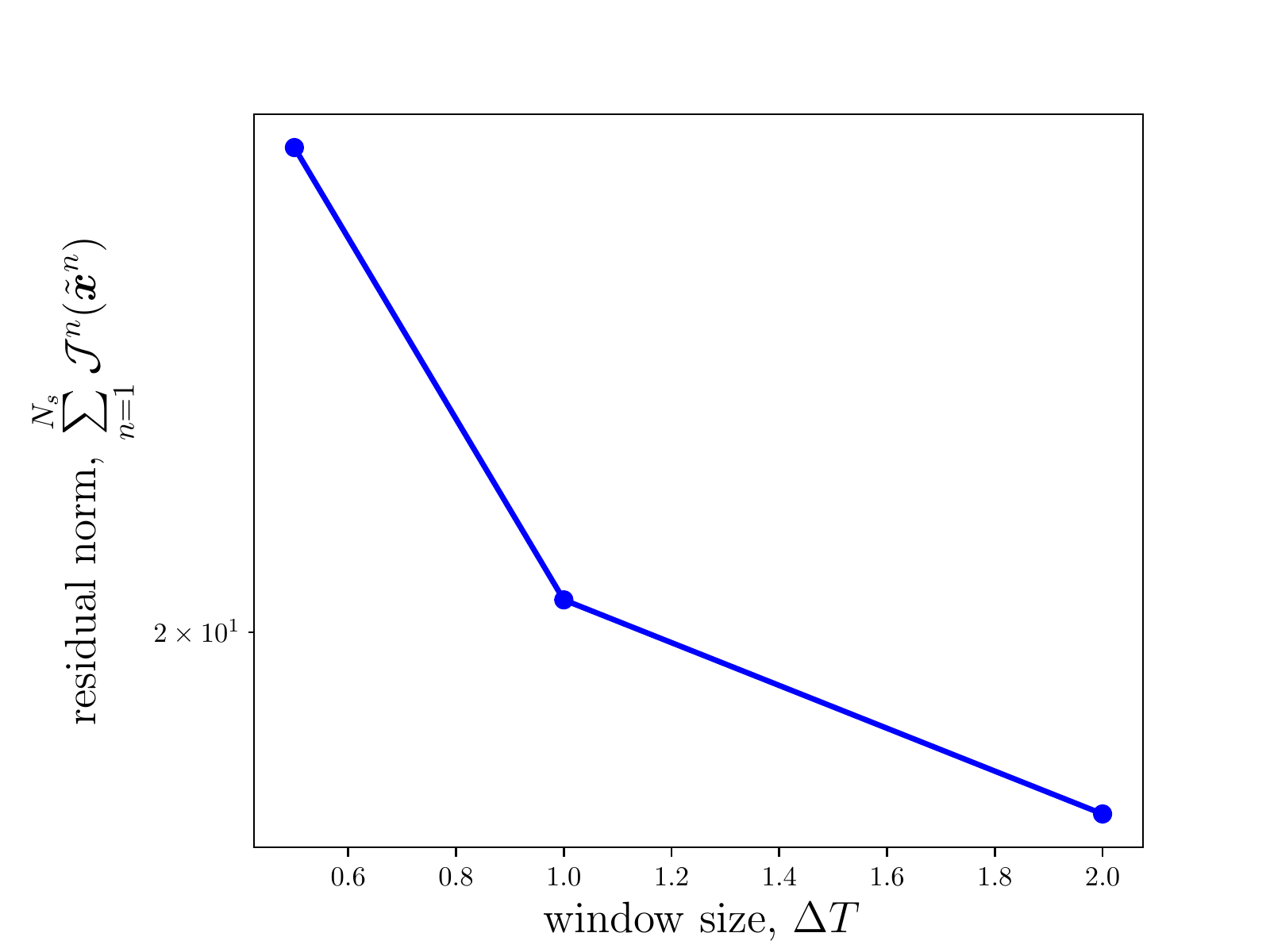}
\caption{Objective function} 
\label{fig:cav_results3b}
\end{subfigure}
\end{center}
\caption{Integrated error (left) and objective function (right) as a function of window size.}
\label{fig:cav_results3}
\end{figure}

Finally, Figure~\ref{fig:cav_wallclock} shows the wall-clock times for $t \in [0,4]$ of the \methodAcronymROMs\ as compared to the LSPG ROMs for basis \#2. Increasing the window size again leads to an increase in computational cost. Minimizing the residual over a window comprising 20 time instances yields a 3x increase in cost over LSPG. 
\begin{figure}
\begin{center}
\includegraphics[trim={0cm 0cm 0cm 0cm},clip,width=0.49\linewidth]{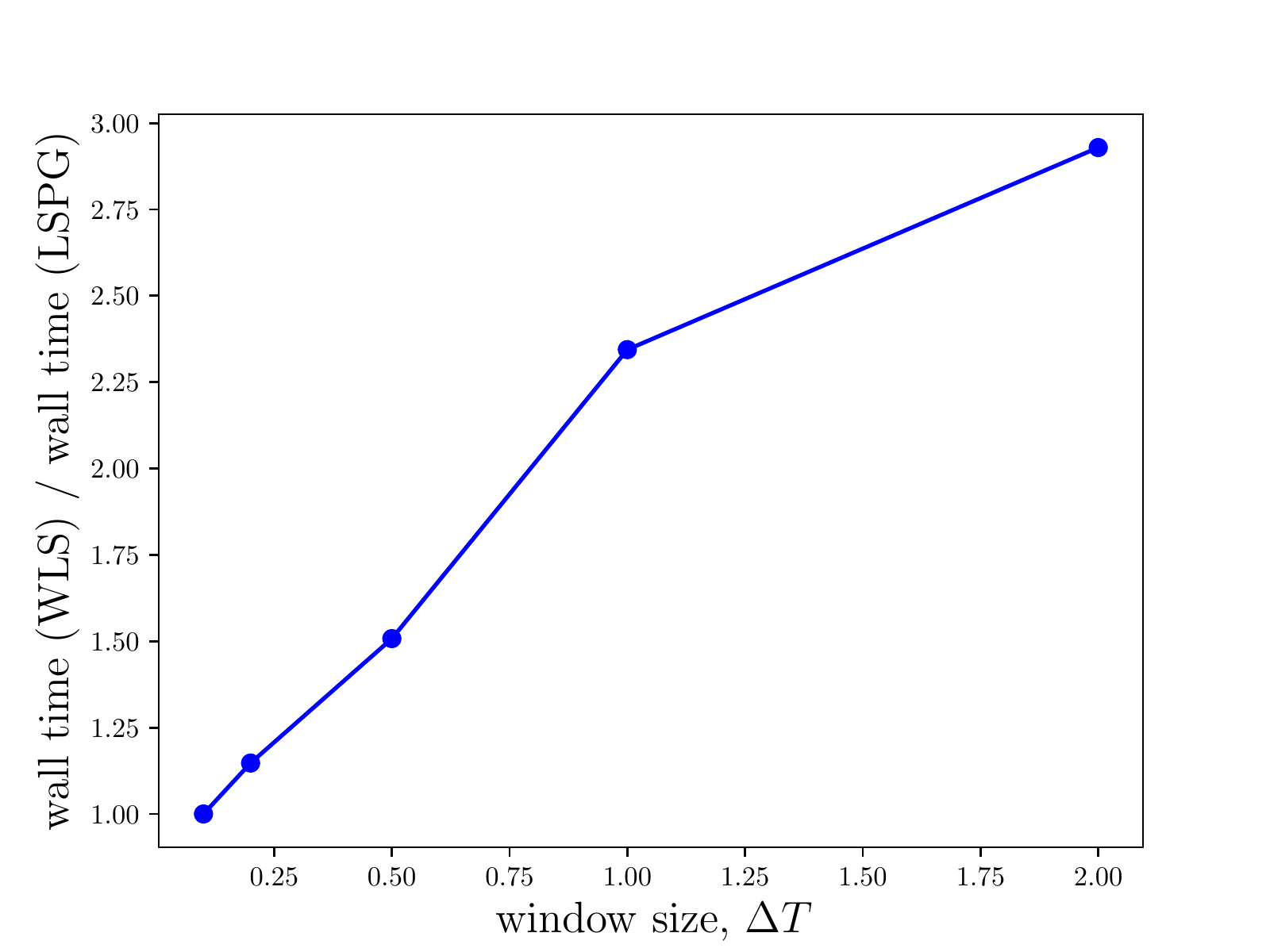}
\caption{Wall-clock times of \methodAcronymROMs\ with respect to the LSPG ROM.}
\label{fig:cav_wallclock}
\end{center}
\end{figure}


\section{Conclusions}\label{sec:conclude}
This paper proposed the windowed least-squares (\methodAcronym) \approachKwd\ for model reduction  of dynamical systems. The approach sequentially minimizes the
	time-continuous full-order-model residual within a low-dimensional space--time trial
	subspace over time windows. The approach was formulated for two types of trial subspaces: one that reduces only the spatial dimension of the full-order model, and one that reduces both the spatial and temporal dimensions of the full-order model. For each type of trial
	subspace, we outlined two different solution techniques: direct (i.e.,
	discretize then optimize) and indirect (i.e., optimize then discretize). We showed that particular instances of the approach recover Galerkin,
	least-squares Petrov--Galerkin (LSPG), and space–time LSPG projection.  
The case of \spatialAcronym\ trial subspaces is of particular interest in the \methodAcronym\ context: it was shown that indirect methods comprise solving a coupled two-point 
Hamiltonian boundary value problem.
The forward system, which is forced by an auxiliary costate,
		evolves the (spatial) generalized coordinates of the ROM in time. The
		backward system, which is forced by the time-continuous FOM residual
		evaluated about the ROM state, governs the dynamics of the costate. 

Numerical experiments of the compressible Euler and Navier--Stokes equations demonstrated the utility in the proposed approach. The first numerical experiment, in where the Sod shock tube was examined, demonstrated that \methodAcronymROMs\ minimizing the residual over larger time windows yielded solutions with lower space--time residuals. Increasing the window size over which the residual was minimized, however, did not necessarily decrease the solution error in the $\elltwo$-norm; we observed this to occur over an intermediary window size. We additionally observed that the  
\methodAcronym\ \approachKwd\ overcomes the time-discretization sensitivity that LSPG is subject to. The second numerical experiment, which examined collocated ROMs of a compressible cavity flow, demonstrated the utility of the \methodAcronym\ formulation on a more complex flow. In this experiment, \methodAcronymROMs\ yielded predictions with relative errors less than $5-10\%$, while the Galerkin and LSPG ROMs failed to converge/blew up. Increasing the window size over which the residual was minimized again led to a lower space--time residual, but not necessarily a lower error in the $\elltwo$-norm.

 
The principal challenge encountered in the \methodAcronym\ formulation is the computational cost: increasing the window size over which the residual is minimized leads to a higher computational cost. In the context of the direct solution approach, this increased cost is due to the increased expense of forming and solving the least-squares problem associated with larger window sizes. In the context of indirect methods, this increased cost is a result of the increased number of iterations of the forward backward sweep method. While numerical experiments demonstrated that the increase in computational cost is mild, future work will target the development of new solution techniques tailored for the \methodAcronym\ approach.



\section{Acknowledgments}
The authors thank Yukiko Shimizu and Patrick Blonigan for numerous conversations from which this work benefited. 
E.\ Parish acknowledges an appointment to the Sandia National Laboratories'
John von Neumann Postdoctoral Research Fellowship in Computational Science. 
This work was partially sponsored by Sandia's Advanced Simulation and
Computing (ASC) Verification and Validation (V\&V) Project/Task
\#103723/05.30.02.  This paper describes objective technical results and
analysis. Any subjective views or opinions that might be expressed in the
paper do not necessarily represent the views of the U.S.\ Department of Energy
or the United States Government.  Sandia National Laboratories is a
multimission laboratory managed and operated by National Technology and
Engineering Solutions of Sandia, LLC., a wholly owned subsidiary of Honeywell
International, Inc., for the U.S.\  Department of Energy's National Nuclear
Security Administration under contract DE-NA-0003525.

\begin{appendices}
\section{Proper orthogonal decomposition}
Algorithm~\ref{alg:pod} presents the algorithm for computing the trial basis via proper orthogonal decomposition.
\begin{algorithm}
\caption{Algorithm for generating POD Basis.}
\label{alg:pod}
\textbf{Input:} Number of time-steps between snapshots $N_{\text{skip}}$; intercept $\stateInterceptArg{}$; basis dimension $K$ \; 
\textbf{Output:} POD Basis $\basisspace \in \mathbb{V}_{K}(\RR{N})$ \;
\textbf{Steps:}
\begin{enumerate}
    \item Solve FOM O$\Delta$E and collect solutions into snapshot matrix
$$\mathbf{S}(N_{\text{skip}}) \defeq \begin{bmatrix} \stateFOMDiscreteArg{0} - \stateInterceptArg{n} & \stateFOMDiscreteArg{N_{\text{skip}}} - \stateInterceptArg{n} & \cdots & \stateFOMDiscreteArg{\text{floor}(N_t/N_{\text{skip}})N_{\text{skip}}} - \stateInterceptArg{n} \end{bmatrix}$$
    \item Compute the thin singular value decomposition, $$\mathbf{S} (N_{\text{skip}}) = \mathbf{U \Sigma Z}^T,$$
    where $\mathbf{U} \equiv \begin{bmatrix} \mathbf{u}_1 & \cdots & \mathbf{u}_{\text{floor}(N_t/N_{\text{skip}})}\end{bmatrix}$
    \item Truncate left singular vectors and to form the basis, $\basisspace \equiv \begin{bmatrix} \mathbf{u}_1 & \cdots & \mathbf{u}_K \end{bmatrix}$
\end{enumerate}

\end{algorithm}

\section{Selection of sampling points}
To construct the sampling point matrix used for hyper-reduction in the second numerical experiment, we employ q-sampling~\cite{qdeim_drmac} and the sample 
mesh concept~\cite{carlberg_gnat}. Algorithm~\ref{alg:qdeim} outlines the steps used in the second numerical experiment to compute the sampling points.

\begin{algorithm}
\caption{Algorithm for generating the sampling matrix through q-sampling.}
\label{alg:qdeim}
\textbf{Input:} Number of time-steps between snapshots $N_{\text{skip}}$, number of primal sampling points, $n_s$ \; 
\textbf{Output:} Weighting matrix $\stweightingMatArg{} \equiv \stweightingMatOneTArg{} \stweightingMatOneArg{} \in \{0,1\}^{N \times N}$ \;
\textbf{Steps:}
\begin{enumerate}
    \item Solve FOM O$\Delta$E and collect velocity snapshots 
$$\mathbf{F}(N_{\text{skip}}) \defeq \begin{bmatrix} \velocity (\stateFOMDiscreteArg{0})  & \velocity( \stateFOMDiscreteArg{N_{\text{skip}}}) & \cdots & \velocity( \stateFOMDiscreteArg{\text{floor}(N_t/N_{\text{skip}})N_{\text{skip}}} ) \end{bmatrix}$$
    \item Compute the thin singular value decomposition, $$\mathbf{F} (N_{\text{skip}}) = \mathbf{U \Sigma Z}^T,$$
    where $\mathbf{U} \equiv \begin{bmatrix} \mathbf{u}_1 & \cdots & \mathbf{u}_{\text{floor}(N_t/N_{\text{skip}})}\end{bmatrix}$
    \item Compute the QR factorization of  $\mathbf{U}^T$ with column pivoting,
    \begin{equation*}
        \mathbf{U}^T \mathbf{P}^* = \mathbf{QR}
    \end{equation*}
    with $\mathbf{P}^* \equiv \begin{bmatrix} \mathbf{p}_1 & \cdots & \mathbf{p}_{\text{floor}(N_t/N_{\text{skip}})} \end{bmatrix}$, $\mathbf{p}_i \in \{0,1\}^N$. 
    \item Select the first $n_s$ columns of $\mathbf{P}^*$ to form the sampling point matrix $\stweightingMatOneTArg{} \in \{0,1\}^{N \times n_s}$. 
    \item Augment the sampling point matrix, $\stweightingMatOneArg{}$, with additional columns such that all unknowns are computed at the mesh cells selected by Step 3. For the second numerical experiment, these additional unknowns correspond to each conserved variable and quadrature point in the selected cells. 
\end{enumerate}

\end{algorithm}

\section{Derivation of the Euler--Lagrange equations}\label{appendix:eulerlagrange}
This section details the derivation of the Euler--Lagrange equations. To this end, we consider the generic functional of the form,
\begin{equation}\label{eq:el_gen}
\mathcal{J}: (\statey,\stateyDot) \mapsto  \int_a^b \integrand(\statey(t),\stateyDot(t),t)  dt,
\end{equation}
where $\integrand: \RR{M}  \times \RR{M}  \times[a,b] \rightarrow  \RRplus$
	(for arbitrary $M$) with $\integrand:
	(\stateyDiscrete,\stateyDotDiscrete,\timeDummy) \mapsto
	\integrand(\stateyDiscrete,\stateyDotDiscrete,\timeDummy)$. We now introduce
	the function $\statez: [a,b] \rightarrow \RR{M}$ with $\statez: \timeDummy \mapsto \statez(\timeDummy)$ along with $\statezDot \equiv d \statez /d\timeDummy$. We define this function to be a stationary point of~\eqref{eq:el_gen} (with $\statez$ being the first argument and $\statezDot$ being the second argument) subject to the boundary condition
	$\statez(a) = \statez_a$. 
We additionally introduce an arbitrary function $\variation : [a,b] \rightarrow \RR{M}$
with the boundary condition $\variationArg{a} = \bz$ and define a variation
from the stationary point by
\begin{align*}
\statezBar  &: (\tau,\delta) \mapsto \statez(\tau) + \delta \variation(\tau),\\
&: [a,b] \times \RR{} \rightarrow \RR{M}.
\end{align*}
We note that $\statezBar$ satisfies the same boundary condition $\statez$ since $\variationArg{a} = \bz$.
We define a new function that is equivalent to the function~\eqref{eq:el_gen} evaluated at $\statezBar$ in the first argument and 
$\statezDotBar \equiv d\statezBar / d\timeDummy$ in the second argument,
$$
\mathcal{J}_{\delta}: \delta \mapsto \int_a^b \integrand(\statezBar(t,\delta),\statezDotBar(t,\delta) ,t)  dt.
$$
The objective is to now find $\overline{\statez}$ that makes
$\mathcal{J}_{\delta}$ stationary. This can be done by differentiating with
respect to $\delta$ and setting the result to zero, i.e., 
\begin{equation}\label{eq:stationaryOne}
\frac{d}{d\delta} \big( \mathcal{J}_{\delta} \big)=  0.
\end{equation}
Using the chain rule,
$$ 
\frac{d}{d\delta} \big( \mathcal{J}_{\delta} \big)(\epsilon)= 
\int_a^b \bigg[ \frac{\partial \integrand  }{\partial \stateyDiscrete } \big(\statezBar(t,\epsilon),\statezDotBar(t,\epsilon) ,t\big) \frac{\partial \statezBar }{\partial \delta}(t,\epsilon)  + \frac{\partial \integrand}{\partial \stateyDotDiscrete } \big(\statezBar(t,\epsilon),\statezDotBar(t,\epsilon) ,t \big) \frac{\partial \statezDotBar}{\partial \delta} (t,\epsilon)\bigg]dt . $$
Noting that
$$\frac{\partial \statezBar}{\partial \delta} (t,\cdot)= \variation(t), \qquad \frac{\partial \statezDotBar }{\partial \delta}(t,\cdot)= \variationDot(t),$$
where $\variationDot \equiv d \variation / d\timeDummy$, 
we have
$$
\frac{d}{d\delta} \big( \mathcal{J}_{\delta} \big)(\epsilon)
= \int_a^b \bigg[ \frac{\partial \integrand  }{\partial \stateyDiscrete } \big(\statezBar(t,\epsilon),\statezDotBar(t,\epsilon),t \big) \variation(t)  + \frac{\partial \integrand}{\partial \stateyDotDiscrete } \big(\statezBar(t,\epsilon),\statezDotBar(t,\epsilon) ,t \big) \variationDot(t) \bigg]dt. $$
We integrate the second term by parts,
\begin{multline}\label{eq:gradient}
\frac{d}{d\delta} \big( \mathcal{J}_{\delta} \big)(\epsilon)
	=  \int_a^b  \bigg[\frac{\partial \integrand  }{\partial \stateyDiscrete }
	\big(\statezBar(t,\epsilon),\statezDotBar(t,\epsilon) , t \big)
	\variation(t) - \frac{d}{dt}\bigg( \frac{\partial \integrand}{\partial
	\stateyDotDiscrete }  \big(\statezBar(t,\epsilon),\statezDotBar(t,\epsilon)
	,t \big) \bigg) \variation(t) \bigg]dt + \\ 
\frac{\partial \integrand}{\partial \stateyDotDiscrete} \big(\statezBar(b,\epsilon),\statezDotBar(b,\epsilon) ,b \big) \variation(b) , 
\end{multline}
where we have used $\variationArg{a} = \bz$.
Substituting Eq.~\eqref{eq:gradient} in the stationarity condition~\eqref{eq:stationaryOne} yields
\begin{equation}\label{eq:statOne}
  \int_a^b  \bigg[\frac{\partial \integrand  }{\partial \stateyDiscrete } \big(\statezBar(t,\epsilon),\statezDotBar(t,\epsilon),t \big) \variation(t) - \frac{d}{dt}\bigg( \frac{\partial \integrand}{\partial \stateyDotDiscrete }  \big(\statezBar(t,\epsilon),\statezDotBar(t,\epsilon),t \big) \bigg) \variation(t) \bigg]dt + \frac{\partial \integrand}{\partial \stateyDotDiscrete} \big(\statezBar(b,\epsilon),\statezDotBar(b,\epsilon),b \big) \variation(b)  = 0. 
\end{equation}
By construction, $\statez(\cdot) \equiv \statezBar(\cdot;0)$ and $\statezDot(\cdot) \equiv \statezDotBar(\cdot;0)$, comprise a stationary
point; thus, setting $\epsilon = 0$ in Eq.~\eqref{eq:statOne} yields
\begin{equation}\label{eq:euler_lagrange_analysis}
  \int_a^b  \bigg[\frac{\partial \integrand  }{\partial \stateyDiscrete } \big(\statez(t),\statezDot(t),t \big) \variation(t) - \frac{d}{dt}\bigg( \frac{\partial \integrand}{\partial \stateyDotDiscrete }  \big(\statez(t),\statezDot(t),t \big) \bigg) \variation(t) \bigg]dt + \frac{\partial \integrand}{\partial \stateyDotDiscrete} \big(\statez(b),\statezDot(b) ,b\big) \variation(b)  = 0. 
\end{equation}
As $\variation$ is an arbitrary function, this equality requires
\begin{equation}\label{eq:euler_lagrange_appendix}
\begin{split}
\bigg[ &\frac{\partial \integrand  }{\partial \stateyDiscrete }
	\big(\statez(t),\statezDot(t) ,t\big) \bigg]^T - \bigg[ \frac{d}{dt}\bigg(
	\frac{\partial \integrand}{\partial \stateyDotDiscrete }
	\big(\statez(t),\statezDot(t),t \big) \bigg) \bigg]^T = \bz , \\
&\statez(a) = \statez_a, \hspace{0.65 in} \bigg[ \frac{\partial \integrand}{\partial \stateyDotDiscrete} \big(\statez(b),\statezDot(b),b \big) \bigg]^T = \bz.
\end{split}
\end{equation}
Equation~\eqref{eq:euler_lagrange_appendix} is known as the Euler--Lagrange
equation. It states that, for ($\statez$,$\statezDot$) to define a stationary point of $\mathcal{J}$, then they must 
satisfy~\eqref{eq:euler_lagrange_appendix}. It is emphasized
that~\eqref{eq:euler_lagrange_appendix} is a necessary condition on ($\statez,\statezDot$)
to make $\mathcal{J}$ stationary, but it is not a sufficient condition. It is
additionally noted that~\eqref{eq:euler_lagrange_appendix} provides a
stationary point of $\mathcal{J}$, but the resulting stationary point could be a local minima, local maxima, or saddle point.

\section{Evaluation of gradients in the Euler--Lagrange equations for \methodAcronym\ with \spatialAcronym\ trial subspaces}\label{appendix:vector_calc}
We now derive the specific form of the Euler--Lagrange equations for the \methodAcronym\ formulation with \spatialAcronym\ trial subspaces. Without loss of generality, we present the derivation for a single window $t \in [0,T]$ with a constant basis $\basisspace$ and weighting matrix $\stweightingMat$.  
To obtain the specific form of the Euler--Lagrange equations for the \methodAcronym\ formulation, we need to evaluate the gradients in~\eqref{eq:euler_lagrange_appendix} for the integrand
\begin{align*}\label{eq:integrand_apx}
 \minintegrand & \vcentcolon
(\genstateyDiscreteArgnt{}, \genstateyDiscreteDotArgnt{} ,\timeDummy) \mapsto \frac{1}{2} \big[
\basisspace \genstateyDiscreteDotArgnt{} - \velocity(\basisspace \genstateyDiscreteArgnt{}
+ \stateIntercept,\timeDummy) \big]^T \stweightingMatArg{} \big[
\basisspace \genstateyDiscreteDotArgnt{}  - \velocity(\basisspace \genstateyDiscreteArgnt{} +
\stateIntercept,\timeDummy) \big], \\ & 
\vcentcolon \RR{\romdim} \times \RR{\romdim} \times [0,T]
 \rightarrow \RR{} .  
\end{align*}
To evaluate the gradients, we first expand $\minintegrand$:
\begin{align*}
 \minintegrand(\genstateyDiscrete,\genstateyDiscreteDot,\timeDummy)  &= \frac{1}{2} \big[ \basisspace \genstateyDiscreteDotArgnt{}  - \velocity(\basisspace \genstateyDiscreteArgnt{} + \stateIntercept,\timeDummy) \big]^T \stweightingMatArg{} \big[ \basisspace \genstateyDiscreteDotArgnt{} - \velocity(\veloargsromy) \big] \\ 
 &= \frac{1}{2}\big[\basisspace \genstateyDiscreteDotArgnt{}  \big]^T  \stweightingMatArg{}  \big[\basisspace \genstateyDiscreteDotArgnt{} \big]  - \frac{1}{2}\big[ \velocity(\veloargsromy) \big]^T \stweightingMatArg{}  \big[\basisspace \genstateyDiscreteDotArgnt{}\big] - \frac{1}{2} \big[ \basisspace \genstateyDiscreteDotArgnt{} \big]^T \stweightingMatArg{} \big[ \velocity(\veloargsromy) \big]  \\ &+ \frac{1}{2}\big[\velocity(\veloargsromy) \big]^T \stweightingMatArg{} \big[ \velocity(\veloargsromy) \big].
\end{align*}
Since $\stweightingMat$ is symmetric,
\begin{equation}\label{eq:int_expand}
 \minintegrand(\genstateyDiscrete,\genstateyDiscreteDot,\timeDummy)  = \frac{1}{2}\big[\basisspace \genstateyDiscreteDotArgnt{}  \big]^T  \stweightingMatArg{}  \big[ \basisspace \genstateyDiscreteDotArgnt{} \big]  -  \big[ \basisspace \genstateyDiscreteDotArgnt{} \big]^T \stweightingMatArg{} \big[ \velocity(\veloargsromy) \big]  + \frac{1}{2}\big[\velocity(\veloargsromy) \big]^T \stweightingMatArg{} \big[ \velocity(\veloargsromy) \big].
\end{equation}
For notational purposes, we write the above as,
$$  \minintegrand(\genstateyDiscrete,\genstateyDiscreteDot,\timeDummy) =  \minintegrand_1(\genstateyDiscrete,\genstateyDiscreteDot,\timeDummy) +  \minintegrand_2(\genstateyDiscrete,\genstateyDiscreteDot,\timeDummy) +  \minintegrand_3(\genstateyDiscrete,\genstateyDiscreteDot,\timeDummy),$$
where
\begin{align*}
&\minintegrand_1(\genstateyDiscrete,\genstateyDiscreteDot,\timeDummy) =  \frac{1}{2}\big[\basisspace \genstateyDiscreteDotArgnt{}  \big]^T  \stweightingMatArg{}  \big[ \basisspace \genstateyDiscreteDotArgnt{} \big], \\
&   \minintegrand_2(\genstateyDiscrete,\genstateyDiscreteDot,\timeDummy) = -  \big[ \basisspace \genstateyDiscreteDotArgnt{} \big]^T \stweightingMatArg{} \big[ \velocity(\veloargsromy) \big], \\
&  \minintegrand_3(\genstateyDiscrete,\genstateyDiscreteDot,\timeDummy) = \frac{1}{2}\big[\velocity(\veloargsromy) \big]^T \stweightingMatArg{} \big[ \velocity(\veloargsromy) \big] . 
\end{align*}
Constructing the Euler--Lagrange equations for this functional $\minintegrand$ requires evaluating the derivatives $\frac{\partial \minintegrand}{\partial \genstateyDiscrete}$ and $\frac{\partial \minintegrand}{\partial \genstateyDiscreteDot}$. We start by evaluating $\frac{\partial \minintegrand}{\partial \genstateyDiscrete}$ and go term by term.

Starting with $\minintegrand_1(\genstateyDiscrete,\genstateyDiscreteDot)$, we see,
$$\frac{\partial \minintegrand_1 }{\partial \genstateyDiscrete} = \boldsymbol 0,$$
where it is noted that $\mathcal{I}_1$ only depends on $\genstateyDiscreteDot$. Working with the second term:
\begin{align*}
\frac{\partial \minintegrand_2}{\partial \genstateyDiscrete}  &= -\frac{\partial}{\partial \genstateyDiscrete} \bigg( \big[ \basisspace \genstateyDiscreteDot \big]^T \stweightingMatArg{} \big[ \velocity(\veloargsromy) \big] \bigg) \\ 
&= - \big[ \basisspace \genstateyDiscreteDotArgnt{} \big]^T \stweightingMatArg{} \frac{\partial}{\partial \genstateyDiscreteArgnt{}} \bigg( \big[ \velocity(\veloargsromy) \big]  \bigg)\\
 &= -\big[ \basisspace \genstateyDiscreteDotArgnt{} \big]^T \stweightingMatArg{}   \frac{\partial \velocity}{\partial \stateyDiscrete}\frac{\partial \stateyDiscrete}{\partial \genstateyDiscrete} \\
& = - \big[ \basisspace \genstateyDiscreteDotArgnt{} \big]^T \stweightingMatArg{}  \frac{\partial \velocity}{\partial \stateyDiscrete}\basisspace \\
&= -[\genstateyDiscreteDotArgnt{}]^T \basisspace^T \stweightingMatArg{} \frac{\partial \velocity}{\partial \stateyDiscrete}  \basisspace,
\end{align*}
where we have suppressed the arguments of the Jacobian for simplicity; e.g., formally 
$$\frac{\partial \velocity}{\partial \stateyDiscrete} : (\statewDiscrete,\timeDummy) \mapsto \frac{ \partial \velocity }{ \partial \stateyDiscrete} (\statewDiscrete, \timeDummy).$$ 
For $\minintegrand_3$,
\begin{align}\label{eq:i3sol}
\frac{\partial  \minintegrand_3}{\partial \genstateyDiscrete}  &= \frac{1}{2} \frac{\partial }{\partial \genstateyDiscrete} \bigg( \big[\velocity(\veloargsromy) \big]^T \stweightingMatArg{}\big[ \velocity(\veloargsromy) \big] \bigg) \\  
&=  [\velocity(\veloargsromy) ]^T \stweightingMatArg{} \frac{\partial \velocity}{\partial \stateyDiscrete} \frac{\partial \stateyDiscrete}{\partial \genstateyDiscrete} \\
 &=   [\velocity(\veloargsromy) ]^T  \stweightingMatArg{} \frac{\partial \velocity}{\partial \stateyDiscrete} \basisspace.
\end{align}
This gives the final expression,
$$
 \frac{\partial \minintegrand}{\partial \boldsymbol \genstateyDiscrete} = - \big[ \basisspace \genstateyDiscreteDotArgnt{} \big]^T \stweightingMatArg{}  \frac{\partial \velocity}{\partial \stateyDiscrete}\basisspace +  [\velocity(\veloargsromy) ]^T \stweightingMatArg{} \frac{\partial \velocity}{\partial \stateyDiscrete} \basisspace.
$$%
We now evaluate $\frac{\partial \minintegrand}{\partial \genstateyDiscreteDotArgnt{}}$ and again go term by term. Starting with $\minintegrand_1$,
\begin{align*}
\frac{\partial \minintegrand_1}{\partial \genstateyDiscreteDotArgnt{}} &=
\frac{1}{2} \frac{\partial}{\partial \genstateyDiscreteDotArgnt{} }\bigg( \big[ \basisspace \genstateyDiscreteDotArgnt{} \big]  \stweightingMatArg{}  \big[ \basisspace \genstateyDiscreteDotArgnt{} \big]  \bigg) \\
&= [\genstateyDiscreteDotArgnt{}]^T \basisspace^T \stweightingMatArg{} \basisspace.
\end{align*}
Now working with the second term:
\begin{align*}
\frac{\partial \minintegrand_2}{\partial \genstateyDiscreteDotArgnt{} } &=
\frac{\partial}{\partial \genstateyDiscreteDotArgnt{}  } \bigg( \big[ \basisspace \genstateyDiscreteDotArgnt{} \big]^T \stweightingMatArg{} \big[ \velocity(\veloargsromy) \big]  \bigg)\\
 &= \frac{\partial}{\partial \genstateyDiscreteDotArgnt{} } \bigg( [\genstateyDiscreteDotArgnt{}]^T \bigg) \basisspace^T  \stweightingMatArg{} \big[ \velocity(\veloargsromy) \big] \\ 
 &= \bigg[  \basisspace^T  \stweightingMatArg{}  \velocity(\veloargsromy )   \bigg]^T \\
 &= [\velocity(\veloargsromy) ]^T \stweightingMatArg{} \basisspace.
\end{align*}
Finally, for the last term,
\begin{align*}
\frac{\partial \minintegrand_3}{\partial \genstateyDiscreteDotArgnt{} } &= \boldsymbol 0,
\end{align*}
where it is noted that $\mathcal{I}_3$ only depends on $\genstateyDiscrete$. We thus have,
$$\frac{\partial \minintegrand}{\partial \genstateyDiscreteDot } =   [\genstateyDiscreteDotArgnt{}]^T \basisspace^T \stweightingMatArg{} \basisspace -  [\velocity(\veloargsromy ) ]^T \stweightingMatArg{} \basisspace.$$
Combining all terms and evaluating at $(\genstateArg{}{t},\genstateDotArg{}{t},t)$,
\begin{multline*}
 \frac{\partial \minintegrand}{\partial \genstateyDiscrete}(\genstateArg{}{t},\genstateDotArg{}{t},t)  - \frac{d}{dt} \bigg[ \frac{\partial \minintegrand}{\partial \genstateyDiscreteDotArgnt{}} (\genstateArg{}{t},\genstateDotArg{}{t},t) \bigg] =  - \big[ \basisspace \genstateDotArg{}{t}  \big]^T \stweightingMatArg{} \big[  \frac{\partial \velocity}{\partial \stateyDiscrete} (\veloargsrom) \big] \basisspace + \\  [\velocity(\veloargsrom) ]^T \stweightingMatArg{} \big[ \frac{\partial \velocity}{\partial \stateyDiscrete} (\veloargsrom) \big] \basisspace - \frac{d}{dt} \bigg[  [\genstateDotArg{}{t} ]^T  \basisspace^T \stweightingMatArg{} \basisspace - [\velocity(\veloargsrom) ]^T \stweightingMatArg{} \basisspace \bigg]  = \boldsymbol 0.
\end{multline*}
To put this in a more recognizable form, we can pull out the common factor in the first two terms,
\begin{multline*}
- \bigg( \big[ \basisspace \genstateDotArg{}{t} \big]^T  -  [\velocity(\veloargsrom)] ^T \bigg) \stweightingMatArg{}
 \frac{\partial \velocity}{\partial \stateyDiscrete}(\veloargsrom) \basisspace -  \\
\frac{d}{dt} \bigg[  [\genstateDotArg{}{t}]^T \basisspace^T \stweightingMatArg{} \basisspace -  
[\velocity(\veloargsrom) ]^T \stweightingMatArg{} \basisspace  \bigg] = \boldsymbol 0.
\end{multline*}
Taking the transpose to put into the common column major format,
$$ -  \basisspace^T \big[ \frac{\partial \velocity}{\partial \stateyDiscrete}(\veloargsrom)  \big]^T \stweightingMatArg{} \bigg( \big[ \basisspace \genstateDotArg{}{t}\big]  -  \velocity(\veloargsrom)  \bigg) -  \frac{d}{dt} \bigg[  \basisspace^T \stweightingMatArg{} \basisspace \genstateDotArg{}{t}  - \basisspace^T \stweightingMatArg{} \velocity(\veloargsrom)   \bigg] = \bz.
 $$
We now factor the second term,
$$ -  \basisspace^T \big[ \frac{\partial \velocity}{\partial \stateyDiscrete}(\veloargsrom)  \big]^T \stweightingMatArg{ } \bigg(  \basisspace \genstateDotArg{ }{t}  -  \velocity(\veloargsrom) \bigg) -  \basisspace^T \stweightingMatArg{ } \frac{d}{dt} \bigg[   \basisspace \genstateDotArg{}{t}  - \velocity(\veloargsrom) \bigg] = \bz. $$
Gathering terms and multiplying by negative one, the final form of the Euler--Lagrange equations are obtained,
\begin{equation}\label{eq:el_secondorder}
 \bigg[\basisspace^T \big[\frac{\partial \velocity}{\partial \stateyDiscrete}(\veloargsrom) \big]^T \stweightingMatArg{} + \basisspace^T \stweightingMatArg{} \frac{d}{dt} \bigg] \bigg(  \basisspace \genstateDotArg{}{t} -  \velocity(\veloargsrom) \bigg) = \bz.
\end{equation} 
This is the \methodAcronym-ROM. Note that this is a second order equation and can be written as two separate first order equations. Defining the ``costate" as,
\begin{equation*}
\adjointArgnt{}  : \timeDummy \mapsto \genstateDotArg{}{\timeDummy}  -  \mass^{-1} \basisspace^T \stweightingMatArg{} \velocity(\basisspace \genstateArg{}{t} + \stateIntercept, \timeDummy) ,
\end{equation*}
we can manipulate Eq.~\eqref{eq:el_secondorder} as follows: 
First, we add and subtract the first term multiplied by $\basisspace [\massArgnt{n}]^{-1}\basisspace^T \stweightingMat$, 
\begin{multline*} 
\bigg[\basisspace^T \bigg[\frac{\partial \velocity}{\partial \stateyDiscrete}(\veloargsrom) \bigg]^T \stweightingMatArg{} \bigg( \mathbf{I} - \basisspace [\massArgnt{}]^{-1} \basisspace^T\stweightingMatArg{} + \basisspace [\massArgnt{}]^{-1} \basisspace^T \stweightingMatArg{} \bigg)  + \basisspace^T \stweightingMatArg{}  \frac{d}{dt} \bigg] \\ 
\bigg(  \basisspace \genstateDotArg{}{t}   -  \velocity(\veloargsrom) \bigg) = \bz.
\end{multline*}
Pulling out the term multiplied by the positive portion of $\basisspace [\massArgnt{}]^{-1} \basisspace^T \stweightingMatArg{}$,
\begin{multline*} 
\bigg[\basisspace^T \bigg[\frac{\partial \velocity}{\partial \stateyDiscrete} ( \veloargsrom) \bigg]^T \stweightingMatArg{}\bigg( \mathbf{I} - \basisspace [\massArgnt{}]^{-1}  \basisspace^T  \stweightingMatArg{} \bigg)  + \\ \basisspace^T \bigg[\frac{\partial \velocity}{\partial \stateyDiscrete} (\veloargsrom) \bigg]^T \stweightingMatArg{} \basisspace [\massArgnt{}]^{-1}  \basisspace^T \stweightingMatArg{} +   \basisspace^T \stweightingMatArg{} \frac{d}{dt} \bigg] \bigg(  \basisspace \genstateDotArg{}{t}   -  \velocity(\veloargsrom) \bigg) = \bz.
\end{multline*}
Splitting into two separate terms, 
\begin{multline*}
\basisspace^T \bigg[\frac{\partial \velocity}{\partial \stateyDiscrete}(\veloargsrom) \bigg]^T \stweightingMatArg{}\bigg( \mathbf{I} - \basisspace [\massArgnt{}]^{-1}  \basisspace^T \stweightingMatArg{} \bigg)  \bigg(  \basisspace \genstateDotArg{}{t}   -  \velocity(\veloargsrom) \bigg)  + \\  
\bigg[ \basisspace^T \bigg[\frac{\partial \velocity}{\partial \stateyDiscrete}(\veloargsrom) \bigg]^T \stweightingMatArg{} \basisspace [\massArgnt{}]^{-1} \basisspace^T \stweightingMatArg{} +   \basisspace^T \stweightingMatArg{} \frac{d}{dt} \bigg] \bigg(  \basisspace \genstateDotArg{}{t}  -  \velocity(\veloargsrom) \bigg) = \bz.
\end{multline*}
Pulling $\mass^{-1} \basisspace^T \stweightingMatArg{}$ inside the parenthesis on the second term,
\begin{multline*}
\basisspace^T \bigg[\frac{\partial \velocity}{\partial \state^n}(\veloargsrom) \bigg]^T \stweightingMatArg{}\bigg( \mathbf{I} - \basisspace[\massArgnt{}]^{-1} \basisspace^T \stweightingMatArg{} \bigg)  \bigg(  \basisspace \genstateDotArg{}{t}   -  \velocity(\veloargsrom) \bigg)  + \\  
\bigg[ \basisspace^T \bigg[\frac{\partial \velocity}{\partial \stateyDiscrete}(\veloargsrom) \bigg]^T \stweightingMatArg{} \basisspace  +   \mass \frac{d}{dt} \bigg] \bigg( \genstateDotArg{}{t}  - \mass^{-1} \basisspace^T \stweightingMatArg{}  \velocity(\veloargsrom) \bigg) = \bz.
\end{multline*}
By definition, the term inside the parenthesis of the second term is $\adjointArg{}{t}$,
\begin{multline*}
\basisspace^T \bigg[\frac{\partial \velocity}{\partial \stateyDiscrete}(\veloargsrom) \bigg]^T \stweightingMatArg{}\bigg( \mathbf{I} - \basisspace [\massArgnt{}]^{-1} \basisspace^T \stweightingMatArg{} \bigg)  \bigg(  \basisspace \genstateDotArg{}{t}    -  \velocity(\veloargsrom) \bigg)  +   \\ \basisspace^T \bigg[\frac{\partial \velocity}{\partial \stateyDiscrete}(\veloargsrom) \bigg]^T \stweightingMatArg{} \basisspace \adjointArg{}{t}  +  \massArgnt{} \frac{d}{dt} \adjointArg{}{t}= \bz.
\end{multline*}
Re-arranging,
\begin{multline*}
\mass \frac{d }{dt}\adjointArg{}{t} + \basisspace^T \bigg[\frac{\partial \velocity}{\partial \stateyDiscrete} ( \veloargsrom ) \bigg]^T \stweightingMatArg{} \basisspace  \adjointArg{}{t}  \\
= - \basisspace^T \bigg[\frac{\partial \velocity}{\partial \stateyDiscrete}(\veloargsrom)\bigg]^T \stweightingMatArg{}\bigg( \mathbf{I} - \basisspace [\massArgnt{}]^{-1} \basisspace^T \stweightingMatArg{} \bigg)  \bigg(  \basisspace \genstateDotArg{}{t}   -  \velocity(\veloargsrom) \bigg).
\end{multline*}
We thus get the splitting
\begin{align*}\label{eq:lspg_continuous_appendix}
& \mass \frac{d}{dt}\genstateArg{}{t}  -  \basisspace^T \stweightingMatArg{} \velocity(\veloargsrom) =  \mass \adjointArg{}{t},\\
&\mass \frac{d}{dt} \adjointArg{}{t}  + \basisspace^T \bigg[\frac{\partial \velocity}{\partial \stateyDiscrete} (\veloargsrom) \bigg]^T \stweightingMatArg{} \basisspace \adjointArg{}{t} = \\
& -\basisspace^T \big[ \frac{\partial \velocity}{\partial \stateyDiscrete}(\veloargsrom) \big] ^T \stweightingMatArg{}  \bigg( \mathbf{I} -   \basisspace [\massArgnt{}]^{-1} \basisspace^T   \stweightingMatArg{} \bigg)\bigg( \basisspace \genstateDotArg{}{t}   -   \velocity(\veloargsrom) \bigg) . 
\end{align*}
\section{Evaluation of gradients for optimal control formulation}\label{appendix:optimal_control}
When formulated as an optimal control problem of Lagrange type, the gradients of the Hamiltonian with respect to the state, controller, and costate 
need to be evaluated. This section details this evaluation. The case derivation is presented for the case with one window, for notational simplicity.

The Pontryagin Maximum Principle leverages the following Hamiltonian,
\begin{align*}
\hamiltonian \; &: \;  (\genstateyDiscreteArgnt{},\adjointDiscreteDumArgnt{},\controllerDiscreteDumArgnt{},\timeDummy) \mapsto 
 \adjointDiscreteDumArgnt{T} \bigg[  [\massArgnt{}]^{-1}\basisspace^T \stweightingMatArg{}\velocity(\veloargsromy) + [\massArgnt{}]^{-1}\controllerDiscreteDumArgnt{} \bigg] +  \objectiveControlArg{}(\genstateyDiscreteArgnt{},\controllerDiscreteDumArgnt{},\timeDummy) \\
&: \; \RR{\romdim} \times \RR{\romdim} \times \RR{\romdim} \times [0,T] \rightarrow \RR{},
\end{align*} 
where,
\begin{align*}
 \objectiveControlArg{} &:  (\genstateyDiscreteArgnt{},\controllerDiscreteDumArgnt{},\timeDummy)
\mapsto \frac{1}{2} \bigg[ \basisspace \bigg(  [\massArgnt{}]^{-1}\basisspace^T
\stweightingMatArg{}  \velocity(\veloargsromy) + [\massArgnt{}]^{-1}\controllerDiscreteDumArgnt{} \bigg) -
\velocity(\veloargsromy) \bigg]^T
\stweightingMatArg{}  \\ & \hspace{1.5 in}\bigg[ \basisspace \bigg(
[\massArgnt{}]^{-1}\basisspace^T \stweightingMatArg{}\velocity(\veloargsromy) + [\massArgnt{}]^{-1}\controllerDiscreteDumArgnt{}
\bigg) - \velocity( \veloargsromy ) \bigg]
, \nonumber \\ & : \RR{\romdim} \times \RR{\romdim} \times [0,T] \rightarrow \RR{}.
\end{align*} 
As described in Section~\ref{sec:optimal_control}, to derive the stationary conditions of the \methodAcronym\ objective function, we require evaluating the following gradients,
$ \frac{\partial \hamiltonianArg{}}{\partial \adjointDiscreteDumArgnt{}{}}, \;   \frac{\partial \hamiltonianArg{}}{\partial \genstateyDiscreteArgnt{}{}}$, and $\frac{\partial \hamiltonianArg{}}{\partial \controllerDiscreteDumArgnt{}{}}.$  Starting with $\frac{\partial \hamiltonianArg{}}{\partial \adjointDiscreteDumArgnt{}{}}$, we have,
$$
\bigg[\frac{\partial \hamiltonianArg{}}{\partial \adjointDiscreteDumArgnt{}{}}\bigg]^T = [\massArgnt{}]^{-1}\basisspace^T \stweightingMatArg{}\velocity(\veloargsromy) + [\massArgnt{}]^{-1}\controllerDiscreteDumArgnt{} .
$$
Next, we address $ \frac{\partial \hamiltonianArg{}}{\partial \genstateyArgnt{}{}}$. 
\begin{align*}
  \frac{\partial \hamiltonianArg{}}{\partial \genstateyDiscreteArgnt{}{}} &= \frac{\partial}{\partial \genstateyDiscreteArgnt{}{}} \bigg[  \adjointDiscreteDumArgnt{T} \big[  [\massArgnt{}]^{-1}\basisspace^T \stweightingMatArg{}\velocity(\veloargsromy) + [\massArgnt{}]^{-1}\controllerDiscreteDumArgnt{} \big] +  \objectiveControlArg{}(\genstateyDiscreteArgnt{},\controllerDiscreteDumArgnt{},\timeDummy) \bigg] \\
 &= \frac{\partial}{\partial \genstateyDiscreteArgnt{}{}} \bigg[  \adjointDiscreteDumArgnt{T} \big[  [\massArgnt{}]^{-1}\basisspace^T \stweightingMatArg{}\velocity(\veloargsromy
) + [\massArgnt{}]^{-1}\controllerDiscreteDumArgnt{} \big] \bigg]+  \frac{\partial}{\partial \genstateyDiscreteArgnt{}{}}  \bigg[ \objectiveControlArg{}(\genstateyDiscreteArgnt{},\controllerDiscreteDumArgnt{},\timeDummy) \bigg] \\
 &=   \adjointDiscreteDumArgnt{T} \big[  [\massArgnt{}]^{-1}\basisspace^T \stweightingMatArg{}  \frac{\partial}{\partial \genstateyDiscreteArgnt{}{}} \bigg( \velocity(\veloargsromy)\bigg) \big] +  \frac{\partial}{\partial \genstateyDiscreteArgnt{}{}}  \bigg[ \objectiveControlArg{}(\genstateyDiscreteArgnt{},\controllerDiscreteDumArgnt{},\timeDummy) \bigg] \\
 &=   \adjointDiscreteDumArgnt{T} \big[  [\massArgnt{}]^{-1}\basisspace^T \stweightingMatArg{}  \frac{\partial \velocity}{\partial \stateyDiscrete} \frac{\partial \stateyDiscreteArgnt{}{}}{\partial  \genstateyDiscreteArgnt{}{} } \big] +  \frac{\partial}{\partial \genstateyDiscreteArgnt{}{}}  \bigg[ \objectiveControlArg{}(\genstateyDiscreteArgnt{},\controllerDiscreteDumArgnt{},\timeDummy) \bigg] \\
 &=   \adjointDiscreteDumArgnt{T} \big[  [\massArgnt{}]^{-1}\basisspace^T \stweightingMatArg{}  \frac{\partial \velocity}{\partial \stateyDiscrete}\basisspace  \big] +  \frac{\partial}{\partial \genstateyDiscreteArgnt{}{}}  \bigg[ \objectiveControlArg{}(\genstateyDiscreteArgnt{},\controllerDiscreteDumArgnt{},\timeDummy) \bigg]. 
\end{align*}
To evaluate $\frac{\partial}{\partial \genstateyDiscreteArgnt{}{}}  \bigg[ \objectiveControlArg{}(\genstateyDiscreteArgnt{},\controllerDiscreteDumArgnt{},\timeDummy) \bigg]$, we leverage the previous result~\eqref{eq:int_expand} and insert $\genstateyDiscreteDotArgnt{} =  [\massArgnt{}]^{-1}\basisspace^T
\stweightingMatArg{}  \velocity(\veloargsromy) + [\massArgnt{}]^{-1}\controllerDiscreteDumArg{}{t} .$ This leads to the expression for the expanded Lagrangian,
\begin{multline*}
 \objectiveControlArg{}(\genstateyDiscreteArgnt{},\controllerDiscreteDumArgnt{},\timeDummy) = \frac{1}{2}\big[\basisspace  [\massArgnt{}]^{-1}\basisspace^T
\stweightingMatArg{}  \velocity(\veloargsromy)   + \basisspace [\massArgnt{}]^{-1}\controllerDiscreteDumArg{}{t} \big]^T  \stweightingMatArg{}  \big[ \basisspace  [\massArgnt{}]^{-1}\basisspace^T
\stweightingMatArg{}  \velocity(\veloargsromy)   + \basisspace [\massArgnt{}]^{-1}\controllerDiscreteDumArg{}{t}\big] \\  -  \big[ \basisspace  [\massArgnt{}]^{-1}\basisspace^T
\stweightingMatArg{}  \velocity(\veloargsromy)  + \basisspace [\massArgnt{}]^{-1}\controllerDiscreteDumArg{}{t} \big]^T \stweightingMatArg{} \big[ \velocity(\veloargsromy) \big]  + \frac{1}{2}\big[\velocity(\veloargsromy) \big]^T \stweightingMatArg{} \big[ \velocity(\veloargsromy) \big].
\end{multline*}
Again for notational purposes, we split this into three terms,
$$
\objectiveControlArg{}(\genstateyDiscreteArgnt{},\controllerDiscreteDumArgnt{},\timeDummy)  = 
\objectiveControlArg{}_1(\genstateyDiscreteArgnt{},\controllerDiscreteDumArgnt{},\timeDummy)  + 
\objectiveControlArg{}_2(\genstateyDiscreteArgnt{},\controllerDiscreteDumArgnt{},\timeDummy)  + 
\objectiveControlArg{}_3(\genstateyDiscreteArgnt{},\controllerDiscreteDumArgnt{},\timeDummy) ,$$
where
\begin{align*}
& \objectiveControlArg{}_1(\genstateyDiscreteArgnt{},\controllerDiscreteDumArgnt{},\timeDummy) =  \frac{1}{2}\big[\basisspace  [\massArgnt{}]^{-1}\basisspace^T
\stweightingMatArg{}  \velocity(\veloargsromy)  + \basisspace [\massArgnt{}]^{-1}\controllerDiscreteDumArgnt{}{}  \big]^T  \stweightingMatArg{}  \big[ \basisspace  [\massArgnt{}]^{-1}\basisspace^T
\stweightingMatArg{}  \velocity(\veloargsromy)   +  \basisspace [\massArgnt{}]^{-1}\controllerDiscreteDumArgnt{}{} \big],\\
& \objectiveControlArg{}_2(\genstateyDiscreteArgnt{},\controllerDiscreteDumArgnt{},\timeDummy) = -  \big[ \basisspace  [\massArgnt{}]^{-1}\basisspace^T
\stweightingMatArg{}  \velocity(\veloargsromy)  + \basisspace [\massArgnt{}]^{-1} \controllerDiscreteDumArgnt{}\big]^T \stweightingMatArg{} \big[ \velocity(\veloargsromy) \big] ,\\
& \objectiveControlArg{}_3(\genstateyDiscreteArgnt{},\controllerDiscreteDumArgnt{},\timeDummy) = \frac{1}{2}\big[\velocity(\veloargsromy) \big]^T \stweightingMatArg{} \big[ \velocity(\veloargsromy) \big] .
\end{align*}
Evaluating the first term,
\begin{align*}
\frac{\partial  \objectiveControlArg{}_1 }{\partial \genstateyDiscreteArgnt{}{}} &= 
\frac{1}{2} \frac{\partial }{\partial \genstateyDiscreteArgnt{}{}} \bigg( [\velocity(\veloargsromy)]^T \stweightingMatArg{} \basisspace [\massArgnt{}]^{-1} \basisspace^T \stweightingMatArg{}  \big[ \basisspace  [\massArgnt{}]^{-1}\basisspace^T
\stweightingMatArg{}  \velocity(\veloargsromy) \big] \bigg) +  \\
& \qquad \frac{\partial }{\partial \genstateyDiscreteArgnt{}{}} \bigg( [\controllerDiscreteDumArgnt{}]^T [\massArgnt{}]^{-1} \basisspace^T \stweightingMatArg{}  \big[ \basisspace  [\massArgnt{}]^{-1}\basisspace^T
\stweightingMatArg{}  \velocity(\veloargsromy) \big] \bigg) + \\
& \qquad \frac{1}{2} \frac{\partial }{\partial \genstateyDiscreteArgnt{}{}} \bigg( [\controllerDiscreteDumArgnt{}]^T [\massArgnt{}]^{-1} \basisspace^T \stweightingMatArg{} \basisspace [\massArgnt{}]^{-1} \controllerDiscreteDumArgnt{}\bigg) 
,\\ 
&= 
[\velocity(\veloargsromy)]^T \stweightingMatArg{} \basisspace [\massArgnt{}]^{-1} \basisspace^T \stweightingMatArg{}  \basisspace  [\massArgnt{}]^{-1}\basisspace^T
\stweightingMatArg{}  \frac{\partial \velocity}{\partial \stateyDiscrete} \basisspace + 
[\controllerDumArgnt{}]^T [\massArgnt{}]^{-1} \basisspace^T \stweightingMatArg{}   \basisspace  [\massArgnt{}]^{-1}\basisspace^T
\stweightingMatArg{} \frac{\partial \velocity}{\partial \statey} \basisspace , \\ 
&= 
\bigg( [\velocity(\veloargsromy)]^T \stweightingMatArg{} \basisspace [\massArgnt{}]^{-1}  + [\controllerDiscreteDumArgnt{}]^T [\massArgnt{}]^{-1} \bigg)\basisspace^T  \stweightingMatArg{}   \basisspace  [\massArgnt{}]^{-1}\basisspace^T
\stweightingMatArg{} \frac{\partial \velocity}{\partial \stateyDiscrete} \basisspace ,\\
&= 
[\genstateyDiscreteDotArgnt{} ]^T \basisspace^T  \stweightingMatArg{}   \basisspace  [\massArgnt{}]^{-1}\basisspace^T
\stweightingMatArg{} \frac{\partial \velocity}{\partial \stateyDiscrete} \basisspace , \\
&= 
[\basisspace \genstateyDiscreteDotArgnt{} ]^T   \stweightingMatArg{}   \basisspace  [\massArgnt{}]^{-1}\basisspace^T
\stweightingMatArg{} \frac{\partial \velocity}{\partial \stateyDiscrete} \basisspace .
\end{align*}
Now evaluating the second term,
\begin{align*}
\frac{\partial  \objectiveControlArg{}_2 }{\partial \genstateyDiscreteArgnt{}{}} &= 
-\frac{\partial  }{\partial \genstateyDiscreteArgnt{}{}}   \big[ \basisspace  [\massArgnt{}]^{-1}\basisspace^T
\stweightingMatArg{}  \velocity(\veloargsromy) \big]^T \stweightingMatArg{} \big[ \velocity(\veloargsromy) \big]  - \frac{\partial  }{\partial \genstateyDiscreteArgnt{}{}} \bigg( [\controllerDiscreteDumArgnt{}]^T[\massArgnt{}]^{-1} \basisspace^T \stweightingMatArg{} \big[ \velocity(\veloargsromy) \big]  \bigg)  \\&= 
-\frac{\partial  }{\partial \genstateyDiscreteArgnt{}{}}    [\velocity(\veloargsromy)]^T \stweightingMatArg{} \basisspace [\massArgnt{}]^{-1} \basisspace^T 
 \stweightingMatArg{} \big[ \velocity(\veloargsromy) \big]  - 
  [\controllerDiscreteDumArgnt{}]^T[\massArgnt{}]^{-1} \basisspace^T \stweightingMatArg{} \frac{\partial \velocity}{\partial \stateyDiscrete} \basisspace 
\\
&= 
- 2[\velocity(\veloargsromy)]^T \stweightingMatArg{} \basisspace [\massArgnt{}]^{-1} \basisspace^T \stweightingMatArg{}  
 \frac{\partial \velocity}{\partial \stateyDiscrete} \basisspace - 
 [\controllerDiscreteDumArgnt{}]^T[\massArgnt{}]^{-1} \basisspace^T \stweightingMatArg{} \frac{\partial \velocity}{\partial \stateyDiscrete} \basisspace, \\
&= - \bigg[ 2[\velocity(\veloargsromy)]^T  \stweightingMatArg{} \basisspace [\massArgnt{}]^{-1}  + [\controllerDiscreteDumArgnt{}]^T  [\massArgnt{}]^{-1} \bigg] \basisspace^T \stweightingMatArg{} \frac{\partial \velocity}{\partial \stateyDiscrete} \basisspace ,\\
&= -\bigg( \big[ \basisspace \genstateyDiscreteDotArgnt{} ]^T  + \velocity(\veloargsromy)^T  \stweightingMatArg{} \basisspace [\massArgnt{}]^{-1} \basisspace^T \bigg) \stweightingMatArg{} \frac{\partial \velocity}{\partial \stateyDiscrete} \basisspace. 
\end{align*}
Next, we can use the result~\eqref{eq:i3sol} to have,
$$ \frac{\partial  \objectiveControlArg{}_3 }{\partial \genstateyDiscreteArgnt{}{}} =
  [\velocity(\veloargsromy) ]^T  \stweightingMatArg{} \frac{\partial \velocity}{\partial \stateyDiscrete} \basisspace.
$$
Thus we have
\begin{align*}
\frac{\partial \objectiveControlArg{}}{\partial \genstateyDiscreteArgnt{}} &= 
[\basisspace \genstateyDiscreteDotArgnt{} ]^T   \stweightingMatArg{}   \basisspace  [\massArgnt{}]^{-1}\basisspace^T
\stweightingMatArg{} \frac{\partial \velocity}{\partial \stateyDiscrete} \basisspace -
\bigg( \big[ \basisspace \genstateyDiscreteDotArgnt{} ]^T  + \velocity(\veloargsromy)]^T  \stweightingMatArg{} \basisspace [\massArgnt{}]^{-1} \basisspace^T  \bigg) \stweightingMatArg{} \frac{\partial \velocity}{\partial \stateyDiscrete} \basisspace + 
 [\velocity(\veloargsromy) ]^T  \stweightingMatArg{} \frac{\partial \velocity}{\partial \stateyDiscrete} \basisspace, \\
&= 
\bigg( [\basisspace \genstateyDiscreteDotArgnt{} ]^T - \velocity(\veloargsromy)^T \bigg)  \stweightingMatArg{}   \basisspace  [\massArgnt{}]^{-1}\basisspace^T
\stweightingMatArg{} \frac{\partial \velocity}{\partial \stateyDiscrete} \basisspace -
\bigg( [\basisspace \genstateyDiscreteDotArgnt{} ]^T - \velocity(\veloargsromy)^T \bigg)  \stweightingMatArg{}
\frac{\partial \velocity}{\partial \stateyDiscrete} \basisspace, \\ 
&= 
\bigg( [\basisspace \genstateyDiscreteDotArgnt{} ]^T - \velocity(\veloargsromy)^T \bigg)   \bigg( \stweightingMatArg{} \basisspace  [\massArgnt{}]^{-1}\basisspace^T - \mathbf{I} \bigg)
\stweightingMatArg{} \frac{\partial \velocity}{\partial \stateyDiscrete} \basisspace, 
\end{align*}
such that,
\begin{equation*}
\frac{\partial \hamiltonianArg{}}{\partial \genstateyDiscreteArgnt{}{}} = 
  \adjointDumArgnt{T} \big[  [\massArgnt{}]^{-1}\basisspace^T \stweightingMatArg{}  \frac{\partial \velocity}{\partial \stateyDiscrete}\basisspace  \big]  + 
\bigg( [\basisspace \genstateyDiscreteDotArgnt{} ]^T - \velocity(\veloargsromy)^T \bigg)   \bigg( \stweightingMatArg{} \basisspace  [\massArgnt{}]^{-1}\basisspace^T - \mathbf{I} \bigg)
\stweightingMatArg{} \frac{\partial \velocity}{\partial \stateyDiscrete} \basisspace.
\end{equation*}
Equivalently we write
\begin{equation*}
\bigg[ \frac{\partial \hamiltonianArg{}}{\partial \genstateyDiscreteArgnt{}{}} \bigg]^T =  \basisspace^T \big[  \frac{\partial \velocity}{\partial \stateyDiscrete} \big]^T \stweightingMatArg{}  \basisspace [\massArgnt{}]^{-1}  \adjointDiscreteDumArgnt{} + 
 \basisspace^T \big[  \frac{\partial \velocity}{\partial \stateyDiscrete} \big]^T 
  \stweightingMatArg{} \bigg(  \basisspace  [\massArgnt{}]^{-1}\basisspace^T  \stweightingMatArg{}- \mathbf{I} \bigg)
 \bigg( \basisspace \genstateyDiscreteDotArgnt{} - \velocity(\veloargsromy) \bigg). 
\end{equation*}
Next, we evaluate $\frac{\partial \hamiltonianArg{}}{\partial \controllerDumArgnt{}}$:
\begin{align*}
  \frac{\partial \hamiltonianArg{}}{\partial \controllerDumArgnt{}{}} &= \frac{\partial}{\partial \controllerDumArgnt{}{}} \bigg[  \adjointDumArgnt{T} \big[  [\massArgnt{}]^{-1}\basisspace^T \stweightingMatArg{}\velocity(\veloargsromy) + [\massArgnt{}]^{-1}\controllerDumArgnt{} \big] +  \objectiveControlArg{}(\genstateyDiscreteArgnt{},\controllerDiscreteDumArgnt{},\timeDummy) \bigg] \\
 &= \frac{\partial}{\partial \controllerDiscreteDumArgnt{}{}} \bigg[  \adjointDiscreteDumArgnt{T} \big[  [\massArgnt{}]^{-1}\basisspace^T \stweightingMatArg{}\velocity(\veloargsromy) + [\massArgnt{}]^{-1}\controllerDiscreteDumArgnt{} \big] \bigg]+  \frac{\partial}{\partial \controllerDiscreteDumArgnt{}{}}  \bigg[ \objectiveControlArg{}(\genstateyDiscreteArgnt{},\controllerDiscreteDumArgnt{},\timeDummy) \bigg] \\
 &= [\adjointDiscreteDumArgnt{}]^T[\massArgnt{}]^{-1} + \frac{\partial}{\partial \controllerDiscreteDumArgnt{}{}}  \bigg[ \objectiveControlArg{}(\genstateyDiscreteArgnt{},\controllerDiscreteDumArgnt{},\timeDummy) \bigg]. 
\end{align*}
To evaluate $\frac{\partial}{\partial \controllerDiscreteDumArgnt{}{}}  \bigg[ \objectiveControlArg{}(\genstateyDiscreteArgnt{},\controllerDiscreteDumArgnt{},\timeDummy) \bigg]$, we again 
go term by term. Starting with the first term,
\begin{align*}
\frac{\partial  \objectiveControlArg{}_1 }{\partial \genstateyDiscreteArgnt{}{}} &= 
\frac{1}{2} \frac{\partial }{\partial \controllerDiscreteDumArgnt{}} \bigg( [\velocity(\veloargsromy)]^T \stweightingMatArg{} \basisspace [\massArgnt{}]^{-1} \basisspace^T \stweightingMatArg{}  \big[ \basisspace  [\massArgnt{}]^{-1}\basisspace^T
\stweightingMatArg{}  \velocity(\veloargsromy) \big] \bigg) + \\
& \qquad \frac{\partial }{\partial \controllerDiscreteDumArgnt{}{}} \bigg( [\controllerDiscreteDumArgnt{}]^T [\massArgnt{}]^{-1} \basisspace^T \stweightingMatArg{}  \big[ \basisspace  [\massArgnt{}]^{-1}\basisspace^T
\stweightingMatArg{}  \velocity(\veloargsromy) \big] \bigg) + \\
& \qquad \frac{1}{2} \frac{\partial }{\partial \controllerDiscreteDumArgnt{}{}} \bigg( [\controllerDiscreteDumArgnt{}]^T [\massArgnt{}]^{-1} \basisspace^T \stweightingMatArg{} \basisspace [\massArgnt{}]^{-1} \controllerDiscreteDumArgnt{}\bigg) \\
&= \bigg(  [\massArgnt{}]^{-1} \basisspace^T \stweightingMatArg{}  \big[ \basisspace  [\massArgnt{}]^{-1}\basisspace^T                      
\stweightingMatArg{}  \velocity(\veloargsromy) \big] \bigg)^T +   [\controllerDiscreteDumArgnt{}]^T [\massArgnt{}]^{-1} \basisspace^T \stweightingMatArg{} \basisspace [\massArgnt{}]^{-1} \\
&= \big[ \velocity(\veloargsromy) \big]^T \stweightingMatArg{} \basisspace [\massArgnt{}]^{-1}\basisspace^T  \stweightingMatArg{} \basisspace [\massArgnt{}]^{-1} +   [\controllerDiscreteDumArgnt{}]^T [\massArgnt{}]^{-1} \basisspace^T \stweightingMatArg{} \basisspace [\massArgnt{}]^{-1}. 
\end{align*}
Moving on to the second term,
\begin{align*}
\frac{\partial  \objectiveControlArg{}_2 }{\partial \controllerDiscreteDumArgnt{}{}} &= 
-\frac{\partial  }{\partial \controllerDiscreteDumArgnt{}{}}   \big[ \basisspace  [\massArgnt{}]^{-1}\basisspace^T
\stweightingMatArg{}  \velocity(\veloargsromy) \big]^T \stweightingMatArg{} \big[ \velocity(\veloargsromy) \big]  - \frac{\partial  }{\partial \controllerDiscreteDumArgnt{}{}} \bigg( [\controllerDiscreteDumArgnt{}]^T[\massArgnt{}]^{-1} \basisspace^T \stweightingMatArg{} \big[ \velocity(\veloargsromy) \big]  \bigg)  \\
&=- \bigg[ [\massArgnt{}]^{-1} \basisspace^T \stweightingMatArg{} \velocity(\basisspace \genstateyDiscrete + \stateIntercept) \bigg]^T \\
&= - [\velocity(\veloargsromy) ]^T \stweightingMatArg{} \basisspace [\massArgnt{}]^{-1}. 
\end{align*}
For the third term we have simply,
$$ \frac{\partial  \objectiveControlArg{}_3 }{\partial \controllerDiscreteDumArgnt{}{}} = \bz .$$
Thus,
\begin{align*}
\frac{\partial \hamiltonianArg{}}{\partial \controllerDiscreteDumArgnt{}} &= 
[\adjointDiscreteDumArgnt{}]^T[\massArgnt{}]^{-1}  +  \big[ \velocity(\veloargsromy) \big]^T \stweightingMatArg{} \basisspace [\massArgnt{}]^{-1}\basisspace^T  \stweightingMatArg{} \basisspace [\massArgnt{}]^{-1} + \\
& \qquad   [\controllerDiscreteDumArgnt{}]^T [\massArgnt{}]^{-1} \basisspace^T \stweightingMatArg{} \basisspace [\massArgnt{}]^{-1} - [\velocity(\veloargsromy) ]^T \stweightingMatArg{} \basisspace [\massArgnt{}]^{-1}, \\
& = \bigg( [\adjointDiscreteDumArgnt{}]^T - [\velocity(\veloargsromy) ]^T \stweightingMatArg{} \basisspace \bigg) [\massArgnt{}]^{-1}  + \bigg(  \big[ \velocity(\veloargsromy) \big]^T \stweightingMatArg{} \basisspace + [\controllerDiscreteDumArgnt{}]^T   \bigg) [\massArgnt{}]^{-1}\basisspace^T  \stweightingMatArg{} \basisspace [\massArgnt{}]^{-1}  \\
& = \bigg( [\adjointDiscreteDumArgnt{}]^T - [\velocity(\veloargsromy) ]^T \stweightingMatArg{} \basisspace \bigg) [\massArgnt{}]^{-1}  + \bigg(  \big[ \velocity(\veloargsromy) \big]^T \stweightingMatArg{} \basisspace + [\controllerDiscreteDumArgnt{}]^T   \bigg) [\massArgnt{}]^{-1}  \\
& =  [\adjointDiscreteDumArgnt{}]^T  [\massArgnt{}]^{-1}  +  [\controllerDiscreteDumArgnt{}]^T   [\massArgnt{}]^{-1}  
\end{align*}
Or, equivalently,
\begin{equation*}
\frac{\partial \hamiltonianArg{}}{\partial \controllerDiscreteDumArgnt{}} = 
  [\massArgnt{}]^{-1} [\adjointDiscreteDumArgnt{} +  \controllerDiscreteDumArgnt{}]. 
\end{equation*}
Evaluating at $(\genstateArg{}{t},\genstateDotArg{}{t},t$), the gradients in the Pontryagin Maximum Principle yield
\begin{align*}
&\frac{d}{dt}\genstateArg{}{t} =  [\massArgnt{}]^{-1}\basisspace^T \stweightingMatArg{}\velocity(\veloargsrom) + [\massArgnt{}]^{-1}\controllerArg{}{t}\\ 
&\frac{d }{dt} \adjointArg{}{t}  + \basisspace^T \big[  \frac{\partial \velocity}{\partial \statey} \big]^T \stweightingMatArg{}  \basisspace [\massArgnt{}]^{-1}  \adjointArg{}{t} = 
 - \basisspace^T \big[  \frac{\partial \velocity}{\partial \statey} \big]^T \stweightingMatArg{} 
\bigg(  \basisspace  [\massArgnt{}]^{-1}\basisspace^T \stweightingMatArg{} - \mathbf{I} \bigg)
 \bigg( \basisspace \genstateDotArg{}{t} - \velocity(\basisspace \genstateArg{}{t} + \stateIntercept) \bigg) \\ 
& \adjointArg{}{t}= -\controllerArg{}{t}.
\end{align*}
This can be written equivalently as
\begin{align*}
&\frac{d}{dt}\genstateArg{}{t} =  [\massArgnt{}]^{-1}\basisspace^T \stweightingMatArg{}\velocity(\basisspace \genstateArg{}{t} + \stateIntercept) + [\massArgnt{}]^{-1}\controllerArg{}{t} \\ 
&\frac{d}{dt} \controllerArg{}{t}  + \basisspace^T \big[  \frac{\partial \velocity}{\partial \statey} \big]^T \stweightingMatArg{}  \basisspace [\massArgnt{}]^{-1}  \controllerArg{}{t} = 
 - \basisspace^T \big[  \frac{\partial \velocity}{\partial \statey} \big]^T \stweightingMatArg{} 
\bigg( \mathbf{I} -   \basisspace  [\massArgnt{}]^{-1}\basisspace^T \bigg)
\stweightingMatArg{} \bigg( \basisspace \genstateDotArg{}{t} - \velocity(\basisspace \genstateArg{}{t} + \stateIntercept) \bigg). 
\end{align*}

\end{appendices}

\bibliographystyle{siam}
\bibliography{refs}

\begin{thebibliography}{10}

\bibitem{l1}
{\sc R.~Abgrall and R.~Crisovan}, {\em {Model reduction using L1-norm
  minimization as an application to nonlinear hyperbolic problems}},
  International Journal for Numerical Methods in Fluids, 87 (2018),
  pp.~628--651.

\bibitem{basis_rotation}
{\sc M.~Balajewicz, I.~Tezaur, and E.~Dowell}, {\em {Minimal subspace rotation
  on the Stiefel manifold for stabilization and enhancement of projection-based
  reduced order models for the compressible Navier--Stokes equations}}, Journal
  of Computational Physics, 321 (2016), pp.~224--241.

\bibitem{eim}
{\sc M.~Barrault, Y.~Maday, N.~C. Nguyen, and A.~T. Patera}, {\em An `empirical
  interpolation' method: application to efficient reduced-basis discretization
  of partial differential equations}, C. R. Acad. Sci. Paris, 339 (2004),
  pp.~667--672.

\bibitem{br1}
{\sc F.~Bassi and S.~Rebay}, {\em A high-order accurate discontinuous finite
  element method for the numerical solution of the compressible
  {Navier--Stokes} equations}, Journal of Computational Physics, 131 (1997),
  pp.~267 -- 279.

\bibitem{benner_st}
{\sc M.~Baumann, P.~Benner, and J.~Heiland}, {\em Space-time {Galerkin} {POD}
  with application in optimal control of semilinear partial differential
  equations}, SIAM Journal on Scientific Computing, 40 (2018),
  pp.~A1611--A1641.

\bibitem{structurePreserveBeattie}
{\sc C.~Beattie and S.~Gugercin}, {\em Structure-preserving model reduction for
  nonlinear port-{H}amiltonian systems}, in Decision and Control and European
  Control Conference (CDC-ECC), 2011 50th IEEE Conference on, IEEE, 2011,
  pp.~6564--6569.

\bibitem{wilcox_benner_rev}
{\sc P.~Benner, S.~Gugercin, and K.~Willcox}, {\em {A survey of
  projection-based model reduction methods for parametric dynamical systems}},
  SIAM Review, 57 (2015), pp.~483--531.

\bibitem{Bergmann_pod_vms}
{\sc M.~Bergmann, C.~Bruneaua, and A.~Iollo}, {\em {Enablers for robust POD
  models}}, Journal of Computational Physics, 228 (2009), pp.~516--538.

\bibitem{berkooz_turbulence_pod}
{\sc G.~Berkooz, P.~Holmes, and J.~L. Lumley}, {\em The proper orthogonal
  decomposition in the analysis of turbulent flows}, Annu. Rev. Fluid Mech., 25
  (1993), pp.~539--575.

\bibitem{bochev_leastsquares}
{\sc P.~Bochev and M.~Gunzburger}, {\em Finite element methods of least-squares
  type}, SIAM Review, 40 (1998), pp.~789--837.

\bibitem{broyden}
{\sc C.~G. Broyden}, {\em A class of methods for solving nonlinear simultaneous
  equations}, 1965.

\bibitem{bui_thesis}
{\sc T.~Bui-Thanh}, {\em {Model-constrained optimization methods for reduction
  of parameterized large-scale systems}}, PhD thesis, Massachusetts Institute
  of Technology, 2007.

\bibitem{bui_resmin_steady}
{\sc T.~Bui-Thanh, K.~Willcox, and O.~Ghattas}, {\em Model reduction for
  large-scale systems with high-dimensional parametric input space}, SIAM
  Journal on Scientific Computing, 30 (2008), pp.~3270--3288.

\bibitem{bui_unsteady}
\leavevmode\vrule height 2pt depth -1.6pt width 23pt, {\em Parametric
  reduced-order models for probabilistic analysis of unsteady aerodynamic
  applications}, AIAA Journal, 46 (2008), pp.~2520--2529.

\bibitem{iliescu_ciazzo_residual_rom}
{\sc A.~Caiazzo, T.~Iliescu, V.~John, and S.~Schyschlowa}, {\em A numerical
  investigation of velocity--pressure reduced order models for incompressible
  flows}, Journal of Computational Physics, 259 (2014), pp.~598 -- 616.

\bibitem{carlberg_thesis}
{\sc K.~Carlberg}, {\em Model reduction of nonlinear mechanical systems via
  optimal projection and tensor approximation}, PhD thesis, Stanford
  University, 2011.

\bibitem{carlberg_hadaptation}
{\sc K.~Carlberg}, {\em Adaptive $h$-refinement for reduced-order models},
  International Journal for Numerical Methods in Engineering, 102 (2015),
  pp.~1192--1210.

\bibitem{carlberg_lspg_v_galerkin}
{\sc K.~Carlberg, M.~Barone, and H.~Antil}, {\em {Galerkin v. least-squares
  Petrov-Galerkin projection in nonlinear model reduction}}, Journal of
  Computational Physics, 330 (2017), pp.~693--734.

\bibitem{carlberg_lspg}
{\sc K.~Carlberg, C.~Bou-Mosleh, and C.~Farhat}, {\em Efficient non-linear
  model reduction via a least-squares {Petrov--Galerkin} projection and
  compressive tensor approximations}, Int. J. Numer. Methods Eng., 86 (2011),
  pp.~155--181.

\bibitem{carlberg_conservative_rom}
{\sc K.~Carlberg, Y.~Choi, and S.~Sargsyan}, {\em Conservative model reduction
  for finite-volume models}, Journal of Computational Physics, 371 (2018),
  pp.~280--314.

\bibitem{carlberg_gnat}
{\sc K.~Carlberg, C.~Farhat, J.~Cortial, and D.~Amsallem}, {\em The {GNAT}
  method for nonlinear model reduction: Effective implementation and
  application to computational fluid dynamics and turbulent flows}, Journal of
  Computational Physics, 242 (2013), pp.~623--647.

\bibitem{carlberg2012spd}
{\sc K.~Carlberg, R.~Tuminaro, and P.~Boggs}, {\em Preserving {L}agrangian
  structure in nonlinear model reduction with application to structural
  dynamics}, SIAM J. Sci. Comput., 37 (2015), pp.~B153---B184.

\bibitem{chan2019entropy}
{\sc J.~Chan}, {\em Entropy stable reduced order modeling of nonlinear
  conservation laws}, arXiv e-print,  (2019).

\bibitem{chaturantabut2016structure}
{\sc S.~Chaturantabut, C.~Beattie, and S.~Gugercin}, {\em Structure-preserving
  model reduction for nonlinear port-{Hamiltonian} systems}, SIAM Journal on
  Scientific Computing, 38 (2016), pp.~B837--B865.

\bibitem{choi_stlspg}
{\sc Y.~Choi and K.~Carlberg}, {\em Space--time least-squares
  {Petrov--Galerkin} projection for nonlinear model reduction}, SIAM Journal on
  Scientific Computing, 41 (2019), pp.~A26--A58.

\bibitem{constantine_strom}
{\sc P.~G. Constantine and Q.~Wang}, {\em Residual minimizing model
  interpolation for parameterized nonlinear dynamical systems}, SIAM J. Sci.
  Comput.,  (2012).

\bibitem{conway_optimalcontrolreview}
{\sc B.~A. Conway}, {\em A survey of methods available for the numerical
  optimization of continuous dynamic systems}, J. Optim. Theory Appl., 152
  (2012), pp.~271--306.

\bibitem{qdeim_drmac}
{\sc Z.~Drmac and S.~Gugercin}, {\em A new selection operator for the discrete
  empirical interpolation method---improved a priori error bound and
  extensions}, J. Sci. Comput., 38 (2016), pp.~A631--A648.

\bibitem{etter2019online}
{\sc P.~Etter and K.~Carlberg}, {\em Online adaptive basis refinement and
  compression for reduced-order models via vector-space sieving}, arXiv
  e-print,  (2019).

\bibitem{everson_sirovich_gappy}
{\sc R.~Everson and L.~Sirovich}, {\em {{Karhunen-Lo\`eve} procedure for gappy
  data}}, Journal of the Optical Society of America A, 12 (1995),
  pp.~1657--1644.

\bibitem{farhat2014dimensional}
{\sc C.~Farhat, P.~Avery, T.~Chapman, and J.~Cortial}, {\em Dimensional
  reduction of nonlinear finite element dynamic models with finite rotations
  and energy-based mesh sampling and weighting for computational efficiency},
  International Journal for Numerical Methods in Engineering, 98 (2014),
  pp.~625--662.

\bibitem{ssp_rk3}
{\sc S.~Gottlieb, C.-W. Shu, and E.~Tadmor}, {\em Strong stability-preserving
  high-order time discretization methods}, SIAM Review, 43 (2001), pp.~89--112.

\bibitem{GugercinIRKA}
{\sc S.~Gugercin, A.~Antoulas, and C.~Beattie}, {\em {$\mathcal{H}_2$} model
  reduction for large-scale linear dynamical systems}, SIAM Journal on Matrix
  Analysis and Applications, 30 (2008), pp.~609--638.

\bibitem{bochev_lsfem_book}
{\sc M.~D. Gunzburger and P.~B. Bochev}, {\em Least-Squares Finite Element
  Methods}, Springer, New York, NY, 2009.

\bibitem{iliescu_vms_pod_ns}
{\sc T.~Iliescu and Z.~Wang}, {\em Variational multiscale proper orthogonal
  decomposition: {Navier--Stokes} equations}, Numerical Methods for Partial
  Differential Equations, 30 (2014), pp.~641--663.

\bibitem{Kalashnikova_sand2014}
{\sc I.~Kalashnikova, S.~Arunajatesan, M.~F. Barone, B.~G. van
  Bloemen~Waanders, and J.~A. Fike}, {\em Reduced order modeling for prediction
  and control of large-scale systems}, Report SAND2014-4693, Sandia, May 2014.

\bibitem{optimal_control_book}
{\sc D.~E. Kirk}, {\em Optimal Control Theory: An Introduction}, Prentice Hall,
  2007.

\bibitem{jfnk}
{\sc D.~Knoll and D.~Keyes}, {\em {Jacobian}-free {Newton---Krylov} methods: a
  survey of approaches and applications}, Journal of Computational Physics, 193
  (2004), pp.~357 -- 397.

\bibitem{LALL2003304}
{\sc S.~Lall, P.~Krysl, and J.~E. Marsden}, {\em Structure-preserving model
  reduction for mechanical systems}, Physica D: Nonlinear Phenomena, 184
  (2003), pp.~304 -- 318.
\newblock Complexity and Nonlinearity in Physical Systems -- A Special Issue to
  Honor Alan Newell.

\bibitem{leeCarlberg}
{\sc K.~Lee and K.~Carlberg}, {\em Model reduction of dynamical systems on
  nonlinear manifolds using deep convolutional autoencoders}, arXiv e-print,
  (2018).

\bibitem{legresley_1}
{\sc P.~LeGresley and J.~Alonso}, {\em Airfoil design optimization using
  reduced order models based on proper orthogonal decomposition}.

\bibitem{legresley_3}
\leavevmode\vrule height 2pt depth -1.6pt width 23pt, {\em Dynamic domain
  decomposition and error correction for reduced order models}.

\bibitem{legresley_2}
\leavevmode\vrule height 2pt depth -1.6pt width 23pt, {\em Investigation of
  non-linear projection for {POD} based reduced order models for Aerodynamics}.

\bibitem{fbs}
{\sc S.~Lenhart and J.~Workman}, {\em Optimal Control Applied to Biological
  Models}, Chpman \& Hall/CRC Press, 2007.

\bibitem{McAsey2012ConvergenceOT}
{\sc M.~McAsey, L.~Mou, and W.~Han}, {\em Convergence of the forward-backward
  sweep method in optimal control}, Comp. Opt. and Appl., 53 (2012),
  pp.~207--226.

\bibitem{moore}
{\sc B.~{Moore}}, {\em Principal component analysis in linear systems:
  Controllability, observability, and model reduction}, IEEE Transactions on
  Automatic Control, 26 (1981), pp.~17--32.

\bibitem{multiple_shooting}
{\sc D.~Morrison, J.~Riley, and J.~Zancanaro}, {\em Multiple shooting method
  for two-point boundary value problems}, Communications of the ACM, 5 (1962),
  pp.~613--614.

\bibitem{roberts}
{\sc C.~T. Mullis and R.~A. Roberts}, {\em Synthesis of minimum roundoff noise
  fixed point digital filters}, IEEE Transactions on Circuits and Systems, 23
  (1976), pp.~551--562.

\bibitem{NgocCuong2005}
{\sc N.~Ngoc~Cuong, K.~Veroy, and A.~T. Patera}, {\em Certified real-time
  solution of parametrized partial differential equations}, Springer
  Netherlands, Dordrecht, 2005, pp.~1529--1564.

\bibitem{parish_apg}
{\sc E.~Parish, C.~Wentland, and K.~Duraisamy}, {\em The {Adjoint
  Petrov--Galerkin} method for non-linear model reduction}, CMAME (Submitted),
  (2018), p.~50.

\bibitem{adeim_peherstorfer}
{\sc B.~Peherstorfer and K.~Willcox}, {\em Online adaptive model reduction for
  nonlinear systems via low-rank updates}, J. Sci. Comput., 37 (2015),
  pp.~A2123--A2150.

\bibitem{rb_1}
{\sc C.~Prud'homme, D.~V. Rovas, K.~Veroy, L.~Machiels, Y.~Maday, A.~T. Patera,
  and G.~Turinici}, {\em Reliable real-time solution of parametrized partial
  differential equations: Reduced-basis output bound methods}, Journal of
  Fluids Engineering, 124 (2001), pp.~70--80.

\bibitem{colloc_review}
{\sc A.~Rac}, {\em A survey of numerical methods for optimal control}, Advances
  in the Astronautical Sciences, 135 (2010).

\bibitem{rovas_thesis}
{\sc D.~V. Rovas}, {\em {Reduced-basis output bound methods for parametrized
  partial differential equations}}, PhD thesis, Massachusetts Institute of
  Technology, 2003.

\bibitem{rowley_pod_energyproj}
{\sc C.~W. Rowley, T.~Colonius, and R.~M. Murray}, {\em {Model reduction for
  compressible flows using POD and Galerkin projection}}, Physica D: Nonlinear
  Phenomena, 189 (2004), pp.~115--129.

\bibitem{Rozza2008}
{\sc G.~Rozza, D.~B.~P. Huynh, and A.~T. Patera}, {\em Reduced basis
  approximation and a posteriori error estimation for affinely parametrized
  elliptic coercive partial differential equations}, Archives of Computational
  Methods in Engineering, 15 (2008), p.~229.

\bibitem{rusanov}
{\sc V.~Rusanov}, {\em On difference schemes of third order accuracy for
  nonlinear hyperbolic systems}, Journal of Computational Physics, 5 (1970),
  pp.~507--516.

\bibitem{iliescu_pod_eddyviscosity}
{\sc O.~San and T.~Iliescu}, {\em Proper orthogonal decomposition closure
  models for fluid flows: {Burgers} equation}, Comput. Methods Appl. Mech.
  Engrg, 5 (2014), pp.~217--237.

\bibitem{san_iliescu_geostrophic}
{\sc O.~San and T.~Iliescu}, {\em {A stabilized proper orthogonal decomposition
  reduced-order model for large scale quasigeostrophic ocean circulation}}, Adv
  Comput Math, 41 (2015), pp.~1289--1319.

\bibitem{San2018}
{\sc O.~San and R.~Maulik}, {\em Neural network closures for nonlinear model
  order reduction}, Advances in Computational Mathematics, 44 (2018),
  pp.~1717--1750.

\bibitem{adolc}
{\sc A.~W. und A.~Griewank}, {\em {Getting started with ADOL-C. In
  Combinatorial Scientific Computing.}}, Chapman-Hall CRC Computational
  Science, 2012.

\bibitem{URBAN2012203}
{\sc K.~Urban and A.~T. Patera}, {\em A new error bound for reduced basis
  approximation of parabolic partial differential equations}, Comptes Rendus
  Mathematique, 350 (2012), pp.~203 -- 207.

\bibitem{rb_3}
{\sc K.~Veroy and A.~T. Patera}, {\em Certified real-time solution of the
  parametrized steady incompressible {Navier-Stokes} equations: Rigorous
  reduced-basis a posteriori error bounds}, International Journal for Numerical
  Methods in Fluids, 47 (2005), pp.~773--788.

\bibitem{rb_2}
{\sc K.~Veroy, C.~Prud'homme, D.~Rovas, and A.~Patera}, {\em A posteriori error
  bounds for reduced-basis approximation of parametrized noncoercive and
  nonlinear elliptic partial differential equations}.

\bibitem{Wang:269133}
{\sc Q.~Wang, N.~Ripamonti, and J.~S. Hesthaven}, {\em Recurrent neural network
  closure of parametric {POD-Galerkin} reduced-order models based on the
  {Mori-Zwanzig} formalism}, Journal of Computational Physics,  (2019).

\bibitem{Wang_ROM_thesis}
{\sc Z.~Wang}, {\em Reduced-Order Modeling of Complex Engineering and
  Geophysical Flows: Analysis and Computations}, PhD thesis, Virginia
  Polytechnic Institute and State University, 2012.

\bibitem{wentland_apg}
{\sc C.~R. Wentland, C.~Huang, and K.~Duraisamy}, {\em Closure of reacting flow
  reduced-order models via the {Adjoint Petrov-Galerkin} Method}.

\bibitem{Yano2014ASC}
{\sc M.~Yano, A.~T. Patera, and K.~Urban}, {\em A space-time
  hp-interpolation-based certified reduced basis method for {Burgers}
  equation}, Mathematical Models and Methods in Applied Sciences, 24 (2014),
  pp.~1903--1935.

\end{thebibliography}
\end{document}